\documentclass[a4paper]{article}

\usepackage[all]{xy}\usepackage[latin1]{inputenc}        
\usepackage[dvips]{graphics,graphicx}
\usepackage{amsfonts,amssymb,amsmath,xcolor,mathrsfs, amstext}
\usepackage{amsbsy, amsopn, amscd, amsxtra, amsthm,authblk, enumerate}
\usepackage{upref}
\usepackage{geometry}
\geometry{left=2.5cm,right=2.5cm,top=3cm,bottom=3cm}
\usepackage[displaymath]{lineno}
\usepackage{float}

\usepackage[colorlinks,
            linkcolor=blue,
            anchorcolor=green,
            citecolor=blue
            ]{hyperref}

\numberwithin{equation}{section}

\def\e{\varepsilon}
\def\epsilon{\varepsilon}
\def\eps{\varepsilon}

\newcommand{\ol}{\overline}
\newcommand{\wt}{\widetilde}
\def\alb#1\ale{\begin{align*}#1\end{align*}}
\newcommand{\eqb}{\begin{equation}}
\newcommand{\eqe}{\end{equation}}

\newcommand{\bbE}{\mathbb{E}}
\newcommand{\bbH}{\mathbb{H}}

\newcommand{\bbR}{\mathbb{R}}

\newcommand{\cD}{\mathcal{D}}
\newcommand{\cF}{\mathcal{F}}
\newcommand{\cL}{\mathcal{L}}

\newcommand{\QT}{\mathrm{QT}}
\newcommand{\LF}{\mathrm{LF}}
\newcommand{\SLE}{\mathrm{SLE}}
\newcommand{\Wd}{\mathrm{Weld}}
\newcommand{\Md}{{\mathcal{M}}^\mathrm{disk}}
\newcommand{\tildeMd}{\widetilde{\mathcal{M}}^\mathrm{disk}}

\newcommand{\xin}[1]{{\color{blue}{#1}}}

\newtheorem{theorem}{Theorem}[section]
\newtheorem{lemma}[theorem]{Lemma}
\newtheorem{proposition}[theorem]{Proposition}
\newtheorem*{proposition*}{Proposition}

\newtheorem*{corollary*}{Corollary}
\newtheorem{definition}[theorem]{Definition}
\newtheorem*{definitions*}{Definitions}

\newtheorem*{example*}{\bf Example}
\theoremstyle{remark}
\newtheorem{remark}[theorem]{\bf Remark}

\numberwithin{equation}{section}

\title{
Quantum triangles and imaginary geometry flow lines}
\author{Morris Ang  \qquad\quad    Xin Sun  \qquad\quad   Pu Yu}

\date{}
\begin{document}

\maketitle

\begin{abstract}
We define a three-parameter family of random surfaces in Liouville quantum gravity (LQG) which can be viewed as the quantum version of triangles. These quantum triangles are natural in two senses. First, by our definition they produce the boundary three-point correlation functions of Liouville conformal field theory on the disk. Second, it turns out that the laws of the triangles bounded by flow lines in imaginary geometry coupled with LQG are given by these quantum triangles. In this paper we demonstrate the second point for boundary flow lines on a quantum disk. Our method has the potential to prove general conformal welding results with quantum triangles glued in an arbitrary way. Quantum triangles play a basic role in understanding the integrability of SLE and LQG  via conformal welding. In this paper, we deduce integrability results for chordal SLE with three force points, using the conformal welding of a quantum triangle and a two-pointed quantum disk. 
{Further applications will be explored in  subsequent works.}
\end{abstract}

\setcounter{tocdepth}{1}

 \tableofcontents

\section{Introduction}

Schramm-Loewner evolution (SLE) and Liouville quantum gravity (LQG)  are central subjects in random conformal geometry as canonical theories for random curves and surfaces, respectively. Starting from~\cite{She16a}, a key tool to study SLE and LQG is their coupling, where SLE curves arise as the interfaces of LQG surfaces under conformal welding. This leads to the mating-of-trees theory~\cite{DMS14}, which is fundamental in connecting LQG and the scaling limits of random planar maps decorated with statistical physics models; see the textbook~\cite{BP21} and the survey~\cite{GHS19}.  More recently, conformal welding  was used to study the integrability of SLE and LQG~\cite{AHS21,AGS21,AS21}. 

In most conformal welding results established so far, the SLE curves cut the LQG surfaces into smaller surfaces with two boundary marked points. The infinite-area version of these two-pointed marked surfaces are called quantum wedges, while the finite-area variants are  called two-pointed quantum disks. As shown in~\cite{DMS14,AHS20},  when these surfaces are welded together, the law of the SLE interfaces are a collection of flow lines in the sense of imaginary geometry~\cite{MS16a,MS17}, which is a canonical framework to couple multiple SLE curves. Two-pointed quantum disks also plays a basic role in the Liouville conformal field theory (LCFT) as they determine the  reflection coefficient for LCFT on the disk~\cite{HRV-disk,RZ20b,AHS21}.

In this paper we define a three-parameter family of LQG surfaces with three boundary marked points, which we call quantum triangles. They are defined to produce the boundary three-point correlation functions of LCFT on the disk. When two of the parameters are equal, they reduce to a two-parameter family of quantum surfaces defined in~\cite{AHS21}. The main goal of our paper is   to demonstrate that  the law of the triangular surfaces cut out by imaginary geometry flow lines on
a LQG  disk with multiple boundary marked points are given by quantum triangles; see Theorem~\ref{thm:disk}. Based on our work, a  general result with quantum triangles conformally welded in an arbitrary way will be proved by the first and the third authors in a subsequent work. Quantum triangles enrich the applications of conformal welding to SLE and LQG. In this paper, we deduce integrablity results for chordal SLE with three force points. {Further applications will be discussed in Section~\ref{subsec:intro-outlook}.}

We will give a brief description of quantum triangles in Section~\ref{subsec:intro-def} with the precise definition postponed to Section~\ref{sec-pre}. Then in Section~\ref{subsec:intro-2pt} we state a key result (Theorem \ref{thm:M+QTp-rw}) saying that the conformal welding of a quantum triangle and a two-pointed quantum disk gives another quantum triangle, which is proved in Sections~\ref{sec:WW2}---\ref{sec-6-thin-qt}.  The proof includes several novel techniques for proving  general conformal welding results. In particular, we give a Markovian characterization of the Liouville fields defining quantum triangles, which explains their ubiquity. 
As a corollary of Theorem \ref{thm:M+QTp-rw}, we state the aforementioned Theorem~\ref{thm:disk}  in Section~\ref{subsec:intro-disk} with more details  on imaginary geometry provided in Section~\ref{sec-pre-ig}. We present some applications of Theorem \ref{thm:M+QTp-rw} to SLE in Section~\ref{subsec-SLE-weighted}, whose proofs are given in Section~\ref{sec:application}. In Section~\ref{subsec:intro-outlook}, we discuss some perspectives  and related works.

\subsection{Definition of the quantum triangle}\label{subsec:intro-def}

Fix $\gamma\in (0,2)$. A quantum surface in $\gamma$-LQG is a surface with an area measure and a   metric structure  induced by a variant of Gaussian free field (GFF). The area is defined in~\cite{DS11}  and the metric is defined in~\cite{DDDF19,GM19metric}. A quantum surface with the disk topology can be represented as a pair $(D,h)$ where $D$ is a simply connected domain and $h$ is a variant of GFF. For such surfaces there is also a notion of $\gamma$-LQG length measure on the disk boundary~\cite{DS11}.   Two pairs $(D,h)$  and $(D',h')$ represent the same quantum surface if there is a conformal map between $D$ and $D'$ preserving the geometry. A particular pair $(D,h)$ is called a (conformal) embedding of the quantum surface. 

For $W>0$, the two-pointed quantum disk of weight $W$ is a quantum surface with two boundary marked points introduced in~\cite{DMS14,AHS20}, which has finite quantum area and length. It has two regimes: thick (i.e.\ $W\ge \frac{\gamma^2}{2}$) and thin (i.e.\ $W\in (0,\frac{\gamma^2}2)$).
For $W\ge \gamma^2/2$, the two-pointed quantum disk has the disk topology with two boundary  marked  points. The field near the two marked points has a $\beta$-log singularity where $\beta$ and $W$ are related by  
\begin{equation}\label{eq:betaW}
	\beta = \gamma+\frac{2-W}{\gamma},\quad \textrm{i.e.}\quad W=\gamma (Q+\frac{\gamma}2-\beta).
\end{equation}
For $W\in (0,\gamma^2/2)$, the weight-$W$ two-pointed quantum disk has the topology of an ordered collection of disks, each of which has two boundary  marked  points. There is a canonical law $\mathcal{M}_2^{\textup{disk}}(W)$ for the weight-$W$ two-pointed quantum disk, which has no constraint on the total area and boundary lengths. Other variants with fixed area and/or length can be obtained from $\mathcal{M}_2^{\textup{disk}}(W)$ by conditioning.
We also write $\mathcal{M}_2^{\textup{disk}}(2)$ as $\mathrm{QD}_{0,2}$. A sample from $\mathrm{QD}_{0,2}$ is known as the quantum disk with two typical boundary  points, because in this case the two marked points are simply distributed according to the $\gamma$-LQG boundary length measure. 
This special case arises naturally as scaling limits of random planar maps. For example,  when $\gamma=\sqrt{8/3}$, $\mathrm{QD}_{0,2}$ is the law of the LQG realization  of the Brownian disk with two boundary marked points, with free area and boundary length~\cite{lqg-tbm-1,lqg-tbm-2}. This is the scaling limit of  triangulation or quadrangulations  sampled from the critical Boltzmann measure~\cite{BM15browniandisk,GM19browniandisk}. 
In general, $\mathcal{M}_2^{\textup{disk}}(W)$ is an infinite measure. For $W\in (0,\frac{\gamma^2}{2})$, the ordered collections of disks in $\mathcal{M}_2^{\textup{disk}}(W)$
can be obtained from an initial segment of the Poisson point process with intensity measure $\mathcal{M}_2^{\textup{disk}}(\gamma^2-W)$.
We will recall the precise definition of $\mathcal{M}_2^{\textup{disk}}(W)$ in  Section~\ref{sec-pre}.

Two-pointed quantum disks are intimately related to Liouville conformal field theory on the disk~\cite{HRV-disk}. This relation is most transparent when we parameterize a quantum disk by a strip. Let $\mathcal S$ be the horizontal  strip $\mathbb R\times (0,\pi)$. For $W>\frac{\gamma^2}{2}$, let $(\mathcal{S},\phi, +\infty, -\infty)$ be an embedding of a sample from $\mathcal{M}_2^{\textup{disk}}(W)$. Let $\beta = \gamma+\frac{2-W}{\gamma}<Q$ as in~\eqref{eq:betaW}. By~\cite{AHS21},
if we independently sample $T$ from the Lebesgue measure on  $\mathbb{R}$, then the law of the field $\tilde{\phi}:=\phi(\cdot+T)$  is  $\frac{\gamma}{2(Q-\beta)^2}\textup{LF}_\mathcal{S}^{
	(\beta,\pm\infty)}$, where $\textup{LF}_\mathcal{S}^{(\beta,\pm\infty)}$ is the Liouville field on $\mathcal S$ with $\beta$ insertions at $\pm \infty$.
See Section~\ref{sec-pre} for the definition of Liouville fields with insertions.

We now describe our main quantum surfaces of interest, the quantum triangles. We first recall a special case that is already considered in~\cite{AHS21} and played a crucial rule there. 
For $\beta, \beta_3<Q$, the Liouville field measure  $\textup{LF}_\mathcal{S}^{
	(\beta,\pm\infty), (0,\beta_3)}$ 
is formally defined by
$\textup{LF}_\mathcal{S}^{
	(\beta,\pm\infty), (\beta_3,0)}(d\phi )=e^{\beta_3\phi(0)}\textup{LF}_\mathcal{S}^{
	(\beta,\pm\infty)} (d\phi)$, and can be made rigorous by regularization. Let $W,W_3>\frac{\gamma^2}{2}$ be determined by $\beta,\beta_3$ as in~\eqref{eq:betaW}, respectively. Sample $\phi$ from $\frac{1}{(Q-\beta)^2(Q-\beta_3)}\textup{LF}_\mathcal{S}^{
	(\beta,\pm\infty),(0,\beta_3)}$ and let $\QT(W,W,W_3)$ be the law of the three-pointed quantum surface $(\mathcal S, \phi, \pm \infty, 0)$. We call a sample from $\QT(W,W,W_3)$ a quantum triangle of weight $(W,W,W_3)$. Up to a multiplicative constant, the measure $\QT(W,W,W_3)$ agrees with $\mathcal{M}_{2, \bullet}^{\textup{disk}}(W;\beta_3)$ defined in~\cite{AHS21}; also see Definition~\ref{def-m2dot-alpha}. 
For $W=\frac{\gamma^2}2$, we define $\QT(\frac{\gamma^2}{2},\frac{\gamma^2}2,W_3)$ as the 
$W\downarrow \frac{\gamma^2}2$ limit of $\QT(W,W,W_3)$. For $W\in (0,\frac{\gamma^2}{2})$, 
following the definition of $\mathcal{M}_{2, \bullet}^{\textup{disk}}(W;\beta_3)$ in~\cite{AHS21}, we let $\QT(W,W,W_3)$ be the law of the three-pointed surface obtained by attaching an independent weight-$W$ 
two pointed disk at a quantum triangle of weight $(\gamma^2-W,\gamma^2-W, W_3)$.

For $W_1,W_2,W_3>0$, we define $\QT(W_1,W_2,W_3)$ as follows. For $W_1,W_2,W_3>\frac{\gamma^2}{2}$, set $\beta_i = \gamma+\frac{2-W_i}{\gamma}<Q$ and let $\textup{LF}_{\mathcal{S}}^{(\beta_1, +\infty), (\beta_2, -\infty), (\beta_3, 0)}$ be the Liouville field on $\mathcal S$ 
with insertion $\beta_1,\beta_2,\beta_3$ at $+\infty,-\infty$ and $0$, respectively. 
Sample $\phi$ from 
\[
\frac{1}{(Q-\beta_1)(Q-\beta_2)(Q-\beta_3)}\textup{LF}_{\mathcal{S}}^{(\beta_1, +\infty), (\beta_2, -\infty), (\beta_3, 0)}.
\]
We define  $\textup{QT}(W_1, W_2, W_3)$ to be the law of the 3-pointed quantum surface $(\mathcal{S}, \phi, +\infty, -\infty, 0)$. We call a sample from {$\QT(W_1,W_2,W_3)$} a quantum triangle of weight $(W_1,W_2,W_3)$.
Taking the limit $W_i\downarrow \frac{\gamma^2}{2}$, we can extend the definition of $\textup{QT}(W_1, W_2, W_3)$  to  $W_1,W_2,W_3\ge \frac{\gamma^2}{2}$; see Section~\ref{subsec:critical}. In this regime a quantum triangle has the disk topology. When $W_1\in (0,\frac{\gamma^2}{2})$ and $W_2,W_3\ge \frac{\gamma^2}2$, we define $\QT(W_1,W_2,W_3)$ by   attaching an independent weight-$W_1$ 
two pointed disk at a quantum triangle of weight $(\gamma^2-W_1,W_2, W_3)$. Using this method we extend the definition of $\textup{QT}(W_1, W_2, W_3)$  to  $W_1,W_2,W_3>0$.
We call the three marked points vertices of a quantum triangle and $W_i$ ($i=1,2,3$) is called the weight of the corresponding vertex. Given a sample of  $\QT(W_1,W_2,W_3)$, the geometry near the vertex of weight $W_i$ looks like the neighborhood of a marked point on a weight-$W_i$ quantum disk. We say a vertex is thick if its weight $W\ge \frac{\gamma^2}2$. We call it thin if $W\in (0,\frac{\gamma^2}2)$. See Figure~\ref{fig-qt} for an illustration.

\begin{figure}[ht]
	\centering
	\includegraphics[scale=0.43]{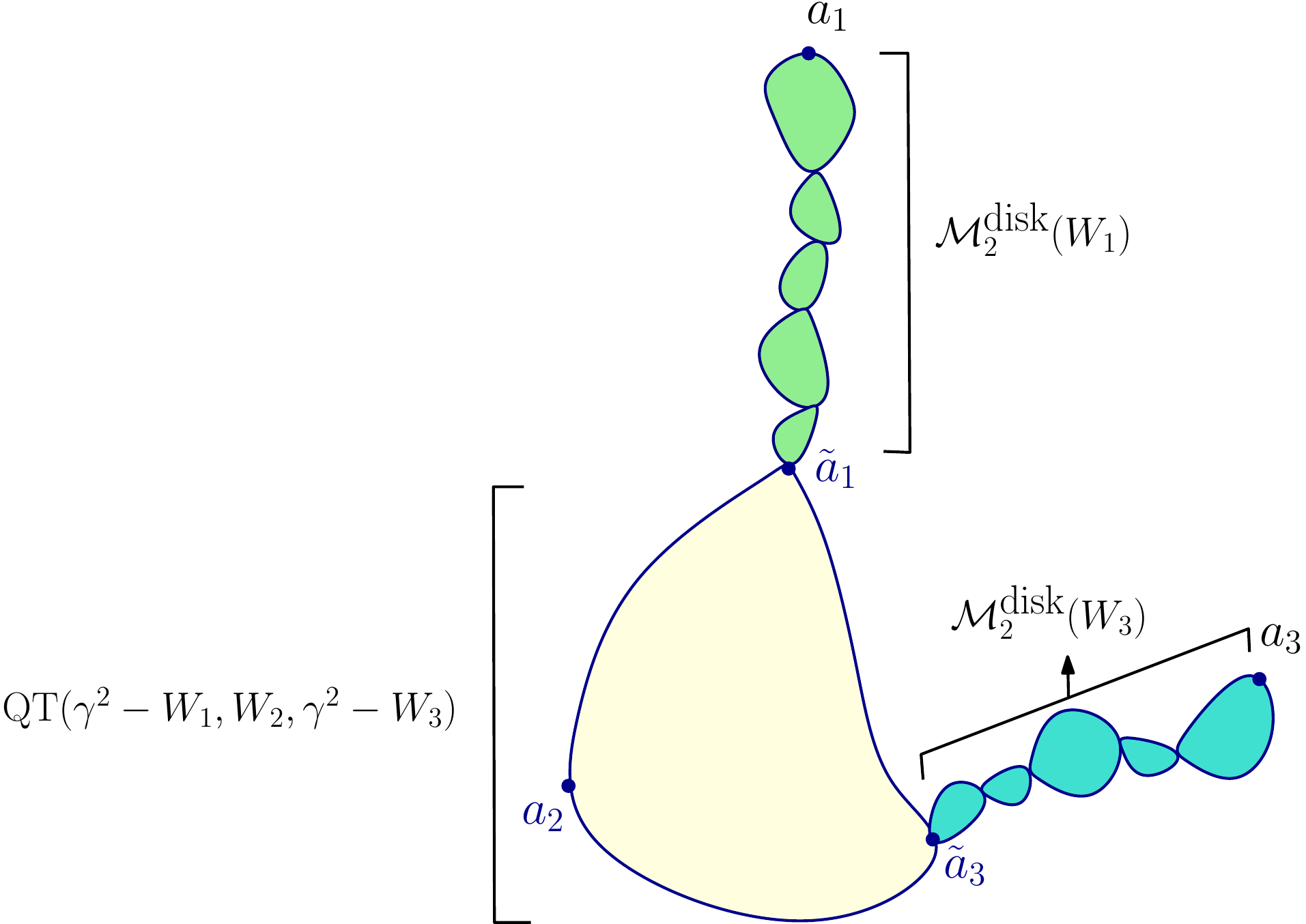}
	\caption{A sample of $\QT(W_1,W_2,W_3)$ with a thick vertex $a_2$   and  two thin vertices $a_1,a_3$, i.e.\ $W_2\geq\frac{\gamma^2}{2}$ and $W_1,W_3<\frac{\gamma^2}{2}$. The yellow surface is a quantum triangle with thick vertices $\tilde{a}_1,a_2,\tilde{a}_3$.  The two thin two-pointed quantum disks (colored green) are concatenated with the  yellow triangle at  $\tilde{a}_1$ and $\tilde{a}_3$.} \label{fig-qt}
\end{figure}

\subsection{Conformal welding of a quantum triangle and a  2-pointed quantum disk}\label{subsec:intro-2pt}
We first recall the conformal welding result for two-pointed quantum disk 
proved in~\cite{AHS20} based on its infinite-area variant in~\cite{DMS14}. For $W>0$, define $\mathcal{M}_2^{\textup{disk}}(W;\ell,r)$ via the disintegration 
$\mathcal{M}_2^{\textup{disk}}(W)=\iint_0^\infty \mathcal{M}_2^{\textup{disk}}(W;\ell,r) d\ell d r$, where $\mathcal{M}_2^{\textup{disk}}(W;\ell,r)$ is supported on surfaces with left boundary length $\ell$ and right boundary length $r$. Given a pair of quantum surfaces sampled from $\mathcal{M}_2^{\textup{disk}}(W_1;\ell_1,\ell)\times \mathcal{M}_2^{\textup{disk}}(W_2;\ell,\ell_2)$, we can conformally weld them together along the boundary with length $\ell$ to obtain a quantum surface decorated with a curve. We denote its law by $\mathrm{Weld}(\mathcal{M}_2^{\textup{disk}}(W_1;\ell_1,\ell),\mathcal{M}_2^{\textup{disk}}(W_2;\ell,\ell_2))$. 
For $\kappa>0$, $\rho_->-2$ and $\rho_+>-2$, chordal  $\SLE_\kappa(\rho_-;\rho_+)$ is a classical variant of SLE$_\kappa$ curve on simply connected domain between two boundary points, which will be recalled in Section~\ref{subsec:ig}.
Fix $W_1,W_2>0$, the conformal welding result for $\mathcal{M}_2^{\textup{disk}}(W_1)$ and $\mathcal{M}_2^{\textup{disk}}(W_2)$ says the following.
Let $(D,h,a,b)$ be an embedding of a two-pointed quantum disk sampled from $\mathcal{M}_2^{\textup{disk}}(W_1+W_2)$ with $a,b$ being the two boundary marked points. 
Let $\eta$ be a $\SLE_\kappa(\rho^-;\rho^+)$ curve on $D$ from $a$ to $b$ independent of $h$, where
\begin{equation}\label{eq:kapp}
	\kappa=\gamma^2\in (0,4); \quad \textrm{and}\quad \rho_-=W_1-2>-2; \quad \textrm{and} \quad \rho_+=W_2-2>-2.
\end{equation}
We write  $\mathcal{M}_2^{\textup{disk}}(W_1+W_2)\otimes  {\SLE}_\kappa(W_1-2;W_2-2)$ as the law of the curve-decorated surface $(D,h,\eta, a,b)$.
Then there is a constant $c>0$ such that
\begin{equation}\label{eq:2pt-weld}
	\mathcal{M}_2^{\textup{disk}}(W_1+W_2)\otimes  {\SLE}_\kappa(W_1-2;W_2-2)=c\textup{Weld} (\mathcal{M}_2^{\textup{disk}}(W_1),\mathcal{M}_2^{\textup{disk}}(W_2)),
\end{equation}
where $\textup{Weld} (\mathcal{M}_2^{\textup{disk}}(W_1),\mathcal{M}_2^{\textup{disk}}(W_2)) := \iiint_0^\infty \textup{Weld}( \mathcal{M}_2^{\textup{disk}}(W_1;\ell,\ell_1),\mathcal{M}_2^{\textup{disk}}(W_2;\ell_1,\ell_2))d\ell d\ell_1d\ell_{2}$ is called the conformal welding of $\mathcal{M}_2^{\textup{disk}}(W_1)$ and $\mathcal{M}_2^{\textup{disk}}(W_2)$.

The bulk of our paper is devoted to proving  that the conformal welding of a quantum triangle and a two-pointed quantum disk gives another quantum triangle with an SLE curve whose law is explicit. Similarly as in~\eqref{eq:2pt-weld}, we define $\QT(W_1,W_2,W_3;\ell_1,\ell_2,\ell_3)$ via the disintegration 
$\QT(W_1,W_2,W_3)=\int\QT(W_1,W_2,W_3;\ell_1,\ell_2,\ell_3) d\ell_1d\ell_2d\ell_3$. Here $\ell_i$ is the length between the weight-$W_i$ and weight-$W_{i+1}$ vertices where $i=1,2,3$ and  $3+1$  is identified with  $1$. Fix $W,W_1,W_2,W_3>0$, given a pair of quantum surfaces sampled from $\mathcal{M}_2^{\textup{disk}}(W;\ell_1,\ell)\times \QT(W_1,W_2,W_3;\ell,\ell_2,\ell_3)$, we conformally weld them together along the boundary with length $\ell$ to obtain a quantum surface decorated with a curve and three marked points, whose law is denoted by $\mathrm{Weld}(\mathcal{M}_2^{\textup{disk}}(W;\ell,\ell_1),\QT(W_1,W_2,W_3;\ell,\ell_2,\ell_3))$. We define the conformal welding of $\mathcal{M}_2^{\textup{disk}}(W)$ and $\QT(W_1,W_2,W_3)$ by
\begin{equation}\label{eq:def-weld}
	\textup{Weld} (\mathcal{M}_2^{\textup{disk}}(W),\QT(W_1,W_2,W_3)) :=
	\iiiint_0^\infty \Wd(\Md_2(W;\ell_1,\ell),\QT(W_1,W_2,W_3;\ell,\ell_2,\ell_3))d\ell d\ell_1d\ell_2 d\ell_3.
\end{equation} 
Similar to~\eqref{eq:2pt-weld}, the law of the three pointed quantum surface for $\textup{Weld} (\mathcal{M}_2^{\textup{disk}}(W),\QT(W_1,W_2,W_3))$ is proportional to $\QT(W+W_1, W+W_2, W_3)$. To describe the law of the SLE interface, we need chordal $\SLE_\kappa$ with multiple boundary forces points, which is a more general variant of chordal $\SLE$ that arises in imaginary geometry~\cite{MS16a}. We let $\SLE_\kappa(\rho_-;\rho_+,\rho_1)$  be the law of a 
chordal $\SLE_\kappa $ on the upper half plane $\bbH$ from $0$ to $\infty$ with forces points at $0^-,0^+,1$, whose weight are $\rho_-,\rho_+,\rho_1$ respectively.
We will recall its definition in Section~\ref{sec-pre-ig}, for now it is sufficient to know that it is a random simple curve on $\bbH$ from $0$ to $\infty$, with an additional boundary marked points $0^-,0^+,1$ called force points, each of which is labeled by a number called weight. (This is not to be confused with the weight for a vertex of a quantum triangle).   Our previous notion of chordal $\SLE_\kappa(\rho_-;\rho_+)$ on $\bbH$ from $0$ to $\infty$ is the special case where $\rho_1=0$.

Our first welding result (Theorem~\ref{thm:M+QT}) says that when $W_1,W_2,W_3$ satisfies $W_1+2=W_2+W_3$,  the interface in $\textup{Weld} (\mathcal{M}_2^{\textup{disk}}(W),\QT(W_1,W_2,W_3))$ is a  chordal $\SLE_\kappa(W-2;W_2-2,W_1-W_2)$ curve if  $W+W_1, W+W_2, W_3$  are all thick weights (namely $\ge\frac{\gamma^2}{2}$); and if some of $W+W_1, W+W_2, W_3$ are thin, the analogous result holds after natural modifications.
Let us first assume $W+W_1, W+W_2, W_3$ are all thick so that 
a sample from  $\QT(W+W_1, W+W_2, W_3)$
can be embedded as {$(\bbH,h,\infty,0, 1)$, where the points $\infty,0,1$ correspond to the weight $W+W_1,W+W_2,W_3$ vertices.} Sample $\eta$ from ${\SLE}_\kappa(W-2;W_2-2,W_1-W_2)$  independently from $h$. We write  $ \QT(W+W_1, W+W_2, W_3)\otimes {\SLE}_\kappa(W-2;W_2-2,W_1-W_2)$ as the law of the curve-decorated surface {$(\bbH,h,\eta,\infty,0,1)$}. 
Now if $W_3\in (0,\frac{\gamma^2}2)$ instead, then a sample of $\QT(W+W_1, W+W_2, W_3)$ can be obtained by attaching a weight $W_3$ two-pointed quantum disk to a quantum triangle of weight $(W+W_1, W+W_2, \gamma^2-W_3)$  at the weight $(\gamma^2-W_3)$ vertex. We now embed the
weight $(W+W_1, W+W_2, \gamma^2-W_3)$ triangle to $(\bbH, 0,\infty,1)$ and run an independent ${\SLE}_\kappa(W-2;W_2-2,W_1-W_2)$ curve from $0$ to $\infty$. We still write  $ \QT(W+W_1, W+W_2, W_3)\otimes {\SLE}_\kappa(W-2;W_2-2,W_1-W_2)$ as the law of the resulting curve-decorated surface with the two-pointed quantum disk attached.
We will give the precise definition of this law for the case when $W+W_1 $ or $W+W_2$ is thin
in Section~\ref{sec-6-thin-qt}.  See Figure~\ref{fig-qt-weld} for illustrations of various cases. 

\begin{theorem}\label{thm:M+QT}
	Suppose $W,W_1,W_2,W_3>0$  with 
	$W_1+2=W_2+W_3$. 
	Then there exists some constant $c = c_{W,W_1,W_2}\in (0,\infty)$ such that
	\begin{equation}\label{eq:M+QT}
		\QT(W+W_1, W+W_2, W_3)\otimes {\SLE}_\kappa(W-2;W_2-2,W_1-W_2)= c \textup{Weld} (\mathcal{M}_2^{\textup{disk}}(W),\QT(W_1,W_2,W_3)).
	\end{equation}	 
\end{theorem}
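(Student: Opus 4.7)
I plan to deduce~\eqref{eq:M+QT} from the two-pointed welding identity~\eqref{eq:2pt-weld} by a marking-and-reweighting argument that adds a third vertex to both sides. First, I would reduce to the thick regime $W+W_1,W+W_2,W_3\ge \gamma^2/2$: in the thin regime $\QT$ is defined in Section~\ref{subsec:intro-def} by attaching an independent thin two-pointed disk at the thin vertex, the welding in~\eqref{eq:def-weld} happens along a boundary arc disjoint from such an attachment, and the thin cases therefore follow from the thick case by the disintegration in~\eqref{eq:def-weld}.

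In the thick regime, the parameter constraint $W_1+2=W_2+W_3$ is equivalent via~\eqref{eq:betaW} to the Liouville-insertion identity $\beta_1+\gamma=\beta_2+\beta_3$ on the three weights $\beta_i=\gamma+(2-W_i)/\gamma$ of the target triangle. Starting from~\eqref{eq:2pt-weld} applied to the pair $(W,W_1)$, which gives
\[
\mathcal{M}_2^{\textup{disk}}(W+W_1)\otimes \SLE_\kappa(W-2;W_1-2)\;\propto\;\textup{Weld}\bigl(\mathcal{M}_2^{\textup{disk}}(W),\mathcal{M}_2^{\textup{disk}}(W_1)\bigr),
\]
I would sample an extra boundary point $x$ from the $\gamma$-LQG length measure on the side corresponding to $\mathcal{M}_2^{\textup{disk}}(W_1)$ and perform a Girsanov-type reweighting that simultaneously adds a Liouville insertion of weight $\beta_3$ at $x$ and transfers a fraction of insertion weight onto the marked point that will become the weight-$(W+W_2)$ vertex. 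The identity $\beta_1+\gamma=\beta_2+\beta_3$ is precisely what makes this bookkeeping self-consistent on both sides: the same reweighting turns the $\mathcal{M}_2^{\textup{disk}}(W_1)$ factor on the right-hand side into $\QT(W_1,W_2,W_3)$, turns the $\mathcal{M}_2^{\textup{disk}}(W+W_1)$ factor on the left-hand side into $\QT(W+W_1,W+W_2,W_3)$, and leaves the $\mathcal{M}_2^{\textup{disk}}(W)$ factor untouched. On the left-hand side, the reweighting also modifies the law of the SLE curve via the standard Liouville--imaginary-geometry coupling: the boundary insertion at $x$ shifts the Loewner driving function by a Girsanov drift equal to the force-point term at $x$ of weight $\gamma(\gamma-\beta_3)=W_3-2=W_1-W_2$, so that the previous right-side weight $W_1-2$ partitions as $(W_2-2)+(W_1-W_2)$ and $\SLE_\kappa(W-2;W_1-2)$ becomes $\SLE_\kappa(W-2;W_2-2,W_1-W_2)$.

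The most delicate step will be justifying that the boundary Liouville insertion modifies the SLE driving function exactly as a new force point of the asserted weight. This is the content of the Markovian characterization of the Liouville fields defining quantum triangles promised in the introduction, and it is what pins down the law of the interface after reweighting. The field-side identification then reduces to a direct comparison of Liouville densities under $\beta_1+\gamma=\beta_2+\beta_3$, and the multiplicative constant $c$ is tracked through finitely many explicit factors, each finite and nonzero.
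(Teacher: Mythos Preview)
Your proposal has a genuine gap at the step where you reweight at the shared vertex (the starting point of the SLE). The reweighting lemma you are implicitly relying on --- Proposition~\ref{prop-3-pt-disk}, i.e.\ \cite[Proposition~4.5]{AHS21} --- only handles an insertion at a point $x$ lying strictly in the interior of a boundary arc of one piece, away from the SLE endpoints: there the circle average $\phi_\eps(x)$ transfers to the piece via the smooth map $\psi_\eta$ and produces the factor $\psi_\eta'(x)^{1-\Delta_\beta}$, yielding $\widetilde{\SLE}$ rather than a force-point $\SLE$. At $0$ the situation is qualitatively different: both uniformizing maps are singular there, a small semicircle around $0$ intersects both pieces, and there is no clean decomposition of $e^{\frac12(\Delta\beta)\phi_\eps(0)}$ into a factor that changes only the triangle's insertion, a factor that leaves $\mathcal{M}_2^{\mathrm{disk}}(W)$ intact, and a local martingale for the Loewner driving function. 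In particular your assertion that the insertion at $x$ ``shifts the Loewner driving function by a Girsanov drift equal to the force-point term at $x$ of weight $W_3-2$'' contradicts Proposition~\ref{prop-3-pt-disk}; that proposition shows the reweighting yields $\widetilde{\SLE}_\kappa(W-2;W_1-2,0;1-\Delta_{\beta_3})$, and this coincides with $\SLE_\kappa(W-2;W_2-2,W_1-W_2)$ only in the degenerate case $W_1=W_2,\ W_3=2$. Getting unequal interface weights is exactly the hard part.

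The paper's route is not a refinement of the reweighting argument but a different mechanism. Section~\ref{sec:WW2} first produces a triangle with unequal interface weights by a limiting argument on Proposition~\ref{prop-3-pt-disk} (letting the bead containing the marked point shrink). Section~\ref{sec:resampling} then identifies the field law after welding via a Markov-chain uniqueness argument: one shows that the welded field satisfies the domain Markov property of the Liouville field in three overlapping regions covering $\bbH$ (Lemmas~\ref{lem-markov1}--\ref{lem-markovinfty}), and that this pins down the law up to a constant (Lemma~\ref{lem-irreducible}). The Markovian characterization is thus used to identify the \emph{field}, not to compute how a boundary insertion perturbs the driving function; the curve law is obtained separately by running a second interface and invoking the imaginary-geometry resampling property (Proposition~\ref{prop:ig-flow-descriptions}). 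Section~\ref{sec-6-thin-qt} extends the range by iterated weldings and the change-of-weight Proposition~\ref{prop-change-weight}, which again only reweights at the vertex \emph{off} the interface.
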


\begin{figure}[ht]
	\centering
	\begin{tabular}{ccc} 
		\includegraphics[scale=0.56]{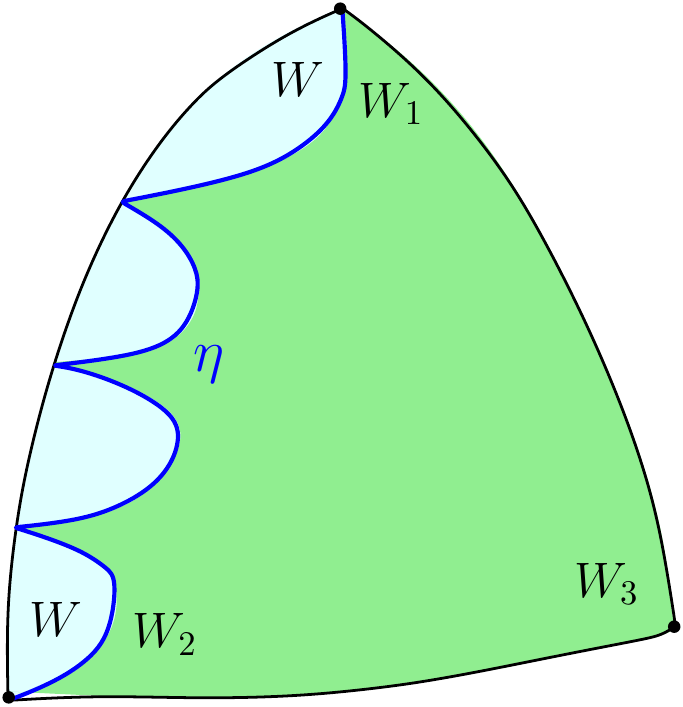}
		& & \ \ 
		\includegraphics[scale=0.53]{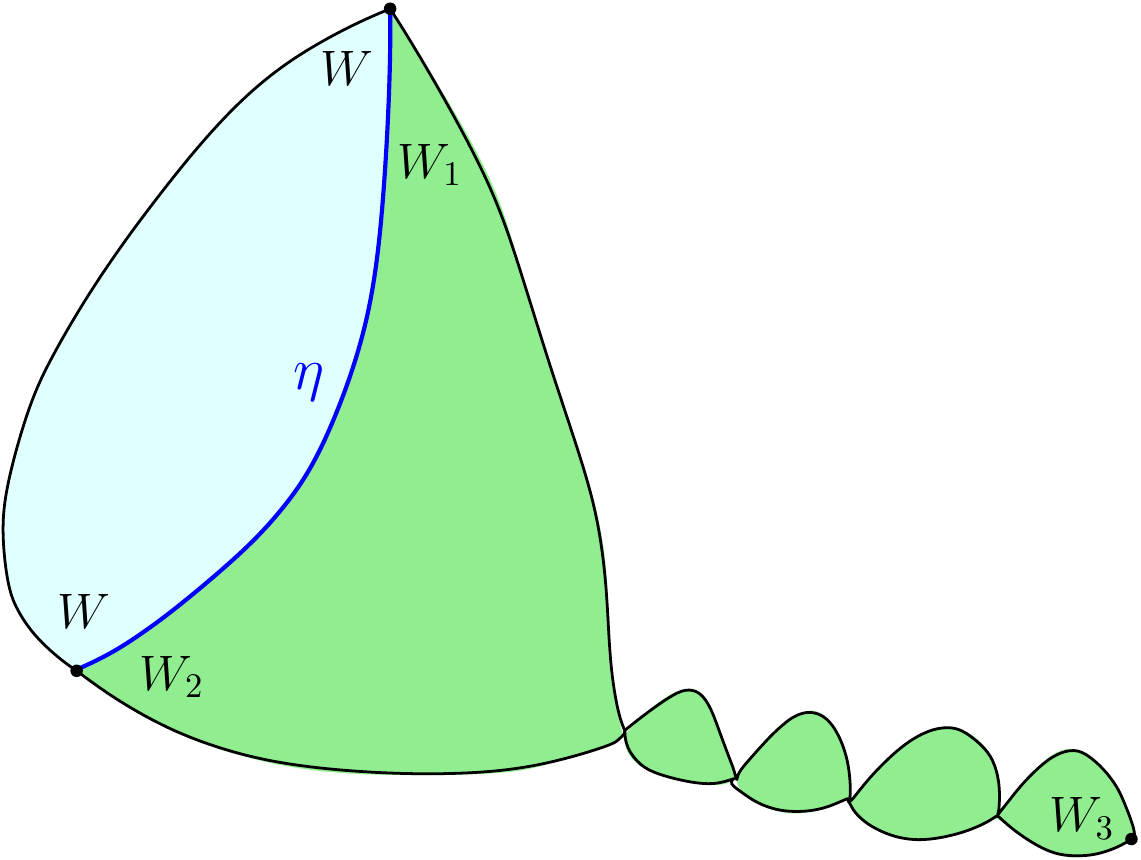}\\
		\includegraphics[scale=0.56]{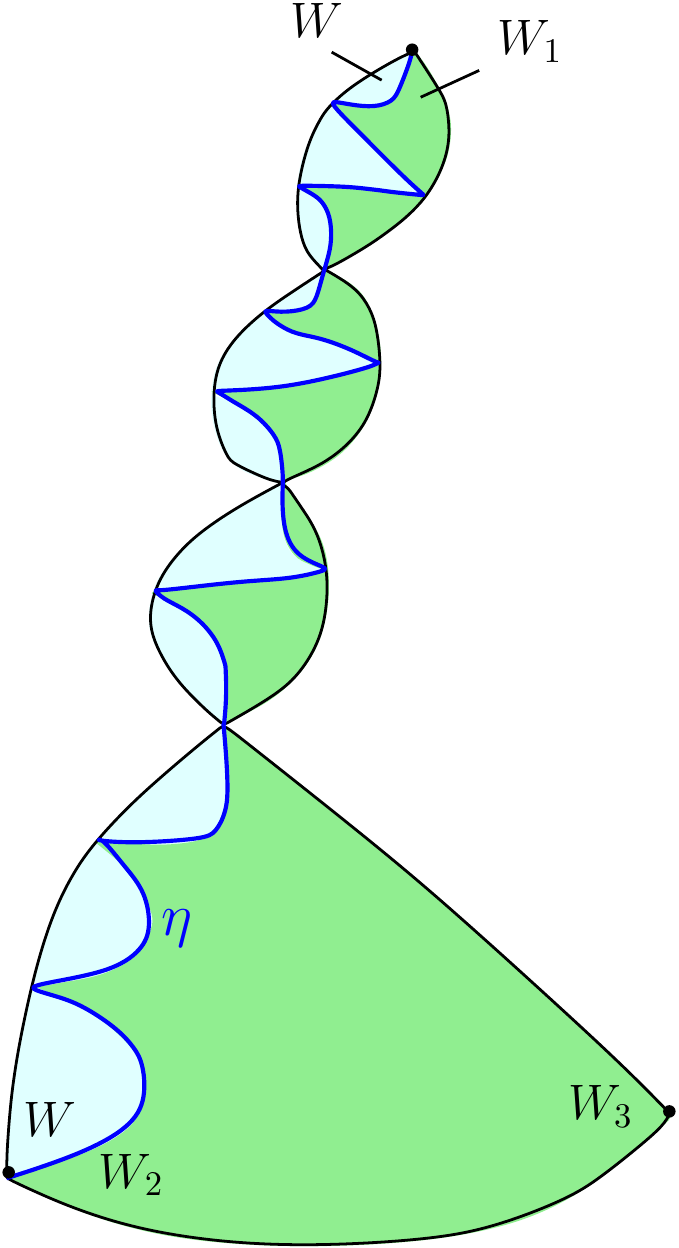}
		& &
		\includegraphics[scale=0.56]{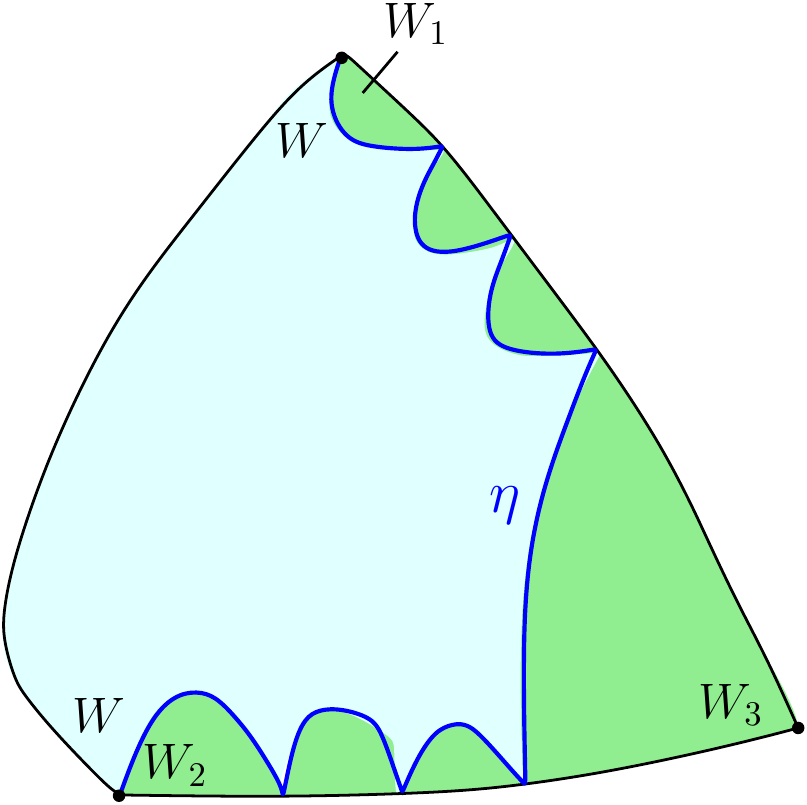}
	\end{tabular}
	\caption{Illustration of some topological scenarios in Theorem~\ref{thm:M+QT}. (a) $W\in (0,\frac{\gamma^2}2)$ and $W_1,W_2,W_3\geq \frac{\gamma^2}{2}$; (b) $W,W_1,W_2\geq \frac{\gamma^2}{2}$ and $W_3\in (0,\frac{\gamma^2}2)$; (c) $W+W_1\in (0,\frac{\gamma^2}{2})$ and $W_2,W_3\geq \frac{\gamma^2}{2}$; (d) $W,W_3\geq \frac{\gamma^2}{2}$ and $W_1,W_2\in (0,\frac{\gamma^2}{2})$. }\label{fig-qt-weld} 
\end{figure}

As we will see in Theorem~\ref{thm:disk}, quantum triangles whose weight satisfy $W_1-W_2=W_3-2$ are those that will appear naturally in imaginary geometry on quantum disk with boundary typical points.
The conformal welding result for $W_1-W_2\neq W_3-2$ can be easily deduced from Theorem~\ref{thm:M+QT} following arguments in~\cite{AHS21}.  Suppose $\eta$ is a curve from 0 to $\infty$ on $\mathbb{H}$ that does not touch $1$. Let $D_\eta$ be the component of $\mathbb{H}\backslash \eta$ containing $1$, and $\psi_\eta$ is the unique conformal map from the component $D_\eta$ to $\mathbb{H}$ fixing 1 and sending the first (resp.\ last) point on $\partial D_\eta$ hit by $\eta$ to 0 (resp.\ $\infty$). Define the measure $\widetilde{\SLE}_\kappa(\rho_-;\rho_+,\rho_1;\alpha)$ on curves from 0 to $\infty$ on $\mathbb{H}$ as follows.
\begin{equation}\label{eqn-sle-CR}
	\frac{d\widetilde{\SLE}_\kappa(\rho_-;\rho_+,\rho_1;\alpha)}{d{\SLE}_\kappa(\rho_-;\rho_+,\rho_1)}(\eta) = \psi'_{\eta}(1)^\alpha.
\end{equation}
Then we have the following extension  of Theorem~\ref{thm:M+QT}.
\begin{theorem}\label{thm:M+QTp-rw}
	Suppose $W,W_1,W_2,W_3>0$. 
	Set \begin{equation}\label{eqn-alpha}
		\alpha = \frac{W_3+W_2-W_1-2}{4\kappa}(W_3+W_1+2-W_2-\kappa).    
	\end{equation} 
	Then with the same constant  $c = c_{W,W_1,W_2}\in (0,\infty)$ as  in Theorem~\ref{thm:M+QT}, we have 
	\begin{equation}\label{eq:M+QT2}
		\QT(W+W_1, W+W_2, W_3)\otimes  \wt{\SLE}_\kappa(W-2;W_2-2,W_1-W_2;\alpha)= c \textup{Weld} (\mathcal{M}_2^{\textup{disk}}(W),\QT(W_1,W_2,W_3)).
	\end{equation}
\end{theorem}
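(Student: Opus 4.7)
The approach is to derive Theorem~\ref{thm:M+QTp-rw} from Theorem~\ref{thm:M+QT} via a Liouville CFT reweighting at the boundary marked point of weight $W_3$. Set $W_3^\star := W_1 - W_2 + 2$: then the constraint $W_1 + 2 = W_2 + W_3^\star$ of Theorem~\ref{thm:M+QT} holds, and the exponent in~\eqref{eqn-alpha} simultaneously reduces to $\alpha = 0$, so in this special case Theorem~\ref{thm:M+QTp-rw} coincides with Theorem~\ref{thm:M+QT}. For general $W_3$, I interpolate by multiplying both sides of~\eqref{eq:M+QT} (evaluated at $W_3^\star$) by the formal Radon--Nikodym derivative that modifies the boundary Liouville insertion at the point $1$ from $\beta_3^\star := \gamma + (2 - W_3^\star)/\gamma$ to $\beta_3 := \gamma + (2 - W_3)/\gamma$; in accordance with the defining identity $\textup{LF}^{\ldots,(\beta,x)}(d\phi) = e^{\beta\phi(x)} \textup{LF}^{\ldots}(d\phi)$ for adding a boundary insertion, this reweighting is formally multiplication by $e^{(\beta_3 - \beta_3^\star)\, h(1)}$ up to a constant, where $h$ denotes the compound field on $\mathbb{H}$.

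On the LHS of~\eqref{eq:M+QT}, $h$ is sampled from $\QT(W+W_1, W+W_2, W_3^\star)$ independently of the SLE $\eta$, so the reweighting converts the LHS directly into a constant multiple of $\QT(W+W_1, W+W_2, W_3) \otimes \SLE_\kappa(W-2; W_2-2, W_1-W_2)$. On the RHS, the inner quantum triangle is embedded in the random domain $D_\eta$ through $\psi_\eta : D_\eta \to \mathbb{H}$, which fixes the marked point $1$. Applying the LCFT coordinate-change formula for boundary insertions under $\psi_\eta$, the Radon--Nikodym derivative that changes the inner triangle's weight from $W_3^\star$ to $W_3$, expressed as a functional of the compound field $h$, equals a constant times $|\psi_\eta'(1)|^{-(\Delta^\partial_{\beta_3} - \Delta^\partial_{\beta_3^\star})}\, e^{(\beta_3 - \beta_3^\star)\, h(1)}$, where $\Delta^\partial_\beta := \tfrac{\beta}{2}(Q - \tfrac{\beta}{2})$ is the boundary Liouville conformal weight (the Jacobians at the other two inner-triangle vertices, namely the SLE endpoints that move under $\psi_\eta$, cancel in this ratio since the insertion weights $\beta_1, \beta_2$ are unchanged). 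Hence reweighting the RHS by $e^{(\beta_3 - \beta_3^\star)\, h(1)}$ produces a constant multiple of $|\psi_\eta'(1)|^{\Delta^\partial_{\beta_3} - \Delta^\partial_{\beta_3^\star}} \cdot \textup{Weld}(\mathcal{M}_2^{\textup{disk}}(W), \QT(W_1, W_2, W_3))$.

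Equating the two reweighted sides and absorbing the $|\psi_\eta'(1)|$ factor into the SLE law via~\eqref{eqn-sle-CR} yields~\eqref{eq:M+QT2} with the same proportionality constant $c$ as in Theorem~\ref{thm:M+QT} and with exponent $\alpha = \Delta^\partial_{\beta_3^\star} - \Delta^\partial_{\beta_3}$; substituting $\beta_i = \gamma + (2 - W_i)/\gamma$, $Q = \gamma/2 + 2/\gamma$, and $\kappa = \gamma^2$ verifies that this $\alpha$ matches~\eqref{eqn-alpha}. The hard part will be making the formal reweighting $e^{(\beta_3 - \beta_3^\star)\, h(1)}$ rigorous, since $e^{\beta h(1)}$ at a boundary point is defined only as a limit of regularized Liouville vertex operators: the argument must be carried out at a positive regularization scale $\varepsilon > 0$, applying the LCFT coordinate-change formula at that scale and justifying the limit $\varepsilon \to 0$ uniformly over the welding configuration.
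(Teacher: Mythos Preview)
Your reweighting strategy is exactly the paper's Proposition~\ref{prop-change-weight} (with the rigorous $\varepsilon\to 0$ limit supplied by Lemma~\ref{lm-change-reweight}), and for the base case $W_3^\star=W_1-W_2+2$ it is applied just as you describe in the proof of Proposition~\ref{prop-curve-2}: one checks $\Delta_{\beta_3^\star}-\Delta_{\beta_3}=\alpha$ and reads off~\eqref{eq:M+QT2} with the same constant $c$. So in the regime where your plan applies it coincides with the paper.

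There is, however, a genuine range gap you do not address. Theorem~\ref{thm:M+QT} needs $W_3>0$, so your base case $W_3^\star=W_1-W_2+2$ is only available when $W_2<W_1+2$; for $W_2\ge W_1+2$ no positive $W_3$ satisfies the constraint of Theorem~\ref{thm:M+QT} for the given $(W_1,W_2)$, and the reweighting has no starting point. The paper closes this gap not by reweighting but via the time reversal of $\SLE_\kappa(\rho_-;\rho_+,\rho_1)$ (Theorem~\ref{thm-sle-reverse}): reversing the interface swaps the roles of $W_1$ and $W_2$, lands in the already-settled case $W_1\ge W_2$, and the reversal law $\widetilde{\SLE}_\kappa(\rho_-;\rho_++\rho_1,-\rho_1;\frac{\rho_1(4-\kappa)}{2\kappa})$ supplies exactly the missing conformal-derivative exponent (see Proposition~\ref{prop:thm-non-gamma/2}). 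A secondary caveat you should also flag is $W_3^\star=\tfrac{\gamma^2}{2}$, which the paper handles separately by the limiting construction in Section~\ref{subsec:critical}.
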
 
We will give the precise definition of $\QT(W+W_1, W+W_2, W_3)\otimes  \wt{\SLE}_\kappa(W-2;W_2-2,W_1-W_2;\alpha)$ in Section~\ref{sec-6-thin-qt}, which again requires a proper interpretation when  some of  $W+W_1, W+W_2, W_3$ are thin. 

The proof of Theorems~\ref{thm:M+QT} and~\ref{thm:M+QTp-rw} is divided into three steps that are carried  out in Sections~\ref{sec:WW2}---\ref{sec-6-thin-qt}, respectively. The first step (Proposition~\ref{thm-w-2u}) intuitively says the following. Suppose $W_1>\gamma^2/2$ and $W_2\in (0,\gamma^2/2)$ in the welding equation~\eqref{eq:2pt-weld} for $\mathcal{M}_2^{\textup{disk}}(W_1)$ and $\mathcal{M}_2^{\textup{disk}}(W_2)$, if a cut point is added to the weight-$W_2$ disk so that it is split into two independent copies of $\mathcal{M}_2^{\textup{disk}}(W_2)$, then the addition of the third point create a quantum triangles with weights compatible with Theorem~\ref{thm:M+QTp-rw}. Proposition~\ref{thm-w-2u} is proved via a limiting procedure based on results from~\cite{AHS21}. Quantum triangles that we are able to identify in this step all have two vertices of equal weight.

The second and third steps require essential new techniques for proving welding results. First of all, there is no existing mechanism to identify the law of a quantum surface obtained from welding that has  three boundary marked points of three different log singularities. In Step 2 (Section~\ref{sec:resampling}), we provide a Markovian characterizations of the three-pointed Liouville field that allows us to identify the law of quantum triangles after welding $\mathcal{M}_2^{\textup{disk}}(W)$ and $\QT(W_1,W_2,W_3)$ as in Theorem~\ref{thm:M+QT}. This proves Theorems~\ref{thm:M+QT} and~\ref{thm:M+QTp-rw}  in a restricted range of weights. The range constraint is removed in Step 3 (Section~\ref{sec-6-thin-qt}). For this purpose it is crucial to work under the setting of Theorem~\ref{thm:M+QTp-rw}, because we need the freedom to perform conformal welding along different edges of  the same quantum triangle where the condition $W_1+2=W_2+W_3$ in Theorem~\ref{thm:M+QT} cannot be satisfied simultaneously for every welding. Techniques in Sections~\ref{sec:resampling} and~\ref{sec-6-thin-qt} are quite robust and will play a crucial role in the subsequent work~\cite{AY22} proving more general welding results for quantum triangles; see Section~\ref{subsec:intro-outlook}.

\subsection{Imaginary geometry on a quantum disk with multiple boundary points}\label{subsec:intro-disk}

When welding multiple two-pointed quantum disks, the interfaces are a set of flow lines in imaginary geometry. We briefly recall the flow line construction from~\cite{MS16a}. For $\kappa\in (0,4)$ and $\rho_-,\rho_+>-2$, set 
\begin{equation}\label{eq:IG}
	\chi=\frac{2}{\sqrt{\kappa}} -\frac{\sqrt{\kappa}}{2}  \quad \textrm{and}\quad \lambda=\frac{\pi}{\sqrt{\kappa}};\quad \quad   \quad \lambda_-=-\lambda(1+ \rho_-) \quad \textrm{and}\quad  \lambda_+=\lambda(1+\rho_+).
\end{equation}
Let $\mathfrak h$ be a GFF on the upper half plane $\bbH$  with Dirichlet boundary condition such that the  boundary value  is  $\lambda_-$ between $(-\infty,0)$ and   $\lambda_+$ between $(0,\infty)$. 
Then there exists a coupling between $\mathfrak h$ and an $\SLE_\kappa(\rho_-;\rho_+)$ curve $\eta$ on $\bbH$ from 0 to $\infty$, under which $\eta$ is determined by $\mathfrak h$. Although $\mathfrak h$ is only a generalized function, the curve $\eta$ can be interpreted as the flow line from 0 to $\infty$ of the random vector field $e^{\frac{i\mathfrak h}{\chi}}$. For $\theta\in (-\frac{\lambda+\lambda_+}{\chi},\frac{\lambda-\lambda_-}{\chi})$,  we can also consider the flow line of $e^{\frac{i\mathfrak h}{\chi}+\theta}$ which is the chordal  $\SLE_\kappa(- \frac{\lambda_-+\chi \theta}{\lambda}-1; \frac{\lambda_++\chi\theta}{\lambda}-1)$ determined by $\mathfrak h +\chi \theta$ via~\eqref{eq:IG} with $(\lambda_-,\lambda_+)$ replaced by $(\lambda_-+\chi\theta,\lambda_++\chi\theta)$. Varying $\theta$, we have  multiple SLE curves between $0$ and $\infty$ with force points at $0^-$ and $0^+$coupled together. As generalization of~\eqref{eq:2pt-weld}, the conformal welding result from~\cite{AHS20} for multiple two-pointed quantum disks can be stated as follows. Fix $\gamma\in (0,2)$ and $\kappa=\gamma^2$. Consider $W=\sum_{i=0}^{n}W_i$ with $W_i>0$. Let $(\bbH, h, 0,\infty)$ be an embedding of a sample from $\mathcal M^{\mathrm{disk}}_2(W)$. Let $\mathfrak h$ be a GFF with boundary condition $\lambda_-=-(W-2)\lambda$ and $\lambda_+=0$.
Let $\theta_1>\theta_2>\cdots >\theta_n$ be defined by
\begin{equation}\label{eq:theta}
	W_i=    \frac{(\theta_i-\theta_{i+1}) }{\lambda} \quad \textrm{for }i<n \quad \textrm{and}\quad W_n=1+\frac{\theta_n \chi}{\lambda}.
\end{equation}
For $1\le i\le n$, let $\eta_i$ be the flow line of $e^{\frac{i \mathfrak h }{\chi} +\theta_i}$ from $0$ to $\infty$. Then the law of the decorated quantum surface  $(\bbH, h, \eta_1,\cdots, \eta_n)$ is given by the conformal welding of $\mathcal{M}_2^{\textup{disk}}(W_1),
\cdots, \mathcal{M}_2^{\textup{disk}}(W_n)$ in that order.

\begin{figure}[ht]
	\centering
	\includegraphics[scale=0.6]{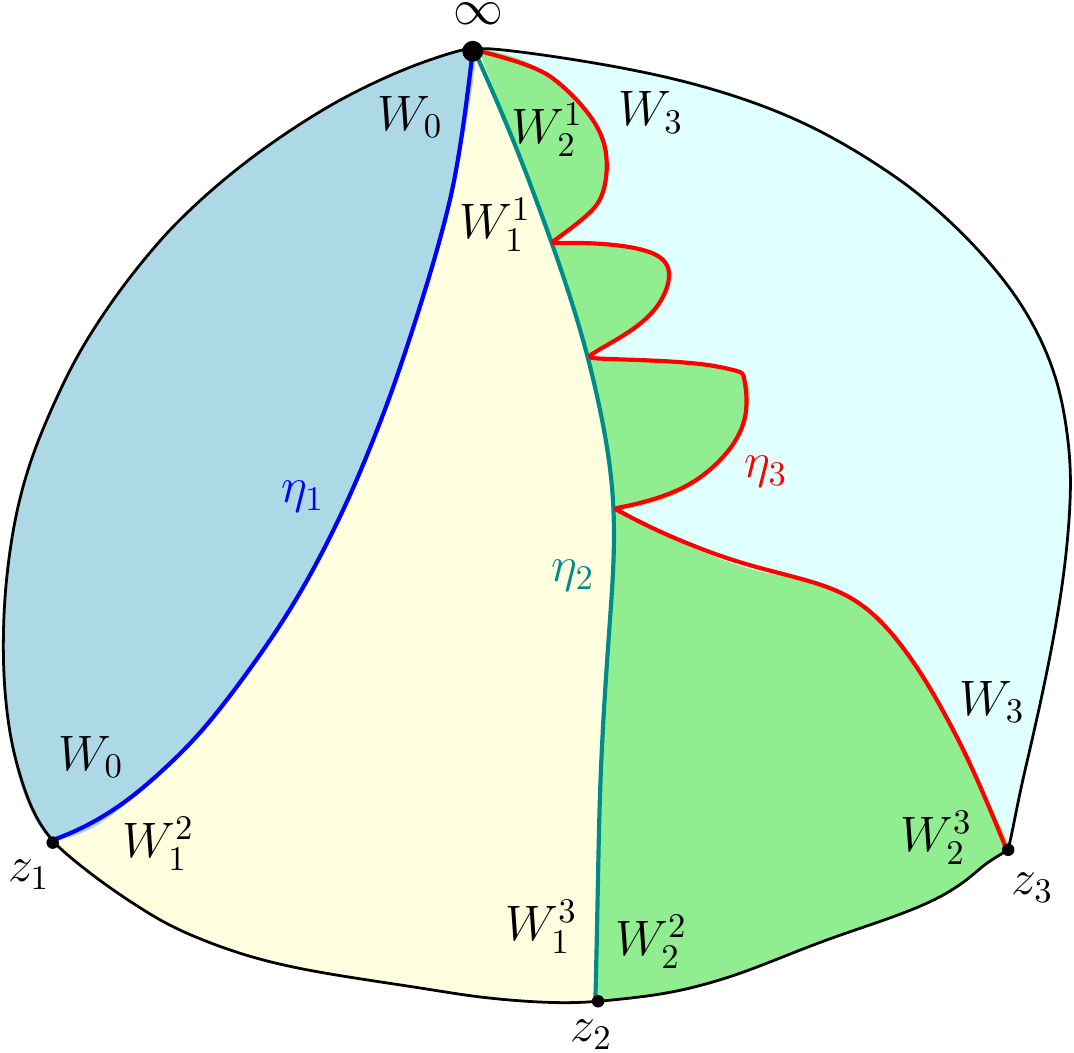}
	\caption{An illustration of Theorem~\ref{thm:disk} where $n=3$. The three flow lines cut the quantum disk $\mathrm{QD}_{0,4}$ into four parts: a weight $W_0$ and a weight $W_3$ quantum disk, a weight $(W^1_1,W_1^2,W_1^3)$ thick quantum triange and a weight $(W_{2}^1,W_{2}^2,W_{2}^3)$ thin triangle.}\label{fig-thm-disk}
\end{figure}

In the framework of imaginary geometry in~\cite{MS16a}, it is possible to emanate flow lines from
different boundary points with the same target point. Unfortunately the conformal welding result for two-pointed disk falls short of producing this rich picture. Thanks to the introduction of quantum triangles, this can now be achieved as in Theorem~\ref{thm:disk}. Although our result can be stated more generally, we restrict ourselves to the following neat setting to make the point. We consider a quantum disk with more than two  typical boundary marked points. For $n\ge 2$, a quantum disk with $n+1$ {quantum typical points} can sampled as follows. First sample a two-pointed quantum disk from the tilted measure $L^{n-1}\mathrm{QD}_{0,2}$ where $L$ is the total quantum length of a sample from  $\mathrm{QD}_{0,2}$; then sample $n-1$ additional marked points independently according to the boundary length measure. 
Moreover, we consider the imaginary geometry whose field has zero boundary condition, namely on $\bbH$ the boundary value of $\mathfrak h$ is $\lambda_-=\lambda_+=0$ on $\mathbb R$, hence the origin is not special anymore.
\begin{theorem}\label{thm:disk} 
	Fix $\gamma\in (0,2)$, $\kappa=\gamma^2$, $\lambda = \frac{\pi}{\gamma}$ and $\chi=\frac{2}{\gamma}-\frac{\gamma}{2}$. 
	For $n\ge 1$, let $(\bbH,h,\infty,z_1,\cdots, z_n)$ be an embedding of a quantum disk with $n+1$ boundary marked points and $z_1<...<z_n$. Let $\mathfrak h$ be a zero-boundary Gaussian free field  on $\bbH$ independent of $h,z_1,\cdots, z_n$.
	Fix  $\frac{\lambda}{\chi}>\theta_1 \xin{>} \theta_2>\cdots \xin{>}  \theta_n>-\frac{\lambda}{\chi}$. For $1\le i\le n$, let $\eta_i$ be the flow line of $e^{\frac{i \mathfrak h }{\chi} +\theta_i}$ starting from $z_i$. Then the law of the decorated quantum surface  $(\bbH, h, \eta_1,\cdots, \eta_n)$ is given by the conformal welding of 
	$$\Md_2(W_0), \QT(W^1_1,W_1^2,W_1^3), \cdots,\QT(W_{n-1}^1,W_{n-1}^2,W_{n-1}^3),\Md_2(W_n)\quad \textrm{in that order},$$
	with $W_0 = 1-\frac{\theta_1\chi}{\lambda}$, $W_i^1 = \frac{(\theta_i-\theta_{i+1})\chi}{\lambda}$, $W_i^2 = 1+\frac{\theta_i\chi}{\lambda}$, $W_i^3 = 1-\frac{\theta_{i+1}\chi}{\lambda}$ for $i = 1, ..., n-1$ and ${W}_n = 1+\frac{\theta_n\chi}{\lambda}$. 
\end{theorem}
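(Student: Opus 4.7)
The plan is to prove Theorem~\ref{thm:disk} by induction on $n$, with Theorem~\ref{thm:M+QTp-rw} serving as the core inductive step.

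For the base case $n=1$, the field $h$ has law $\mathrm{QD}_{0,2}=\Md_2(2)$ with two typical boundary marked points $\infty,z_1$, and the $L^{n-1}$ reweighting is trivial. By imaginary geometry~\cite{MS16a} applied to $\mathfrak h+\chi\theta_1$, whose boundary value along $\mathbb R$ is the constant $\chi\theta_1$, the flow line $\eta_1$ from $z_1$ is a chordal $\SLE_\kappa(W_0-2;\,W_1-2)$ curve via~\eqref{eq:IG}, with $W_0=1-\theta_1\chi/\lambda$ and $W_1=1+\theta_1\chi/\lambda$. Since $W_0+W_1=2$, the welding identity~\eqref{eq:2pt-weld} yields the desired decomposition into $\Md_2(W_0)$ and $\Md_2(W_1)$.

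For the inductive step, assume the theorem for $n$ flow lines. In the $(n+1)$-flow-line setup, first apply the inductive hypothesis to $\eta_1,\dots,\eta_n$, regarding $z_{n+1}$ as an extra typical boundary marked point carried along for the ride. Since $z_n<z_{n+1}<+\infty$, it lies on the real-line boundary of the rightmost piece of the resulting decomposition, namely an $\Md_2(W_n^{\mathrm{old}})$ with $W_n^{\mathrm{old}}:=1+\theta_n\chi/\lambda$. Using the identification $\QT(W,W,2)\propto\mathcal M_{2,\bullet}^{\mathrm{disk}}(W;\gamma)$ noted in Section~\ref{subsec:intro-def}, this rightmost piece with $z_{n+1}$ appended has law proportional to $\QT(W_n^{\mathrm{old}},W_n^{\mathrm{old}},2)$. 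Condition on $\eta_1,\dots,\eta_n$ and all other pieces; by the target-independence and domain-Markov properties of imaginary geometry~\cite{MS16a}, the flow line $\eta_{n+1}$ inside this rightmost piece is an SLE from $z_{n+1}$ to $\infty$ with three force points at $z_n,\infty,z_{n+1}$ whose weights one checks correspond, via the conformal embedding of the piece as $\bbH$, to the parameter choice $(W,W_1,W_2,W_3)=(W_{n+1},W_n^3,W_n^1,W_n^2)$ in Theorem~\ref{thm:M+QTp-rw}. The compatibility relations $W+W_1=2$, $W+W_2=W_n^{\mathrm{old}}$, and $W_3=W_n^{\mathrm{old}}$ are then easily verified from the explicit formulas for $W_n^1,W_n^2,W_n^3,W_{n+1}$. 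Applying~\eqref{eq:M+QT2} refines the rightmost piece into $\QT(W_n^1,W_n^2,W_n^3)$ conformally welded with $\Md_2(W_{n+1})$, and inserting this refined decomposition into the $n$-flow-line output completes the induction.

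The principal difficulty is identifying the law of $\eta_{n+1}$ inside the rightmost piece with the reweighted SLE measure $\wt{\SLE}_\kappa(\,\cdot\,;\alpha)$ that appears on the left of~\eqref{eq:M+QT2}. Generically $W_1+2\neq W_2+W_3$ in our parameter choice, so the Radon-Nikodym factor $\psi_\eta'(1)^\alpha$ of~\eqref{eqn-sle-CR} is nontrivial. This factor should emerge from the conformal change of coordinates used to embed the rightmost piece as $\bbH$, combined with the Liouville transformation rule at the typical insertion $z_{n+1}$ of weight $\beta_3=\gamma$; once this is reconciled with the proportionality constant between $\mathcal M_{2,\bullet}^{\mathrm{disk}}(W;\gamma)$ and $\QT(W,W,2)$ together with the Jacobian of the uniformizing map at $z_{n+1}$, the exponent $\alpha$ of~\eqref{eqn-alpha} should emerge. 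Verifying this reconciliation is the main computational check in the argument.
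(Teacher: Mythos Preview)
Your inductive strategy matches the paper's, but the ``principal difficulty'' you identify is an artifact of a mislabeling of the triangle vertices. In the convention of Theorem~\ref{thm:M+QTp-rw} the interface runs from the weight-$(W+W_2)$ vertex to the weight-$(W+W_1)$ vertex, with the weight-$W_3$ vertex off to the side. The flow line $\eta_{n+1}$ runs from $z_{n+1}$ (the weight-$2$ vertex of the rightmost piece) to $\infty$ (a weight-$W_n^{\mathrm{old}}$ vertex), with $z_n$ (the other weight-$W_n^{\mathrm{old}}$ vertex) as the extra force point. Matching this to the theorem forces $W+W_2=2$, $W+W_1=W_n^{\mathrm{old}}$, $W_3=W_n^{\mathrm{old}}$, i.e.\ $(W,W_1,W_2,W_3)=(W_{n+1},W_n^1,W_n^3,W_n^2)$ --- you have swapped $W_1$ and $W_2$. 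With the correct labels one checks directly
\[
W_1+2=W_n^1+2=\frac{(\theta_n-\theta_{n+1})\chi}{\lambda}+2=W_n^3+W_n^2=W_2+W_3,
\]
so the constraint of Theorem~\ref{thm:M+QT} holds and $\alpha=0$. This is precisely the observation the paper makes (``every quantum triangle appearing in Theorem~\ref{thm:disk} satisfies the weight constraint in Theorem~\ref{thm:M+QT}''), and it is why the imaginary-geometry law of $\eta_{n+1}$ in $D_n$ --- an honest $\SLE_\kappa(\underline{\rho})$ with no conformal-derivative weighting --- matches the welding interface exactly. No reconciliation with a Radon--Nikodym factor $\psi_\eta'(1)^\alpha$ is needed, because there is none.

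A second, smaller point: ``regarding $z_{n+1}$ as an extra typical boundary marked point carried along for the ride'' hides a nontrivial reweighting. Passing from $\mathrm{QD}_{0,n+1}$ to $\mathrm{QD}_{0,n+2}$ involves an additional boundary-length factor, and one must check that conditioning the last point to lie on the rightmost arc exactly converts the rightmost $\Md_2(W_n^{\mathrm{old}})$ into $\Md_{2,\bullet}(W_n^{\mathrm{old}})$ (after the combinatorial factor from the ordering). The paper does this via an explicit comparison of two sampling procedures; once you fix the labeling issue above, you should make this step explicit as well.
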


Every quantum triangle appearing in Theorem~\ref{thm:disk} satisfies the weight constraint in Theorem~\ref{thm:M+QT}. As we will show in Section~\ref{sec-pfthm1.1}, Theorem~\ref{thm:disk} is an easy consequence of Theorem~\ref{thm:M+QT}. By a limiting argument, it is also possible to allow $\theta_i=\theta_{i+1}$, in which case $\eta_i$ and $\eta_{i+1}$ will merge before hitting the target. The result can also be refined by allowing $z_i=z_{i+1}$. We will not carry out these extensions explicitly.

\subsection{Applications of Theorem~\ref{thm:M+QTp-rw} to $\SLE_\kappa(\rho^-; \rho^+; \rho_1)$} \label{subsec-SLE-weighted}
As demonstrated in  \cite{AHS21}, conformal welding results such as Theorem~\ref{thm:M+QTp-rw} can be used to derive the law of the conformal derivative  $\psi'(1)$ in~\eqref{eqn-sle-CR}, which is Theorem~\ref{thm:sle-conf-radius} below. Define the function 
\begin{equation}\label{eqn-f-function}
	F(x,\kappa,\rho_-,\rho_+,\rho_1) := \frac{\Gamma_{\frac{\sqrt{\kappa}}{2}}(\frac{2}{\sqrt{\kappa}}-\frac{\sqrt{\kappa}}{2} + \frac{\rho_+}{\sqrt{\kappa}}+\frac{x}{2} ) \Gamma_{\frac{\sqrt{\kappa}}{2}} (\frac{4}{\sqrt{\kappa}} +\frac{\rho_++\rho_1}{\sqrt{\kappa}} -\frac{x}{2}) }{\Gamma_{\frac{\sqrt{\kappa}}{2}}(\frac{4}{\sqrt{\kappa}}-\frac{\sqrt{\kappa}}{2}+\frac{\rho_++\rho_-}{\sqrt{\kappa}}+\frac{x}{2} ) \Gamma_{\frac{\sqrt{\kappa}}{2}} (\frac{6}{\sqrt{\kappa}}+\frac{\rho_-+\rho_++\rho_1}{\sqrt{\kappa}}-\frac{x}{2})   }.
\end{equation}where
$\Gamma_{b}(z)$ is the double gamma function that appears frequently in LCFT; see \eqref{eqn-double-gamma} for the definition. 
\begin{theorem}\label{thm:sle-conf-radius}
	Fix $\kappa\in (0,4)$, $\rho_-, \rho_+>-2$ and $\rho_1>-2-\rho_+$. 
	Let $\alpha_0 = \frac{1}{\kappa}(\rho_++2)(\rho_++\rho_1+4-\frac{\kappa}{2})$. For any $\alpha<\alpha_0$, let $\beta$ be a solution to \begin{equation}\label{eqn-alpha-beta-relation}
		\frac{\sqrt{\kappa}(\sqrt{\kappa}-\beta)-\rho_1}{4\kappa}\big(4-\rho_1-\sqrt{\kappa}\beta\big)=\alpha .
	\end{equation}
	Let $\eta$ be an SLE$_\kappa(\rho_-;\rho_+,\rho_1)$ on $\bbH$ from 0 to $\infty$ with force points at $0^-,0^+,1$, and $\psi$ be as in~\eqref{eqn-sle-CR}. Then
	\begin{equation}\label{eqn-sle-conf-radius}
		\bbE[\psi_{\eta}'(1)^\alpha] = \frac{F(\beta+\frac{\rho_1}{\sqrt{\kappa}}, \kappa,\rho_-, \rho_+, \rho_1 )}{F(\sqrt{\kappa}, \kappa,\rho_-, \rho_+, \rho_1)  }\quad \textrm{for }\alpha<\alpha_0.
	\end{equation}
	Moreover, if $\alpha \ge \alpha_0$ then $\bbE[\psi_{\eta}'(1)^\alpha] =\infty$.
\end{theorem}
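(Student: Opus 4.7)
The plan is to extract $\bbE[\psi'_\eta(1)^\alpha]$ from the conformal welding identity in Theorem~\ref{thm:M+QTp-rw} by taking total masses on both sides after disintegration. This follows the blueprint used in~\cite{AHS21} for $\SLE_\kappa(\rho_-;\rho_+)$, with the new ingredient that the quantum triangle now carries a genuine third vertex.

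First I would parametrize the welding so that the resulting SLE measure has the desired law. Given $(\kappa,\rho_-,\rho_+,\rho_1)$, set $W=\rho_-+2$, $W_2=\rho_++2$, $W_1=\rho_++\rho_1+2$, so that the SLE weights $(W-2;W_2-2,W_1-W_2)$ in~\eqref{eq:M+QT2} match $(\rho_-;\rho_+,\rho_1)$. The exponent $\alpha$ in~\eqref{eqn-alpha} then becomes a quadratic function of the remaining free parameter $W_3$, and substituting $W_3 = 2+\kappa - \sqrt{\kappa}\beta$ (i.e.\ $\beta = \gamma + (2-W_3)/\gamma$ as in~\eqref{eq:betaW}) converts~\eqref{eqn-alpha} into exactly~\eqref{eqn-alpha-beta-relation}. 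Each admissible $\alpha<\alpha_0$ therefore corresponds to some $W_3>0$; the threshold $\alpha_0$ is precisely the value at which $W_3$ leaves the allowed range (equivalently, one of the Seiberg-type bounds on the Liouville insertions becomes tight).

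Second I would disintegrate both sides of~\eqref{eq:M+QT2} over the three outer boundary lengths $(\ell_1,\ell_2,\ell_3)$ of the output triangle and over the glued length $\ell$ on the right-hand side, and then take total masses. The total mass of $\QT(W_1',W_2',W_3';\ell_1,\ell_2,\ell_3)$ is proportional to the Liouville boundary three-point structure constant with insertions $\beta_i = \gamma+(2-W_i')/\gamma$ and boundary lengths $\ell_i$; similarly $\Md_2(W;\cdot,\cdot)$ has an explicit two-point Liouville total mass. After integrating over $\ell$, the continuous convolution structure yields a Mellin-type product. On the left, the $\wt{\SLE}$-factor contributes exactly $\bbE[\psi'_\eta(1)^\alpha]$ times the quantum triangle total mass. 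Solving for $\bbE[\psi'_\eta(1)^\alpha]$ produces a ratio of three-point and two-point Liouville structure constants, which by the Ponsot–Teschner formula simplifies to a ratio of products of $\Gamma_{\gamma/2}$'s. The bookkeeping of shifts in the $\beta_i$-arguments is precisely calibrated so that the resulting ratio is $F(\beta+\rho_1/\sqrt{\kappa},\kappa,\rho_-,\rho_+,\rho_1)/F(\sqrt{\kappa},\kappa,\rho_-,\rho_+,\rho_1)$; the $F(\sqrt{\kappa},\cdot)$ in the denominator arises from the particular $\beta$-value at the ``trivial" end of the one-parameter family (the $W_3$ for which $\alpha=0$, corresponding to the untilted $\SLE_\kappa(\rho_-;\rho_+,\rho_1)$).

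The hardest step will be Step~3: verifying that the ratio of double gamma factors coming from the Liouville structure constants really coincides with the explicit function $F$, since the three insertions enter asymmetrically and the shifts introduced by welding (replacing $W_i$ by $W+W_i$ for two of the three vertices while leaving $W_3$ unchanged) must line up correctly with the specific Upsilon-shift identities built into $F$. A secondary but necessary step is the blow-up at $\alpha_0$: I would obtain this by monotone convergence, letting $W_3$ decrease to its critical value so that the Liouville total mass on the right-hand side stays finite while the one on the left-hand side diverges, forcing $\bbE[\psi'_\eta(1)^{\alpha_0}]=\infty$ and hence the same for all $\alpha\ge \alpha_0$. Finally, one must check that the $\widetilde{\SLE}$-measure in~\eqref{eqn-sle-CR} is a well-defined probability tilt under the chosen parameters (i.e.\ the force point at $1$ is not swallowed almost surely), which is standard from the imaginary geometry construction recalled in Section~\ref{sec-pre-ig}.
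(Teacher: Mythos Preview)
Your approach diverges from the paper's in an essential way, and there is a genuine gap.

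The core difficulty is your claim that ``$\Md_2(W;\cdot,\cdot)$ has an explicit two-point Liouville total mass''. This is only true for the special weights $W\in\{2,\tfrac{\gamma^2}{2}\}$ (Lemma~\ref{lem-special-weights}); for generic $W$ the joint density $|\Md_2(W;\ell_1,\ell_2)|$ is not available in closed form. After you disintegrate~\eqref{eq:M+QT2} on the outer boundary lengths and take your ``Mellin-type'' integral over the welding length $\ell$, the right-hand side contains the moment $\int_0^\infty|\Md_2(W;x,\ell)|\,\ell^{p}\,d\ell$ with the exponent $p$ depending on $W_3$. Your normalization trick of dividing by the $\alpha=0$ value does kill the unknown welding constant $c_{W,W_1,W_2}$ (which is indeed independent of $W_3$), but it does \emph{not} kill the ratio of these disk moments at two different exponents $p$, and that ratio is not explicit for general $\rho_-=W-2$. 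The bookkeeping you propose in Step~3 therefore cannot be completed.

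The paper avoids this obstacle by a two-stage argument. First, Lemma~\ref{lm-sle-conf-gamma} carries out exactly the computation you describe, but \emph{only} for $\beta_-\in\{\gamma,Q\}$ (i.e.\ $W\in\{2,\tfrac{\gamma^2}{2}\}$), where the disk density is a simple elementary function and the $\ell$-integral is a Beta integral. Second, the shift relations~\eqref{eqn-sle-shift-a}--\eqref{eqn-sle-shift-b} are obtained from the factorization $\psi_{\eta_2}=\psi_{\eta_2|\eta_1}\circ\psi_{\eta_1}$ for nested flow lines; these give a multiplicative recursion in $\beta_-$ with steps $\tfrac{2}{\gamma}$ and $\tfrac{\gamma}{2}$. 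For irrational $\gamma^2$ the two periods are incommensurate, so the function $m(\beta_-,\cdot)/g(\beta_-,\cdot)$ is constant on a dense set and equals $1$ at $\beta_-=\gamma$; monotonicity and an SLE continuity argument close the remaining cases. Your proposal would only recover the first stage; the shift-relation mechanism is the missing idea needed to reach arbitrary $\rho_-$.
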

Theorem~\ref{thm:sle-conf-radius} generalizes the main result in~\cite{AHS21}, which corresponds to the case $\rho_1=0$. The result in~\cite{AHS21} is stated for all $\kappa>0$. Our Theorem~\ref{thm:sle-conf-radius} can also be extended similarly using the same argument based on SLE duality. 
By the definition of the measure $\widetilde{\SLE}_\kappa(\rho_-;\rho_+,\rho_1;\alpha)$ in~\eqref{eqn-sle-conf-radius}, $\bbE[\psi_{\eta}'(1)^\alpha]$ equals its total mass, which can be computed from the conformal welding identity~\eqref{eq:M+QT} combined with the integrability of boundary LCFT~\cite{RZ20b}. 
See Section~\ref{sec:application-sle-conf-radius} for its proof. See the introduction of~\cite{AHS21} for a literature review of integrability results for SLE.
Theorem~\ref{thm:M+QTp-rw} also makes the following reversibility of  $\SLE_\kappa(\rho_-;\rho_+,\rho_1)$ transparent.  
\begin{theorem}\label{thm-sle-reverse}
	Fix $\rho_+>-2$, $\rho_->-2$, and $\rho_1>-2-\rho_+$. Let $\eta$ be an $\SLE_\kappa(\rho_-;\rho_+,\rho_1)$ curve in $\bbH$ from $0$ to $\infty$ with force located at $0^-,0^+$ and 1. Let $\bar\eta$ be the image of the time reversal of  $\eta$ under  $z\mapsto \frac{1}{\bar z}$.
	Then the law of $\bar \eta$ is  the probability measure  proportional to $\widetilde{\SLE}_\kappa(\rho_-;\rho_++\rho_1,-\rho_1;\frac{\rho_1(4-\kappa)}{2\kappa})$. 
\end{theorem}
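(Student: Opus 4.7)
The plan is to derive the reversibility by applying the welding identity of Theorem~\ref{thm:M+QTp-rw} twice and exploiting the symmetry of the quantum triangle under a permutation of its vertices. Concretely, choose $W = \rho_- + 2$, $W_2 = \rho_+ + 2$, $W_1 = \rho_+ + \rho_1 + 2$, and $W_3 = \rho_1 + 2$ (taking $W_3 = \kappa - \rho_1 - 2$ in the edge case $\rho_1 \le -2$). These satisfy $W_1 + 2 = W_2 + W_3$, so Theorem~\ref{thm:M+QT} applies with $\alpha = 0$ and yields
\[
	\QT(W+W_1, W+W_2, W_3) \otimes \SLE_\kappa(\rho_-; \rho_+, \rho_1) = c \cdot \Wd(\Md_2(W), \QT(W_1, W_2, W_3))
\]
where the large triangle is embedded at $(\infty, 0, 1)$ with the listed weights and the interface $\eta$ goes from $0$ to $\infty$ with force points at $0^-, 0^+, 1$.

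Next, apply the anti-holomorphic involution $z \mapsto 1/\bar z$ of $\bbH$ (which swaps $0 \leftrightarrow \infty$ and fixes $1$) to this welded configuration. By the reflection symmetry of the GFF, and hence of the three-pointed Liouville fields defining $\QT$, the pushforward of the welding is again a welding of the same type, but with the first two vertices of each triangle swapped: the large triangle becomes a sample from $\QT(W+W_2, W+W_1, W_3)$, the small triangle becomes a sample from $\QT(W_2, W_1, W_3)$, and $\Md_2(W)$ is preserved since its two endpoints carry equal weight. The image of $\eta$ under $z \mapsto 1/\bar z$ is a curve from $\infty$ to $0$ whose time reversal is precisely the curve $\bar\eta$ of the theorem, now seen as the welding interface in the new configuration.

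Now interpret the transformed welding via Theorem~\ref{thm:M+QTp-rw} applied with the primed parameters $(W, W_1', W_2', W_3') = (W, W_2, W_1, W_3)$. The SLE parameters become $\rho_-' = W - 2 = \rho_-$, $\rho_+' = W_2' - 2 = \rho_+ + \rho_1$, and $\rho_1' = W_1' - W_2' = -\rho_1$, while the tilt exponent is
\[
	\alpha' = \frac{W_3 + W_1 - W_2 - 2}{4\kappa}\bigl(W_3 + W_2 + 2 - W_1 - \kappa\bigr) = \frac{\rho_1(4-\kappa)}{2\kappa}
\]
by direct substitution (the computation gives the same answer for either choice of $W_3$ above). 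Equating the two welding identities, both of which describe the same measure $c \cdot \Wd(\Md_2(W), \QT(W_1, W_2, W_3))$ modulo the reflection, and disintegrating over the quantum surface, we identify the law of $\bar\eta$ as a probability measure proportional to $\widetilde{\SLE}_\kappa(\rho_-; \rho_+ + \rho_1, -\rho_1; \frac{\rho_1(4-\kappa)}{2\kappa})$, as claimed.

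The main technical obstacle will be justifying the action of the anti-holomorphic involution on the welded configuration, which amounts to verifying the symmetry of the three-pointed Liouville field $\textup{LF}_\mathcal{S}^{(\beta_1,+\infty),(\beta_2,-\infty),(\beta_3,0)}$ under swapping the $+\infty$ and $-\infty$ insertions. This should follow from the GFF's invariance under complex conjugation combined with the symmetric form of the insertion factors $e^{\beta_i \phi(z_i)}$, although some additional bookkeeping is needed in the thin regime where $\QT$ is defined via concatenation with an independent two-pointed quantum disk attached at the relevant vertex.
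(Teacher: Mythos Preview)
Your approach is circular. In this paper, Theorem~\ref{thm-sle-reverse} is proved \emph{before} Theorem~\ref{thm:M+QTp-rw} and is then used as an input in the proof of Theorem~\ref{thm:M+QTp-rw}: see Proposition~\ref{prop:thm-non-gamma/2}, where the case $W_1<W_2$ of the welding theorem is obtained by reversing the interface and invoking Theorem~\ref{thm-sle-reverse}. Your second application of Theorem~\ref{thm:M+QTp-rw} is precisely with swapped weights $(W_1',W_2')=(W_2,W_1)$; whenever $\rho_1>0$ this lands in the regime $W_1'<W_2'$, and whenever $\rho_1<0$ your first application is already in the regime $W_1<W_2$. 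In either case you are calling the welding theorem in the range where its proof relies on the very reversibility statement you are trying to establish.

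The paper's actual proof (Section~\ref{sec-sle-reverse}) avoids LQG entirely: it takes the $\rho_-=0$ case from \cite{Zhan19} as a black box, then couples $\eta$ with an auxiliary flow line $\eta_2$ (with weights chosen so that both the forward and reversed pictures fit into the imaginary-geometry framework of Lemma~\ref{lm:ig-commute}), and reads off the conditional law of the reversed $\eta$ given $\eta_2$. Iterating this adds $2$ to $\rho_-$ at each step, reaching all $\rho_->-2$. Your welding argument is exactly the heuristic the authors describe for \emph{guessing} Theorem~\ref{thm-sle-reverse} (see the discussion after the statement), and Section~\ref{sec:application-sle-reverse} shows it can be made into a genuine proof, but only for the restricted range $\max\{\rho_+,\rho_++\rho_1\}\ge 0$, $|\rho_1|\le 2$, where Theorem~\ref{thm:M+QTp-rw} can be established without appealing to reversibility (via the extra-curve argument in Figure~\ref{fig-curve-3}). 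For the full parameter range you would need an independent proof of Theorem~\ref{thm:M+QTp-rw} in the $W_1<W_2$ regime, which the paper does not supply.
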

For $\rho_-=0$, Theorem~\ref{thm-sle-reverse} follows from the main result in~\cite{Zhan19}. Based on this we prove the $\rho_- \neq 0$ case in Section~\ref{subsec:ig} using imaginary geometry. Although the proof does not use LQG, 
we first guessed the statement of Theorems~\ref{thm:M+QT} and~\ref{thm:M+QTp-rw} and then use them to guess the statement  Theorem~\ref{thm-sle-reverse} before proving it.  Indeed, if $\SLE_\kappa(\rho_-;\rho_+,\rho_1)$ is the interface of a sample from $\mathrm{Weld}(\Md_2(W),\QT(W_1,W_2,W_3))$ from the weight $W+W_1$ vertex to the weight $W+W_2$ vertex as in Theorem~\ref{thm:M+QT}, then by Theorem~\ref{thm:M+QTp-rw}, the law of the interface from  the weight $W+W_2$ vertex to the weight $W+W_1$ vertex in $\mathrm{Weld}(\Md_2(W),\QT(W_2,W_1,W_3))$ is
$\widetilde{\SLE}_\kappa(\rho_-;\rho_++\rho_1,-\rho_1;\frac{\rho_1(4-\kappa)}{2\kappa})$ with $\alpha=\frac{\rho_1(4-\kappa)}{2\kappa}$. Once proved,
Theorem~\ref{thm-sle-reverse} is in turn used  as a tool to prove Theorems~\ref{thm:M+QT} and~\ref{thm:M+QTp-rw} in the full range of parameters. 

As another application of  Theorem~\ref{thm:M+QTp-rw}, let $(\eta_1,\eta_2)$ be the two interfaces in the conformal welding of  a two-pointed quantum disk, a quantum triangle, another two-pointed quantum disk, in that order. Then for $i=1,2$ the marginal law of $\eta_i$ and the conditional law of $\eta_{3-i}$ are $\widetilde{\SLE}_\kappa(\rho_-;\rho_+,\rho_1;\alpha)$ curves with various parameters. This is an instance of commutation  relation for  in the spirit of ~\cite{dubedat2007commutation,zhan2008duality}. 
See Section~\ref{sec:application} for the precise statement and its proof.

\subsection{Perspectives and related work}\label{subsec:intro-outlook}
We describe a few  subsequent  works and future directions concerning quantum triangles and their various applications. 
\begin{itemize}
	\item (Integrability of quantum triangles.) Let $A$ be the area of a sample of $\QT(W_1,W_2,W_3)$ and $L_1,L_2,L_3$ be the three boundary lengths. Then  $(\mu,\mu_1,\mu_2,\mu_3)\mapsto \QT(W_1,W_2,W_3)[e^{-\mu A-\sum_{i=1}^3\mu_i L_i}]$ gives the boundary three-point structure constant of Liouville conformal field theory. For  $\mu=0$ an exact formula was obtained in~\cite{RZ20b} and is used in our proof of Theorem~\ref{thm:sle-conf-radius}. 
	With Remy and Zhu, the first and the second authors of this paper will prove the conjecture of Ponsot and Teschner~\cite{Ponsot-Teschner} that the exact expression for $\mu>0$ is given by the Virasoro fusion kernel. 
	\item (Integrability of imaginary geometry coupled with LQG.) The aforementioned integrability of quantum triangles, and the welding results in this paper, and the mating of trees theory~\cite{DMS14} can together be used to study the integrablity of  imaginary geometry coupled with LQG. For example, a class of permutons (i.e. scaling limit of permutation)  called the skew Brownian permutons were recently introduced in~\cite{Borga-skew}, with the Baxter permuton~\cite{Borga-baxter} as a special case. As  shown in \cite[Proposition 1.14]{BHSY22},   the expected portion of inversions for these permutons is related to a natural quantity in imaginary geometry coupled with LQG. In a subsequent work we will derive an exact expression for this quantity. See~\cite{BGS-meander}  for other applications of  SLE/LQG to  permutons.

	\item (Reversibility of SLE.) As explained in Section~\ref{subsec-SLE-weighted} conformal welding of quantum triangles is closely related to the  reversal property of  $\SLE_\kappa(\rho^-; \rho^+; \rho_1)$. {Recently Zhan~\cite{Zhan19} and the third named author~\cite{Yu22} gave a description of the law of the time reversal of chordal SLE curves with multiple force points. 
		We believe that the conformal welding of multiple quantum disks and quantum triangles can provide an alternative and more robust approach to such results, which can extend to other cases such as the time reversal of radial $\SLE$ with multiple force points.}

	\item (Extensions to $\kappa\in (4,8)$ and integrability of non-simple CLE.) Our conformal welding results have nontrivial extension to $\SLE_\kappa$ curves with $\kappa\in(4,8)$, which corresponds to counter flow lines in imaginary geometry~\cite{MS16a}. These results will be used to study the integrability of conformal loop ensemble (CLE) with $\kappa\in (4,8)$ where the loops are non-simple. In particular, we aim at extending results in~\cite{AS21,ARS22} for simple CLE, and deriving exact results specific to the non-simple regime such as the probability that an outermost loop of a CLE on the disk touches the boundary.
	\item (Interior flow lines.) Imaginary geometry with interior flow lines was developed in~\cite{MS17}. The first and the third named authors will prove the counterpart of Theorems~\ref{thm:M+QT}---\ref{thm:disk} in that setting and plan to use them to study properties of radial and whole plane SLE. Both results and techniques in this paper will play a crucial role.
	\item (Quantum triangulation) Given a triangulation of any surface, we can conformally weld quantum triangles following the topological prescription.  The first and third named authors will prove that conditioning on the conformal structure of the resulting Riemann surface, the field is a Liouville field on that surface. If the resulting surface is non-simple, then the conformal structure (i.e. modulus) of surface itself is random. It is an interesting challenge to understand the random  moduli and the SLE interfaces in this setting.
\end{itemize}

\medskip
\noindent\textbf{Acknowledgements.} 
We thank Nina Holden and Scott Sheffield for helpful discussions at the early stage of the project. We thank two anonymous referees for their careful reading and many helpful comments. We thank Dapeng Zhan for explaining his work~\cite{Zhan19}. M.A. and P.Y. were partially supported by NSF grant DMS-1712862. M.A.\ was partially supported by the Simons Foundation as a Junior Fellow at the Simons Society of Fellows. X.S.\ was partially supported by the NSF grant DMS-2027986, the NSF Career award 2046514, and by a fellowship from the Institute for Advanced Study (IAS) during 2022-2023. P.Y. thanks IAS for hosting his visit during Fall 2022.

\section{Quantum triangles: definition and  basic properties}\label{sec-pre}

In this section we recall some preliminaries.  In Section \ref{sec-pre-gff}, we start with the definition of the Gaussian free field (GFF) and review the definition of quantum surfaces. In Section \ref{sec-pre-qt} and {Section \ref{sec-pre-thin}}, we relate marked quantum disks and Liouville CFT and establish the precise definition of the quantum triangle. {In Section \ref{sec-pre-length}, we consider the quantum triangles with fixed boundary lengths. Finally in Section \ref{subsec:critical}, we define quantum triangles with weight $\frac{\gamma^2}{2}$ vertices by a limiting procedure.}  

In this paper we work with non-probability measures and extend the terminology of ordinary probability to this setting. For a finite or $\sigma$-finite  measure space $(\Omega, \mathcal{F}, M)$, we say $X$ is a random variable if $X$ is an $\mathcal{F}$-measurable function with its \textit{law} defined via the push-forward measure $M_X=X_*M$. In this case, we say $X$ is \textit{sampled} from $M_X$ and write $M_X[f]$ for $\int f(x)M_X(dx)$. \textit{Weighting} the law of $X$ by $f(X)$ corresponds to working with the measure $d\tilde{M}_X$ with Radon-Nikodym derivative $\frac{d\tilde{M}_X}{dM_X} = f$, and \textit{conditioning} on some event $E\in\mathcal{F}$ (with $0<M[E]<\infty$) refers to the probability measure $\frac{M[E\cap \cdot]}{M[E]} $  over the space $(E, \mathcal{F}_E)$ with $\mathcal{F}_E = \{A\cap E: A\in\mathcal{F}\}$. {For a finite measure $M$ we write $M^\# = M/|M|$ for the probability measure proportional to $M$.}  We also fix the notation $|z|_+ := \max\{|z|, 1\}$.

\subsection{The Gaussian free field and quantum surfaces}\label{sec-pre-gff}

{Let $D\subset \mathbb{C}$ be a domain with $\partial D =  \partial^D\cup\partial^F$, $\partial^D\cap\partial^F=\emptyset$. We construct the GFF on $D$ with \textit{Dirichlet} \textit{boundary conditions} on $\partial^D$ and \textit{free boundary conditions} on $\partial^F$ as follows. Consider the space of smooth functions on $D$ with finite Dirichlet energy and zero value near $\partial^D$, and let $H(D)$ be its closure with respect to the inner product $(f,g)_\nabla={(2\pi)^{-1}}\int_D(\nabla f\cdot\nabla g)\ dx\ dy$. Then our GFF is defined by
	\begin{equation}\label{eqn-def-gff}
		h = \sum_{n=1}^\infty \xi_nf_n
	\end{equation}
	where $(\xi_n)_{n\ge 1}$ is a collection of i.i.d. standard Gaussians and $(f_n)_{n\ge 1}$ is an orthonormal basis of $H(D)$. One can show that the sum \eqref{eqn-def-gff} a.s.\ converges to a random distribution independent of the choice of the basis $(f_n)_{n\ge 1}$. Note that if $\partial^D$ is harmonically trivial, then elements in $H(D)$ should be understood as smooth functions modulo global additive constants, and the resulting $h$ is a distribution modulo an additive (random) global constant. For $D = \mathcal{S}$, the horizontal strip $\mathbb{R}\times (0, \pi)$, we fix the constant by requiring every function in $H(\mathcal{S})$ has mean value zero on $\{0\}\times [0, i\pi]$, while for $D=\mathbb{H}$, the upper half plane $\{z:\text{Im} z>0\}$, every function in $H(\mathbb{H})$ should have zero average value on the semicircle $\{e^{i\theta}:\theta\in(0, \pi)\}$, and we denote the corresponding laws of $h$ by $P_{\mathcal{S}}$ and $P_{\mathbb{H}}$, and the samples from $P_{\mathcal{S}}$ and $P_\bbH$ are referred as $h_{\mathcal{S}}$ and $h_{\bbH}$. See \cite[Section 4.1.4]{DMS14} for more details. }

For $h_{\mathcal S}$ and $h_{\mathbb H}$, the covariance kernels $G_D(z,w) := \mathbb E[h_D(z) h_D(w)]$ are given by 
\begin{equation}\label{eqn-gff-cov}
	\begin{split}
		&G_{\mathcal{S}}(z,w) = -\log|e^z-e^w|-\log|e^z-e^{\bar{w}}| +2\max\{\text{Re}z, 0\}+2\max\{\text{Re}w, 0\},\\
		&G_{\mathbb{H}}(z,w) = G_{\mathcal{S}}(e^z,e^w) = -\log|z-w|-\log|z-\bar{w}|+2\log|z|_++2\log|w|_+. 
	\end{split}
\end{equation}
{The first two terms in~\eqref{eqn-gff-cov} correspond to the Green's function for Laplacian with free boundary conditions, while the last two terms comes from our normalisation that $h$ has average zero on the segment $\{0\}\times (0,\pi)$ or in the unit semicircle $\{z\in\bbH:|z|=1\}$.}
Note that the notion $h_D(z)$ is defined by first taking the circle average $h_{D,\epsilon}(z)$ of $h_D$ over $\partial B(z, \epsilon)$ and then sending $\epsilon\to 0$.

One important fact is the radial-lateral decomposition of $h_{\mathcal{S}}$. Consider the subspace $H_1(\mathcal{S})\subset H(\mathcal{S})$ (resp. $H_2(\mathcal{S})\subset H(\mathcal{S})$) of functions with constant value (resp. mean zero) on $[t, t+i\pi]:=\{t\}\times (0, \pi)$ for every $t>0$. Then we have the orthogonal decomposition $H(\mathcal{S}) =  H_1(\mathcal{S})\oplus H_2(\mathcal{S})$, and we can write
\begin{equation}\label{eqn-gff-decom}
	h_\mathcal{S} = h_\mathcal{S}^1+h_\mathcal{S}^2
\end{equation}
by gathering the corresponding orthonormal bases of $H_1(\mathcal{S})$ and $H_2(\mathcal{S})$. Moreover, the common values $\{h_\mathcal{S}^1(t)\}_{t\in\mathbb{R}}$ agrees with the law of $\{B_{2t}\}_{t\in\mathbb{R}}$ where $\{B_t\}_{t\in\mathbb{R}}$ is the standard two-sided Brownian motion, while $ h_\mathcal{S}^1,h_\mathcal{S}^2$ are independent. See \cite[Section 4.1.6]{DMS14} for more details. 

Another important result is the Markov property of the GFF, which we state below.
\begin{proposition}[Markov Property of GFF] \label{prop:Markov}
	Let $D\subset \mathbb{C}$ be a domain with $\partial D =  \partial^D\cup\partial^F$, $\partial^D\cap\partial^F=\emptyset$, and $U\subset D$ open. Let $h$ be the GFF on $D$ with  \textit{Dirichlet} (resp. free) \textit{boundary conditions} on $\partial^D$ (resp. $\partial^F$). Then we can write $h=h_1+h_2$ where: 
	\begin{enumerate}
		\item $h_1$ and $h_2$ are independent;
		\item $h_1$ is a GFF on $U$ with Dirichlet boundary condition on $\partial U\backslash \partial^F$ and free on $\partial U\cap \partial^F$;
		\item $h_2$ is the same as $h$ outside $U$ and harmonic inside $U$.
	\end{enumerate}
\end{proposition}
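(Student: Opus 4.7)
The plan is to realize the decomposition as coming from an orthogonal splitting of the Cameron--Martin space $H(D)$, then translate this splitting into a decomposition of the random distribution via the series representation \eqref{eqn-def-gff}.

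First I would set $H_1 \subset H(D)$ to be the closure, in the Dirichlet inner product $(\cdot,\cdot)_\nabla$, of those smooth functions on $D$ supported in $\overline{U}\setminus\partial^D$ and vanishing in a neighborhood of $\partial U\setminus\partial^F$. Extending by zero outside $U$, this is exactly the Cameron--Martin space used in \eqref{eqn-def-gff} to construct a GFF on $U$ with Dirichlet boundary on $\partial U\setminus\partial^F$ and free boundary on $\partial U\cap\partial^F$, embedded isometrically into $H(D)$. Let $H_2$ be its orthogonal complement in $H(D)$, so $H(D)=H_1\oplus H_2$.

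Next I would identify $H_2$ explicitly. For $f\in H(D)$ and $g\in H_1$ smooth, integration by parts gives
\[
(f,g)_\nabla \;=\; \frac{1}{2\pi}\int_D \nabla f\cdot\nabla g\,dx\,dy \;=\; -\frac{1}{2\pi}\int_D g\,\Delta f\,dx\,dy
\]
plus a boundary integral on $\partial U\cap\partial^F$ which vanishes because $g$ has free-boundary behavior on $\partial^F$ (no Dirichlet constraint forced on $g$ there) and one uses the Neumann-type condition satisfied by $f\in H(D)$ on $\partial^F$ in the weak Dirichlet-energy sense. Varying $g$ over smooth functions compactly supported in $U$ shows that $f\in H_2$ iff $\Delta f=0$ in $U$ in the distributional sense.

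Then I would pick orthonormal bases $(e_k)_{k\ge 1}$ of $H_1$ and $(e_k')_{k\ge 1}$ of $H_2$; their concatenation is an orthonormal basis of $H(D)$. Writing
\[
h \;=\; \sum_{k\ge 1}\xi_k e_k \;+\; \sum_{k\ge 1}\xi_k' e_k'
\]
with $\{\xi_k,\xi_k'\}$ i.i.d.\ standard Gaussians, I set $h_1:=\sum_k\xi_k e_k$ and $h_2:=\sum_k\xi_k' e_k'$. Independence of $h_1$ and $h_2$ is immediate from independence of the Gaussian coefficients on disjoint parts of the basis. By construction, $h_1$ has the law of the GFF on $U$ with Dirichlet boundary on $\partial U\setminus\partial^F$ and free boundary on $\partial U\cap\partial^F$. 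Since each $e_k'$ is harmonic in $U$, the partial sums defining $h_2$ are harmonic functions in $U$, and this is preserved under the distributional limit, so $h_2$ is harmonic in $U$. Finally, for any test function $\varphi$ supported in $D\setminus\overline{U}$, each $e_k$ vanishes on the support of $\varphi$, hence $(h_1,\varphi)=0$ and so $h_2=h-h_1$ agrees with $h$ outside $U$.

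The main technical point is the identification of $H_2$ with the space of functions harmonic in $U$ (with the correct free-boundary behavior on $\partial^F\cap\partial U$): one must verify that the boundary contribution on $\partial^F\cap\partial U$ in the integration-by-parts step truly vanishes for arbitrary $f\in H_2$, not merely for smooth representatives. This is handled by a standard approximation argument using the density of smooth functions in $H(D)$ together with the fact that the free-boundary condition is encoded into the definition of $H(D)$ via the absence of a vanishing-near-$\partial^F$ constraint. Once this is in place, the remaining assertions follow directly from the basis construction.
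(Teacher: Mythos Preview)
The paper does not actually prove this proposition; it simply states the result and refers the reader to \cite[Section 4.1.5]{DMS14} for details. Your proposal is the standard orthogonal-decomposition argument one finds there, and it is correct in outline.

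One small clarification on your integration-by-parts step: you write that the boundary contribution on $\partial U\cap\partial^F$ vanishes because of ``the Neumann-type condition satisfied by $f\in H(D)$ on $\partial^F$.'' That phrasing is slightly misleading. Generic elements of $H(D)$ carry no Neumann condition; the free boundary is encoded only by the \emph{absence} of a Dirichlet constraint. The correct logic is the reverse: for $f\in H_2$, orthogonality to all of $H_1$ (which includes functions that are nonzero on $\partial U\cap\partial^F$) \emph{forces} $\partial_n f=0$ there in the weak sense, on top of forcing $\Delta f=0$ in $U$ from the compactly supported test functions. So the Neumann behavior on $\partial U\cap\partial^F$ is a consequence of being in $H_2$, not an input from membership in $H(D)$. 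You essentially acknowledge this in your final paragraph, but the earlier sentence should be rephrased accordingly.
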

Note that if $\partial^D=\emptyset$ (i.e., $h$ is free) then $h_2$ is defined modulo constant. See \cite[Section 4.1.5]{DMS14} for more details.  {The above property can also be extended to random sets. We say that a (random) closed set $A\subset D$ containing $\partial D$ is \textit{local}, if one can find a law on pairs $(A, h_2)$ such that $h_2|_{D\backslash A}$ is harmonic, while given $(A, h_2)$, we have $h = h_1+h_2$ where $h_1$ is an instance of zero boundary GFF on $D\backslash A$.}

Now we turn to Liouville quantum gravity and the quantum surfaces. Throughout this paper, we fix the LQG coupling constant $\gamma\in(0,2)$ and set
\begin{equation*}
	Q = \frac{2}{\gamma}+\frac{\gamma}{2}, \qquad \qquad \kappa = \gamma^2.
\end{equation*}

For two tuples $(D,h,z_1, ..., z_m)$ and $(\tilde{D}, \tilde{h}, \tilde{z}_1, ..., \tilde{z}_m)$, where $D$ and $\tilde{D}$ are {simply connected} domains on $\mathbb{C}$ with $(z_1, ..., z_m)$ and $(\tilde{z}_1, ..., \tilde{z}_m)$ being $m$ marked points on the bulk and the boundary of $D$ and $\tilde{D}$, and $h$ (resp. $\tilde{h}$) a distribution on $D$ (resp. $\tilde{D}$), we say  
\begin{equation}\label{eqn-qs-relation}
	(D, h, z_1, ..., z_m)\sim_\gamma (\tilde{D}, \tilde{h}, \tilde{z}_1, ..., \tilde{z}_m)
\end{equation}
if one can find a conformal mapping $\psi:{D}\to \tilde D$ such that $\psi({z}_j) = \tilde z_j$ for each $j$ and $\tilde{h} = \psi\bullet_\gamma h:= h\circ \psi^{-1}+Q\log|(\psi^{-1}) '|$, and we call each tuple $(D, h, z_1, ..., z_m)$ modulo the equivalence relation $\bullet_\gamma$ a $\gamma$-\textit{quantum surface}. 

For a $\gamma$-quantum surface $(D, h, z_1, ..., z_m)$, its \textit{quantum area measure} $\mu_h$ is defined by taking the weak limit $\epsilon\to 0$ of $\mu_{h_\epsilon}:=\epsilon^{\frac{\gamma^2}{2}}e^{\gamma h_\epsilon(z)}d^2z$, where $d^2z$ is the Lebesgue area and $h_\epsilon(z)$ is the circle average of $h$ over $\partial B(z, \epsilon)$. When $D=\mathbb{H}$, we can also define the  \textit{quantum boundary length measure} $\nu_h:=\lim_{\epsilon\to 0}\epsilon^{\frac{\gamma^2}{4}}e^{\frac{\gamma}{2} h_\epsilon(x)}dx$ where $h_\epsilon (x)$ is the average of $h$ over the semicircle $\{x+\epsilon e^{i\theta}:\theta\in(0,\pi)\}$. It has been shown in \cite{DS11, SW16} that all these weak limits are well-defined  for the GFF and its variants we are considering in this paper, while   $\mu_{h}$ and $\nu_{h}$ could be conformally extended to other domains using the relation $\bullet_\gamma$. 

Next we present the definition of \textit{weight $W$ (thick) quantum disk},  introduced in \cite[Section 4.5]{DMS14}. 

\begin{definition}\label{def-thick-disk}
	Fix $W\ge\frac{\gamma^2}{2}$ and let {$\beta = \gamma+ \frac{2-W}{\gamma}\le Q$}. Sample independent distributions $\psi_1, \psi_2$ such that: 
	\begin{itemize}
		\item $\psi_1$ has the same law as 
		\begin{equation}
			X_t:=\left\{ \begin{array}{rcl} 
				B_{2t}-(Q-\beta) t & \mbox{for} & t\ge 0\\
				\tilde{B}_{-2t} +(Q-\beta) t & \mbox{for} & t<0
			\end{array} 
			\right.
		\end{equation}
		where $(B_t)_{t\ge 0}$ and $(\tilde{B}_t)_{t\ge 0}$ are standard Brownian motions conditioned on  $B_{2t}-(Q-\beta)t<0$ and  $ \tilde{B}_{2t} - (Q-\beta)t<0$ for all $t>0$;  
		\item $\psi_2$ has the same law as $h_{\mathcal{S}}^2$ described in \eqref{eqn-gff-decom}.
	\end{itemize}
	Let $\hat \psi = \psi_1 + \psi_2$. Independently sample $\mathbf{c}$ from $\frac{\gamma}{2}e^{(\beta-Q)c}dc$, and let $\psi = \hat \psi + \mathbf c$.
	Let  $\mathcal{M}^{\textup{disk}}_2(W)$ be infinite measure describing the law of $(\mathcal{S}, \psi, -\infty, +\infty)/{\sim_\gamma}$. We call a sample from $\mathcal{M}^{\textup{disk}}_2(W)$ a (two-pointed) \emph{quantum disk of weight $W$}.
\end{definition}
When $0<W<\frac{\gamma^2}{2}$, we can also define the \textit{thin} quantum disk as a concatenation of weight $\gamma^2-W$ (two-pointed) thick disks as in \cite[Section 2]{AHS20}.

\begin{definition}\label{def-thin-disk}
	For $W\in(0, \frac{\gamma^2}{2})$, the infinite measure $\mathcal{M}_2^{\textup{disk}}(W)$ on two-pointed beaded surfaces is defined as follows. First sample $T$ from $(1-\frac{2}{\gamma^2}W)^{-2}\textup{Leb}_{\mathbb{R}_+}$, then sample a Poisson point process $\{(u, \mathcal{D}_u)\}$ from the intensity measure $\textbf{1}_{t\in [0,T]}dt\times \mathcal{M}_2^{\textup{disk}}(\gamma^2-W)$ and finally concatenate the disks $\{\mathcal{D}_u\}$ according to the ordering induced by $u$. The total sum of the left (resp. right) boundary lengths of all the $\mathcal{D}_u$'s is referred as the left (resp. right) boundary length of the thin quantum disk.
\end{definition}

We introduce the notion of embedding a thin quantum disk in the plane. Although not mathematically essential for our arguments, it simplifies exposition by letting us talk concretely about points and curves in the plane rather than abstractly on quantum surfaces. We follow the treatment of  \cite{DMS14}.

A \emph{(beaded) quantum surface} is a tuple $(D, h, z_1, \dots, z_m)$ modulo the equivalence relation~\eqref{eqn-qs-relation}, except that $D\subset \mathbb C$ is a closed set such that each component of its interior together with its prime-end boundary is homeomorphic to the closed disk, $h$ is defined as a distribution on each such component, and $\psi: D \to \tilde D$ is any homeomorphism which is conformal on each component of the interior of $D$ and sends $\psi (z_i) = \tilde z_i$ for each $i$. An \emph{embedding} of a beaded quantum surface is any choice of representative $(D, h, z_1, \dots, z_m)$. It is easy to see that a thin quantum disk is a beaded quantum surface.

\subsection{Liouville conformal field theory and thick quantum triangles}\label{sec-pre-qt}

In this section we review the theory of Liouville CFT and its relation with quantum disks as established in \cite[Section 2]{AHS21}. We will recap the notion of $\mathcal{M}_{2, \bullet}^{\textup{disk}}(W)$, the three-pointed quantum disks and then give the definition of quantum triangles in terms of LCFT.

We start from the  LCFT on the upper half plane. Recall that $P_\mathcal{S}$ and $P_\mathbb{H}$ are the probability measure induced by the GFF as in \eqref{eqn-def-gff} with our normalization.

\begin{definition}\label{def-lf-H}
	Let $(h, \mathbf{c})$ be sampled from $P_\mathbb{H}\times [e^{-Qc}dc]$ and take $\phi = h - 2Q\log|z|_++\mathbf{c}$. We say $\phi$ is a \textup{Liouville field} on $\mathbb{H}$ and let $\textup{LF}_{\mathbb{H}}$ be its law.
\end{definition}

\begin{definition}[Liouville field with boundary insertions]\label{def-lf-H-bdry}
	Let $\beta_i\in\mathbb{R}$  and $s_i\in \partial\mathbb{H}\cup\{\infty\}$ for $i = 1, ..., m,$ where $m\ge 1$ and all the $s_i$'s are distinct. Also assume $s_i\neq\infty$ for $i\ge 2$. We say $\phi$ is a \textup{Liouville Field on $\mathbb{H}$ with insertions $\{(\beta_i, s_i)\}_{1\le i\le m}$} if $\phi$ can be produced as follows by  first sampling $(h, \mathbf{c})$ from $C_{\mathbb{H}}^{(\beta_i, s_i)_i}P_\mathbb{H}\times [e^{(\frac{1}{2}\sum_{i=1}^m\beta_i - Q)c}dc]$ with
	$$C_{\mathbb{H}}^{(\beta_i, s_i)_i} =
	\left\{ \begin{array}{rcl} 
		\prod_{i=1}^m  |s_i|_+^{-\beta_i(Q-\frac{\beta_i}{2})} \exp(\frac{1}{4}\sum_{j=i+1}^{m}\beta_i\beta_j G_\mathbb{H}(s_i, s_j)) & \mbox{if} & s_1\neq \infty\\
		\prod_{i=2}^m  |s_i|_+^{-\beta_i(Q-\frac{\beta_i}{2}-\frac{\beta_1}{2})}\exp(\frac{1}{4}\sum_{j=i+1}^{m}\beta_i\beta_j G_\mathbb{H}(s_i, s_j)) & \mbox{if} & s_1= \infty
	\end{array} 
	\right.
	$$
	and then taking
	\begin{equation}\label{eqn-def-lf-H}
		\phi(z) = h(z) - 2Q\log|z|_++\frac{1}{2}\sum_{i=1}^m\beta_i G_\mathbb{H}(s_i, z)+\mathbf{c}
	\end{equation}
	with the convention $G_\mathbb{H}(\infty, z) = 2\log|z|_+$. We write $\textup{LF}_{\mathbb{H}}^{(\beta_i, s_i)_i}$ for the law of $\phi$.
\end{definition}

The following lemma explains that adding a $\beta$-insertion point at $s\in\partial\mathbb{H}$  is equal to weighting the law of Liouville field $\phi$ by $e^{\frac{\beta}{2}\phi(s)}$  in some sense. 

\begin{lemma}[Lemma 2.6 of \cite{AHS21}]\label{lmm-lf-insertion-bdry}
	For $\beta, s\in\mathbb{R}$ such that $s\notin\{s_1, ..., s_m\}$, in the sense of vague convergence of measures,
	\begin{equation}
		\lim_{\epsilon\to 0}\epsilon^{\frac{\beta^2}{4}}e^{\frac{\beta}{2}\phi_\epsilon(s)}\textup{LF}_{\mathbb{H}}^{(\beta_i, s_i)_i} = \textup{LF}_{\mathbb{H}}^{(\beta_i, s_i)_i, (\beta, s)}.
	\end{equation}
\end{lemma}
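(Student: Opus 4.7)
The plan is a Cameron--Martin/Girsanov calculation, modelled on the standard insertion-lemma arguments in Liouville CFT and on the boundary-insertion lemmas in~\cite{HRV-disk, AHS21}. Applying the semicircle-average operator at scale $\epsilon$ around $s$ to the defining formula~\eqref{eqn-def-lf-H}, the first step is to write
\[
\phi_\epsilon(s) = h_\epsilon(s) - 2Q\log|s|_+ + \tfrac{1}{2}\sum_{i=1}^m \beta_i G_\mathbb{H}(s_i, s) + \mathbf{c} + o(1),
\]
where the $o(1)$ is deterministic and vanishes because the functions $z\mapsto -2Q\log|z|_+$ and $z\mapsto G_\mathbb{H}(s_i,z)$ are smooth in a neighbourhood of $s$ (using the assumption $s\notin\{s_1,\ldots,s_m\}$). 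Hence the only genuinely singular term is the Gaussian $h_\epsilon(s)$, and the whole computation reduces to understanding how $e^{\frac{\beta}{2}h_\epsilon(s)}$ tilts the law of $h$ under $P_\mathbb{H}$.

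The second step is a direct computation from~\eqref{eqn-gff-cov} showing $\mathrm{Var}(h_\epsilon(s)) = -2\log\epsilon + 4\log|s|_+ + o(1)$: the two logarithmic singularities $-\log|z-w|$ and $-\log|z-\bar w|$ each contribute $-\log\epsilon$ after double semicircle averaging, and a short trigonometric identity shows that the two order-one remainders cancel exactly (this uses $\int_0^{\pi/2}\log\sin t\,dt=-\tfrac{\pi\log 2}{2}$). By Cameron--Martin, weighting $P_\mathbb{H}$ by $e^{\frac{\beta}{2}h_\epsilon(s)}$ multiplies by the normalizer $\mathbb{E}[e^{\frac{\beta}{2}h_\epsilon(s)}] = \epsilon^{-\beta^2/4}|s|_+^{\beta^2/2}(1+o(1))$ and shifts $h \mapsto h + \frac{\beta}{2}\mathbb{E}[h_\epsilon(s)h(\cdot)]$, the shift converging to $\frac{\beta}{2}G_\mathbb{H}(s,\cdot)$ as $\epsilon\to 0$. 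The $\epsilon^{\beta^2/4}$ in the statement cancels the divergent $\epsilon^{-\beta^2/4}$ exactly, leaving a residue of $|s|_+^{\beta^2/2}$. Combined with the deterministic contribution $e^{-\beta Q\log|s|_+ + \frac{\beta}{4}\sum_i\beta_i G_\mathbb{H}(s_i,s) + \frac{\beta}{2}\mathbf{c}}$ from Step~1, the $\mathbf{c}$-density is tilted from $e^{(\frac{1}{2}\sum\beta_i - Q)c}$ to $e^{(\frac{1}{2}(\sum\beta_i + \beta) - Q)c}$, and the shifted GFF rearranges into a Liouville field carrying an extra insertion $(\beta,s)$. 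A direct inspection of Definition~\ref{def-lf-H-bdry} confirms that the residual deterministic prefactor
\[
|s|_+^{\beta^2/2 - \beta Q}\exp\!\Bigl(\tfrac{\beta}{4}\sum_{i=1}^m\beta_i G_\mathbb{H}(s_i, s)\Bigr) = |s|_+^{-\beta(Q-\beta/2)}\exp\!\Bigl(\tfrac{1}{4}\sum_{i=1}^m\beta\beta_i G_\mathbb{H}(s_i,s)\Bigr)
\]
is exactly the ratio $C_\mathbb{H}^{(\beta_i,s_i)_i,(\beta,s)}/C_\mathbb{H}^{(\beta_i,s_i)_i}$ prescribed by Definition~\ref{def-lf-H-bdry}, closing the identification.

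The main obstacle is to upgrade this formal identity to genuine \emph{vague} convergence of measures, i.e.\ convergence of $\int F\,d(\epsilon^{\beta^2/4}e^{\frac{\beta}{2}\phi_\epsilon(s)}\,\textup{LF}_\mathbb{H}^{(\beta_i,s_i)_i})$ for a compactly supported continuous functional $F$ of $\phi$. Here one needs uniform integrability in $\epsilon$ of $\epsilon^{\beta^2/4}e^{\frac{\beta}{2}h_\epsilon(s)}$ under $P_\mathbb{H}$; this follows from a standard $L^p$ moment bound with some $p>1$, since this rescaled Gaussian exponential has bounded mean. The $\mathbf c$-integral, which is a priori infinite, is harmless because the compact support of $F$ forces effective confinement of $\mathbf c$ to a bounded set (this is the same mechanism by which $\textup{LF}_\mathbb{H}$, though an infinite measure, defines a well-behaved vague limit). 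Together with convergence in probability of the Cameron--Martin shift $\frac{\beta}{2}\mathbb{E}[h_\epsilon(s)h(\cdot)]\to\frac{\beta}{2}G_\mathbb{H}(s,\cdot)$ uniformly on compact subsets of $\overline{\mathbb{H}}\setminus\{s\}$, dominated convergence then yields the claimed vague limit. These technical pieces are by now routine in the LQG/LCFT literature, so after the bookkeeping above the argument reduces to a short verification.
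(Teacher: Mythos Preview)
Your argument is correct and is precisely the standard Cameron--Martin/Girsanov computation used to prove such insertion lemmas; the paper itself does not give a proof but simply cites this as Lemma~2.6 of~\cite{AHS21}, where the argument is essentially the one you have written. The variance identity $\mathrm{Var}(h_\epsilon(s)) = -2\log\epsilon + 4\log|s|_+ + o(1)$ can be confirmed cleanly by noting that the double semicircle average of $-\log|z-w|-\log|z-\bar w|$ reduces to the logarithmic energy of the arcsine law on $[-1,1]$, which equals $-\log 2$ and cancels the residual $\log 2$ exactly.
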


On the other hand, insertions at the infinity can also be handled via the following approximation. For $\beta\in\mathbb{R}$, we use the shorthand 
\begin{equation}\label{eqn-delta-alpha}
	\Delta_\beta:=\frac{\beta}{2}(Q-\frac{\beta}{2}).
\end{equation}
\begin{lemma}[Lemma 2.9 of \cite{AHS21}]\label{lmm-lf-insertion-infty}
	With the same notation as Lemma \ref{lmm-lf-insertion-bdry}, in the topology of vague convergence of measures,
	\begin{equation}
		\lim_{r\to+\infty}r^{2\Delta_\beta}\textup{LF}_\bbH^{(\beta,r),(\beta_i,s_i)_i} = \textup{LF}_{\mathbb{H}}^{(\beta, \infty),(\beta_i, s_i)_i}.
	\end{equation}
\end{lemma}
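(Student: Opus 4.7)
The plan is to unpack both sides using Definition~\ref{def-lf-H-bdry} and show that the prefactor $r^{2\Delta_\beta}$ exactly absorbs the divergent $r$-dependence from moving the $(\beta,r)$ insertion to infinity; everything else reduces to Green's function asymptotics.

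First I would compare the two measures. Taking $r>\max_i|s_i|\vee 1$ so that $|r|_+=r$, I note that by Definition~\ref{def-lf-H-bdry} both $\textup{LF}_\bbH^{(\beta,r),(\beta_i,s_i)_i}$ and $\textup{LF}_\bbH^{(\beta,\infty),(\beta_i,s_i)_i}$ are constants times the pushforward of $P_\bbH\times [e^{(\frac12(\beta+\sum_i\beta_i)-Q)c}dc]$ under the map $(h,c)\mapsto h-2Q\log|\cdot|_++\tfrac12\sum_i\beta_i G_\bbH(s_i,\cdot)+\tfrac{\beta}{2}G_\bbH(\star,\cdot)+c$, where $\star=r$ or $\infty$ and $G_\bbH(\infty,\cdot)=2\log|\cdot|_+$. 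The $c$-exponential weight is literally the same on the two sides (because $\beta$ appears in the sum in both cases), so it suffices to compare the constants $C$ and the deterministic harmonic perturbation of the field induced by moving $\star$.

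Second I would check that $r^{2\Delta_\beta}$ absorbs the constant. Using $2\Delta_\beta=\beta(Q-\beta/2)$ to cancel the explicit $r^{-\beta(Q-\beta/2)}$ factor in $C_\bbH^{(\beta,r),(\beta_i,s_i)_i}$, a direct computation yields
\[
\frac{r^{2\Delta_\beta}\,C_\bbH^{(\beta,r),(\beta_i,s_i)_i}}{C_\bbH^{(\beta,\infty),(\beta_i,s_i)_i}} \;=\; \exp\!\Bigl(\tfrac{\beta}{4}\sum_i\beta_i\bigl[G_\bbH(r,s_i)-2\log|s_i|_+\bigr]\Bigr).
\]
Using the closed form $G_\bbH(r,s)=-\log|r-s|-\log|r-\bar s|+2\log r+2\log|s|_+$ (valid for $r>1$), one has $G_\bbH(r,s_i)\to 2\log|s_i|_+$ as $r\to\infty$, so this ratio tends to $1$.

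Third I would show that the field converges. The only difference between the $r$-field and the $\infty$-field is the deterministic harmonic function
\[
\tfrac{\beta}{2}\bigl(G_\bbH(r,z)-2\log|z|_+\bigr)\;=\;-\tfrac{\beta}{2}\log\frac{|r-z|\,|r-\bar z|}{r^2},
\]
which tends to $0$ locally uniformly in $z$ as $r\to\infty$. Combined with Step~2, this gives pointwise convergence of the integrand as $r\to\infty$ for any bounded continuous functional $F$ of $\phi$ that depends continuously on pairings of $\phi$ against smooth compactly supported test functions.

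The main obstacle is making vague convergence precise, since the target is an infinite measure. Following the LCFT convention implicit in Lemma~\ref{lmm-lf-insertion-bdry}, I would restrict to bounded continuous $F$ supported on distributions whose $\mathbf{c}$-component lies in a fixed compact interval. The $c$-exponential weight is then integrable against $F$ uniformly in $r$, and dominated convergence applied to the $c$-integral and the GFF expectation completes the argument. All else is routine Green's function bookkeeping; the real content is the cancellation in Step~2, which is simply an identity in the exponent registering that $\Delta_\beta$ is the boundary conformal weight of the insertion $(\beta,\star)$.
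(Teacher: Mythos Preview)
The paper does not prove this lemma; it is quoted verbatim as Lemma~2.9 of \cite{AHS21} and used as a black box. Your direct verification is correct: unpacking Definition~\ref{def-lf-H-bdry}, the $r^{2\Delta_\beta}$ factor exactly cancels the $|r|_+^{-\beta(Q-\beta/2)}$ in the normalizing constant, and the remaining $r$-dependence in both the constant and the field reduces to the Green's function asymptotic $G_\bbH(r,\cdot)\to 2\log|\cdot|_+$, which is immediate from~\eqref{eqn-gff-cov}. This is the natural proof and is essentially what \cite{AHS21} does; there is nothing to add.
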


Sometimes it is also natural to work on Liouville fields on the strip $\mathcal{S}$ with insertions at $\pm\infty$. 

\begin{definition}\label{def-lf-strip}
	Let $(h, \mathbf{c})$ be sampled from $C_{\mathcal{S}}^{(\beta_1, +\infty), (\beta_2, -\infty), (\beta_3, s_3)}P_{\mathcal{S}}\times[e^{(\frac{\beta_1+\beta_2+\beta_3}{2}-Q)c}dc]$ with $\beta_1, \beta_2, \beta_3\in\mathbb{R}$,  $s_3\in\partial\mathcal{S}$ and
	{$$C_{\mathcal{S}}^{(\beta_1, +\infty), (\beta_2, -\infty), (\beta_3, s_3)} = e^{(-\Delta_{\beta_3}+\frac{(\beta_1+\beta_2)\beta_3}{4})|\textup{Re}s_3|+\frac{(\beta_1-\beta_2)\beta_3}{4}\textup{Re}s_3}.$$}
	Let $\phi(z) = h(z)+\frac{\beta_1+\beta_2-2Q}{2}|\textup{Re}z|+\frac{\beta_1-\beta_2}{2}\textup{Re}z+\frac{\beta_3}{2}G_{\mathcal{S}}(z, s_3)+\mathbf{c}$. We write $\textup{LF}_{\mathcal{S}}^{(\beta_1, +\infty), (\beta_2, -\infty), (\beta_3, s_3)}$ for the law of $\phi$.
\end{definition}

In general, the Liouville fields has nice compatibility with the notion of quantum surfaces. To be more precise, for a measure $M$ on the space of distributions on a domain $D$ and a conformal map $\psi:D\to \tilde{D}$, if we let $\psi_* M$ be the push-forward of $M$ under the mapping $\phi\mapsto\phi\circ\psi^{-1}+Q\log|(\psi^{-1})'|$. Then under this push-forward, the corresponding Liouville field measures only differs a multiple constant. For instance, 

\begin{lemma}\label{lmm-lcft-H-strip}
	For $\beta_1, \beta_2, \beta_3\in\mathbb{R}$ and $s_3\in\partial\mathcal{S}$, we have 
	\begin{equation}\label{eqn-lcft-H-strip}
		\textup{LF}_{\mathbb{H}}^{(\beta_1, \infty), (\beta_2, 0), (\beta_3, e^{s_3})}= e^{-\Delta_{\beta_3}\textup{Re}s_3}\exp_*\textup{LF}_{\mathcal{S}}^{(\beta_1, +\infty), (\beta_2, -\infty), (\beta_3, s_3)}.
	\end{equation} 
\end{lemma}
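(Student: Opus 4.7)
The plan is to verify the identity by a direct computation using the LQG coordinate change induced by $\Psi = \exp: \mathcal{S} \to \mathbb{H}$. With inverse $\log$, the rule $\phi \mapsto \phi \circ \log + Q \log|(\log)'(w)|$ amounts to adding $-Q\log|w|$ to $\phi \circ \log$. My strategy is to push the strip Liouville field forward pointwise, match it with the Definition~\ref{def-lf-H-bdry} form of the half-plane Liouville field, and reduce the claim to a ratio computation between the two normalizing constants $C_{\mathcal{S}}^{\cdot}$ and $C_{\mathbb{H}}^{\cdot}$.

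First I would observe that $h_{\mathcal{S}} \circ \log \stackrel{d}{=} h_{\mathbb{H}}$, since the unit semicircle in $\mathbb{H}$ is the image under $\exp$ of the vertical segment $\{0\} \times [0, i\pi]$, so both sides share the zero-average normalization along this curve; thus $\Psi_* P_{\mathcal{S}}$ is the law of $h_{\mathbb{H}} - Q \log|w|$. Next, I would expand the deterministic shift. With $\textup{Re}\log w = \log|w|$, the terms
\[
\tfrac{\beta_1+\beta_2-2Q}{2}|\log|w|| + \tfrac{\beta_1-\beta_2}{2}\log|w| - Q\log|w|
\]
simplify on $\{|w|\ge 1\}$ to $(\beta_1 - 2Q)\log|w|_+$ and on $\{|w|\le 1\}$ to $-\beta_2 \log|w|_-$. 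Comparing with the background $-2Q\log|w|_+ + \tfrac{\beta_1}{2} G_{\mathbb{H}}(\infty, w) + \tfrac{\beta_2}{2} G_{\mathbb{H}}(0, w)$ in Definition~\ref{def-lf-H-bdry}, and using $G_{\mathbb{H}}(\infty, w) = 2\log|w|_+$ together with $G_{\mathbb{H}}(0, w) = -2 \log \min(|w|,1)$ from \eqref{eqn-gff-cov}, these two expressions agree. The $\beta_3$-term transforms via the trivial identity $G_{\mathcal{S}}(\log w, s_3) = G_{\mathbb{H}}(w, e^{s_3})$, read off directly from \eqref{eqn-gff-cov}, and the Lebesgue factor $e^{(\frac{\beta_1+\beta_2+\beta_3}{2}-Q)c}dc$ is identical on both sides.

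What remains is to match the weighting constants, and this is the only real bookkeeping. Writing $a = \textup{Re}\, s_3$, I would expand
\[
C_{\mathbb{H}}^{(\beta_1, \infty), (\beta_2, 0), (\beta_3, e^{s_3})} = |e^{s_3}|_+^{-\beta_3(Q - \beta_3/2 - \beta_1/2)} \exp\!\bigl(\tfrac{\beta_2\beta_3}{4} G_{\mathbb{H}}(0, e^{s_3})\bigr)
\]
using $|e^{s_3}|_+ = e^{\max(a,0)}$ and $G_{\mathbb{H}}(0, e^{s_3}) = 2\max(-a, 0)$, then rewrite with $\max(\pm a, 0) = \tfrac{|a| \pm a}{2}$ and use $\beta_3(Q - \beta_3/2) = 2\Delta_{\beta_3}$. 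Collecting the coefficients of $|a|$ and $a$ separately and comparing with the definition of $C_{\mathcal{S}}^{(\beta_1,+\infty),(\beta_2,-\infty),(\beta_3, s_3)}$, the $|a|$-coefficients agree exactly, while the $a$-coefficients differ by $-\Delta_{\beta_3}$. This gives $C_{\mathcal{S}}^{\cdot} = e^{\Delta_{\beta_3} a} \, C_{\mathbb{H}}^{\cdot}$, so when the push-forward is divided by the strip normalization and multiplied by the half-plane normalization, we obtain the prefactor $e^{-\Delta_{\beta_3} \textup{Re}\, s_3}$ as stated.

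No step is conceptually hard; the main place to be careful is the bookkeeping in the constant comparison, where the definition of $C_{\mathcal{S}}^{\cdot}$ has been engineered precisely to ensure this clean proportionality, reflecting the conformal anomaly of a primary field of conformal weight $\Delta_{\beta_3}$ at the finite insertion under $\exp$.
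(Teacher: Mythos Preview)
Your proof is correct and follows exactly the approach the paper indicates: the paper's own proof is a one-line sketch (``directly compare the expressions of the corresponding multiplicative constants and invoke the conformal invariance of the GFF and the Green's function''), and you have carried out precisely that computation in detail. Your bookkeeping on the constants is accurate, and the identification of the $-\Delta_{\beta_3}$ discrepancy in the $a$-coefficient is the key point.
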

For a proof, one can directly compare the expressions of the corresponding multiplicative constants and invoke the conformal invariance of the GFF and the Green's function (with the mapping $z\mapsto e^z$). We also have the following
\begin{lemma}[Proposition 2.7 of \cite{AHS21}]\label{lmm-lcft-H-conf}
	Fix $\beta_i, s_i\in\mathbb{R}$ for $i=1, ..., m$ with $s_i$'s being distinct.  Suppose $\psi:\mathbb{H}\to\mathbb{H}$ is conformal such that $\psi(s_i)\neq\infty$ for each $i$. Then $\textup{LF}_{\mathbb{H}} = \psi_*\textup{LF}_{\mathbb{H}}$, and 
	\begin{equation}
		\textup{LF}_{\mathbb{H}}^{(\beta_i, \psi(s_i))_i} = \prod_{i=1}^m|\psi'(s_i)|^{-\Delta_{\beta_i}}\psi_*\textup{LF}_{\mathbb{H}}^{(\beta_i, s_i)_i}.
	\end{equation} 
\end{lemma}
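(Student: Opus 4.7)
The proof splits naturally into two steps: (i) the bare conformal invariance $\textup{LF}_\bbH = \psi_*\textup{LF}_\bbH$, and (ii) the insertion transformation rule. The second follows from the first together with a scaling computation, so the real content is in step (i).

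For step (i), I would parametrize $\textup{LF}_\bbH$ as the law of $\phi = h - 2Q\log|z|_+ + \mathbf{c}$ with $(h, \mathbf{c}) \sim P_\bbH \otimes e^{-Qc}dc$, and perform a change of variables $(h, \mathbf{c}) \mapsto (\tilde h, \tilde{\mathbf{c}})$ so that $\psi \bullet \phi = \tilde h - 2Q\log|w|_+ + \tilde{\mathbf{c}}$. By conformal invariance of the Dirichlet inner product, $h \circ \psi^{-1} + Q\log|(\psi^{-1})'|$ agrees with a $P_\bbH$-distributed GFF up to an additive random constant $K$ coming from the change of normalization curve (the unit semicircle versus its $\psi^{-1}$-image); the constant $K$ together with the deterministic discrepancy between $-2Q\log|\psi^{-1}(w)|_+$ and $-2Q\log|w|_+$ get absorbed into a shift of $\mathbf{c}$. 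The remaining Cameron--Martin type correction needed to restore the zero-mean condition for $\tilde h$ contributes an exponential factor that is exactly cancelled by the Jacobian of $\mathbf{c} \mapsto \tilde{\mathbf{c}}$ against the weight $e^{-Qc}dc$, thanks to the specific coefficient $Q$. Since $\mathrm{PSL}(2, \bbR)$ is generated by translations, scalings, and the inversion $z \mapsto -1/z$, it suffices to verify invariance on these three families, where all averages and corrections can be computed explicitly.

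For step (ii), I would appeal to Lemma~\ref{lmm-lf-insertion-bdry}, which realizes the $\beta$-insertion as a vague limit $\textup{LF}_\bbH^{(\beta, s)} = \lim_{\epsilon \to 0} \epsilon^{\beta^2/4} e^{\frac{\beta}{2}\phi_\epsilon(s)} \textup{LF}_\bbH$. Using the local linearization $\psi(z) \approx \psi(s) + \psi'(s)(z-s)$ near $s$, the $\epsilon$-semicircle average of $\psi \bullet \phi$ at $\psi(s)$ corresponds to the $(\epsilon/|\psi'(s)|)$-semicircle average of $\phi$ at $s$, plus the additive shift $-Q\log|\psi'(s)|$ coming from the $Q\log|(\psi^{-1})'|$ term. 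Substituting and using $\epsilon^{\beta^2/4} = |\psi'(s)|^{\beta^2/4}(\epsilon/|\psi'(s)|)^{\beta^2/4}$ yields
\begin{equation*}
\epsilon^{\beta^2/4} e^{\frac{\beta}{2}(\psi\bullet\phi)_\epsilon(\psi(s))} = |\psi'(s)|^{-\Delta_\beta} \cdot (\epsilon/|\psi'(s)|)^{\beta^2/4} e^{\frac{\beta}{2}\phi_{\epsilon/|\psi'(s)|}(s)} \cdot (1 + o_{\epsilon}(1)),
\end{equation*}
with $\tfrac{\beta^2}{4} - \tfrac{\beta Q}{2} = -\Delta_\beta$; sending $\epsilon \to 0$ and combining with the pushforward-invariance from step (i) yields the single-insertion identity. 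Iterating one insertion at a time via Lemma~\ref{lmm-lf-insertion-bdry} produces the full product formula $\prod_i |\psi'(s_i)|^{-\Delta_{\beta_i}}$.

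The main obstacle is step (i): the Mobius invariance of $\textup{LF}_\bbH$ is folklore but requires careful bookkeeping, since the transformation $(h, \mathbf c) \mapsto (\tilde h, \tilde{\mathbf c})$ is not diagonal. Obtaining exact equality (no stray overall constant) hinges on the numerical match between the exponent $Q$ of the weight $e^{-Qc}dc$ and the background charge appearing in $\phi \mapsto \phi \circ \psi^{-1} + Q\log|(\psi^{-1})'|$. Once step (i) is in hand, step (ii) is a routine Gaussian multiplicative chaos scaling computation.
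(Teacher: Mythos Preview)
The paper does not prove this lemma; it is stated with the citation ``Proposition 2.7 of \cite{AHS21}'' and no argument is given. Your two-step outline --- first establishing $\psi_*\textup{LF}_\bbH = \textup{LF}_\bbH$ by tracking how the renormalization and additive constant transform under $\psi$, then pulling the insertions through via the vague-limit characterization of Lemma~\ref{lmm-lf-insertion-bdry} --- is the standard approach and is essentially what is carried out in \cite{AHS21}. The sketch is correct; the only point that would need care in a full write-up is the bookkeeping in step (i), where one must check that the Girsanov shift needed to recenter $h\circ\psi^{-1}$ on the unit semicircle combines with the deterministic term $Q\log|(\psi^{-1})'|$ and the translation of $\mathbf c$ to leave the weight $e^{-Qc}dc$ exactly invariant (no residual multiplicative constant), which you correctly flag as the crux.
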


Using Lemma \ref{lmm-lf-insertion-infty}, the above result can also be extended to Liouville fields with insertions at infinity.

\begin{lemma}\label{lmm-lcft-H-conf-infty}
	Suppose $\beta_1,\beta_2,\beta_3\in\mathbb{R}$ and $\psi:\mathbb{H}\to\mathbb{H}$ being conformal with $\psi(0) = 1$, $\psi(1)=\infty$ and $\psi(\infty) = 0$. Then 
	\begin{equation}
		\textup{LF}_{\mathbb{H}}^{(\beta_1, 0), (\beta_2, 1), (\beta_3, \infty)} = \psi_*\textup{LF}_{\mathbb{H}}^{(\beta_1, \infty), (\beta_2, 0), (\beta_3, 1)}.
	\end{equation} 
\end{lemma}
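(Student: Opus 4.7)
The statement is an extension of Lemma~\ref{lmm-lcft-H-conf} to a M\"obius self-map $\psi$ that exchanges a finite boundary point with $\infty$, namely $\psi(1)=\infty$ and $\psi(\infty)=0$. The plan is to reduce to Lemma~\ref{lmm-lcft-H-conf} via a limit: first, I use Lemma~\ref{lmm-lf-insertion-infty} to replace each $\infty$-insertion by an insertion at a large finite point, and simultaneously perturb $\psi$ to maps $\psi_r$ whose three boundary preimages and images are all finite. Since three boundary values determine a M\"obius self-map of $\mathbb{H}$ uniquely, I may fix the representative $\psi(z)=\frac{1}{1-z}$.

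For each large $r>0$, let $\psi_r\colon\mathbb{H}\to\mathbb{H}$ be the M\"obius map determined by $\psi_r(r)=0$, $\psi_r(0)=1$, $\psi_r(1)=r$. A direct calculation yields $\psi_r(z)=\frac{r(z-r)}{(1-r+r^2)z-r^2}\to\psi(z)$ uniformly on compact subsets of $\overline{\mathbb{H}}\setminus\{1\}$, together with
\[
|\psi_r'(r)|=(r-1)^{-2},\qquad |\psi_r'(0)|=(r-1)^2/r^2,\qquad |\psi_r'(1)|=r^2.
\]
Because $\psi_r$ sends every insertion point in $\textup{LF}_{\mathbb{H}}^{(\beta_1,r),(\beta_2,0),(\beta_3,1)}$ to a finite target, Lemma~\ref{lmm-lcft-H-conf} applies and produces a Jacobian prefactor $|\psi_r'(r)|^{-\Delta_{\beta_1}}|\psi_r'(0)|^{-\Delta_{\beta_2}}|\psi_r'(1)|^{-\Delta_{\beta_3}}$, which is $(1+o(1))\,r^{2\Delta_{\beta_1}-2\Delta_{\beta_3}}$ as $r\to\infty$. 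Rearranging, I obtain
\[
r^{2\Delta_{\beta_3}}\,\textup{LF}_{\mathbb{H}}^{(\beta_1,0),(\beta_2,1),(\beta_3,r)} \;=\; (1+o(1))\,(\psi_r)_*\!\left(r^{2\Delta_{\beta_1}}\,\textup{LF}_{\mathbb{H}}^{(\beta_1,r),(\beta_2,0),(\beta_3,1)}\right).
\]

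Sending $r\to\infty$, Lemma~\ref{lmm-lf-insertion-infty} gives that the LHS converges vaguely to $\textup{LF}_{\mathbb{H}}^{(\beta_1,0),(\beta_2,1),(\beta_3,\infty)}$, and that the measure inside the pushforward on the RHS converges vaguely to $\textup{LF}_{\mathbb{H}}^{(\beta_1,\infty),(\beta_2,0),(\beta_3,1)}$. What remains is to exchange the limit with the pushforward. The main technical obstacle is this joint continuity: as $\psi_r\to\psi$ locally uniformly with its derivative, the correction $Q\log|(\psi_r^{-1})'|$ appearing in $\psi_r\bullet_\gamma\phi$ also converges locally uniformly to $Q\log|(\psi^{-1})'|$, so testing against any bounded continuous functional of the field supported on a fixed compact subset of $\mathbb{H}$ shows that the pushforwards converge vaguely to $\psi_*\textup{LF}_{\mathbb{H}}^{(\beta_1,\infty),(\beta_2,0),(\beta_3,1)}$, yielding the stated identity.
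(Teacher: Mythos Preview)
Your proof is correct and follows essentially the same approach as the paper: approximate $\psi$ by M\"obius maps $\psi_r$ with finite boundary data, apply Lemma~\ref{lmm-lcft-H-conf}, and pass to the limit via Lemma~\ref{lmm-lf-insertion-infty}. The only cosmetic difference is your choice of $\psi_r$ (sending $1\mapsto r$ exactly, whereas the paper uses $\psi_r(z)=\frac{z-r}{(r-1)z-r}$, which sends $1\mapsto r-1$); both lead to the same asymptotic prefactor and the same limiting identity.
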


\begin{proof}
	{The proof is almost identical to that of \cite[Lemma 2.11]{AHS21}.} Note $\psi(z) = \frac{1}{1-z}$, and for $r>0$ set $\psi_r(z):=\frac{z-r}{(r-1)z-r}$.  Now $|\psi_r'(0)|=1+o_r(1)$, $|\psi_r'(1)|=(1+o_r(1))r^2$ and $|\psi_r'(r)|=(1+o_r(1))r^{-2}$, by Lemma \ref{lmm-lcft-H-conf}, as $r\to\infty$,
	\begin{equation}\label{eqn-lcft-H-conf-infty}
		\textup{LF}_{\mathbb{H}}^{(\beta_1, 0), (\beta_2, 1), (\beta_3, r)} = (1+o_r(1))r^{-2\Delta_{\beta_3}+2\Delta_{\beta_1}}(\psi_r)_*\textup{LF}_{\mathbb{H}}^{(\beta_1, r), (\beta_2, 0), (\beta_3, 1)}.
	\end{equation}
	Since $\psi_r\to\psi$ in the topology of uniform convergence of analytic functions and their derivatives on compact sets, we are done by multiplying both sides of \eqref{eqn-lcft-H-conf-infty} by $r^{2\Delta_{\beta_3}}$ and applying Lemma \ref{lmm-lf-insertion-infty}.
\end{proof}

The \emph{uniform embedding} of two-pointed quantum disk in the strip gives a Liouville field:

\begin{theorem}[Theorem 2.22 of \cite{AHS21}]
	For $W>\frac{\gamma^2}{2}$ and $\beta = \gamma+\frac{2-W}{\gamma}$, if we independently sample $T$ from $\textup{Leb}_\mathbb{R}$ and $(\mathcal{S},\phi, +\infty, -\infty)$ from $\mathcal{M}_2^{\textup{disk}}(W)$, then the law of $\tilde{\phi}:=\phi(\cdot+T)$ is  $\frac{\gamma}{2(Q-\beta)^2}\textup{LF}_\mathcal{S}^{(\beta,\pm\infty)}$. 
\end{theorem}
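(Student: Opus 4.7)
My plan is to compare both measures through the radial--lateral decomposition \eqref{eqn-gff-decom} of the strip $\mathcal{S}$. A sample of $\textup{LF}_\mathcal{S}^{(\beta,\pm\infty)}$ is of the form $\phi(z) = B_{2\textup{Re}\,z} - (Q-\beta)|\textup{Re}\,z| + \mathbf{c} + h^2_\mathcal{S}(z)$ with $B$ a two-sided standard Brownian motion and $\mathbf{c}$ sampled from $e^{(\beta-Q)c}dc$, whereas a sample of $\mathcal{M}_2^{\textup{disk}}(W)$ reads $\phi(z) = X_{\textup{Re}\,z} + \mathbf{c}' + \psi_2(z)$ with $X$ the conditioned Brownian process from Definition \ref{def-thick-disk}, $\psi_2$ an independent copy of $h^2_\mathcal{S}$, and $\mathbf{c}'$ sampled from $\tfrac{\gamma}{2}e^{(\beta-Q)c}dc$. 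Applying the horizontal shift by $T\sim\textup{Leb}_\mathbb{R}$ leaves $\psi_2(\cdot+T)$ a copy of $h^2_\mathcal{S}$ by translation invariance, reducing the identity to one between infinite measures on the radial process alone.

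For the radial identity I would invoke Williams' two-sided path decomposition. The process $W_t := B_{2t} - (Q-\beta)|t|$ has outward linear drift, so almost surely attains a unique maximum at some random location $\sigma\in\mathbb{R}$ of value $N\ge 0$, and Williams' theorem identifies the centered path $s \mapsto W(\sigma+s)-N$ with the conditioned Brownian process $X$ from Definition \ref{def-thick-disk}, independently of $(\sigma,N)$. Under the (argmax, max, centered path) parametrization, the Liouville radial samples as $(\sigma, M = N + \mathbf{c}, X')$ with $X' \sim P_X$, while the shifted QD radial samples as $(\sigma = -T, M = \mathbf{c}', X)$ with $\sigma$ of marginal $\textup{Leb}_\mathbb{R}$, $M$ of marginal $\tfrac{\gamma}{2} e^{(\beta-Q)M}dM$, and $X \sim P_X$. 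The claim then reduces to showing that the Liouville joint law of $(\sigma, M)$ is $(Q-\beta)^2\,d\sigma \otimes e^{(\beta-Q)M}dM$.

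Changing variables $(\sigma,N,\mathbf{c}) \mapsto (\sigma, M = N + \mathbf{c})$ puts the Liouville joint into the form $\tilde\rho(d\sigma) \otimes e^{(\beta-Q)M}dM$ with $\tilde\rho(d\sigma) := \int_0^\infty e^{(Q-\beta)N}\rho_W(d\sigma,dN)$ and $\rho_W$ the Wiener joint law of $(\sigma,N)$. The translation invariance of $\textup{LF}_\mathcal{S}^{(\beta,\pm\infty)}$, inherited from the scale invariance of $\textup{LF}_\bbH^{(\beta,0),(\beta,\infty)}$ via Lemma~\ref{lmm-lcft-H-strip}, forces $\tilde\rho$ to be a constant multiple of Lebesgue, and the whole proof reduces to pinning down the constant. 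The main obstacle is to do this rigorously, since $\mathbb{E}_{\textup{Wiener}}[e^{(Q-\beta)N}] = \infty$ and the naive Laplace transform diverges. I would sidestep this by matching both sides against a reference observable --- for instance, the law of the quantum length of the top boundary of $\mathcal S$, computable on the LCFT side via boundary GMC identities and on the QD side via the standard $\mathcal M_2^{\textup{disk}}(W;\ell,r)$ disintegration --- and reading off the prefactor $\frac{\gamma}{2(Q-\beta)^2}$ from the ratio.
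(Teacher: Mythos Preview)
The paper does not give its own proof; the result is quoted from \cite{AHS21}. Your overall strategy --- radial--lateral decomposition on $\mathcal S$, a Williams-type path decomposition for the radial part, and translation invariance to handle the argmax --- is the natural one.

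There is, however, a genuine gap in your invocation of a ``two-sided Williams decomposition''. You claim that under the law of $W_t = B_{2t} - (Q-\beta)|t|$, the centered path $Y'_s := W(\sigma+s)-N$ is independent of $(\sigma, N)$ with law $P_X$. This cannot hold: since $W_0=0$ forces the a.s.\ identity $N = -Y'(-\sigma)$, independence of $Y'$ from $(\sigma, N)$ would mean that conditionally on $\sigma=\sigma_0\neq 0$ the variable $N=-Y'(-\sigma_0)$ is both a nondegenerate function of $Y'$ and independent of it --- a contradiction. (Concretely, one can check that the Wiener-marginal of $Y'$ is $P_X$ reweighted by $(Q-\beta)^2\int_{\mathbb R} e^{(Q-\beta)Y'(u)}\,du$, not $P_X$.) What the argument actually needs, and what \emph{does} hold, is the analogous statement under the \emph{tilted} measure $e^{(Q-\beta)N}P_W$ arising after the additive constant $\mathbf c$ is integrated out: there $(\sigma, Y')$ has law $(Q-\beta)^2\,d\sigma\times P_X$. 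The Lebesgue factor in $\sigma$ is exactly your translation-invariance step. To identify the $Y'$-factor as $P_X$: on $\{\sigma>0\}$, one-sided Williams (Lemma~\ref{lem-Mbeta}) makes $Y'|_{s>0}$ the post-maximum of the right half, which is one side of $X$ and is independent of everything entering the tilt; since translation invariance forces the conditional law of $Y'$ not to depend on the sign of $\sigma$, the mirror argument at $\sigma<0$ supplies the other independent half.

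The constant also falls out directly, so your observable-matching detour is unnecessary. Writing $\mu=Q-\beta$ and $p(\sigma,a)$ for the first-passage density to level $a$ of variance-$2$ Brownian motion with drift $+\mu$, the $\sigma$-density under $e^{\mu N}P_W$ is, for $\sigma>0$,
\[
\mu\int_0^\infty p(\sigma,a)\bigl(1 - e^{-\mu a}\bigr)\,da \;=\; \mu^2\bigl[\Phi(\mu\sqrt{\sigma/2})+\Phi(-\mu\sqrt{\sigma/2})\bigr]\;=\;\mu^2,
\]
which gives the prefactor $\gamma/\bigl(2(Q-\beta)^2\bigr)$ immediately.
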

This result also leads to the notion of three-pointed quantum disks, where we may first sample a surface from the quantum disk measure reweighted by the left/right boundary length, and then sample a third marked point on $\mathbb{R}$ from the quantum length measure. 

\begin{definition}\label{three-pointed-disk}
	Fix {$W\ge\frac{\gamma^2}{2}$}. First sample  $(\mathcal{S},\phi, +\infty, -\infty)$ from $\nu_{\phi}(\mathbb{R})\mathcal{M}_2^{\textup{disk}}(W)[d\phi]$ and then sample $s\in\mathbb{R}$ according to the probability measure proportional to $\nu_{\phi}|_{\mathbb{R}}$. We denote the law of the surface  $(\mathcal{S},\phi, +\infty, -\infty, s)/{\sim_\gamma}$ by $\mathcal{M}_{2, \bullet}^{\textup{disk}}(W)$. 
\end{definition}

The definition above can be naturally extended to the case with the marked point added on $\mathbb{R}+i\pi$. And we have the following relation between $\mathcal{M}_{2, \bullet}^{\textup{disk}}(W)$ and Liouville fields.  

\begin{proposition}[Proposition 2.18 of \cite{AHS21}]\label{prop-m2dot}
	For $W>\frac{\gamma^2}{2}$ and $\beta = \gamma+\frac{2-W}{\gamma}$, let $\phi$ be sampled from $\frac{\gamma}{2(Q-\beta)^2}\textup{LF}_{\mathcal{S}}^{(\beta,\pm\infty), (\gamma, 0)}$. Then $(\mathcal{S}, \phi, +\infty, -\infty, 0)/{\sim_\gamma}$ has the same law as $\mathcal{M}_{2, \bullet}^{\textup{disk}}(W)$.
\end{proposition}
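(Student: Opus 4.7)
The plan is to combine the uniform embedding theorem stated just above Definition~\ref{three-pointed-disk} with the Gaussian multiplicative chaos (GMC) approximation of the boundary length measure, turning the sampling of a length-typical boundary point into a $\gamma$-log insertion at $0$ via Lemma~\ref{lmm-lf-insertion-bdry}.

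First, I would unfold Definition~\ref{three-pointed-disk}: the joint law of a representative $(\mathcal{S},\phi,+\infty,-\infty)$ of a sample from $\mathcal{M}_2^{\textup{disk}}(W)$ together with the marked boundary point $s\in\mathbb{R}$ is the measure $\nu_\phi(ds)\,\mathcal{M}_2^{\textup{disk}}(W)[d\phi]$ on pairs. Since the conformal automorphisms of $\mathcal{S}$ fixing $\pm\infty$ are precisely the horizontal translations, each equivalence class $(\mathcal{S},\phi,+\infty,-\infty,s)/{\sim_\gamma}$ has a unique representative with the third marked point at $0$, namely $\tilde\phi(\cdot):=\phi(\cdot+s)$. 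It therefore suffices to show that the law of $\tilde\phi$ is $\frac{\gamma}{2(Q-\beta)^2}\textup{LF}_{\mathcal{S}}^{(\beta,\pm\infty),(\gamma,0)}$.

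Next, using the GMC representation $\nu_\phi(ds)=\lim_{\epsilon\to 0}\epsilon^{\gamma^2/4}e^{\gamma\phi_\epsilon(s)/2}\,ds$ and testing against a suitable function $F(\tilde\phi)$ gives
\begin{align*}
\int F(\tilde\phi)\,\nu_\phi(ds)\,\mathcal{M}_2^{\textup{disk}}(W)[d\phi]
&=\lim_{\epsilon\to 0}\int_\mathbb{R} ds\int F(\phi(\cdot+s))\,\epsilon^{\gamma^2/4}e^{\gamma\phi_\epsilon(s)/2}\,\mathcal{M}_2^{\textup{disk}}(W)[d\phi]\\
&=\lim_{\epsilon\to 0}\int F(\tilde\phi)\,\epsilon^{\gamma^2/4}e^{\gamma\tilde\phi_\epsilon(0)/2}\,\mathcal{N}[d\tilde\phi],
\end{align*}
where $\mathcal{N}$ denotes the law of $\phi(\cdot+T)$ for independent $\phi\sim\mathcal{M}_2^{\textup{disk}}(W)$ and $T\sim\textup{Leb}_{\mathbb{R}}$; this equality is obtained by exchanging the order of integration and substituting $\tilde\phi=\phi(\cdot+s)$ with $T=s$, so that $\tilde\phi_\epsilon(0)=\phi_\epsilon(s)$. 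By the uniform embedding theorem, $\mathcal{N}=\frac{\gamma}{2(Q-\beta)^2}\textup{LF}_{\mathcal{S}}^{(\beta,\pm\infty)}$. Applying the strip analog of Lemma~\ref{lmm-lf-insertion-bdry} with $\beta=\gamma$ and $s=0$ converts the $\epsilon^{\gamma^2/4}e^{\gamma\tilde\phi_\epsilon(0)/2}$ weighting into a $(\gamma,0)$ insertion, yielding $\frac{\gamma}{2(Q-\beta)^2}\textup{LF}_{\mathcal{S}}^{(\beta,\pm\infty),(\gamma,0)}$ as desired.

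The main technical obstacle is justifying the interchange of $\lim_{\epsilon\to 0}$ with integration against the infinite measures $\mathcal{M}_2^{\textup{disk}}(W)$ and $\textup{LF}_{\mathcal{S}}^{(\beta,\pm\infty)}$. I would reduce to a finite-mass setting by taking $F$ bounded continuous and supported on fields whose total quantum area and two boundary lengths all lie in a fixed compact subset of $(0,\infty)^3$, and then apply the standard $L^p$ bounds for GMC which underlie the vague convergence already used in the proof of Lemma~\ref{lmm-lf-insertion-bdry}. The strip version of that lemma follows from its half-plane statement via the conformal map $z\mapsto e^z$ and Lemma~\ref{lmm-lcft-H-strip}.
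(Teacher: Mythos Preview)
Your proposal is correct and follows exactly the approach one expects: combine the uniform embedding theorem (Theorem~2.22 of \cite{AHS21}, stated just before Definition~\ref{three-pointed-disk}) with the GMC approximation of the boundary length measure to convert the length-typical marked point into a $(\gamma,0)$ insertion via Lemma~\ref{lmm-lf-insertion-bdry}. The paper itself does not give a proof of this proposition but simply cites it from \cite{AHS21}; your argument is essentially the one given there.
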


This third added point, which is sampled from the quantum length measure, is usually referred as \textit{quantum typical point}, and results in a $\gamma$-insertion to the Liouville field. This gives rise to the quantum disks with general third insertion points, which could be defined via three-pointed Liouville fields.

\begin{definition}\label{def-m2dot-alpha}
	Fix $W>\frac{\gamma^2}{2}$ and let $\alpha\in\mathbb{R}$. Set $\mathcal{M}_{2, \bullet}^{\textup{disk}}(W;\alpha)$ to be the law of $(\mathcal{S}, \phi, +\infty, -\infty, 0)/\sim_\gamma$ with $\phi$ sampled from $\frac{\gamma}{2(Q-\beta)^2}\textup{LF}_{\mathcal{S}}^{(\beta,\pm\infty), (\alpha, 0)}$. We call the boundary arc between the two $\beta$-singularities with (resp. not containing) the $\alpha$-singularity the marked (resp. unmarked) boundary arc.
\end{definition}

One can also add a third boundary marked point for thin disks  {and extend the definition of $\mathcal{M}_{2, \bullet}^{\textup{disk}}(W)$ to $W\in(0,\frac{\gamma^2}{2})$}. Recall in \cite[Proposition 4.4]{AHS20},  {one can equivalently define $\mathcal{M}_{2, \bullet}^{\textup{disk}}(W)$ with $W\in(0,\frac{\gamma^2}{2})$} by starting from first sampling a thick disk from  $\mathcal{M}_{2, \bullet}^{\textup{disk}}(\gamma^2-W)$ and then concatenating another two independent weight $W$ thin disks to the two endpoints. Therefore this leads to 
\begin{definition}\label{def-m2dot-thin}
	For $W\in(0, \frac{\gamma^2}{2})$ and $\alpha\in\mathbb{R}$, suppose $(S_1, S_2, S_3)$ is sampled from 
	$$(1-\frac{2}{\gamma^2}W)^2\mathcal{M}_{2}^{\textup{disk}}(W) \times\mathcal{M}_{2, \bullet}^{\textup{disk}}(\gamma^2-W;\alpha)\times \mathcal{M}_{2}^{\textup{disk}}(W) $$
	and $S$ is the concatenation of the three surfaces. Then we define the infinite measure $\mathcal{M}_{2, \bullet}^{\textup{disk}}(W;\alpha)$ to be the law of $S$. 
\end{definition} 

So far we have studied three-pointed quantum surfaces in terms of LCFT whenever two of the insertion points have the same $\alpha$ value. Indeed this relation can be extended to three-pointed Liouville fields with different insertion values, from which arises the notion of \textit{quantum triangles}.     

\begin{definition}[Thick quantum triangles]\label{def-qt-thick}
	Fix {$W_1, W_2, W_3>\frac{\gamma^2}{2}$}. Set $\beta_i = \gamma+\frac{2-W_i}{\gamma}<Q$ for $i=1,2,3$, and {let $\phi$ be sampled from $\frac{1}{(Q-\beta_1)(Q-\beta_2)(Q-\beta_3)}\textup{LF}_{\mathcal{S}}^{(\beta_1, +\infty), (\beta_2, -\infty), (\beta_3, 0)}$}. Then we define the infinite measure $\textup{QT}(W_1, W_2, W_3)$ to be the law of $(\mathcal{S}, \phi, +\infty, -\infty, 0)/\sim_\gamma$.
\end{definition}

We note that by Lemma \ref{lmm-lcft-H-strip} (where $s_3=0$) and Lemma \ref{lmm-lcft-H-conf-infty}, the measure $\textup{LF}_{\mathcal{S}}^{(\beta_1, +\infty), (\beta_2, -\infty), (\beta_3, 0)}$ has invariance under the conformal mappings $\mathcal{S}\to\mathcal{S}$ rearranging $\{+\infty, -\infty, 0\}$ and hence compatible with the relation $\sim_\gamma$.  To embed our quantum triangles onto other domains, we can further apply the conformal transforms and use $\sim_\gamma$. Also as will be explained in Section \ref{subsec:critical}, the choice of the constant $\frac{1}{(Q-\beta_1)(Q-\beta_2)(Q-\beta_3)}$ shall allow us to extend the definition when some of the $W_i$ is the same as $\frac{\gamma^2}{2}$, and the boundary length law is some sort of analytic.

\subsection{Quantum triangles with thin vertices}\label{sec-pre-thin}

Again recall that we can define thin quantum disks of weight $W\in(0,\frac{\gamma^2}{2})$ via concatenation of weight $\gamma^2-W$ thick disks (Definition \ref{def-thin-disk}), and a triply-marked thin disk could also be constructed by concatenating three-pointed weight $\gamma^2-W$ disks with weight $W$ thin disks. We shall apply the same idea to construct quantum triangles with thin vertices. See Figure \ref{fig-qt} for an illustration.

\begin{definition}\label{def-qt-thin}
	Fix {$W_1, W_2, W_3\in (0,\frac{\gamma^2}{2})\cup(\frac{\gamma^2}{2}, \infty)$}. Let $I:=\{i\in\{1,2,3\}:W_i<\frac{\gamma^2}{2}\}$.
	{Let $\tilde{W}_i = W_i$ if $i\notin I$, and $\tilde{W}_i = \gamma^2-W_i$ if $i\in I$. Sample 
		$(S_0, (S_i)_{i\in I})$ from 
		\[\textup{QT}(\tilde{W}_1, \tilde{W}_2, \tilde{W}_3)\times \prod_{i\in I} (1-\frac{2W_i}{\gamma^2})\mathcal{M}_2^{\textup{disk}}(W_i). \]
		For $i\in I$, concatenate $S_i$  with $S_0$ at the vertex of $S_0$ of weight $\tilde W_i$.} { Let $\textup{QT}(W_1, W_2, W_3)$ be the law of the resulting quantum surface. 
	}

	
\end{definition} 

\begin{remark}\label{remark-qt-3-disk}
	When $W_3>\frac{\gamma^2}{2}$ with $\beta_3=\gamma+\frac{2-W_3}{\gamma}$, by Definitions \ref{def-m2dot-alpha}, \ref{def-m2dot-thin} and \ref{def-qt-thin}, the measure $\Md_{2,\bullet}(W;\beta_3)$ is some multiple constant of the measure $\QT(W,W,W_3)$.  {We use the notation $\Md_{2,\bullet}(W;\beta_3)$ for compatibility with~\cite{AHS21,AGS21} since we will draw on results obtained there.} 
\end{remark}

\begin{definition}
	For a quantum triangle with thin vertices as in Definition~\ref{def-qt-thin}, we call $S_0$ its \emph{core}, {and we call each $S_i$ an \emph{arm} of weight $W_i$.}
\end{definition}

Since the thin quantum triangle is a concatenation of a thick quantum triangle with one to three independent thin quantum disks, we embed the surface as  $(D, \phi, a_1, a_2, a_3)$ where $D$ is not simply connected; see the discussion after Definition~\ref{def-thin-disk}. The vertices $a_1,a_2,a_3$ correspond to the weight $W_1,W_2,W_3$ vertices respectively. 
To simplify the notations, we shall call the boundary arc between the points with weights $W_1$ and $W_2$ the \emph{left} boundary arc,  the boundary arc between the points with weights $W_2$ and $W_3$ the  \emph{bottom} boundary arc, and the points with weights $W_3$ and $W_1$ the  \emph{right} boundary arc, as depicted in Figure \ref{fig-qt}.

In the remaining  of this section, we will work on the boundary length law of quantum triangles. We begin with the integrability of boundary LQG measure as obtained in \cite{RZ20a, RZ20b}. To state the results we will need several functions. The functions $\bar{R}$ and $\bar{H}$ are introduced for more general parameters (see \cite[Page 6-8]{RZ20b}) but for simplicity we only the ones which will appear later.  For $b>0$, recall the double-gamma function, the meromorphic function $\Gamma_b(z)$ in $\mathbb{C}$ such that for $\textup{Re}z>0$,
\begin{equation}\label{eqn-double-gamma}
	\log\Gamma_b(z) = \int_0^\infty \frac{1}{t}\bigg(\frac{e^{-zt}-e^{\frac{(b^2+1)t}{2b}}}{(1-e^{-bt})(1-e^{-\frac{1}{b}t)}} -\frac{(b^2+1-2bz)^2}{2b^2}e^{-t}+\frac{2bz-b^2-1}{bt} \bigg)dt
\end{equation}
and it satisfies the shift equations 
\begin{equation}\label{eqn-gamma-shift}
	\frac{\Gamma_b(z)}{\Gamma_b(z+b)} = \frac{1}{\sqrt{2\pi}}\Gamma(bz)b^{-bz+\frac{1}{2}}, \ \ \frac{\Gamma_b(z)}{\Gamma_b(z+b^{-1})} = \frac{1}{\sqrt{2\pi}}\Gamma(b^{-1}z)b^{\frac{z}{b}-\frac{1}{2}}.
\end{equation}
For $\mu>0$, let
\begin{equation}
	\bar{R}(\beta,\mu,0) := \bar{R}(\beta,0,\mu) = \mu^{\frac{2(Q-\beta)}{\gamma}}\frac{(2\pi)^{\frac{2(Q-\beta)}{\gamma}-\frac{1}{2}}(\frac{2}{\gamma})^{\frac{\gamma(Q-\beta)}{2}-\frac{1}{2}}   }{ (Q-\beta)\Gamma(1-\frac{\gamma^2}{4})^{\frac{2(Q-\beta)}{\gamma}} } \frac{\Gamma_{\frac{\gamma}{2}}(\beta-\frac{\gamma}{2})}{\Gamma_{\frac{\gamma}{2}}(Q-\beta)}.
\end{equation}
Finally set $\bar{\beta} = \beta_1+\beta_2+\beta_3$ and
\begin{equation}\label{eq-H}
	\bar{H}_{(0,1,0)}^{(\beta_1, \beta_2, \beta_3)} := \frac{ (2\pi)^{\frac{2Q-\bar{\beta}+\gamma}{\gamma} (\frac{2}{\gamma})^{(\frac{\gamma}{2}-\frac{2}{\gamma})(Q-\frac{\bar{\beta}}{2})}-1  } }{ \Gamma(1-\frac{\gamma^2}{4})^{\frac{2Q-\bar{\beta}}{\gamma}} \Gamma(\frac{\bar{\beta}-2Q}{\gamma}) }\frac{\Gamma_{\frac{\gamma}{2}}(\frac{\bar{\beta}}{2}-Q) \Gamma_{\frac{\gamma}{2}}(\frac{\bar{\beta}-2\beta_2}{2}) \Gamma_{\frac{\gamma}{2}}(\frac{\bar{\beta}-2\beta_1}{2} ) \Gamma_{\frac{\gamma}{2}} (Q-\frac{\bar{\beta}-2\beta_3}{2} )  }{ \Gamma_{\frac{\gamma}{2}}(Q)\Gamma_{\frac{\gamma}{2}}(Q-\beta_1)\Gamma_{\frac{\gamma}{2}}(Q-\beta_2)\Gamma_{\frac{\gamma}{2}}(\beta_3)   }.
\end{equation}

\begin{proposition}[Theorem 1.1 of \cite{RZ20a}; also see Section 3.3.4 of \cite{RZ20b}]\label{prop-rz20a} Fix $\beta_1, \beta_2, \beta_3\in\mathbb{R}$ and set $\bar{\beta}=\beta_1+\beta_2+\beta_3$. Let $h$ be sampled from $P_{\mathbb{H}}$ and let $\phi(z) = h(z) - \beta_1\log|z|-\beta_2\log |1-z|$. Then
	\begin{equation}\label{eqn-rz20a}
		\bar{H}_{(0,1,0)}^{(\beta_1, \beta_2, \beta_3)} = \mathbb{E}\big[\nu_{\phi}([0,1])^{\frac{2Q-\bar{\beta}}{\gamma}}\big]
	\end{equation}
	if $\beta_1, \beta_2, \beta_3$ satisfies the constraints
	\begin{equation}\label{eqn-seiberg}
		\beta_1,\beta_2<Q,  \ |\beta_1-\beta_2|<\beta_3,  \text{and} \ \bar{\beta}>\gamma . 
	\end{equation}	
	If \eqref{eqn-seiberg} is not jointly satisfied, then the right hand side of \eqref{eqn-rz20a} is infinite.	
\end{proposition}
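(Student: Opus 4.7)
The plan is to identify $\bar H_{(0,1,0)}^{(\beta_1,\beta_2,\beta_3)}$ as the boundary three-point structure constant of Liouville conformal field theory on the half plane and to derive its closed-form expression via the BPZ/shift-equation strategy pioneered by Kupiainen--Rhodes--Vargas and adapted to the boundary setting. The right-hand side of \eqref{eqn-rz20a} is, after the Cameron--Martin shift encoded in the log-singularities of $\phi$ and the definition of $\nu_\phi$, an $s$-th moment of a boundary Gaussian multiplicative chaos with two insertions, where $s=(2Q-\bar\beta)/\gamma$; this is precisely the unnormalized correlator $\langle V_{\beta_1}(0) V_{\beta_2}(1) V_{\beta_3}(\infty)\rangle$ of boundary LCFT at cosmological constant $\mu=0$ after integrating out the zero mode.

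I would first verify that the Seiberg range \eqref{eqn-seiberg} is sharp by standard GMC arguments. The bounds $\beta_i<Q$ guarantee local integrability near the finite insertions (Kahane); the bound $\bar\beta>\gamma$ is equivalent to $s<\tfrac{4}{\gamma^2}$ and so yields finiteness of the GMC moment itself (Rhodes--Vargas); and $|\beta_1-\beta_2|<\beta_3$ is the reflection condition ensuring that the insertion implicitly placed at $\infty$, which carries the KPZ exponent $s$, is itself sub-critical. The infiniteness of \eqref{eqn-rz20a} outside this region would be verified by exhibiting the corresponding divergence, depending on which bound fails: non-integrability at $0$ or $1$, divergence of the spatial GMC integral on an unbounded scale, or reflection blow-up of the boundary one-point function at $\infty$.

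For the exact formula I would augment the correlator with an auxiliary degenerate insertion of weight $-\chi/2$ at a boundary point $t\in(1,\infty)$, for $\chi\in\{\gamma/2,\,2/\gamma\}$, and study the resulting four-point function $U_\chi(t)$. A Girsanov computation rewrites $U_\chi(t)$ as a weighted GMC moment with an extra logarithmic singularity at $t$, while the level-two BPZ null-vector equation shows that $U_\chi$ satisfies a second order hypergeometric ODE on $(1,\infty)$. Its two independent solutions are Euler-type contour integrals whose Mellin transforms are explicit ratios of $\Gamma$-functions. Matching the asymptotics of $U_\chi(t)$ as $t\to\infty$, where the four-point function factorizes into a three-point function times a boundary two-point function, against the behavior at the other singular points and applying Mellin inversion yields two \emph{shift equations} for the map $\beta_3\mapsto \bar H$: one shifting $\beta_3$ by $\gamma/2$ and one by $2/\gamma$, with multipliers matching the recursions \eqref{eqn-gamma-shift} obeyed by $\Gamma_{\gamma/2}$.

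The principal obstacle is promoting the pair of shift equations into a unique determination of $\bar H$. For generic $\gamma\in(0,2)$ with $\gamma^2$ irrational, the additive group $\frac{\gamma}{2}\mathbb{Z}+\frac{2}{\gamma}\mathbb{Z}$ is dense in $\mathbb{R}$, so any sufficiently regular meromorphic solution of both shift relations equals the right-hand side of \eqref{eq-H} up to a single multiplicative constant. That constant is pinned down by evaluating at a degenerate specialization where the moment collapses to a closed form, for instance at an integer value of $s$ reducing to a Selberg--Dotsenko--Fateev integral, or at a boundary value matching the reflection coefficient $\bar R(\beta,\mu,0)$; continuity in $\gamma$ then extends the identity to all $\gamma\in(0,2)$. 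The technical heart of the argument lies in establishing the regularity and decay estimates needed to justify Mellin inversion and to exclude spurious homogeneous solutions to the shift equations, which is precisely where the careful probabilistic analysis of GMC in \cite{RZ20a} is required.
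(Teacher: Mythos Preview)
The paper does not prove this proposition at all: it is stated as a citation of Theorem~1.1 of \cite{RZ20a} (and Section~3.3.4 of \cite{RZ20b}) and used as a black box. There is therefore nothing to compare your argument against in the paper itself.

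That said, your sketch is a fair high-level summary of the strategy actually carried out in \cite{RZ20a,RZ20b}: interpret the moment as a boundary LCFT three-point function, insert a degenerate boundary vertex operator to obtain a hypergeometric BPZ equation, extract two shift relations in $\beta_3$ by the amounts $\gamma/2$ and $2/\gamma$, and pin down the unique solution using density of $\tfrac{\gamma}{2}\mathbb Z+\tfrac{2}{\gamma}\mathbb Z$ together with a Selberg-type normalization. Your account of the Seiberg bounds is also essentially right, though the roles are slightly permuted: the condition $\bar\beta>\gamma$ is $s<\tfrac{2}{\gamma}(Q-\tfrac{\gamma}{2})=\tfrac{4}{\gamma^2}-1$ rather than $\tfrac{4}{\gamma^2}$, and the constraint $|\beta_1-\beta_2|<\beta_3$ is equivalent to $s>-\tfrac{2}{\gamma}(Q-\max(\beta_1,\beta_2))$, which governs integrability of the negative moment near the heavier of the two finite insertions rather than a reflection at $\infty$. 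These are minor corrections; the overall architecture you describe is the one used in the cited references.
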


The boundary length law of quantum disks can also be expressed in terms of $\bar{R}$.

\begin{proposition}[{Propositions 3.3 and 3.6 of \cite{AHS21}}]\label{prop-disk-bdry-law}
	For $W<\gamma Q$, $\beta = \gamma+\frac{2-W}{\gamma}$, the left (or right) boundary of a sample from $\mathcal{M}_{2}^{\textup{disk}}(W)$ has law
	\begin{equation}\label{eqn-disk-bdry-law}
		\textup{\textbf{1}}_{\ell>0}\bar{R}(\beta; 1, 0)\ell^{-\frac{2}{\gamma^2}W}d\ell.
	\end{equation}
	When $W\ge\gamma Q$, for any subinterval $I$ of $(0,\infty)$, the event $\{\text{left boundary length}\in I  \}$ has infinite $\mathcal{M}_{2}^{\textup{disk}}(W)$ measure.
\end{proposition}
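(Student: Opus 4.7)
The plan is to treat the thick regime $W\in[\gamma^2/2,\gamma Q)$ first using Definition~\ref{def-thick-disk}, and then pass to the thin regime via the Poissonian concatenation of Definition~\ref{def-thin-disk}. By the symmetry $z\mapsto\bar{z}+i\pi$ of the strip, the two boundary arcs have the same law, so I only need to treat one of them.

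For $W$ thick, the scaling structure of the field immediately gives the exponent. Writing $\phi=\hat{\psi}+\mathbf{c}$ as in Definition~\ref{def-thick-disk}, with $\hat{\psi}$ independent of $\mathbf{c}$ and $\mathbf{c}$ of density $\tfrac{\gamma}{2}e^{(\beta-Q)c}\,dc$, one has $\nu_{\hat{\psi}+\mathbf{c}}(\mathbb{R})=e^{\gamma\mathbf{c}/2}\nu_{\hat{\psi}}(\mathbb{R})$. Changing variables from $\mathbf{c}$ to $\ell=e^{\gamma\mathbf{c}/2}\nu_{\hat{\psi}}(\mathbb{R})$ and using $\tfrac{2(\beta-Q)}{\gamma}=1-\tfrac{2W}{\gamma^2}$, the marginal density of $\ell$ takes the form
\[
C(W)\,\ell^{-2W/\gamma^2}\,d\ell, \qquad C(W)=\mathbb{E}\bigl[\nu_{\hat{\psi}}(\mathbb{R})^{2W/\gamma^2-1}\bigr],
\]
which already yields the required power law. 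To identify $C(W)$ with $\bar{R}(\beta;1,0)$, I would invoke the correspondence between the uniform embedding of $\mathcal{M}_2^{\textup{disk}}(W)$ and the Liouville field $\mathrm{LF}_{\mathcal{S}}^{(\beta,\pm\infty)}$ recalled in Section~\ref{sec-pre-qt}. Under this correspondence $C(W)$ becomes, up to an explicit multiplicative constant, a boundary Liouville one-point moment with two bulk-$\beta$ insertions at $\pm\infty$; conformally mapping $\mathcal{S}\to\mathbb{H}$ via Lemmas~\ref{lmm-lcft-H-strip} and~\ref{lmm-lcft-H-conf-infty} reduces this to the type of Liouville moment evaluated in~\cite{RZ20b}, whose explicit double-gamma expression, via the shift equations~\eqref{eqn-gamma-shift}, matches the formula given for $\bar{R}(\beta;1,0)$.

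For the thin case $W\in(0,\gamma^2/2)$, Definition~\ref{def-thin-disk} presents the total boundary length $L$ as a Poissonian sum of i.i.d.\ boundary lengths of weight-$(\gamma^2-W)$ thick disks indexed by a Poisson process on $[0,T]$, with $T$ independent of the rest. Setting $\beta'=2Q-\beta$, the first step of the proof gives the law of each component length, so the exponential formula for Poissonian integrals gives
\[
\mathbb{E}\bigl[e^{-\lambda L}\bigm|T\bigr] = \exp\!\Bigl(-T\,\bar{R}(\beta';1,0)\!\int_0^\infty(1-e^{-\lambda\ell})\ell^{-2(\gamma^2-W)/\gamma^2}\,d\ell\Bigr).
\]
The inner integral is a standard $\Gamma$-integral producing a constant multiple of $\lambda^{1-2W/\gamma^2}$; integrating against the density $(1-2W/\gamma^2)^{-2}\,dT$ of $T$ yields a Laplace transform proportional to $\lambda^{2W/\gamma^2-1}$, which inverts to a density $c\,\ell^{-2W/\gamma^2}\,d\ell$. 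Matching $c$ with $\bar{R}(\beta;1,0)$ then uses the Euler reflection identity $\Gamma(x)\Gamma(1-x)=\pi/\sin(\pi x)$ together with the shift equations~\eqref{eqn-gamma-shift} to convert $\bar{R}(2Q-\beta;1,0)$ into $\bar{R}(\beta;1,0)$.

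Finally, when $W\ge\gamma Q$ one has $\beta\le\gamma/2$; in this regime the moment $C(W)=\mathbb{E}[\nu_{\hat{\psi}}(\mathbb{R})^{2W/\gamma^2-1}]$ diverges, as the relevant Seiberg-type bound in the spirit of Proposition~\ref{prop-rz20a} fails, and combined with the scaling identity this forces infinite mass on every subinterval of $(0,\infty)$. The main obstacle throughout is the identification of the multiplicative constant: the scaling argument pins down the exponent but not the prefactor, and obtaining the explicit double-gamma expression for $\bar{R}(\beta;1,0)$ relies on the LCFT integrability input of~\cite{RZ20b} together with careful bookkeeping of normalisation constants across the correspondence between quantum disks and Liouville fields.
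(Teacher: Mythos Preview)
The paper does not prove this proposition; it is quoted verbatim from \cite{AHS21} (Propositions~3.3 and~3.6 there) with no argument given here. So there is no ``paper's own proof'' to compare against.

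That said, your sketch is essentially the argument in \cite{AHS21}. The scaling computation for the thick case is correct and gives the exponent $-2W/\gamma^2$ with the prefactor $\mathbb{E}[\nu_{\hat\psi}(\mathbb{R})^{2W/\gamma^2-1}]$; identifying this moment with $\bar R(\beta;1,0)$ is exactly the content of \cite[Proposition~3.3]{AHS21}, which uses the LCFT description of $\mathcal{M}_2^{\textup{disk}}(W)$ and the boundary reflection coefficient computed in \cite{RZ20b}. Your thin-case argument via the exponential formula for the Poisson point process and Laplace inversion is the approach of \cite[Proposition~3.6]{AHS21}, and the constant matching indeed goes through the reflection relation $\bar R(\beta;1,0)\bar R(2Q-\beta;1,0) = \bigl(\Gamma(1-\tfrac{2(Q-\beta)}{\gamma})\Gamma(1+\tfrac{2(Q-\beta)}{\gamma})\bigr)^{-1}$ (cf.~\eqref{eqn-qt-bdry-thin-3} later in the paper). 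The $W\ge\gamma Q$ divergence statement is likewise correct: the relevant GMC moment blows up once the exponent $\tfrac{2W}{\gamma^2}-1$ reaches $\tfrac{4}{\gamma^2}$, i.e.\ $W\ge\gamma Q$. Your outline is sound; the only part that requires genuine care is the bookkeeping of normalisations when passing between the quantum-disk and Liouville-field pictures, which you correctly flag as the main obstacle.
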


Now we are ready to find the boundary length law for our quantum triangles. For a sample from $\textup{QT}(W_1,W_2,W_3)$, let $L_{12}$ be the quantum length of the boundary arc between the $\beta_1$ and $\beta_2$ singularities.

\begin{proposition}\label{prop-qt-bdry-thick}
	Suppose $W_1, W_2, W_3>\frac{\gamma^2}{2}$ and let $\beta_i = \gamma+\frac{2-W_i}{\gamma}$ for $i=1,2,3$. Set $\bar{\beta} = \beta_1+\beta_2+\beta_3$. Suppose $(\beta_i)$ satisfies the bounds \eqref{eqn-seiberg}. Then for a sample from $\textup{QT}(W_1,W_2,W_3)$,  $L_{12}$ has law 
	\begin{equation}\label{eqn-qt-bdry-thick}
		\textup{\textbf{1}}_{\ell>0}\frac{2}{\gamma(Q-\beta_1)(Q-\beta_2)(Q-\beta_3)}\bar{H}_{(0,1,0)}^{(
			\beta_1,\beta_2,\beta_3)} \ell^{\frac{\bar{\beta}-2Q}{\gamma}-1}d\ell.
	\end{equation}
\end{proposition}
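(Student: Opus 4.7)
The plan is to transfer the quantum triangle to an $\bbH$-embedding with insertions at $0$, $1$, $\infty$ so that $L_{12}$ becomes the quantum length of the interval $[0,1]$, then integrate out the zero-mode and invoke Proposition~\ref{prop-rz20a}.

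First, by Lemma~\ref{lmm-lcft-H-strip} applied with $s_3=0$ (so the multiplicative factor reduces to $1$), the map $z\mapsto e^z$ pushes $\textup{LF}_{\mathcal{S}}^{(\beta_1,+\infty),(\beta_2,-\infty),(\beta_3,0)}$ forward to $\textup{LF}_{\bbH}^{(\beta_1,\infty),(\beta_2,0),(\beta_3,1)}$. A further application of Lemma~\ref{lmm-lcft-H-conf-infty} with the Möbius map $\psi(z)=1/(1-z)$ then identifies $\textup{LF}_{\bbH}^{(\beta_1,0),(\beta_2,1),(\beta_3,\infty)}$ with $\psi_*\textup{LF}_{\bbH}^{(\beta_1,\infty),(\beta_2,0),(\beta_3,1)}$. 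Tracing the boundary arc between the $\beta_1$ and $\beta_2$ insertions through these two transformations sends the top boundary $\mathbb{R}+i\pi$ of $\mathcal{S}$ first to $(-\infty,0)$ and then to $[0,1]$. Since the boundary quantum length is a $\sim_\gamma$-invariant, this shows that $L_{12}$ has the same law as $\nu_\phi([0,1])$, where $\phi$ is sampled from $\frac{1}{(Q-\beta_1)(Q-\beta_2)(Q-\beta_3)}\textup{LF}_{\bbH}^{(\beta_1,0),(\beta_2,1),(\beta_3,\infty)}$.

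Next, I would unpack Definition~\ref{def-lf-H-bdry} for this field. Using $G_{\bbH}(\infty,z)=2\log|z|_+$, the fact that $G_{\bbH}(0,1)=0$, and $|0|_+=|1|_+=1$, one checks that the normalizing constant $C_{\bbH}$ is $1$ and that, with $(h,\mathbf{c})\sim P_{\bbH}\times[e^{(\bar{\beta}/2-Q)c}\,dc]$,
\begin{equation*}
\phi(z) \;=\; h(z) + (\bar{\beta}-2Q)\log|z|_+ - \beta_1\log|z| - \beta_2\log|1-z| + \mathbf{c}.
\end{equation*}
On $(0,1)$ the term $\log|z|_+$ vanishes, so $\phi=\phi_0+\mathbf{c}$ there, where $\phi_0(z)=h(z)-\beta_1\log|z|-\beta_2\log|1-z|$ is precisely the field appearing in Proposition~\ref{prop-rz20a}. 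Because boundary length scales as $e^{\gamma\mathbf{c}/2}$ under shifts by a constant, $L_{12}=e^{\gamma\mathbf{c}/2}\nu_{\phi_0}([0,1])$.

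Finally, I would compute the law of $L_{12}$ by changing variables to $\ell=e^{\gamma\mathbf{c}/2}\nu_{\phi_0}([0,1])$ when integrating $\mathbf{c}$ against $e^{(\bar{\beta}/2-Q)c}\,dc$. The Jacobian produces $(2/\gamma)\ell^{(\bar{\beta}-2Q)/\gamma-1}\,d\ell$ together with the factor $\nu_{\phi_0}([0,1])^{(2Q-\bar{\beta})/\gamma}$. Taking expectation over $h$ and applying Proposition~\ref{prop-rz20a} under the Seiberg bounds~\eqref{eqn-seiberg} replaces this last factor by $\bar{H}_{(0,1,0)}^{(\beta_1,\beta_2,\beta_3)}$, and after dividing by $(Q-\beta_1)(Q-\beta_2)(Q-\beta_3)$ one obtains exactly~\eqref{eqn-qt-bdry-thick}. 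The only real bookkeeping is tracking the multiplicative constants across the two conformal identifications and verifying that $C_{\bbH}=1$ in the final embedding; once these are in hand the rest is a direct substitution.
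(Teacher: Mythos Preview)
Your proposal is correct and follows essentially the same approach as the paper's proof. The paper simply asserts the $\bbH$-embedding with insertions at $(0,1,\infty)$ as a consequence of Definition~\ref{def-qt-thick} (the justification via Lemmas~\ref{lmm-lcft-H-strip} and~\ref{lmm-lcft-H-conf-infty} that you spell out is noted just after that definition), then writes $\phi=\phi_0+\mathbf c$, performs the same substitution $\ell=e^{\gamma c/2}\nu_{\phi_0}([0,1])$ with Fubini, and invokes Proposition~\ref{prop-rz20a}.
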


\begin{proof}
	By Definition \ref{def-qt-thick}, we can sample our quantum triangle by sampling $\phi$ from $\textup{LF}_\bbH^{(\beta_1,0), (\beta_2,1), (\beta_3,\infty)}$ and outputting $(\mathbb{H}, \phi, 0, 1, \infty)/\sim_\gamma$. Then one can check that our $\phi$ has expression
	\begin{equation}\label{eqn-pf-qt-bdry-thick-1}
		\phi(z) = h(z) +(\bar{\beta}-2Q)\log|z|_+-\beta_1\log|z|-\beta_2\log|z-1|+\mathbf{c}:=\phi_0(z)+\mathbf{c}
	\end{equation}
	where $(h, \mathbf{c})$ is sampled from $P_{\mathbb{H}}\times e^{\frac{\bar{\beta}-2Q}{2}c}dc$. Now for $b>a>0$, we have
	\begin{equation}
		\begin{split}
			\textup{QT}(W_1,W_2,W_3)\big[\textbf{1}_{\nu_{\phi}([0,1])\in (a,b)}\big]&=\frac{1}{(Q-\beta_1)(Q-\beta_2)(Q-\beta_3)} \mathbb{E}_{P_{\mathbb{H}}}\bigg[\int_0^\infty \textbf{1}_{e^{\frac{\gamma}{2}c  }\nu_{\phi_0}([0,1])\in (a,b)}e^{\frac{\bar{\beta}-2Q}{2}c}dc \bigg]\\
			&=\frac{2}{\gamma(Q-\beta_1)(Q-\beta_2)(Q-\beta_3)}\int_a^b\mathbb{E}_{P_{\mathbb{H}}}\big[ \big(\nu_{\phi_0}([0,1])\big)^{{\frac{2Q-\bar{\beta}}{\gamma}}} \big]\ell^{\frac{\bar{\beta}-2Q}{\gamma}-1} d\ell
		\end{split}	
	\end{equation} 
	where we applied the substitution $\ell = e^{\frac{\gamma}{2}c  }\nu_{\phi_0}([0,1])$ and Fubini's theorem. We conclude the proof by noticing that our $\phi_0$ in \eqref{eqn-pf-qt-bdry-thick-1} coincides with the $\phi$ in Proposition \ref{prop-rz20a} on the interval $[0,1]$ and applying \eqref{eqn-rz20a}.
\end{proof}

We can infer from \eqref{eqn-qt-bdry-thick} that 
\begin{equation}\label{eqn-qt-thick-bdry-lp}
	\textup{QT}(W_1,W_2,W_3)[e^{-\mu L_{12}}] = \frac{2}{\gamma(Q-\beta_1)(Q-\beta_2)(Q-\beta_3)}\bar{H}_{(0,1,0)}^{(
		\beta_1,\beta_2,\beta_3)}\Gamma(\frac{\bar{\beta}-2Q}{\gamma}) \mu^{{\frac{2Q-\bar{\beta}}{\gamma}}}.
\end{equation} 
Therefore we can further use the Laplace transform to compute boundary length laws for thin quantum triangles.

\begin{proposition}\label{prop-qt-bdry-thin}
	Fix {$W_1, W_2\in (0,\frac{\gamma^2}{2})\cup(\frac{\gamma^2}{2}, \infty)$} and $W_3>\frac{\gamma^2}{2}$. For $i=1,2,3$ again let $\beta_i = \gamma+\frac{2-W_i}{\gamma}$, and $\tilde{\beta}_i$ be equal to $\beta_i$ (resp. $2Q-\beta_i$) if $W_i>\frac{\gamma^2}{2}$ (resp. $W_i<\frac{\gamma^2}{2}$). Suppose $(\tilde{\beta}_i)$ satisfies the bounds \eqref{eqn-seiberg}. Then for a sample from $\textup{QT}(W_1,W_2,W_3)$,  $L_{12}$ has law 
	\begin{equation}\label{eqn-qt-bdry-thin}
		\textup{\textbf{1}}_{\ell>0}\frac{2}{\gamma(Q-\beta_1)(Q-\beta_2)(Q-\beta_3)}{\bar{H}_{(0,1,0)}^{(
				\beta_1,\beta_2,\beta_3)}} \ell^{\frac{\bar{\beta}-2Q}{\gamma}-1}d\ell.
	\end{equation}
\end{proposition}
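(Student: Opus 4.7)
The plan is to carry out the Laplace-transform computation as in the proof of Proposition~\ref{prop-qt-bdry-thick}, using the concatenation structure in Definition~\ref{def-qt-thin}. Set $I_{12}:=I\cap\{1,2\}$. By Definition~\ref{def-qt-thin}, a sample from $\QT(W_1,W_2,W_3)$ is the concatenation of a thick core $S_0\sim\QT(\tilde W_1,\tilde W_2,\tilde W_3)$ with, for each $i\in I$, an independent arm $S_i\sim(1-\tfrac{2W_i}{\gamma^2})\Md_2(W_i)$ glued at the weight-$\tilde W_i$ vertex of $S_0$. Since $W_3>\frac{\gamma^2}{2}$, no arm is attached at vertex~$3$, so no unintegrable factor on vertex~$3$'s side appears. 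The boundary arc of the triangle from the weight-$W_1$ to the weight-$W_2$ vertex then splits as a disjoint union of the corresponding core arc, of length $L^{\text{core}}_{12}$, together with, for each $i\in I_{12}$, the side of the arm $S_i$ that lies on the $12$ arc, of length $L^{(i)}$. These pieces are mutually independent, so
\[
\QT(W_1,W_2,W_3)[e^{-\mu L_{12}}]
=\QT(\tilde W_1,\tilde W_2,\tilde W_3)[e^{-\mu L^{\text{core}}_{12}}]\prod_{i\in I_{12}}\bigl(1-\tfrac{2W_i}{\gamma^2}\bigr)\Md_2(W_i)[e^{-\mu L^{(i)}}].
\]

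For the core factor I invoke \eqref{eqn-qt-thick-bdry-lp} with $\beta_i$ replaced by $\tilde\beta_i$, which is legitimate because the $\tilde\beta_i$ satisfy the Seiberg bounds by hypothesis. For each arm factor I use Definition~\ref{def-thin-disk} to realise $S_i$ as a Poisson point process of weight-$(\gamma^2-W_i)$ thick disks on $[0,T]$ with $T$ sampled from $(1-\tfrac{2W_i}{\gamma^2})^{-2}\textup{Leb}_{\mathbb R_+}$. Combining Campbell's formula with the single-disk boundary-length density $\bar R(\tilde\beta_i;1,0)\ell^{2W_i/\gamma^2-2}\,d\ell$ from Proposition~\ref{prop-disk-bdry-law}, the elementary identity $\int_0^\infty(1-e^{-\mu\ell})\ell^{s-1}\,d\ell=-\Gamma(s)\mu^{-s}$ for $s\in(-1,0)$, and then integrating over $T$ and simplifying via $\Gamma(s+1)=s\Gamma(s)$ yields
\[
\bigl(1-\tfrac{2W_i}{\gamma^2}\bigr)\Md_2(W_i)[e^{-\mu L^{(i)}}]=\frac{\mu^{2W_i/\gamma^2-1}}{\bar R(\tilde\beta_i;1,0)\,\Gamma(2W_i/\gamma^2)}.
\]
The algebraic identity $\tfrac{2W_i}{\gamma^2}-1=\tfrac{2(Q-\beta_i)}{\gamma}$ together with $\tilde\beta_i=2Q-\beta_i$ for $i\in I_{12}$ makes the powers of $\mu$ in the product telescope to $\mu^{(2Q-\bar\beta)/\gamma}$, matching the Laplace transform of $\ell^{(\bar\beta-2Q)/\gamma-1}$ and confirming the exponent in \eqref{eqn-qt-bdry-thin}.

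The main obstacle is matching the constant prefactor. The decomposition above naturally produces $\bar H^{(\tilde\beta_1,\tilde\beta_2,\tilde\beta_3)}_{(0,1,0)}$ together with factors $(Q-\tilde\beta_i)^{-1}=-(Q-\beta_i)^{-1}$, $\bar R(\tilde\beta_i;1,0)^{-1}$ and $\Gamma(2W_i/\gamma^2)^{-1}$ for each $i\in I_{12}$, whereas \eqref{eqn-qt-bdry-thin} is stated in terms of $\bar H^{(\beta_1,\beta_2,\beta_3)}_{(0,1,0)}$ and $(Q-\beta_i)^{-1}$. Equivalence of the two prefactors reduces to the reflection identity
\[
\bar H^{(\beta_1,\beta_2,\beta_3)}_{(0,1,0)}\,\Gamma\bigl(\tfrac{\bar\beta-2Q}{\gamma}\bigr)
=\bar H^{(\tilde\beta_1,\tilde\beta_2,\tilde\beta_3)}_{(0,1,0)}\,\Gamma\bigl(\tfrac{\tilde{\bar\beta}-2Q}{\gamma}\bigr)\prod_{i\in I_{12}}\frac{-1}{\bar R(\tilde\beta_i;1,0)\,\Gamma(2W_i/\gamma^2)},
\]
which expresses that replacing $\beta_i\mapsto 2Q-\beta_i$ in $\bar H$ introduces precisely the reflection coefficient $\bar R(\tilde\beta_i;1,0)$ and an explicit ratio of ordinary $\Gamma$-functions. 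I would verify this by direct substitution into the closed-form expression \eqref{eq-H} for $\bar H$, repeatedly applying the shift equations \eqref{eqn-gamma-shift} for $\Gamma_{\gamma/2}$ to convert shifts of the $\Gamma_{\gamma/2}$-arguments by $(Q-\beta_i)$ into products of ordinary $\Gamma$-factors and matching against the explicit expression for $\bar R(\tilde\beta_i;1,0)$; analogous reflection relations for the boundary three-point structure constant are established in~\cite{RZ20b}. Once this identity is in hand the prefactor matches exactly, and the proposition follows by uniqueness of the Laplace transform.
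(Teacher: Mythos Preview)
Your proposal is correct and follows essentially the same route as the paper: compute the Laplace transform of $L_{12}$ via the concatenation structure, then invoke the reflection identity relating $\bar H^{(\tilde\beta_1,\tilde\beta_2,\tilde\beta_3)}_{(0,1,0)}$ to $\bar H^{(\beta_1,\beta_2,\beta_3)}_{(0,1,0)}$. Two minor streamlinings in the paper: it applies Proposition~\ref{prop-disk-bdry-law} directly to the thin arm (that proposition covers all $W<\gamma Q$, so the PPP/Campbell computation is unnecessary and one gets $\Md_2(W_i)[e^{-\mu L^{(i)}}]=\bar R(\beta_i;1,0)\Gamma(1-\tfrac{2W_i}{\gamma^2})\mu^{2W_i/\gamma^2-1}$ immediately), and it dispatches the reflection identity by citing \cite[Lemma~3.4]{RZ20b} rather than recomputing from~\eqref{eq-H}.
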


\begin{proof}
	We first assume that $W_1>\frac{\gamma^2}{2}$ and $W_2<\frac{\gamma^2}{2}$.  Let $L_2$ be the left boundary length of a weight $W_2$ disk, then by \eqref{eqn-disk-bdry-law}, 
	\begin{equation}\label{eqn-disk-bdry-lp}
		\mathcal{M}_2^{\textup{disk}}(W_2)[e^{-\mu L_2}] = \bar{R}(\beta_2;1,0)\Gamma(1-\frac{2W_2}{\gamma^2})\mu^{\frac{2W_2}{\gamma^2}-1}.
	\end{equation}
	By definition of $\textup{QT}(W_1,W_2,W_3)$, if we independently sample a triangle from $\textup{QT}(W_1,\gamma^2-W_2,W_3)$ and let $\tilde{L}_{12}$ be the corresponding edge length, then $L_{12}$ has the same law as $L_2+ \tilde{L}_{12}$. Therefore by combining  \eqref{eqn-qt-thick-bdry-lp} (where $\beta_2$ is replaced by $2Q-\beta_2$) with \eqref{eqn-disk-bdry-lp} , 
	\begin{equation}\label{eqn-qt-bdry-thin-1}
		\begin{split}
			\textup{QT}(W_1,W_2,W_3)&[e^{-\mu L_{12}}] = \frac{2}{\gamma(Q-\beta_1)(\beta_2-Q)(Q-\beta_3)}\times\\
			&\bar{H}_{(0,1,0)}^{(
				\beta_1,2Q-\beta_2,\beta_3)}(1-\frac{2W_2}{\gamma^2})\Gamma(\frac{\beta_1+\beta_3-\beta_2}{\gamma})\Gamma(\frac{2}{\gamma}(\beta_2-Q))\bar{R}(\beta_2;1,0) \mu^{{\frac{2Q-\bar{\beta}}{\gamma}}}.
		\end{split}
	\end{equation}
	On the other hand, by \cite[Lemma 3.4]{RZ20b}, we have
	\begin{equation}\label{eqn-qt-bdry-thin-2}
		\bar{H}_{(0,1,0)}^{(
			\beta_1,2Q-\beta_2,\beta_3)} = -\frac{\Gamma(\frac{2}{\gamma}(2Q-\beta_2-\frac{2}{\gamma})) \Gamma(\frac{\bar{\beta}-2Q}{\gamma}) }{\Gamma(\frac{\beta_1+\beta_3-\beta_2}{\gamma} )   }\bar{R}(2Q-\beta_2; 1,0)	\bar{H}_{(0,1,0)}^{(
			\beta_1,\beta_2,\beta_3)},
	\end{equation}
	\begin{equation}\label{eqn-qt-bdry-thin-3}
		\bar{R}(\beta_2;1,0)\bar{R}(2Q-\beta_2;1,0) = \frac{1}{\Gamma(1-\frac{2(Q-\beta_2)}{\gamma}) \Gamma(1+\frac{2(Q-\beta_2)}{\gamma})}
	\end{equation}
	Combining the equations \eqref{eqn-qt-bdry-thin-1}, \eqref{eqn-qt-bdry-thin-2} and \eqref{eqn-qt-bdry-thin-3} implies
	\begin{equation}
		\textup{QT}(W_1,W_2,W_3)[e^{-\mu L_{12}}] = -\frac{2}{\gamma(Q-\beta_1)(\beta_2-Q)(Q-\beta_3)}\bar{H}_{(0,1,0)}^{(
			\beta_1,\beta_2,\beta_3)}\Gamma(\frac{\bar{\beta}-2Q}{\gamma}){\mu^{{\frac{2Q-\bar{\beta}}{\gamma}}}}
	\end{equation}
	which further implies \eqref{eqn-qt-bdry-thin}. For the case when both $W_1$ and $W_2$ are smaller than $\frac{\gamma^2}{2}$, we can start from independent samples of $\textup{QT}(\gamma^2-W_1,\gamma^2-W_2,W_3)$, $	\mathcal{M}_2^{\textup{disk}}(W_1)$ and $	\mathcal{M}_2^{\textup{disk}}(W_2)$. We omit the details.
\end{proof}

{
	The above result gives the law of a quantum triangle boundary arc length. In fact, for some range of parameters, we can identify the joint law of boundary arc lengths and quantum area. Suppose $\sum \beta_i > 2Q$,  $\beta_1, \beta_2, \beta_3 < Q$, and $\mu_1, \mu_2, \mu_3 > 0$, then \cite[Theorem 1.1]{ARS22} gives an explicit description of 
	\begin{equation}\label{eq-H-bulk}
		H^{(\beta_1, \beta_2, \beta_3)}_{(\mu_1, \mu_2, \mu_3)} := \LF_{\bbH}^{(\beta_1, 0), (\beta_2, 1), (\beta_3, \infty)} [\exp(- \mu_\phi(\bbH) - \mu_1 \nu_\phi(-\infty, 0) - \mu_2 \nu_\phi(0,1) - \mu_3 \nu_\phi(1,\infty))],
	\end{equation}
	where $\phi \sim \LF_{\bbH}^{(\beta_1, 0), (\beta_2, 1), (\beta_3, \infty)}$ is the Liouville field. 
}


\subsection{Quantum triangles with fixed boundary lengths}\label{sec-pre-length}

We start by proving that, the quantum triangles we defined a.s. has positive finite length.

\begin{lemma}\label{lmm-qt-finite-length}
	For any weights $W_1, W_2, W_3>0$, the $\textup{QT}(W_1, W_2, W_3)$ measure of quantum triangles with edges having zero or infinite quantum  length is 0.
\end{lemma}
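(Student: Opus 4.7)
The plan is to split along the regime specified by the weights. For all $W_i > \gamma^2/2$ (the fully thick case), $\QT(W_1,W_2,W_3)$ is the law of $(\mathcal S,\phi,+\infty,-\infty,0)/{\sim_\gamma}$ where $\phi\sim \LF_{\mathcal S}^{(\beta_1,+\infty),(\beta_2,-\infty),(\beta_3,0)}$ with all $\beta_i<Q$. Writing $\phi = \phi_0 + \mathbf c$ as in Definition~\ref{def-lf-strip}, one has $\nu_\phi = e^{\gamma\mathbf c/2}\nu_{\phi_0}$ on any boundary interval, so positivity and finiteness of each arc length transfer to the corresponding $\nu_{\phi_0}$-masses. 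Positivity is a standard boundary GMC property: a Gaussian multiplicative chaos of a log-correlated field is a.s.\ strictly positive on every non-empty open subinterval of the boundary away from its singularities. Finiteness needs two inputs: (i) local integrability near each $\beta_i$-insertion, which holds because $\beta_i<Q$ is exactly the boundary Seiberg threshold; (ii) integrability at $\pm\infty$ on the strip, which follows from the deterministic drift $\tfrac{\beta_1+\beta_2-2Q}{2}|\mathrm{Re}\,z|+\tfrac{\beta_1-\beta_2}{2}\mathrm{Re}\,z$ in Definition~\ref{def-lf-strip} sending $\phi_0$ to $-\infty$ at rates $-(Q-\beta_1)$ and $-(Q-\beta_2)$. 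Via the radial-lateral decomposition~\eqref{eqn-gff-decom}, (ii) reduces to almost-sure finiteness of a classical exponential functional of a Brownian motion with strictly negative drift. This is the same mechanism making boundary lengths finite for two-pointed quantum disks in Definition~\ref{def-thick-disk}; the additional $\beta_3$-insertion at $0$ does not affect the behavior at $\pm\infty$.

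For the thin case, Definition~\ref{def-qt-thin} builds the sample as the concatenation of a thick core $S_0\sim \QT(\tilde W_1,\tilde W_2,\tilde W_3)$ with independent thin disks $S_i\sim \Md_2(W_i)$ attached at each thin vertex $i\in I$. Each boundary arc of the resulting triangle is a finite concatenation of one arc of $S_0$ with at most two arcs from the attached thin disks, so its length is a finite sum of a.s.\ positive finite quantities: the core arc by the thick case, and the thin disk arcs by Proposition~\ref{prop-disk-bdry-law} extended through the Poisson construction of Definition~\ref{def-thin-disk}. For the critical case $W_i=\gamma^2/2$ defined in Section~\ref{subsec:critical} by a limiting procedure, one passes to the limit using the continuous, $(0,\infty)$-supported boundary length densities exhibited in Propositions~\ref{prop-qt-bdry-thick}--\ref{prop-qt-bdry-thin}.

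The main obstacle is step (ii) of the thick case: controlling the GMC fluctuations simultaneously with the strip-end drift. Everything else reduces to standard boundary GMC facts or to the already-established behavior of weight-$W$ quantum disks. Once (ii) is in place, the thin and critical extensions are routine.
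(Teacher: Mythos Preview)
Your outline is correct, but it takes a different route from the paper in the thick case. The paper embeds in $\bbH$ with marked points at $0,1,\infty$ and writes $\phi=\phi_0+\mathbf c$ as in~\eqref{eqn-pf-qt-bdry-thick-1}; since adding $\mathbf c$ does not affect whether an arc length lies in $(0,\infty)$, it suffices to handle $\nu_{\phi_0}([0,1])$ under $P_\bbH$. The paper then dispatches finiteness in one line by choosing $p>0$ with $p<\tfrac{4}{\gamma^2}\wedge\tfrac{2}{\gamma}(Q-\beta_1)\wedge\tfrac{2}{\gamma}(Q-\beta_2)$ and citing the moment bound $\bbE_{P_\bbH}[\nu_{\phi_0}([0,1])^p]<\infty$ from \cite[Theorem~1.1]{RZ20a} (Proposition~\ref{prop-rz20a}); the other two arcs follow by the conformal covariance of Lemma~\ref{lmm-lcft-H-conf-infty}. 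By contrast, you work on the strip and split finiteness into a local piece near each insertion (handled by $\beta_i<Q$) and a tail piece at $\pm\infty$ (handled by the negative drift in the radial part of~\eqref{eqn-gff-decom}). Your argument is more self-contained and makes the mechanism explicit, whereas the paper's is shorter because it outsources the work to a single GMC moment estimate on a bounded interval and avoids any tail analysis altogether.

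For the thin case your reduction to the thick core plus thin-disk arcs matches the paper's one-sentence reduction. One minor point: the critical case $W_i=\tfrac{\gamma^2}{2}$ that you sketch is not yet defined at the point where this lemma appears (Definitions~\ref{def-qt-thick} and~\ref{def-qt-thin} exclude $\tfrac{\gamma^2}{2}$, and the critical case only comes in Section~\ref{subsec:critical}), so that paragraph, while not incorrect, is extraneous here.
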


\begin{proof}
	We begin with the thick quantum triangles. Sample $\phi$ from $\textup{LF}_{\mathbb{H}}^{(\beta_1,0), (\beta_2,1), (\beta_3,\infty)}$ with $\beta_i = \gamma+\frac{2-W_i}{\gamma}<Q$, so our quantum triangle is $(\mathbb{H}, \phi, 0, 1, \infty)/{\sim_\gamma}$. Using the expression \eqref{eqn-pf-qt-bdry-thick-1} for $\phi$, it suffices to check that under $P_{\mathbb{H}}$, $\nu_{\phi_0}([0,1])$ is a.s. finite. Since $\beta_i<Q$, we can pick $p>0$ such that $p<\frac{4}{\gamma^2}\wedge\frac{2}{\gamma}(Q-\beta_1)\wedge\frac{2}{\gamma}(Q-\beta_2)$. By \cite[Theorem 1.1]{RZ20a}, $\mathbb{E}_{P_{\mathbb{H}}}\big[\nu_{\phi_0}([0,1])^p\big]<\infty$, which justifies our claim. The remaining case follows by noticing that thin triangles are produced by concatenating  independent samples of thick triangles with thin quantum disks, while  both of them have   finite length almost surely.    
\end{proof}

We are now ready to disintegrate $\textup{QT}(W_1, W_2, W_3)$ over its boundary length. Basically this is simply \textit{conditioning} on edge length. Recall that for any two-pointed disks, by \cite[Section 2.6]{AHS20}, one can construct the family of measures $\{\mathcal{M}_2^{\text{disk}}(W;\ell_1 ):\ell_1>0\}$ and $\{\mathcal{M}_2^{\text{disk}}(W;\ell_1, \ell_2):\ell_1,\ell_2>0\}$ for  such that 
\begin{equation}
	\mathcal{M}_2^{\text{disk}}(W) = \int_0^\infty\int_0^\infty \mathcal{M}_2^{\text{disk}}(W;\ell_1, \ell_2)d\ell_1d\ell_2; \ \mathcal{M}_2^{\text{disk}}(W,\ell_1) =  \int_0^\infty \mathcal{M}_2^{\text{disk}}(W;\ell_1, \ell_2)d\ell_2.
\end{equation}
Each sample from $\mathcal{M}_2^{\text{disk}}(W;\ell_1)$ has left (or right) boundary length $\ell_1$, and each sample from $\mathcal{M}_2^{\text{disk}}(W;\ell_1, \ell_2)$ has boundary lengths $\ell_1$ and $\ell_2$. And the same disintegration can be applied for $\mathcal{M}_{2,\cdot}(W;\alpha)$ over the length of unmarked boundary \cite{AHS21}.

We formally state the definition below and again start with thick triangles. 
\begin{definition}\label{def-qt-disintegration-thick}
	Suppose $W_1, W_2, W_3>\frac{\gamma^2}{2}$.  Let $\beta_i = \gamma+\frac{2-W_i}{\gamma}$ and $\bar{\beta}=\beta_1+\beta_2+\beta_3$. Sample $h$ from $P_{\mathbb{H}}$ and set
	$$\tilde{h}(z) = h(z) +(\bar{\beta}-2Q)\log|z|_+-\beta_1\log|z|-\beta_2\log|z-1|.$$ (i.e., The Liouville field $\textup{LF}_{\mathbb{H}}^{(\beta_1,0), (\beta_2, 1), (\beta_3, \infty)}$ but without the constant $\mathbf{c}$.) Fix $\ell>0$. Let $L_{12} = \nu_{\tilde{h}}([0,1])$ and we define the measure $\textup{QT}(W_1, W_2, W_3;\ell)$, the quantum triangles of weight $W_1,W_2,W_3$ with \emph{left boundary length} $\ell$, to be the law of $\tilde{h}+\frac{2}{\gamma}\log\frac{\ell}{L_{12}}$ under the reweighted measure $\frac{2}{\gamma(Q-\beta_1)(Q-\beta_2)(Q-\beta_3)}\frac{\ell^{\frac{1}{\gamma}(\bar{\beta}-2Q)-1}}{L_{12}^{\frac{1}{\gamma}(\bar{\beta}-2Q)}}P_{\mathbb{H}}(dh)$. 
\end{definition}

The above definition can be repeated for right or bottom boundary length. (i.e., $L_{12} = \nu_{\tilde{h}}([0,1])$ replaced by $L_{13} = \nu_{\tilde{h}}((-\infty, 0])$ or $L_{23} = \nu_{\tilde{h}}([1,+\infty))$.) The following lemma justifies our disintegration.
\begin{lemma}\label{lem-qt-disintegration-thick}
	In the setting of Definition \ref{def-qt-disintegration-thick}, samples from $\textup{QT}(W_1, W_2, W_3;\ell)$ has left boundary length $\ell$, and we have
	\begin{equation}\label{eqn-qt-disintegration-thick}
		\textup{QT}(W_1, W_2, W_3) = \int_0^\infty \textup{QT}(W_1, W_2, W_3;\ell)d\ell.
	\end{equation}
	Furthermore, if $(\beta_i)$ satisfies the Seiberg bounds \eqref{eqn-seiberg}, then $$\big|\textup{QT}(W_1, W_2, W_3;\ell)\big|=\frac{2}{\gamma(Q-\beta_1)(Q-\beta_2)(Q-\beta_3)}{\bar{H}_{(0,1,0)}^{(
			\beta_1,\beta_2,\beta_3)}} \ell^{\frac{\bar{\beta}-2Q}{\gamma}-1}.$$ 
\end{lemma}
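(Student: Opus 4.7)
The proof is essentially a change-of-variables argument that splits the Liouville field into its scale and shape parts. Recall that under $\textup{LF}_{\mathbb{H}}^{(\beta_1,0), (\beta_2, 1), (\beta_3, \infty)}$ (as in Definition~\ref{def-lf-H-bdry}), the field can be written as $\phi = \tilde{h} + \mathbf{c}$, where $\tilde h$ is as in Definition~\ref{def-qt-disintegration-thick} and $\mathbf{c}$ is sampled independently from $e^{\frac{\bar\beta-2Q}{2}c}\,dc$. By the scaling property of the $\gamma$-LQG boundary measure, one has $\nu_{\phi}([0,1]) = e^{\gamma\mathbf{c}/2}L_{12}$ with $L_{12}=\nu_{\tilde h}([0,1])$, so setting $\ell := e^{\gamma \mathbf{c}/2}L_{12}$ and using $d\mathbf{c} = \frac{2}{\gamma \ell}\,d\ell$ transforms the density of $\mathbf{c}$ into
\[
e^{\frac{\bar\beta-2Q}{2}\mathbf{c}}\,d\mathbf{c} \;=\; \frac{2}{\gamma}\cdot \frac{\ell^{\frac{\bar\beta-2Q}{\gamma}-1}}{L_{12}^{\frac{\bar\beta-2Q}{\gamma}}}\,d\ell.
\]

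The plan is therefore as follows. First, I would verify the sample-wise claim: if $\phi = \tilde{h} + \frac{2}{\gamma}\log(\ell/L_{12})$, then $\nu_\phi([0,1]) = e^{\gamma \cdot \frac{1}{\gamma}\log(\ell/L_{12})}L_{12}=\ell$, so any sample from $\textup{QT}(W_1, W_2, W_3;\ell)$ has left boundary length $\ell$ (and is a.s.\ positive and finite by Lemma~\ref{lmm-qt-finite-length}). Second, I would establish the disintegration~\eqref{eqn-qt-disintegration-thick}: combining the above Jacobian computation with Fubini's theorem and the constant factor $\frac{1}{(Q-\beta_1)(Q-\beta_2)(Q-\beta_3)}$ from Definition~\ref{def-qt-thick}, the pushforward of $\textup{LF}_{\mathbb{H}}^{(\beta_1,0), (\beta_2, 1), (\beta_3, \infty)}$ under $(h,\mathbf{c})\mapsto (h,\ell)$ gives exactly $\textup{QT}(W_1,W_2,W_3;\ell)\,d\ell$ on the quantum-surface side. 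Invariance of $\textup{QT}(W_1,W_2,W_3)$ under the choice of representative $(\mathbb{H},\phi,0,1,\infty)$, which follows from Lemmas~\ref{lmm-lcft-H-strip}, \ref{lmm-lcft-H-conf} and~\ref{lmm-lcft-H-conf-infty}, ensures the definition does not depend on the embedding.

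Third, I would compute the total mass. By construction,
\[
\big|\textup{QT}(W_1,W_2,W_3;\ell)\big| \;=\; \frac{2\,\ell^{\frac{\bar\beta-2Q}{\gamma}-1}}{\gamma(Q-\beta_1)(Q-\beta_2)(Q-\beta_3)}\;\mathbb{E}_{P_\mathbb{H}}\!\left[L_{12}^{-\frac{\bar\beta-2Q}{\gamma}}\right].
\]
The key observation is that on the interval $[0,1]$ one has $|z|_+=1$, so the $\tilde h$ of Definition~\ref{def-qt-disintegration-thick} coincides on $[0,1]$ with the field $\phi = h - \beta_1\log|z|-\beta_2\log|1-z|$ appearing in Proposition~\ref{prop-rz20a}. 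Since $-\frac{\bar\beta-2Q}{\gamma} = \frac{2Q-\bar\beta}{\gamma}$ and the Seiberg bounds~\eqref{eqn-seiberg} hold, Proposition~\ref{prop-rz20a} gives $\mathbb{E}_{P_\mathbb{H}}[L_{12}^{(2Q-\bar\beta)/\gamma}] = \bar{H}_{(0,1,0)}^{(\beta_1,\beta_2,\beta_3)}$, which yields the claimed mass formula.

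I expect no serious obstacle; the only care needed is bookkeeping the $|z|_+$ normalization in the definition of $\tilde h$ versus the field used in Proposition~\ref{prop-rz20a}, and confirming that the resulting reweighted law in Definition~\ref{def-qt-disintegration-thick} is well-defined for a.e.\ $\ell$ (which is immediate from Lemma~\ref{lmm-qt-finite-length}, giving $L_{12}\in(0,\infty)$ almost surely).
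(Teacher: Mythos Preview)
Your proposal is correct and follows essentially the same approach as the paper: verify the boundary length by the scaling $\nu_{\tilde h+c}=e^{\gamma c/2}\nu_{\tilde h}$, establish the disintegration via the change of variables $c=\frac{2}{\gamma}\log(\ell/L_{12})$ with Fubini, and read off the total mass. The only cosmetic difference is that the paper cites Proposition~\ref{prop-qt-bdry-thick} for the mass formula, whereas you inline that computation by invoking Proposition~\ref{prop-rz20a} directly; both amount to the same calculation.
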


\begin{proof}
	The proof is almost identical to that of \cite[Lemma 4.2]{AHS21} but we include it here for completeness. The first claim is trivial as $\nu_{\tilde{h}+\frac{2}{\gamma}\log\frac{\ell}{L_{12}}}([0,1]) =\frac{\ell}{L_{12}}\nu_{\tilde{h}}([0,1]) =\ell$.
	
	Now for any nonnegative measurable function $F$ on $H^{-1}(\mathbb{H})$ we have
	\begin{equation}
		\int_0^\infty\int F(\tilde{h}+\frac{2}{\gamma}\log\frac{\ell}{L_{12}})\frac{2}{\gamma}\frac{\ell^{\frac{1}{\gamma}(\bar{\beta}-2Q)-1}}{L_{12}^{\frac{1}{\gamma}(\bar{\beta}-2Q)}}P_{\mathbb{H}}(dh)d\ell = \int_{\mathbb{R}}\int F(\tilde{h}+c)e^{(\frac{1}{2}\bar{\beta}-Q)c}P_\bbH(dh)dc
	\end{equation}
	using Fubini's theorem and the change of variables $c = \frac{2}{\gamma}\log\frac{\ell}{L_{12}}$. Therefore by definition, \eqref{eqn-qt-disintegration-thick} holds. The last statement follows directly from Proposition \ref{prop-qt-bdry-thick}.
\end{proof}

Indeed if $W_3<\frac{\gamma^2}{2}$, the same disintegration applies by starting from $\textup{QT}(W_1, W_2, \gamma^2-W_3;\ell)$ and then concatenating an independent weight $W_3$ disk (which does not affect the left boundary length).
If $W_1<\frac{\gamma^2}{2}$ and $W_2>\frac{\gamma^2}{2}$, we can still define our disintegration over left boundary length via
\begin{equation}\label{eqn-qt-disintegration-thin-1}
	\textup{QT}(W_1, W_2, W_3;\ell) = (1-\frac{2W_1}{\gamma^2})\int_0^\ell  \mathcal{M}_2^{\text{disk}}(W_1;\ell-x)\times \textup{QT}(\gamma^2-W_1, W_2, W_3;x)dx.
\end{equation}  
Similarly, if $W_1<\frac{\gamma^2}{2}$ and $W_2<\frac{\gamma^2}{2}$, we can also define
\begin{equation}\label{eqn-qt-disintegration-thin-2}
	\begin{split}
		&\textup{QT}(W_1, W_2, W_3;\ell) = (1-\frac{2W_1}{\gamma^2})(1-\frac{2W_2}{\gamma^2})\\&\int_0^\ell\int_0^{\ell-x}  \mathcal{M}_2^{\text{disk}}(W_1;y)\times \textup{QT}(\gamma^2-W_1, \gamma^2-W_2, W_3;x)\times \mathcal{M}_2^{\text{disk}}(W_2;\ell-x-y) dydx.
	\end{split}
\end{equation}
One can directly verify that \eqref{eqn-qt-disintegration-thick} holds for our definition of $\textup{QT}(W_1, W_2, W_3;\ell)$ via \eqref{eqn-qt-disintegration-thin-1} and \eqref{eqn-qt-disintegration-thin-2}, and each sample from  $\textup{QT}(W_1, W_2, W_3;\ell)$ has left boundary length $\ell$.

We have defined disintegration \eqref{eqn-qt-disintegration-thick} over a single boundary arc length. This can naturally be extended to multiple edges, that is,
\begin{equation}
	\textup{QT}(W_1, W_2, W_3) = \iiint_{\mathbb{R}_+^3} \mathrm{QT}(W_1, W_2, W_3; \ell_1, \ell_2, \ell_3)d\ell_1d\ell_2d\ell_3.
\end{equation} 
See \cite[Section 2.6]{AHS20} for more details.

\subsection{Vertices with weight 
	$\frac{\gamma^2}2$}\label{subsec:critical}

In this section we define quantum triangles where one or more vertices have weight $\frac{\gamma^2}2$. 
Plugging in the relation $\beta=Q+\frac{\gamma}{2}-\frac{W}{\gamma}$ gives $\beta=Q$,
but the Liouville field with boundary insertion $Q$ a.s.\ has infinite boundary length near the insertion, so this does not give the correct definition. The correct definition is obtained from  the $\beta \uparrow Q$ limit of the previously defined Liouville field, which we call the Liouville field with insertion of size $\beta = Q^-$.

We first define an infinite measure $M^{Q^-}$ as follows. 
For $a>0$, let $B_t$ be variance 2 Brownian motion run until the first time $\tau_a$ it hits $a$, and independently let $B'_s$ be variance 2 Brownian motion conditioned on the event $\{ B'_s \leq 0 \text{ for all } s \geq 0\}$. Namely, $-\frac{1}{\sqrt2}B_s'$ is a 3D Bessel process starting from 0. Define $X_t = B_t$ for $t \leq \tau_a$ and $X_t = a + B'_{t- \tau_a}$ for $t > \tau_a$. Let $P_a$ be the law of $X_t$. Slightly abusing notation, we sample $(X_t, \mathbf a) $ from $ P_a 1_{a>0} da$ and let $M^{Q^-}$ be the marginal law of $X_t$ under this infinite measure.

We will define the Liouville field with one or more insertions of size $Q^-$ via $M^{Q-}$. 
We will put insertions at the boundary points $(+\infty, -\infty,1)$ of $\mathcal S$; we choose the third boundary point 1 rather than 0 to avoid interfering with the GFF normalization (mean zero on $\{0\} \times [0,\pi]$). 
Recall that $H(\mathcal S)$ is the closure of the space of smooth  functions on $\mathcal S$ of finite Dirichlet energy with respect to the Dirichlet inner product. 
Let $H_1 \subset H(\mathcal S)$ be the subspace of functions which are zero on $(-\infty, 10] \times [0,\pi]$ and constant on each segment $\{t\} \times [0,\pi]$ for $t \geq 10$. Let $H_2$ be the subspace of functions which are zero on $[-10, \infty) \times [0,\pi]$ and constant on each segment $\{ t \} \times [0,\pi]$ for $t \leq -10$. Let $H_3$ be the subspace of functions which are zero on $\{ z \in \mathcal S \: : \: |z - 1| \geq 1\}$ and constant on each semicircle $\{ z \in \mathcal S \: : \: |z-1| = e^{-t}\}$ for $t \geq 0$.  Let $H_0$ be the orthogonal complement of $H_1 \oplus H_2 \oplus H_3$.  {Functions in $H_0$ have the same average value on each  segment $\{t\} \times [0,\pi]$ for $ t\geq 10$,} 
and have similar behavior in $(-\infty, -10) \times [0,\pi]$ and $\{ z \in \mathcal S : |z-1| < 1\}$. 

Let $\mathcal P$ be the set of probability  measures $\rho$ compactly supported in $\{ z \in (-10,10) \times (0,\pi) \:: \: |z-1| > 1\}$ such that $\int G_{\mathcal S}(z,w) \rho(dw)\rho(dz)<\infty$; such measures can be integrated against a GFF. In particular $\mathcal P$ contains the uniform probability measure on $\{0\}\times[0,\pi]$.  Let $P_\rho$ be the law of the GFF $h$ on $\mathcal S$ normalized so $(h, \rho) = 0$. 
Using the decomposition $H(\mathcal S) = H_0 \oplus H_1 \oplus H_2 \oplus H_3$ we can decompose a GFF $h \sim P_\rho$ as 
\begin{equation}\label{eq-gff-decomp-4}
	h = g_0 + g_1 + g_2 + g_3    
\end{equation}
where the $g_i$ are independent and correspond to projections to $H_i$. 

Let $\rho_1$, $\rho_2$ and $\rho_3$ be the uniform probability measures on $\{10\} \times [0,\pi]$, $\{-10\} \times [0,\pi]$ and $\{z \in \mathcal{S}\:: \: |z| = 1\}$ respectively. For real $\beta_1, \beta_2, \beta_3$ define the {\emph{non-probability} measure 
	\[
	P^{(\beta_i)_i}_\rho(dh) = \eps_0^{\frac14((\beta_1-Q)^2 + (\beta_2-Q)^2)} e^{\frac{\beta_1-Q}2(h, \rho_1) + \frac{\beta_2-Q}2 (h, \rho_2) + \frac{\beta_3}2 (h, \rho_3)} P_\rho(dh)
	, \qquad \eps_0 := e^{-10}.\] }
For $\beta \in \bbR$ let $M^\beta$ be the law of Brownian motion with variance 2 and drift $-(Q - \beta)$; in particular $|M^\beta| = 1$. We now extend the definition of the Liouville field to allow insertions of size $Q^-$.  {This is the definition one lands upon when taking $\beta \uparrow Q$ and renormalizing appropriately, as we will see later in Proposition~\ref{prop-limit-crit}.}

\begin{definition}\label{def-critical}
	Suppose $\beta_1, \beta_2, \beta_3 \in \bbR\cup \{ Q^-\}$ and $\rho \in \mathcal P$. Let $\hat \beta_i = Q$ if $\beta_i = Q^-$, and let $\hat \beta_i = \beta_i$ otherwise.
	Let $s = \frac12(\sum \hat\beta_i) - Q$. Sample $(h, \mathbf c, X_t^1, X_t^2, X_t^3) \sim P_\rho^{(\hat \beta_i)_i} \times [e^{sc}\,dc] \times M^{\beta_1} \times M^{\beta_2} \times M^{\beta_3}$. Decompose $h = g_0 + g_1 + g_2 + g_3$ as in~\eqref{eq-gff-decomp-4}. Let $\hat g_1$ be the function which is zero on $(-\infty, 10) \times [0,\pi]$ and equals $X_{t}^1$ on each segment $\{t+10\} \times [0,\pi]$ for $t \geq 0$. Let $\hat g_2$ be the function which is zero on $(-10, \infty)\times[0,\pi]$ and equals $X_{t}^2$ on each segment $\{ -t -10 \} \times [0,\pi]$ for $t \geq 0$. Let $\hat g_3$ be the function which is zero on $\{ z \in \mathcal S : |z| \geq 1\}$ and equals $X_t^3 + Qt$ on each semicircle $\{ z \in \mathcal S \: : \: |z| = e^{-t}\}$ for $t \geq 0$. Let $\phi = g_0 + \hat g_1 + \hat g_2 + \hat g_3 + \mathbf c$. We denote the law of $\phi$ by $\LF_\mathcal{S}^{(\beta_1, +\infty), (\beta_2, -\infty), (\beta_3, 1)}$. 
\end{definition}

\begin{lemma}
	Definition~\ref{def-critical} does not depend on the choice of $\rho$. Moreover, if $\beta_1, \beta_2, \beta_3 \in \bbR$, then the definition agrees with Definition~\ref{def-lf-strip}.
\end{lemma}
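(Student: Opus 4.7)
The plan is to prove the two assertions in turn; both reduce to the observation that the subspaces $H_1,H_2,H_3$ are supported away from $\supp\rho\cup\supp\rho_1\cup\supp\rho_2\cup\supp\rho_3$, so only the bulk component $g_0$ interacts with the choice of $\rho$ or with the reweighting. For the first claim I would argue via a direct change of variables. Since every element of $H_1\oplus H_2\oplus H_3$ vanishes on $\supp\rho\cup\supp\rho_1\cup\supp\rho_2\cup\supp\rho_3$, for $h\sim P_\rho$ with $h=g_0+g_1+g_2+g_3$ we have $(h,\rho)=(g_0,\rho)=0$ and $(h,\rho_j)=(g_0,\rho_j)$ for $j=1,2,3$. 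Writing $R(h):=\frac{\hat\beta_1-Q}{2}(h,\rho_1)+\frac{\hat\beta_2-Q}{2}(h,\rho_2)+\frac{\hat\beta_3}{2}(h,\rho_3)$, one checks that the functional $R$ satisfies $R(\mathbf 1)=\frac{\hat\beta_1+\hat\beta_2+\hat\beta_3}{2}-Q=s$. For two choices $\rho,\rho'\in\mathcal P$ consider the shift $h'=h-(h,\rho')$, $\mathbf c'=\mathbf c+(h,\rho')$. The pushforward of $P_\rho$ under $h\mapsto h'$ is $P_{\rho'}$, the Jacobian is $1$, $g_0+\mathbf c=g_0'+\mathbf c'$ is preserved, and by linearity
\[
R(h')+s\mathbf c'=R(h)-s(h,\rho')+s\mathbf c+s(h,\rho')=R(h)+s\mathbf c.
\]
Hence $e^{R(h)}P_\rho(dh)\,e^{s\mathbf c}d\mathbf c$ transforms into $e^{R(h')}P_{\rho'}(dh')\,e^{s\mathbf c'}d\mathbf c'$; since the $\hat g_i$ are independent of $(h,\mathbf c)$ and unchanged, the coupling preserves the law of $\phi=g_0+\hat g_1+\hat g_2+\hat g_3+\mathbf c$.

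For the agreement with Definition~\ref{def-lf-strip} when $\beta_i\in\bbR$, I would take $\psi\sim\LF_{\mathcal S}^{(\beta_1,+\infty),(\beta_2,-\infty),(\beta_3,1)}$ as in Definition~\ref{def-lf-strip} and write $\psi=h_{\mathcal S}+f+\mathbf c$ with $h_{\mathcal S}\sim P_{\mathcal S}$ and $f(z):=\frac{\beta_1+\beta_2-2Q}{2}|\textup{Re}z|+\frac{\beta_1-\beta_2}{2}\textup{Re}z+\frac{\beta_3}{2}G_{\mathcal S}(z,1)$. Decomposing $h_{\mathcal S}=g_0^{\mathcal S}+g_1^{\mathcal S}+g_2^{\mathcal S}+g_3^{\mathcal S}$ along $H(\mathcal S)=H_0\oplus H_1\oplus H_2\oplus H_3$, a direct computation from~\eqref{eqn-gff-cov} and the description of $H_1$ shows that the common value of $g_1^{\mathcal S}$ on the segment $\{t\}\times[0,\pi]$ for $t\geq 10$ is a variance-$2$ Brownian motion vanishing at $t=10$, while on this same segment the drift $f$ equals $(\beta_1-Q)t$ up to a smooth function that tends to a constant as $t\to+\infty$ and is absorbable into $g_0^{\mathcal S}$ and $\mathbf c$. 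After shifting the constant $10(\beta_1-Q)$ into $\mathbf c$, what remains on $[10,\infty)\times[0,\pi]$ is the law of $\hat g_1$ built from $X^1\sim M^{\beta_1}$. The $H_2$ piece is symmetric. For $H_3$ the radial variable becomes $t=-\log|z-1|$, the Brownian motion of $g_3^{\mathcal S}$ again has variance $2$, and the drift contributed by $\frac{\beta_3}{2}G_{\mathcal S}(z,1)=-\beta_3\log|z-1|+(\text{smooth})$ equals $\beta_3 t$, producing $B_{2t}+\beta_3 t=X_t^3+Qt$ as required.

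The main obstacle is matching the $H_0$ piece and the deterministic constants. After stripping off the $\hat g_i$, the remainder is a GFF in $H_0$ plus a Cameron--Martin shift coming from the bulk portion of $f$; this shift should be the unique element of the Cameron--Martin space whose Girsanov exponential equals the Radon--Nikodym factor $e^{R(h)}$ defining $P_\rho^{(\hat\beta_i)_i}$. The deterministic prefactor $\eps_0^{\frac14((\beta_1-Q)^2+(\beta_2-Q)^2)}$ in Definition~\ref{def-critical} is precisely the Wick self-energy $\frac12\bigl((\tfrac{\beta_1-Q}{2})^2\mathbb E[(h,\rho_1)^2]+(\tfrac{\beta_2-Q}{2})^2\mathbb E[(h,\rho_2)^2]\bigr)$ computed at the cutoff scale $\eps_0=e^{-10}$, and must be reconciled with the multiplicative constant $C_{\mathcal S}^{(\beta_i,\cdot)}$ of Definition~\ref{def-lf-strip} together with the $\max\{\textup{Re}z,0\}$ tail terms in~\eqref{eqn-gff-cov}. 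This bookkeeping of Gaussian self-energies is the delicate step; once it is verified, the independence of the four pieces, which follows from orthogonality of the $H_i$ and the Markov property (Proposition~\ref{prop:Markov}), combined with Step~2, completes the identification of the two constructions.
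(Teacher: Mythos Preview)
Your argument for the $\rho$-independence is essentially the paper's argument made explicit: the paper observes that the weight can be rewritten as $e^{\frac{\hat\beta_1-Q}{2}(h+c,\rho_1)+\frac{\hat\beta_2-Q}{2}(h+c,\rho_2)+\frac{\hat\beta_3}{2}(h+c,\rho_3)}$, a function of $h+\mathbf c$ alone, and that the law of $h+\mathbf c$ under $P_\rho\times dc$ is $\rho$-invariant by translation invariance of Lebesgue measure. Your change of variables $h'=h-(h,\rho')$, $\mathbf c'=\mathbf c+(h,\rho')$ is exactly this observation unpacked.

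For the agreement with Definition~\ref{def-lf-strip} when $\beta_i\in\bbR$, the paper takes a genuinely different and cleaner route than your projection-matching. Rather than decomposing the drift $f$ along $H_0\oplus\cdots\oplus H_3$ and reconciling constants, the paper first verifies the trivial case $(\beta_1,\beta_2,\beta_3)=(Q,Q,0)$, where both definitions collapse to $P_{\mathcal S}\times dc$ with no tilt and no drift. It then adds insertions one at a time via the strip analogue of Lemma~\ref{lmm-lf-insertion-bdry},
\[
\LF_{\mathcal S}^{(\beta_1,+\infty),(Q,-\infty)}(d\phi)=\lim_{\eps\to0}\eps^{\frac14(\beta_1-Q)^2}e^{\frac12(\beta_1-Q)(\phi,\theta_\eps)}\LF_{\mathcal S}^{(Q,+\infty),(Q,-\infty)}(d\phi),
\]
where $\theta_\eps$ is uniform on $\{-\log\eps\}\times[0,\pi]$. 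Since $(\phi,\theta_\eps)=(h,\rho_1)+X^1_{-\log(\eps/\eps_0)}+c$ under the product representation, this weight factorizes over the independent pieces: it tilts $P_\rho$ to $P_\rho^{(\beta_i)_i}$, converts $M^Q$ to $M^{\beta_1}$ by one-dimensional Girsanov, and converts $dc$ to $e^{sc}dc$, with the constant $\eps_0^{\frac14(\beta_1-Q)^2}$ emerging automatically from the split $\eps=\eps_0\cdot(\eps/\eps_0)$. Your direct approach should work in principle, but the step you correctly flag as ``delicate''---matching the $H_0$-projection of $f$ with the Cameron--Martin shift of $g_0$ under $e^{R(g_0)}$, and reconciling $C_{\mathcal S}^{(\beta_i,\cdot)}$ with $\eps_0^{\frac14((\beta_1-Q)^2+(\beta_2-Q)^2)}$ times the Gaussian self-energies---is precisely where the entire difficulty resides, and you have not carried it out. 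The paper's inductive Girsanov approach sidesteps this bookkeeping entirely because each step is self-normalizing.
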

\begin{proof}
	{We first check that if $\rho$ is the uniform probability measure on $\{0\} \times [0,\pi]$, then Definition~\ref{def-critical} agrees with Definition~\ref{def-lf-strip}. For the special case  $(\beta_1, \beta_2, \beta_3) = (Q,Q,0)$, we have $P_\rho^{(\beta_i)_i} = P_{\mathcal S}$, and for $h \sim P_{\mathcal S}$  the field average processes described by $g_1, g_2, g_3$ from~\eqref{eq-gff-decomp-4} each have the law of variance 2 Brownian motion (see e.g.\ \cite[Section 4.1.6]{DMS14}), so the claim is immediate. This gives the decomposition identifying $\LF_{\mathcal S}^{(Q, +\infty), (Q, -\infty)}$ with $P_{\mathcal S}    \times dc \times M^{Q} \times M^Q \times M^0$. Now we explain how to extend to the case $\beta_1 \in \bbR$, $\beta_2 = Q$ and $\beta_3 = 0$. Parametrizing in $\mathcal S$ rather than $\bbH$, an immediate consequence of Lemma~\ref{lmm-lf-insertion-bdry} is
		\begin{align*}
			\LF_{\mathcal S}^{(\beta_1, +\infty), (Q, -\infty)}(d\phi) &= \lim_{\eps \to 0} \eps^{\frac14(\beta_1 - Q)^2} e^{\frac12(\beta_1-Q) (\phi, \theta_\eps)} \LF_{\mathcal S}^{(Q, +\infty), (Q, -\infty)}(d\phi),
		\end{align*}
		where $\theta_\eps$ is the uniform probability measure on $\{-\log \eps\} \times [0,\pi]$. Identifying $\LF_{\mathcal S}^{(Q, +\infty), (Q,-\infty)}$ with $P_{\mathcal S} \times dc \times M^Q \times M^Q \times M^0$, this limit can be written as 
		\begin{align*}
			&\lim_{\eps \to 0} \eps^{\frac14 (\beta_1 - Q)^2}  e^{\frac12(\beta_1 - Q) ((h, \rho_1)+ X^1_{-\log (\eps/\eps_0)} + c)} P_{\mathcal S} (dh) dc M^Q(dX^1)M^Q(dX^2) M^0(dX^3) \\
			&=\lim_{\eps \to 0} (\eps/\eps_0)^{\frac14 (\beta_1 - Q)^2}  e^{\frac12(\beta_1 - Q) X^1_{-\log (\eps/\eps_0)}} P^{(\beta_i)_i}_{\rho} (dh) e^{sc} dc M^Q(dX^1)M^Q(dX^2) M^0(dX^3)\\
			&=P^{(\beta_i)_i}_{\rho} (dh) e^{sc} dc M^{\beta_1}(dX^1)M^Q(dX^2) M^0(dX^3).
		\end{align*}
		To obtain the last equality above, by Girsanov's theorem the law of $X^1_t$ under the probability measure $(\eps /\eps_0)^{\frac14(\beta_1 - Q)^2} \\e^{\frac12(\beta_1 - Q)X^1_{-\log(\eps/\eps_0)}} M^Q (dX^1)$ is Brownian motion with variance $2$, with drift $-(Q -\beta_1)$ until time $-\log(\eps/\eps_0)$ and zero drift afterwards; this converges as $\eps \to 0$ to $M^{\beta_1}$ in the topology of uniform convergence on finite intervals. Thus $\LF_{\mathcal S}^{(\beta_1, +\infty), (Q, -\infty)}$ can be identified with $P_\rho^{(\beta_i)_i} \times e^{sc}dc \times  M^{\beta_1} \times M^{Q} \times  M^0$ as desired. Here we discussed $\beta_1 \in \bbR, \beta_2 = Q, \beta_3 = 0$ to lighten notation, but the same argument applies for $\beta_1,\beta_2, \beta_3 \in \bbR$. We conclude that if $\rho$ is the uniform probability measure on $\{0\} \times [0,\pi]$ then Definition~\ref{def-critical} agrees with Definition~\ref{def-lf-strip}.
	}

	{
		Now let $\rho$ be arbitrary. It remains to verify that Definition~\ref{def-critical} does not depend on the choice of $\rho$. 
		If $h \sim P_\rho$ then the law of $h$ viewed as a distribution modulo additive constant does not depend on $\rho$, so by the translation invariance of $dc$, 
		for  $(h, \mathbf c)$ sampled from $P_\rho \times dc$ the law of $h+\mathbf c$ does not depend on $\rho$. Consequently, if we sample $(h, \mathbf c)$ from 
		\[P_\rho^{(\beta_i)_i} \times [e^{sc}dc] =  e^{\frac{\beta_1-Q}2(h+c, \rho_1) + \frac{\beta_2-Q}2 (h+c, \rho_2) + \frac{\beta_3}2 (h+c, \rho_3)} P_\rho(dh) dc,\]
		the law of $h+\mathbf c$ does not depend on $\rho$; since $\phi$ is a function of  $h+\mathbf c$ and randomness independent of $(h, \mathbf c)$, the claim follows. }
\end{proof}

We will prove that the Liouville field with one or more $Q^-$ insertions arises as a $\beta \uparrow Q$ limit.  The key is the  Brownian motion description of $ \frac1{Q-\beta}M^\beta$  and its convergence to $ M^{Q^-}$ under a suitable topology.

\begin{lemma}\label{lem-Mbeta}
	Let $\beta < Q$. For $X_t \sim \frac{1}{Q-\beta} M^\beta$, the law of $\mathbf a = \sup_t X_t$ is $1_{a>0} e^{-(Q-\beta)a}\,da$. Moreover, conditioned on $\mathbf a$, the conditional law of $X_t$ is that of variance 2 Brownian motion with upward drift $(Q - \beta)t$ run until it hits $\mathbf a$, then variance 2 Brownian motion with downward drift $-(Q-\beta)t$ started at $\mathbf a$ and conditioned to stay below $\mathbf a$.
\end{lemma}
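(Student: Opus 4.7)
The statement is essentially Williams' path decomposition for Brownian motion with drift, adapted to the non-probability normalization $\frac{1}{Q-\beta}M^\beta$. My plan is to first compute the density of $\mathbf a$ under $M^\beta$, then decompose the trajectory at the a.s.\ unique time $\tau := \inf\{t : X_t = \mathbf a\}$ at which the supremum is achieved, using a Girsanov drift change before $\tau$ and the strong Markov property after.

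For the density of $\mathbf a$, I would invoke the classical fact that for $Y_t = \sqrt 2 W_t - \mu t$ with $\mu > 0$ and $W$ a standard Brownian motion, $\bbP(\sup_{t \ge 0} Y_t > a) = e^{-\mu a}$ for $a > 0$ (via the exponential martingale and optional stopping). Specializing to $\mu = Q - \beta$ shows that $\mathbf a$ has density $(Q-\beta)e^{-(Q-\beta)a}\mathbf 1_{a>0}\,da$ under $M^\beta$; multiplying by $\frac{1}{Q-\beta}$ yields the stated density under $\frac{1}{Q-\beta}M^\beta$.

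For the conditional law, fix $a > 0$ and set $\tau_a = \inf\{t : X_t = a\}$, and let $\tilde M$ denote the law of variance-$2$ Brownian motion with drift $+(Q-\beta)$ started at $0$. A Girsanov computation gives $\frac{d\tilde M}{dM^\beta}\big|_{\cF_t} = e^{(Q-\beta) X_t}$, and optional sampling at $\tau_a$ (which is $\tilde M$-a.s.\ finite since $\tilde M$ has positive drift) yields both $M^\beta[\tau_a < \infty] = e^{-(Q-\beta)a}$ and the identification of the $M^\beta$-law of $(X_t)_{0 \le t \le \tau_a}$ conditional on $\{\tau_a < \infty\}$ with the law of $\tilde M$ run until it first hits $a$. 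The strong Markov property then tells us that, conditionally on $\cF_{\tau_a}$, the residual process $(X_{\tau_a + s} - a)_{s \ge 0}$ is an independent sample from $M^\beta$. Writing $\{\mathbf a = a\} = \{\tau_a < \infty\} \cap \{\sup_{s \ge 0}(X_{\tau_a + s} - a) \le 0\}$ and disintegrating $\frac{1}{Q-\beta}M^\beta$ along $\mathbf a$ then produces the two-piece description in the statement: Girsanov gives the pre-$\tau_a$ part, and the strong Markov step combined with the additional event gives the post-$\tau_a$ part.

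The only delicacy is that $\{\mathbf a = a\}$ is a null event and the post-$\tau_a$ "conditioned to stay below $a$" law must be interpreted as a Doob $h$-transform, since starting from $a$ with negative drift the process immediately exceeds $a$ with positive probability. This is standard and is handled by fixing a regular conditional distribution for $(X_t)_t$ given $\mathbf a$ and passing to the limit $\eps \downarrow 0$ in "start from $a - \eps$ and condition on never hitting $a$". Apart from this bookkeeping, the identification is an immediate consequence of the Girsanov and strong Markov inputs above, so I do not expect any essential obstacle.
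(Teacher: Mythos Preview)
Your proposal is correct and matches the paper's approach: the paper simply cites the Williams path decomposition \cite{williams1974path} for the conditional law and calls the supremum density a ``standard Brownian motion computation,'' while you have supplied the details of both. The Girsanov/strong Markov/disintegration route you outline is one of the standard ways to derive Williams' decomposition, so there is no substantive difference.
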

\begin{proof}
	The law of $\mathbf a$ follows from a standard Brownian motion computation, and the conditional law of $X_t$ given $\mathbf a$ follows from the Williams decomposition~\cite{williams1974path}. 
\end{proof}

\begin{lemma}\label{lem-BM-lim}
	For $A>0$ let $E'_A$ be the event that a process $X_t$ satisfies $\sup_t X_t < A$. Then we have the weak limit $\lim_{\beta \uparrow Q} \frac1{Q-\beta}M^\beta|_{E'_A} = M^{Q^-}|_{E'_A}$, where the topology on function space is uniform convergence on compact sets. 
\end{lemma}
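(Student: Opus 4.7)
The plan is to prove the weak convergence by decomposing both measures according to the value of $\mathbf a = \sup_t X_t$, which on the event $E'_A$ lies in $(0,A)$, and then analyzing the pre-maximum and post-maximum portions separately. Under $\frac{1}{Q-\beta} M^\beta|_{E'_A}$ the supremum $\mathbf a$ has law $1_{0<a<A}\,e^{-(Q-\beta)a}\,da$ by Lemma~\ref{lem-Mbeta}, whereas under $M^{Q^-}|_{E'_A}$ the definition of $M^{Q^-}$ (just before Definition~\ref{def-critical}) shows that $\mathbf a$ has law $1_{0<a<A}\,da$. Since both densities are bounded by $1$ on $(0,A)$ and the first converges pointwise to the second as $\beta \uparrow Q$, the marginal laws of $\mathbf a$ converge in total variation, giving in particular the convergence of total masses $\frac{1-e^{-(Q-\beta)A}}{Q-\beta}\to A$.

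For fixed $a\in(0,A)$ I would then verify weak convergence of the conditional law of the path given $\mathbf a = a$. Both conditional laws are the concatenation of two independent pieces: (i) a pre-maximum piece, which under $\frac{1}{Q-\beta}M^\beta$ is variance-$2$ BM with upward drift $(Q-\beta)$ started at $0$ and stopped at $\tau_a$, and under $M^{Q^-}$ is driftless variance-$2$ BM stopped at $\tau_a$; and (ii) a post-maximum piece, which under $\frac{1}{Q-\beta}M^\beta$ is variance-$2$ BM with downward drift $-(Q-\beta)$ started at $a$ and conditioned to stay below $a$, and under $M^{Q^-}$ is $a + B'_\cdot$ with $B'$ a variance-$2$ BM conditioned to stay $\le 0$, equivalently $a$ minus $\sqrt{2}$ times a $3$D Bessel process. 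Part (i) is handled by Girsanov: the Radon--Nikodym derivative $\exp\bigl(\tfrac12(Q-\beta) X_{t\wedge\tau_a}-\tfrac14(Q-\beta)^2(t\wedge\tau_a)\bigr)$ tends to $1$ uniformly on compact time intervals, and $\tau_a$ is a.s.\ finite in both limits, so the stopped processes converge locally uniformly in law. Part (ii) is the Doob $h$-transform of drift-$-(Q-\beta)$ BM by the harmonic function $h_{Q-\beta}(x)=1-e^{-(Q-\beta)(a-x)}$, the probability of never reaching $a$ from $x<a$; since $(Q-\beta)^{-1} h_{Q-\beta}(x)\to a-x$ as $\beta \uparrow Q$, the SDE coefficients of the $h$-transformed diffusion converge on compact subsets of $(-\infty,a)$ to those of driftless BM conditioned to stay below $a$, which is precisely the $3$D Bessel description appearing in $M^{Q^-}$.

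Combining these two steps, for every bounded continuous functional $F$ on $C([0,T],\bbR)$ we have
\[
\int F\,d\!\left(\tfrac{1}{Q-\beta} M^\beta\big|_{E'_A}\right) = \int_0^A e^{-(Q-\beta)a}\,\bbE^\beta_a[F]\,da,
\]
where $\bbE^\beta_a$ denotes the conditional law given $\mathbf a = a$. The integrand is bounded by $\|F\|_\infty$ and, by the previous paragraph, converges pointwise in $a$ to $\bbE^{Q^-}_a[F]$, so dominated convergence yields convergence to $\int_0^A \bbE^{Q^-}_a[F]\,da=\int F\,dM^{Q^-}|_{E'_A}$, proving the lemma. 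The main technical obstacle is the convergence in (ii): one needs the classical fact that variance-$2$ BM with positive drift $\mu$ conditioned to stay positive converges, as $\mu\to 0$, to $\sqrt 2$ times a $3$D Bessel process; this can be established either by explicit transition density comparison or by the SDE-coefficient argument sketched above, and the uniformity in $a\in(0,A)$ required to justify dominated convergence is then automatic since the $h$-transforms depend smoothly on the parameters.
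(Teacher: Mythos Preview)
Your proposal is correct and follows exactly the same approach as the paper: decompose both measures via $\mathbf a = \sup_t X_t$ using Lemma~\ref{lem-Mbeta}, then compare the marginal law of $\mathbf a$ and the pre/post-maximum pieces. The paper's proof is a one-liner (``Comparing the description of $\frac1{Q-\beta}M^\beta$ in Lemma~\ref{lem-Mbeta} to the definition of $M^{Q^-}$, the result follows''), so you have simply filled in the technical details the authors left implicit.
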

\begin{proof}
	Comparing the description of $\frac1{Q-\beta}M^\beta$ in Lemma~\ref{lem-Mbeta} to the definition of $M^{Q^-}$, the result follows.
\end{proof}

\begin{proposition}\label{prop-limit-crit}
	Let $\beta_1, \beta_2, \beta_3 \in \bbR \cup \{Q^-\}$ and $\rho \in \mathcal P$. 
	For $\beta_i \in \bbR\backslash \{Q\}$ let $(\beta_i^n)_{n \geq 1}$ be a sequence with limit $\beta_i$. For $\beta_i = Q$ let $(\beta_i^n)_{n \geq 1}$ be a nonincreasing sequence with limit $Q$. For $\beta_i = Q^-$ let $(\beta_i^n)_{n \geq 1}$ be an increasing sequence with $\lim_{n \to \infty} \beta_i^n = Q$. 
	
	{Let $x_1 = +\infty, x_2 = -\infty, x_3 = 1$.} Let $\mathcal I \subset \{(-\infty, 1), (1, +\infty), \bbR \times \{\pi\} \}$, 
	let $K > 0$ and let $E_K = \{ \phi \::\: |(\phi, \rho)|<K \text{ and } \nu_\phi(I) \leq K \text{ for }I \in \mathcal I\}$.
	Suppose that for any $i$ such that $\beta_i = Q^-$ (resp.\ $\beta_i \geq Q$) the point $x_i$ is an endpoint of some (resp.\ no) interval in $\mathcal I$.
	Then, as a limit in the space of finite measures, 
	\[\lim_{n \to \infty} \left(\prod_{i\: :\: \beta_i = Q^-} \frac1{Q - \beta_i^n}\right) \LF_\mathcal{S}^{(\beta_i^n,x_i)_i} |_{E_K} = \LF_\mathcal{S}^{(\beta_i, x_i)_i}|_{E_K}.\]
	Here, we equip the space of distributions {(on which $\LF_\mathcal{S}^{(\beta_i^n,x_i)_i}$ is a measure)} 
	with the weak-$*$ topology from testing against smooth compactly supported functions. 
\end{proposition}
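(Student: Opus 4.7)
The plan is to unpack both sides via Definition~\ref{def-critical} and verify convergence factor by factor. By that definition, $\LF_{\mathcal S}^{(\beta_i^n, x_i)_i}$ is the pushforward, under $\phi_n = g_0 + \hat g_1 + \hat g_2 + \hat g_3 + \mathbf c$, of a product measure whose ingredients are: a reweighted field law $P^{(\hat\beta_i^n)_i}_\rho$ contributing $g_0$ (the $H_0$-projection of a GFF), independent processes $X^i_\cdot \sim M^{\beta_i^n}$ contributing $\hat g_i$, and a constant $\mathbf c \sim e^{s_n c}\,dc$. A preliminary check shows $(\hat g_i, \rho)=0$ for each $i$ and $\rho \in \mathcal P$, so the constraint $|(\phi_n, \rho)| \le K$ in $E_K$ reads $|(g_0, \rho) + \mathbf c| \le K$, involving only $g_0$ and $\mathbf c$.

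I would then verify the convergence of each factor as $n \to \infty$. For the $g_0$-factor, the Radon--Nikodym density against $P_\rho$ is an explicit continuous function of $(\hat\beta_i^n)_i$ admitting Gaussian $L^1$-domination, so $L^1(P_\rho)$-convergence is automatic. For the $\mathbf c$-factor, $s_n \to s$, and on $|(\phi_n, \rho)| \le K$ the $c$-integration is over a bounded interval depending on $g_0$. For each $X^i$-factor with $\beta_i \in \bbR$, the Girsanov density converges pointwise, yielding the standard weak convergence $M^{\beta_i^n} \to M^{\beta_i}$. The crucial case $\beta_i = Q^-$ is handled by Lemma~\ref{lem-BM-lim}: renormalizing by $\tfrac{1}{Q-\beta_i^n}$ and restricting to $\{\sup_t X^i_t < A\}$ gives convergence to $M^{Q^-}|_{E_A'}$; this is precisely why the renormalization in the statement attaches only to $Q^-$ insertions.

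The main obstacle will be translating the $\nu_\phi$-bound in $E_K$ into the pointwise $\{\sup_t X^i_t < A\}$ bound required by Lemma~\ref{lem-BM-lim} when $\beta_i = Q^-$. The hypothesis on $\mathcal I$ guarantees, for each $Q^-$ insertion at $x_i$, some $I_* \in \mathcal I$ with endpoint $x_i$; near $x_i$ the boundary trace of $\phi$ decomposes as $X^i_t + R(t)$, with remainder $R$ a function of the other four components, locally bounded in probability. Then $\nu_\phi(I_*) \le K$ translates into $\int e^{\gamma X^i_t/2 + \gamma R(t)/2}\,dt \le K$. Conditioning on the other components, I would use the fact that a Brownian excursion of $X^i$ above level $A$ spends a deterministic time above $A-1$ with positive probability, so every such excursion contributes at least a fixed multiple of $e^{\gamma A/2}$ to the integral; therefore $\sup_t X^i_t \ge A$ is incompatible with $\nu_\phi(I_*) \le K$ once $A$ is large enough, up to an error event whose mass vanishes as $A \to \infty$ uniformly in $n$.

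The conclusion then follows by Fubini and dominated convergence on the five-fold product, combining the factor-wise convergences with the $E_K$-induced cutoffs. Finiteness of each $\LF^{(\beta_i^n, x_i)_i}|_{E_K}$ and of its limit is ensured by the hypothesis that $x_i$ is not an endpoint of any interval in $\mathcal I$ when $\beta_i \ge Q$, so no divergent contribution arises from those insertions and no renormalization is needed for them.
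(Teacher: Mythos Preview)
Your overall architecture matches the paper's: factor the law via Definition~\ref{def-critical}, use Lemma~\ref{lem-BM-lim} for each $Q^-$ factor after restricting to $\{\sup_t X^i_t < A\}$, and then show that on $E_K$ the complement $\{\sup_t X^i_t \geq A\}$ has mass $o_A(1)$ uniformly in $n$. The gap is in how you carry out this last step.

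You write $\nu_\phi(I_*)$ as $\int e^{\gamma (X^i_t + R(t))/2}\,dt$ with $R$ a function of the remaining components that is ``locally bounded in probability''. This is not correct: on, say, $[10,\infty)$ one has $\phi = X^1_{\cdot-10} + G + \mathbf c + g_0^{\mathrm{lat}}$, where $G$ is a constant (the common segment average of $g_0$) and $g_0^{\mathrm{lat}}$ is the lateral part of the GFF. The boundary trace of $g_0^{\mathrm{lat}}$ is a log-correlated distribution, not a bounded function, so $\nu_\phi(I_*)$ is a genuine Gaussian multiplicative chaos in this variable, and its mass on any interval can be arbitrarily small with positive probability. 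Consequently the claim that ``every excursion of $X^i$ above $A$ contributes at least a fixed multiple of $e^{\gamma A/2}$'' fails: the lateral GMC might happen to be tiny on that excursion. What you need is a \emph{quantitative} tail bound for this, uniform in $n$; the paper obtains it from the existence of negative GMC moments together with Markov's inequality, yielding
\[
\mathbb P\big[\nu_\phi(I_*) \le K \,\big|\, \mathbf a_i, G, \mathbf c\big] \lesssim \big(K e^{-\frac\gamma2(\mathbf a_i + G + \mathbf c)}\big)^p,
\]
and then integrates against the explicit law of $(\mathbf a_1,\mathbf a_2,\mathbf a_3, G, \mathbf c)$ from Lemma~\ref{lem-Mbeta} to get a bound of order $A^2 e^{-\gamma p A/2}$ for the mass of $E_K \cap \{\max_i \mathbf a_i \ge A\}$, uniform in $n$. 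Without this quantitative input your ``error event whose mass vanishes as $A \to \infty$ uniformly in $n$'' is exactly the statement to be proved, and your excursion-time argument does not supply it.
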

\begin{proof}
	For a field $\phi$ defined in Definition~\ref{def-critical} and for $A>0$, let $E'_{K, A}$ be the event that $|(\phi, \rho)| < K$ and $\sup_t X_t^i < A$ for all $i$. 
	By Lemma~\ref{lem-BM-lim} and the definition of Liouville field from Definition~\ref{def-critical}, Proposition~\ref{prop-limit-crit} holds when $E_K$ is replaced by $E_{K,A}'$.
	
	By the above claim, since the conditional probabilities $\LF_\mathcal{S}^{(\beta_i^n, x_i)_i}[E_K \mid E'_{K, A}]$ and $\LF_\mathcal{S}^{(\beta_i, x_i)_i}[E_K \mid E'_{K, A}]$ are uniformly bounded from below uniformly for all $n$, Proposition~\ref{prop-limit-crit} holds when $E_K$ is replaced by $E_K \cap E'_{K,A}$. 
	To bootstrap this to the desired statement, it suffices to show 
	\eqb\label{eq-unif-crit-A}
	\lim_{A \to \infty}\left(\prod_{i : \beta_i = Q^-} (Q - \beta_i^n)^{-1}\right)\LF_\mathcal{S}^{(\beta_i^n, x_i)_i}[E_K \cap (E'_{K, A})^c]= 0 \text{ uniformly in }n.
	\eqe
	To simplify notation we explain this for the case that $\beta_1, \beta_2, \beta_3 = Q^-$; the other cases are similarly shown.  Let $\rho$ be the uniform probability measure on $\{0\} \times [0,\pi]$. Let  $s^n = \frac12\sum \beta_i^n - Q$.  
	A sample $\phi$ from $\left(\prod_{i : \beta_i = Q^-} (Q - \beta_i^n)^{-1}\right)\LF_\mathcal{S}^{(\beta_i^n, x_i)_i}$ can be obtained by sampling 
	\[(h, \mathbf c, X_t^1, X_t^2, X_t^3) \sim P_\rho^{( \beta_i^n)_i} \times [e^{s^n c }\,dc] \times (\frac1{Q - \beta_1^n} M^{\beta_1^n}) \times (\frac1{Q - \beta_2^n} M^{\beta_2^n}) \times (\frac1{Q - \beta_3^n} M^{\beta_3^n})\]
	and combining them to give $\phi$ as in Definition~\ref{def-critical}. 
	Let $\mathbf a_i  = \sup_t X_t^i$ and let $F_{K,A}^1$ be the event that $\mathbf a_1  \geq \max(A, \mathbf a_2, \mathbf a_3)$ and $|(\phi,\rho)|<K$. By Lemma~\ref{lem-Mbeta}, the law of
	$(h, \mathbf c, \mathbf a_1, \mathbf a_2, \mathbf a_3)$ restricted to $F_{K,A}^1$ is 
	\begin{equation}\label{eq-maxima-law}
		P_\rho^{( \beta_i^n)_i} \times [1_{|c| < K} e^{s^n c} dc] \times [1_{a_1>A} e^{-(Q -\beta_1^n)a_1}\, da_1] \prod_{i=2}^3 [1_{0 < a_i < a_1} e^{-(Q -\beta_i^n)a_i}\, da_i].
	\end{equation}
	Let $G$ be the average of $h$ on $\{10\} \times [0,\pi]$, so the maximum of the field average of $\phi$ on $\{t\} \times [0,\pi]$ for $t \geq 10$ is $\mathbf a_1 + G + \mathbf c$. When $\mathbf a_1 + G + \mathbf c$ is large, the LQG-length of any $I \in\mathcal I$ adjacent to $+\infty$ is likely large, so $E_K$ likely does not occur. We quantify this via the existence of the $-p$th GMC moment \cite[Proposition 3.6]{RV10} for any $p>0$, and Markov's inequality: 
	\begin{align*}
		\mathbb P[\nu_{\phi}([1,\infty)) \leq K \mid \mathbf a_1, G, \mathbf c] &= \mathbb P[\nu_{\phi - (\mathbf a_1 + G + \mathbf c)}([10,\infty)) \leq Ke^{-\frac\gamma2(\mathbf a_1 + G + \mathbf c)} \mid \mathbf a_1, G, \mathbf c] 
		\\ &\leq (Ke^{-\frac\gamma2 (\mathbf a_1 + G + \mathbf c)})^p \bbE[\nu_{\phi - (\mathbf a_1 + G + \mathbf c)}([10,\infty))^{-p} \mid \mathbf a_1, G, \mathbf c]  \lesssim (Ke^{-\frac\gamma2 (\mathbf a_1 + G + \mathbf c)})^p.
	\end{align*}
	The last inequality above holds because the field average of $\phi - (\mathbf a_1 + G + \mathbf c)$ on $\{t \} \times [0,\pi]$ for $t \geq 10$ is negative,  {and the projection of $h$ to $H_2(\mathcal S)$ as in~\eqref{eqn-gff-decom} is translation invariant in law and has negative GMC moments on boundary intervals. (See the moment bound in the proof of \cite[Lemma A.1.4]{DMS14} for a similar argument.)} Thus, taking the expectation with respect to $\mathbf a_1, G, \mathbf c$ gives
	\[\prod(Q-\beta_i^n)^{-1} \LF_\mathcal{S}^{(\beta_i^n, x_i)_i} [E_K \cap F_{K,A}^1] \lesssim \int_{-K}^K\int_A^\infty \bbE[ (K e^{-\frac\gamma2(a_1 + G_0 +c)})^p ]e^{-(Q-\beta_i^n)a_1}a_1^2\,da_1\, e^{s^n c} \,dc\]
	where the implicit constant depends on $p$ but not on $A$ or $n$, and the expectation is taken with respect to the {Gaussian  $G_0$ defined as the average of a GFF sampled from $P_\rho^{(\beta_i^n)_i}/|P_\rho^{(\beta_i^n)_i}|$ on {$\{10\} \times[0,\pi]$}}. In the inequality, the term $a_1^2$ comes from the integrals over $da_2$ and $da_3$ in~\eqref{eq-maxima-law}. Since $\int_A^\infty e^{-\frac\gamma2 p a_1} a_1^2\, da_1 \lesssim A^2 e^{-\frac\gamma2pA}$ {and $G_0$ has mean and variance uniformly bounded in $n$}, the upper bound can be bounded above by a constant times $K^{p+1} e^{-\frac\gamma2 pK} e^{|s|K} A^2 e^{-\frac\gamma2pA}$.
	Defining $F_{K, A}^i$ for $i = 2,3$, the above estimate also holds for these events, so since $(E_{K, A}')^c \subset \bigcup_i F_{K,A}^i$, we obtain 
	\[\prod(Q-\beta_i^n)^{-1} \LF_\mathcal{S}^{(\beta_i^n, x_i)_i} [E_K \cap (E_{K,A}')^c] \lesssim K^{p+1} e^{-\frac\gamma2 pK} e^{|s|K} A^2 e^{-\frac\gamma2pA}.\]
	This gives the desired uniform estimate~\eqref{eq-unif-crit-A}. 
\end{proof}

\begin{definition}\label{def-qt-thick-crit}
	Fix {$W_1, W_2, W_3\geq\frac{\gamma^2}{2}$}. For $W_i > \frac{\gamma^2}2$ let $\beta_i =  \gamma+\frac{2-W_i}{\gamma}<Q$, and for $W_i = \frac{\gamma^2}2$ let $\beta_i = Q^-$. Sample $\phi$ from $\prod_{i: \beta_i \neq Q^-} (Q - \beta_i)^{-1}\LF_{\mathcal{S}}^{(\beta_1, +\infty), (\beta_2, -\infty), (\beta_3, 1)}$. Let $\textup{QT}(W_1, W_2, W_3)$ be the law of $(\mathcal{S}, \phi, +\infty, -\infty, 1)/{\sim_\gamma}$. 
	\\For general $W_1, W_2, W_3 > 0$ with one or more weights equal to $\frac{\gamma^2}2$, define $\QT(W_1, W_2, W_3)$ as in Definition~\ref{def-qt-thin}.
\end{definition}

\begin{remark}\label{rmk:RV-problem}
	A variant of the limiting statement in Proposition~\ref{prop-limit-crit} for Liouville CFT on the sphere was stated {as a conjecture} in \cite[Remark 2.5]{DKRV17a}. We expect that adapting our argument will lead to a proof.
\end{remark}

\section{Imaginary geometry and $\SLE_\kappa(\rho_-;\rho_+,\rho_1)$}\label{sec-pre-ig}
We briefly go over the GFF/SLE coupling in Imaginary Geometry \cite{MS16a} in Section~\ref{subsec:ig}. Then  in Section~\ref{subsec:sle-resampling} we state the \emph{SLE resampling properties} \cite[Section 4]{MS16b}, which will frequently appear in our later proofs. 
{Finally we prove Theorem~\ref{thm-sle-reverse} in Section~\ref{sec-sle-reverse}.}

\subsection{Background on $\SLE_\kappa(\underline{\rho})$ and imaginary geometry}\label{subsec:ig}

The SLE$_\kappa$ curves, as introduced in \cite{Sch00}, is a conformally invariant measure on continuously growing compact hulls $K_t$ with the Loewner driving function $W_t=\sqrt{\kappa}B_t$ (where $B_t$ is the standard Brownian motion). When the background domain is the upper half plane, this can be described by 
\begin{equation}\label{eqn-def-sle}
	g_t(z) = z+\int_0^t \frac{2}{g_s(z)-W_s}ds, \ z\in\mathbb{H},
\end{equation} 
and $g_t$ is the unique conformal transformation from $\mathbb{H}\backslash K_t$ to $\mathbb{H}$ such that $\lim_{|z|\to\infty}|g_t(z)-z|=0$. SLE$_\kappa$ curves also has a natural variant called SLE$_\kappa(\underline{\rho})$, which first appeared in \cite{LSW03} and studied in \cite{Dub05,MS16a}. Fix  $x^{k,L}<...<x^{1,L}\le 0^-\le 0^+\le x^{1,R}<...<x^{\ell, R}$, which are called \textit{force points}, and set $\underline{x} = (\underline{x}_L, \underline{x}_R) = (x^{1,L}, ..., x^{k,L};x^{1,R}, ..., x^{\ell, R})$. For each for each force point $x^{i,q}$, $q\in\{L,R\}$ we assign a \textit{weight} $\rho^{i,q}\in\mathbb{R}$. Let $\underline{\rho}$ be the vector of weights. The  SLE$_\kappa(\underline{\rho})$ process with force points $\underline{x}$ is the measure on compact hulls $(K_t)_{t\ge 0}$ growing the same as ordinary SLE$_\kappa$ (i.e, satisfies \eqref{eqn-def-sle}) except that the Loewner driving function $(W_t)_{t\ge 0}$ are now characterized by 
\begin{equation}\label{eqn-def-sle-rho}
	\begin{split}
		&W_t = \sqrt{\kappa}B_t+\sum_{q\in\{L,R\}}\sum_i \int_0^t \frac{\rho^{i,q}}{W_s-V_s^{i,q}}ds; \\
		& V_t^{i,q} = x^{i,q}+\int_0^t \frac{2}{V_s^{i,q}-W_s}ds, \ q\in\{L,R\}.
	\end{split}
\end{equation}
It has been shown in \cite{MS16a} that  SLE$_\kappa(\underline{\rho})$ processes a.s. exists, is unique and generates a continuous curve until the \textit{continuation threshold}, the first time $t$ such that $W_t = V_t^{j,q}$ with $\sum_{i=1}^j\rho^{i,q}\le -2$ for some $j$ and $q\in\{L,R\}$. Let $f_t := g_t-W_t$ be the \textit{centered Loewner flow.}

Now we recall the notion of \emph{the  GFF flow lines}. Heuristically, given a GFF $h$, $\eta(t)$ is a flow line of angle $\theta$ if
\begin{equation}
	\eta'(t) = e^{i(\frac{h(\eta(t))}{\chi}+\theta)}\ \text{for}\ t>0, \ \text{where}\ \chi = \frac{2}{\sqrt{\kappa}}-\frac{\sqrt{\kappa}}{2}.
\end{equation} 
{To be more precise,  \cite[Theorem 1.1]{MS16a} introduces an exact coupling of a Dirichlet GFF with an $\SLE_\kappa(\underline{\rho})$, which we briefly recap as follows. Let $(K_t)_{t\ge 0}$ be the hull at time $t$ of the SLE$_\kappa(\underline{\rho})$ process described by the Loewner flow \eqref{eqn-def-sle} with $(W_t, V_t^{i,q})$ solving \eqref{eqn-def-sle-rho} with filtration $\mathcal{F}_t$.  Let $\mathfrak{h}_t^0$ be the harmonic function on $\mathbb{H}$ with boundary values 
	$$
	-\lambda(1+\sum_{i=0}^j \rho^{i,L})\ \ \text{on} \ [V_t^{j+1, L}, V_t^{j,L})\ \  \text{and}\ \ \lambda(1+\sum_{i=0}^j \rho^{i,R})\ \text{on}\ \ [V_t^{j, R}, V_t^{j+1,R})
	$$
	where $\lambda = \frac{\pi}{\sqrt{\kappa}}$, $\rho^{0,R} = \rho^{0,L}=0$, $x^{0,L} = 0^-, x^{0,R} = 0^+, x^{k+1, L} = -\infty, x^{\ell+1, R} = +\infty$. Set {$\mathfrak{h}_t(z) = \mathfrak{h}_t^0(g_t(z))-\chi\arg g_t'(z)$}. Let  $\tilde{h}$ be  a zero boundary GFF on $\mathbb{H}$ and $h = \tilde{h}+ \mathfrak{h}_0$. Then for any $\mathcal{F}_t$-stopping time $\tau$ before the continuation threshold, $K_\tau$ is a local set for $h$ and the conditional law of $h|_{\mathbb{H}\backslash K_\tau}$ given $\mathcal{F}_\tau$ is the same as the law of $\mathfrak{h}_\tau+\tilde{h}\circ f_\tau$.}


For $\kappa<4$, the SLE$_\kappa(\underline{\rho})$ coupled with the GFF $h$ as above is referred as the \text{flow lines} of $h$, and we say an SLE$_\kappa(\underline{\rho})$ curve is a flow line of angle $\theta$ if it can be coupled with $h+\theta\chi$. Moreover, \cite[Theorem 1.2]{MS16a} shows that these flow lines are a.s. determined by the GFF $h$, and we can simultaneously consider flow lines starting from different boundary points. Furthermore, by \cite[Theorem 1.5]{MS16a}, the interaction of these flow lines (i.e., crossing, merging, etc.) are completely determined by their angles.   One can also make sense of flow lines of GFF starting from interior points, and see \cite{MS17} for more details.

\subsection{Coupling of two flow lines}\label{subsec:sle-resampling}
One important consequence is that, as argued in \cite[Section 6]{MS16a}, suppose $\eta_1$ and $\eta_2$ are flow lines of $h$, then given $\eta_1$, the conditional law of $\eta_2$ is the same as the law of the flow line (with some angle) of the GFF in $\mathbb{H}\backslash\eta_1$ with the \textit{flow line boundary conditions} (one can go to \cite[Figure 1.10]{MS16a} for more explanation) induced by $\eta_1$, and vice versa for the law of $\eta_1$ given $\eta_2$. The \emph{SLE resampling property} states that these two conditional laws actually uniquely characterize the joint law of $(\eta_1,\eta_2)${, at least in some parameter ranges. 
	
	We summarize the imaginary geometry input we need for domains with three marked points in Proposition~\ref{prop:ig-flow-descriptions} below. Suppose $\eta_1$ is a simple curve in $\bbH$ from $0$ to $\infty$ which does not hit 1. We want to make sense of the curve $\eta_2 \sim \SLE_\kappa(\rho_-,\rho_0;\rho_+)$ in $\bbH \backslash \eta_1$ in the region to the right of $\eta_1$ from 1 to $\infty$. The definition is clear if $\eta_1 \cap (0, \infty) = \emptyset$, {where the force points are located at $1^-,0;1^+$}. Otherwise, let $p$ be the rightmost point of $\eta_1 \cap [0,1)$ and $q$ the leftmost point of $\eta_1 \cap (1, +\infty)$ (with $q = \infty$ if $\eta_1$ is disjoint from $(1, \infty)$). Let $D$ be the connected component of $\bbH \backslash \eta_1$ with $1$ on its boundary, and sample an $\SLE_\kappa(\rho_-,\rho_0;\rho_+)$ curve in $D$ from $1$ to $q$ with {force points at $1^-,p;1^+$}.  If $q = \infty$ then $\eta_2$ is this curve. Otherwise, in each connected component of $\bbH \backslash \eta_1$ to the right of $D$ we sample an independent $\SLE(\rho_0+\rho_-;\rho_+)$ and let $\eta_2$ be the concatenation of all the sampled curves. Similarly, if $\eta_2$ is a curve in $\bbH$ from $1$ to $\infty$ which does not hit $0$, we can define $\SLE_\kappa(\rho_-;\rho_+,\rho_1)$ in $\bbH \backslash \eta_2$ in the region to the left of $\eta_2$ from 0 to $\infty$. 
	
	\begin{proposition}\label{prop:ig-flow-descriptions}
		Let $\theta_1,\theta_2,x_1, x_2, x_3\in\bbR$ such that $\theta_1>\theta_2$ and
		\begin{equation}\label{eqn-ig-range}
			x_1<\lambda-\theta_1\chi; \ x_3>-\lambda-\theta_2\chi;\ -\lambda-\theta_1\chi<x_2<\lambda-\theta_2\chi. 
		\end{equation}
		The following two laws on pairs of curves $(\eta_1, \eta_2)$ agree:
		\begin{itemize}
			\item Sample $\eta_1$ in $\bbH$ from $0$ to $\infty$ as $\SLE_\kappa(-\frac{x_1+\theta_1\chi}{\lambda}-1; \frac{x_2+\theta_1\chi}{\lambda}-1, \frac{x_3-x_2}{\lambda})$. In $\bbH \backslash \eta_1$ in the region to the right of $\eta_1$, sample $\eta_2$ from $1$ to $\infty$ as  $\SLE_\kappa(- \frac{x_2+\theta_2\chi}{\lambda}-1,\frac{x_2+\theta_1\chi}{\lambda}-1;\frac{x_3+\theta_2\chi}{\lambda}-1 )$.
			\item Sample $\eta_2$ in $\bbH$ from $1$ to $\infty$ as $\SLE_\kappa(- \frac{x_2+\theta_2\chi}{\lambda}-1,\frac{x_2-x_1}{\lambda};\frac{x_3+\theta_2\chi}{\lambda}-1 )$. In  $\bbH \backslash \eta_2$ in the region to the left of $\eta_2$, sample $\eta_1$ from $0$ to $\infty$ as  $\SLE_\kappa(-\frac{x_1+\theta_1\chi}{\lambda}-1; \frac{x_2+\theta_1\chi}{\lambda}-1, \frac{-\theta_2\chi-x_2}{\lambda}-1)$.
		\end{itemize}
		Furthermore, for $(\theta_1-\theta_2)\chi\ge\frac{\sqrt{\kappa}\pi}{2}$, this law on $(\eta_1, \eta_2)$ is characterized by the following:
		\begin{itemize}
			\item Almost surely $\eta_1 \cap [1, \infty) = \emptyset$ and $\eta_2 \cap (-\infty, 0] = \emptyset$. Moreover, the conditional law of $\eta_1$ given $\eta_2$ is $\SLE_\kappa(-\frac{x_1+\theta_1\chi}{\lambda}-1; \frac{x_2+\theta_1\chi}{\lambda}-1, \frac{-\theta_2\chi-x_2}{\lambda}-1)$, and the conditional law of $\eta_2$ given $\eta_1$ is $\SLE_\kappa(- \frac{x_2+\theta_2\chi}{\lambda}-1,\frac{x_2+\theta_1\chi}{\lambda}-1;\frac{x_3+\theta_2\chi}{\lambda}-1 )$.
		\end{itemize}
\end{proposition}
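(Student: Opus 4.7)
The plan is to derive Proposition~\ref{prop:ig-flow-descriptions} from the imaginary geometry coupling of \cite{MS16a}. Consider a GFF $\mathfrak{h}$ on $\mathbb{H}$ with piecewise-constant Dirichlet boundary values $x_1$ on $(-\infty,0)$, $x_2$ on $(0,1)$, and $x_3$ on $(1,\infty)$. Let $\eta_1$ (resp.\ $\eta_2$) be the flow line of $\mathfrak{h}$ of angle $\theta_1$ (resp.\ $\theta_2$) started at $0$ (resp.\ $1$). The range condition~\eqref{eqn-ig-range} is exactly the statement that the cumulative left and right force-point weights of the $\SLE_\kappa(\underline{\rho})$ processes in the proposition stay strictly above $-2$ at every force point, so by \cite[Theorem~1.1]{MS16a} both flow lines exist, are determined by $\mathfrak{h}$, and can be grown to $\infty$. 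Reading off weights from the boundary conditions via $\lambda_\pm = \pm\lambda(1+\rho_\pm)$, shifted by $\theta_i\chi$, and encoding the boundary jump at the non-starting marked point as an extra force point, identifies the marginal laws of $\eta_1$ and $\eta_2$ with the $\SLE_\kappa$ processes in the first and second bullets respectively.

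Next I would compute the conditional law of $\eta_2$ given $\eta_1$. By the flow line theory \cite[Section~6]{MS16a}, $\eta_1$ is a local set for $\mathfrak{h}$ and the restriction of $\mathfrak{h}$ to the component $D$ of $\mathbb{H}\setminus\eta_1$ with $1$ on its boundary is a GFF carrying its original boundary values on $\partial D\cap(0,\infty)$ together with flow line boundary conditions along $\eta_1$. Applying \cite[Theorem~1.1]{MS16a} to this conditional GFF, the angle-$\theta_2$ flow line from $1$ is an $\SLE_\kappa$ whose force points at $1^-$, $1^+$, and at the rightmost intersection of $\overline{\eta_1}$ with $[0,1)$ (or at $0$ if this intersection is empty) carry exactly the weights $-\frac{x_2+\theta_2\chi}{\lambda}-1$, $\frac{x_3+\theta_2\chi}{\lambda}-1$, $\frac{x_2+\theta_1\chi}{\lambda}-1$ from the first bullet; the last weight arises from the flow line value $+\lambda-\theta_1\chi$ on $\eta_1$ combined with the winding correction produced by conformally uniformizing $D$ onto $\mathbb{H}$. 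When $\overline{\eta_1}$ intersects $(1,\infty)$, independence across the resulting beads follows from the Markov property at the cut points, as in the bead-by-bead prescription given just before the proposition. A fully symmetric argument yields the second bullet; since both bullets factor the single joint law of $(\eta_1,\eta_2)$ coming from $\mathfrak{h}$ as marginal-times-conditional in opposite orders, they agree.

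For the characterization claim, assume $(\theta_1-\theta_2)\chi\geq\frac{\sqrt{\kappa}\pi}{2}$. The sum of right force-point weights of $\eta_1$ equals $\frac{x_3+\theta_1\chi}{\lambda}-1$; combining the range condition $x_3>-\lambda-\theta_2\chi$ with the angle inequality gives that this sum exceeds $\frac{\kappa}{2}-2$ (using $\sqrt{\kappa}\pi/2 = \kappa\lambda/2$), so the standard boundary-hitting criterion for $\SLE_\kappa(\underline{\rho})$ yields $\eta_1\cap[1,\infty)=\emptyset$ almost surely, and symmetrically $\eta_2\cap(-\infty,0]=\emptyset$ almost surely. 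Uniqueness of the joint law given the two conditional laws together with these non-intersection events follows from the resampling argument of \cite[Section~4]{MS16b}: the angle inequality ensures $\eta_1$ and $\eta_2$ are a.s.\ disjoint, so the iterated resample chain---resample $\eta_1$ from its conditional given $\eta_2$, then $\eta_2$ from its conditional given the new $\eta_1$, and so on---contracts in total variation to the IG joint law, ruling out any alternative stationary measure.

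The main obstacle will be Step~2: tracking the winding correction $-\chi\arg((\varphi^{-1})')$ when conformally mapping $D$ onto $\mathbb{H}$ and verifying that it combines with the boundary value $\lambda-\theta_1\chi$ on $\eta_1$ to produce the weight $\frac{x_2+\theta_1\chi}{\lambda}-1$ with precisely the sign in the statement. The remaining subtleties---accommodating $\overline{\eta_1}$ that hits $(0,1)$ or $(1,\infty)$, and adapting the resampling argument of \cite[Section~4]{MS16b} from its original two-force-point formulation to our three-force-point setting---should be routine modifications of existing techniques.
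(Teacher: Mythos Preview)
Your proposal is correct and follows essentially the same approach as the paper: derive both factorizations from the imaginary geometry coupling via \cite[Section~6]{MS16a}, and obtain the characterization from the resampling argument of \cite[Section~4]{MS16b}. The paper's proof is just a brief reference to these sources, so your sketch is in fact more detailed; the one point the paper emphasizes differently is that the adaptation needed in \cite[Theorem~4.1]{MS16b} is not about the number of force points but about the curves starting at the \emph{same} versus \emph{different} boundary points (here $0$ and $1$), which actually makes the argument simpler since the initial SLE-duality step separating the endpoints is no longer needed.
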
}
See Figure \ref{fig-ig-resample} for an illustration of the setting. The first statement is clear from the flow line conditioning in \cite[Section 6]{MS16a}. The second statement is the resampling property of flow lines in \cite{MS16b}. We remark that the original statement  \cite[Theorem 4.1]{MS16b} is for curves with same starting and ending points; the proof is based on a Markov chain mixing argument and the first step is to apply the SLE duality argument to separate the initial and terminal points of $\eta_1$ and $\eta_2$. Therefore the same argument readily applies (and is simpler) in the case described in Proposition \ref{prop:ig-flow-descriptions}. 

\begin{figure}[ht]
	\centering
	\begin{tabular}{ccc} 
		\includegraphics[scale=0.43]{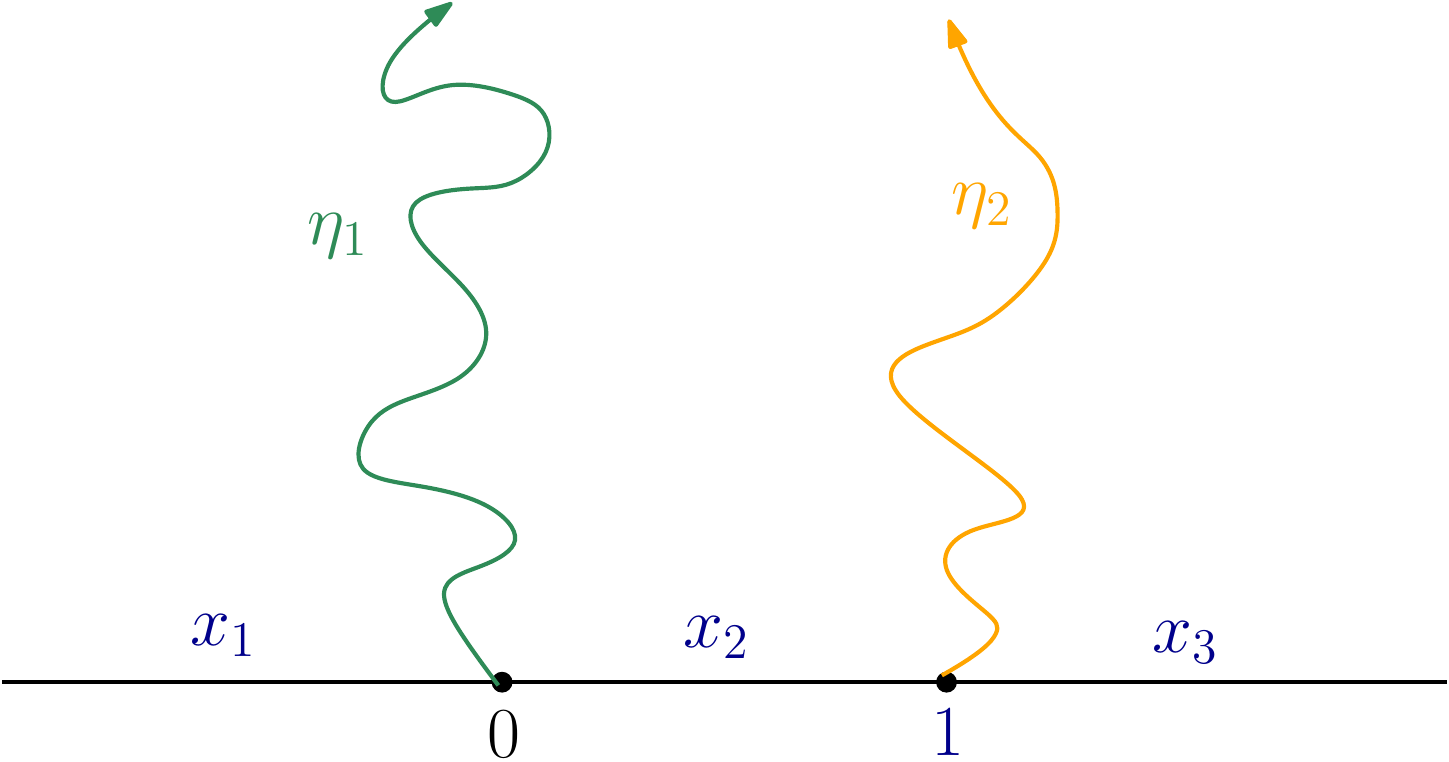}
		& &
		\includegraphics[scale=0.48]{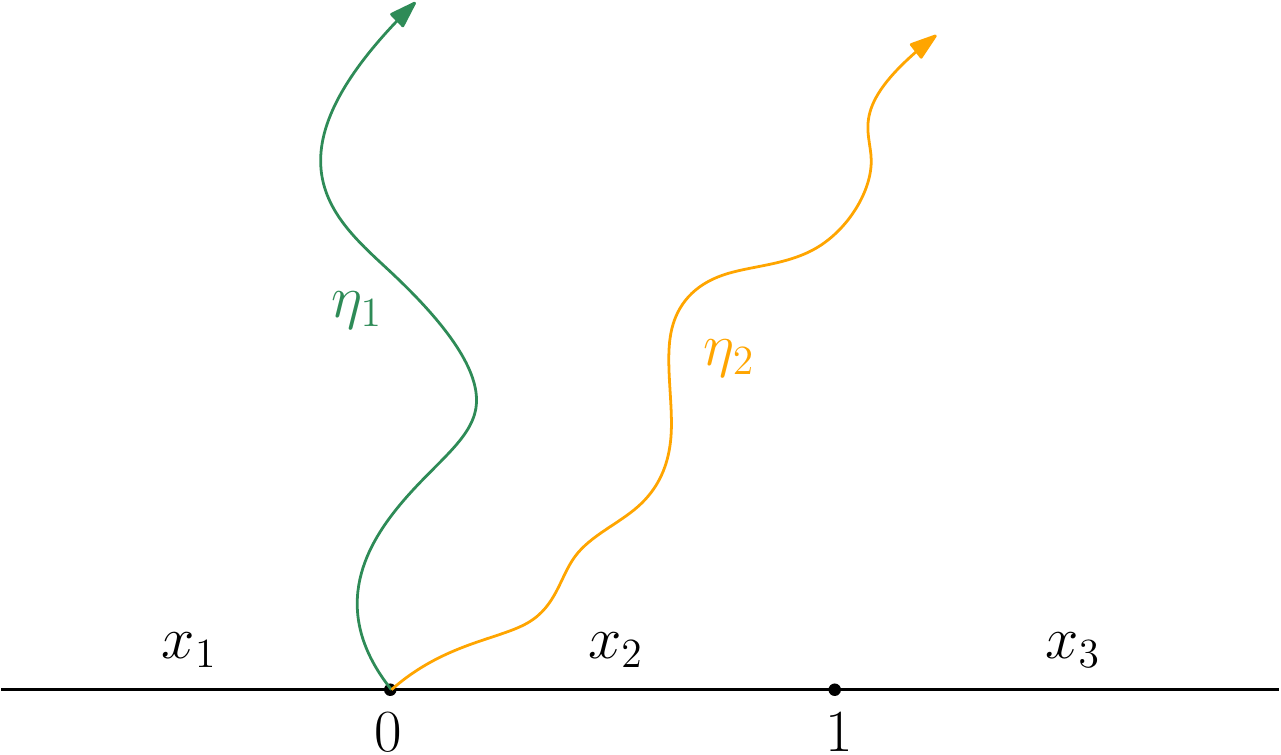}
	\end{tabular}
	\caption{\textbf{Left}: Suppose $h$ is a Dirichlet GFF with piecewise constant boundary condition $x_1\textbf{1}_{(-\infty,0)}(x)+x_2\textbf{1}_{[0,1)}(x)+x_3\textbf{1}_{[1,+\infty)}(x)$. Let $\eta_1$ (resp. $\eta_2$) be the flow line of $h$ starting from 0 (resp. 1) of angle $\theta_1$ (resp. $\theta_2$). Then  the marginal law of $\eta_1$ is SLE$_\kappa(-\frac{x_1+\theta_1\chi}{\lambda}-1; \frac{x_2+\theta_1\chi}{\lambda}-1, \frac{x_3-x_2}{\lambda})$ with force points at $(0^-;0^+,1)$, and the marginal law of $\eta_2$ is SLE$_\kappa(- \frac{x_2+\theta_2\chi}{\lambda}-1,\frac{x_2-x_1}{\lambda};\frac{x_3+\theta_2\chi}{\lambda}-1 )$ force points at $(1^-,0;1^+)$. Suppose $\theta_1\ge\theta_2$. By \cite[Section 6]{MS16a}, one can also read the conditional law of $\eta_1$ given $\eta_2$, which is SLE$_\kappa(-\frac{x_1+\theta_1\chi}{\lambda}-1; \frac{x_2+\theta_1\chi}{\lambda}-1, \frac{-\theta_2\chi-x_2}{\lambda}-1)$ in the left component of $\mathbb{H}\backslash \eta_2$, and similarly conditional law of $\eta_2$ given $\eta_1$ is  SLE$_\kappa(- \frac{x_2+\theta_2\chi}{\lambda}-1,\frac{x_2+\theta_1\chi}{\lambda}-1;\frac{x_3+\theta_2\chi}{\lambda}-1 )$  in the right component of $\mathbb{H}\backslash \eta_1$.
		If $\theta_1<\theta_2$, then given $\eta_2$, the segment of $\eta_1$ before crossing $\eta_2$ is the same as non-crossing case and $\eta_1$ can be continued after crossing. Finally, by the resampling property, the conditional laws of $\eta_1|\eta_2$ and $\eta_2|\eta_1$ uniquely characterize the joint distribution of $(\eta_1,\eta_2)$, as constructed using Imaginary Geometry. \textbf{Right}: One can similarly consider the flow lines from 0 and read off the marginal and conditional laws as in Lemma \ref{lm:ig-commute}.   }\label{fig-ig-resample} 
\end{figure}

\subsection{Reversibility of SLE$_\kappa(\rho_-;\rho_+,\rho_1)$}\label{sec-sle-reverse}

In this section, as an application of the Imaginary Geometry flow lines and the curve resampling properties, we extend the result on reversibility of SLE$_\kappa(0;\rho_+,\rho_1)$ in \cite{Zhan19} to SLE$_\kappa(\rho_-;\rho_+,\rho_1)$ curves. 

To begin with, let us recall the notion of \emph{SLE weighted by conformal derivative}. Given $\rho_-,\rho_+>-2$, $\rho_1>-2-\rho_+$ (which implies that the continuation threshold is never hit) and $\alpha\in\bbR$, we define the measure $\widetilde{\SLE}_\kappa(\rho_-;\rho_+,\rho_1;\alpha)$ on curves $\eta$ from 0 to $\infty$ on $\mathbb{H}$ as follows. Let $D_\eta$ be the component of $\mathbb{H}\backslash \eta$ containing $1$, and $\psi_\eta$ the unique conformal map from $D_\eta$ to $\mathbb{H}$ fixing 1 and sending the first (resp. last) point on $\partial D_\eta$ hit by $\eta$ to 0 (resp. $\infty$). Then our $\widetilde{\SLE}_\kappa(\rho_-;\rho_+,\rho_1;\alpha)$ on $\bbH$ is defined by
\begin{equation}\label{eqn-sle-CR-1}
	\frac{d\widetilde{\SLE}_\kappa(\rho_-;\rho_+,\rho_1;\alpha)}{d{\SLE}_\kappa(\rho_-;\rho_+,\rho_1)}(\eta) = |\psi_{\eta}(1)'|^\alpha
\end{equation}
where the force points of ${\SLE}_\kappa(\rho_-;\rho_+,\rho_1)$ is $0^-,0^+,1$.  This definition can be  extended to other domains via conformal transforms, while by symmetry, we can also define the version $d\widetilde{\SLE}_\kappa(\rho_-,\rho_{-1};\rho_+;\alpha)$ with $1$ replaced by $-1$ and force points {$0^-,-1;0^+$} similarly. Also let $\mathcal{R}(\eta)$ be the time reversal of $\eta$. {With these notations, we state the result in \cite{Zhan19} as follows.}
{\begin{theorem}\label{thm-dapeng}
		Suppose $\eta$ an $\SLE_\kappa(0;\rho_+,\rho_1)$ curve in $\bbH$ from $0$ to $\infty$ with force located at $0^+$ and 1, and $\rho_+>-2, \rho_+ + \rho_1>-2$. Let $\mathcal{L}$ be the law of the time reversal $\mathcal{R}(\eta)$ under the conformal mapping $z\mapsto-\frac{1}{z}$. Then $\mathcal{L}$ is a constant multiple of the measure $\widetilde{\SLE}_\kappa(\rho_++\rho_1,-\rho_1;0;\frac{\rho_1(4-\kappa)}{2\kappa})$. 
\end{theorem}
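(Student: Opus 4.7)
The natural strategy is to couple $\eta$ with a Gaussian free field via imaginary geometry and exploit the conformal symmetry of this coupling under $\phi(z)=-1/z$. I would realize $\eta\sim\SLE_\kappa(0;\rho_+,\rho_1)$ from $0$ to $\infty$ as the zero-angle flow line of a GFF $h$ on $\bbH$ with piecewise constant Dirichlet boundary data: $-\lambda$ on $(-\infty,0)$, $\lambda(1+\rho_+)$ on $(0,1)$, and $\lambda(1+\rho_++\rho_1)$ on $(1,\infty)$. This is the coupling built into Proposition~\ref{prop:ig-flow-descriptions} together with~\eqref{eq:IG}. I would then apply the involution $\phi(z)=-1/z$, which maps $\bbH$ onto itself and sends $(0,1,\infty)$ to $(\infty,-1,0)$.

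Under this map the GFF transforms to $\tilde h = h\circ\phi^{-1} - \chi\arg(\phi^{-1})'$, which is another Dirichlet GFF on $\bbH$ whose boundary data is the original data shifted by the angle correction $\chi\arg(\phi^{-1})'$ and re-read along the reversed orientation. Bookkeeping of these shifts yields boundary values that, via~\eqref{eq:IG}, correspond to a flow line from $0$ to $\infty$ of an $\SLE_\kappa(\rho_++\rho_1,-\rho_1;0)$ process with the interior force point sitting at $-1$. The central technical input is then the reversibility of flow lines: the time-reversal $\mathcal R(\eta)$, transported by $\phi$, is the flow line of $\tilde h$ from $0$ to $\infty$ up to reparametrization. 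This is where the real work lies, and it can be attacked by adapting the Miller--Sheffield coupling arguments underlying Proposition~\ref{prop:ig-flow-descriptions} --- running the forward and reversed flow lines of the same field simultaneously, proving they trace out the same set, and invoking an SLE resampling argument analogous to the one in the second bullet of Proposition~\ref{prop:ig-flow-descriptions}.

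The Radon--Nikodym factor $|\psi_\eta'(\cdot)|^\alpha$ with $\alpha=\rho_1(4-\kappa)/(2\kappa)$ enters because $\phi$ together with the natural normalization of the endpoints of $\mathcal R(\eta)$ does not, in general, send the force point to the canonical location used to define $\widetilde\SLE_\kappa(\cdot;\cdot;\cdot;\alpha)$, so one must apply a further Möbius automorphism of $\bbH$ fixing $0$ and $\infty$; the cost of this additional change of coordinates is precisely a power of the conformal derivative at the force point. The exponent $\alpha$ is pinned down as the unique value for which the weighted law is invariant under such coordinate changes; it equals the conformal-weight exponent associated to the $\rho_1$-force point in the SLE martingale/partition function, and a direct computation using $\chi=\frac{2}{\sqrt\kappa}-\frac{\sqrt\kappa}{2}$ and $\lambda=\pi/\sqrt\kappa$ gives the stated formula.

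The main obstacle is the flow-line reversibility claim of the second paragraph in the presence of an interior force point of possibly negative weight $\rho_1$: one needs the continuation-threshold hypothesis $\rho_++\rho_1>-2$ to guarantee the reversed process reaches its target without being absorbed, and one must check carefully the compatibility between time-reversal of the curve and the imaginary geometry coupling at both endpoints as well as near the marked point $1$. Handling these three localized issues simultaneously is what makes the argument delicate and is the reason the result was originally proved in a separate paper rather than falling out of the standard Miller--Sheffield reversibility for purely boundary-force processes.
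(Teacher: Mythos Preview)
The paper does not actually prove this theorem; immediately after stating it the authors write that it ``is implicitly shown in Theorem 1.1 and Section 3.2 of \cite{Zhan19} via the construction of the reversed curve,'' and they take it as input. Zhan's argument in \cite{Zhan19} is not imaginary-geometry based: it proceeds by direct stochastic calculus, constructing a two-directional $\SLE_\kappa(\underline\rho)$ process and identifying the law of the reversed curve through its driving function and the associated martingales. So there is no paper-internal proof to compare against, and your proposed route is genuinely different from the one being cited.

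That said, your proposal has a real gap at precisely the point you yourself flag. The Miller--Sheffield reversibility in \cite{MS16b} and the resampling characterization in Proposition~\ref{prop:ig-flow-descriptions} apply to flow lines whose force points sit at $0^\pm$ (or, after conditioning on one curve, on the trace of that curve). They do not give you reversibility of a single flow line with a genuine interior boundary force point at $1$: after you apply $\phi(z)=-1/z$, the reversed curve you want to identify is a flow line of $\tilde h$, but you have no a priori theorem saying that the time-reversal of the original flow line \emph{is} that flow line. Your sentence ``running the forward and reversed flow lines of the same field simultaneously, proving they trace out the same set'' is exactly the content of the missing theorem, not a method for proving it; the Miller--Sheffield argument for two force points does not extend in an obvious way because the extra marked point at $1$ breaks the scale invariance their proof exploits. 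Likewise, your derivation of the exponent $\alpha=\rho_1(4-\kappa)/(2\kappa)$ via ``the cost of a further M\"obius automorphism'' is the right heuristic, but turning it into a rigorous identification of the Radon--Nikodym derivative requires either Zhan's explicit martingale computation or the LQG welding argument carried out later in this paper. As written, your proposal is an outline of why the statement is plausible rather than a proof.
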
}
We note that the theorem above  is implicitly shown in Theorem 1.1 and Section 3.2 of \cite{Zhan19} via the construction of the reversed curve.  The statement is  for general $\SLE_\kappa(\underline{\rho})$ curves with all force points lying on the same side of 0, while in this paper we only work on the $0^+,1$ force point case for simplicity. To prove   Theorem~\ref{thm-sle-reverse}, we begin with the following variant of Proposition~\ref{prop:ig-flow-descriptions}. Again suppose we want to sample $\eta_2\sim\widetilde{\SLE}_\kappa(\rho_-;\rho_+,\rho_1;\alpha)$ {going from 0 to $\infty$} to the right of $\eta_1$ in $\bbH\backslash\eta_1$ when  $\eta_1$ is hitting $(0,\infty)$, let $p,q$ be the left and right most point on $\bbR$ on the boundary of the connected component of $D_{\eta_1}$. In each component of $\bbH\backslash\eta_1$ whose boundary contains a segment of $(0,p)$ (resp.\ $(q,\infty)$), we sample an independent $\SLE_\kappa(\rho_-;\rho_+)$ (resp.\ $\SLE_\kappa(\rho_-;\rho_++\rho_1)$), and in $D_{\eta_1}$ we sample an $\widetilde{\SLE}_\kappa(\rho_-;\rho_+,\rho_1;\alpha)$ curve from $p$ to $q$. Then $\eta_2$ is the concatenation of these curves.

\begin{lemma}\label{lm:ig-commute}
	Let $x_1,x_2,x_3,\alpha,\theta_1,\theta_2\in\bbR$ such that  
	$$\theta_1>\theta_2;\ x_1<\lambda-\theta_1\chi;\ x_2,x_3>-\lambda-\theta_2\chi.  $$ The following two laws on pairs of curves $(\eta_1, \eta_2)$ agree:
	\begin{itemize}
		\item Sample $\eta_1$ in $\bbH$ from $0$ to $\infty$ as $\widetilde{\SLE}_\kappa(-\frac{x_1+\theta_1\chi}{\lambda}-1; \frac{x_2+\theta_1\chi}{\lambda}-1, \frac{x_3-x_2}{\lambda};\alpha)$. In $\bbH \backslash \eta_1$ in the region to the right of $\eta_1$, sample $\eta_2$ from $0$ to $\infty$ as  $\widetilde{\SLE}_\kappa(- \frac{(\theta_1-\theta_2)\chi}{\lambda}-2;\frac{x_2+\theta_2\chi}{\lambda}-1, \frac{x_3-x_2}{\lambda};\alpha)$.
		\item Sample $\eta_2$ in $\bbH$ from $0$ to $\infty$ as $\widetilde{\SLE}_\kappa(-\frac{x_1+\theta_2\chi}{\lambda}-1; \frac{x_2+\theta_2\chi}{\lambda}-1, \frac{x_3-x_2}{\lambda};\alpha)$. In  $\bbH \backslash \eta_2$ in the region to the left of $\eta_2$, sample $\eta_1$ from $0$ to $\infty$ as  $\SLE_\kappa(-\frac{x_1+\theta_1\chi}{\lambda}-1; \frac{(\theta_1-\theta_2)\chi}{\lambda}-2)$.
	\end{itemize}
\end{lemma}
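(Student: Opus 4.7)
The plan is to handle the unweighted ($\alpha=0$) case via the imaginary geometry coupling of Section~\ref{subsec:ig}, and then to upgrade to general $\alpha$ via the chain rule for conformal derivatives. For $\alpha=0$, I would realize both $\eta_1$ and $\eta_2$ as flow lines from $0$, at angles $\theta_1$ and $\theta_2$ respectively, of a common Dirichlet GFF $\mathfrak h$ on $\bbH$ with piecewise-constant boundary data $x_1\mathbf{1}_{(-\infty,0)}+x_2\mathbf{1}_{[0,1)}+x_3\mathbf{1}_{[1,+\infty)}$. The marginal laws of $\eta_1$ and $\eta_2$ under this coupling are precisely the $\SLE_\kappa(\underline\rho)$ curves appearing in the first and second bullets by the GFF/SLE coupling formulas from Section~\ref{subsec:ig}. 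The conditional laws also match by the standard flow-line conditioning of \cite[Section 6]{MS16a}, after reading off the effective weights from the right-side boundary value of $\eta_1$ (for $\eta_2\mid\eta_1$) and the left-side boundary value of $\eta_2$ (for $\eta_1\mid\eta_2$). In particular, in the second bullet the left component of $\bbH\backslash\eta_2$ does not contain the point $1$, which explains why the conditional $\eta_1$ carries only two force points and no extra weight.

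For general $\alpha$, I would show that the two weighted joint laws are both obtained from the common $\alpha=0$ joint law by multiplying by the same positive factor $|\Psi'(1)|^\alpha$, where $\Psi$ denotes the unique conformal map from the component of $\bbH\backslash(\eta_1\cup\eta_2)$ containing $1$ onto $\bbH$ fixing $0,1,\infty$. In the first description, the total weighting is $|\psi_{\eta_1}'(1)|^\alpha\cdot|\psi_{\tilde\eta_2}'(1)|^\alpha$, where $\tilde\eta_2:=\psi_{\eta_1}(\eta_2)$ is the image of $\eta_2$ after conformally transporting the right component of $\bbH\backslash\eta_1$ to $\bbH$ via $\psi_{\eta_1}$; the definition of $\widetilde{\SLE}$ in a non-upper-half-plane domain via conformal covariance is what makes the second factor appear. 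Since $\psi_{\eta_1}(1)=1$, the chain rule identifies this product with $|\Psi'(1)|^\alpha$ for $\Psi=\psi_{\tilde\eta_2}\circ\psi_{\eta_1}$. In the second description, only $\eta_2$ is weighted, contributing $|\psi_{\eta_2}'(1)|^\alpha$, while the conditional $\eta_1$ is ordinary $\SLE_\kappa$; because $\theta_1>\theta_2$ forces $\eta_1$ to lie in the left component of $\bbH\backslash\eta_2$, the domain $D_{\eta_2}$ (the right component of $\bbH\backslash\eta_2$) coincides with the component of $\bbH\backslash(\eta_1\cup\eta_2)$ containing $1$. By uniqueness of the conformal map fixing three prime-end boundary points, $\psi_{\eta_2}=\Psi$, so the two total weightings agree and the lemma follows.

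The main subtlety will be verifying the topological configurations tacit in the argument: that $\eta_1$ does not swallow the point $1$ so that $\psi_{\eta_1}'(1)$ is well-defined, that the flow-line interaction rules give $\eta_1\cap D_{\eta_2}=\emptyset$, and that the relevant connected components are correctly identified. These follow from $\theta_1>\theta_2$, $\chi>0$, the stated parameter bounds, and the continuation-threshold criterion reviewed in Section~\ref{subsec:ig} together with the flow-line interaction results of \cite{MS16a}. If $\eta_1$ bounces off $\bbR$ at one or more points of $(0,+\infty)$, the conditional $\SLE$ for $\eta_2$ in the first description must be interpreted as a concatenation of independent $\SLE_\kappa$ curves across the pockets cut off by $\eta_1$ (as in the definition given just above the lemma), but the chain-rule identification of $\Psi$ still applies in the component containing $1$, which is the only place where the conformal-derivative weighting lives.
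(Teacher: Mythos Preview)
Your proposal is correct and follows essentially the same route as the paper: the $\alpha=0$ case is handled via the imaginary-geometry flow-line coupling (both curves as flow lines from $0$ at angles $\theta_1>\theta_2$), and the general $\alpha$ case is reduced to $\alpha=0$ by observing that the two sampling orders produce the same Radon--Nikodym derivative $|\psi_{\eta_2}'(1)|^\alpha=|\psi_{\eta_1}'(1)|^\alpha\cdot|\psi_{\tilde\eta_2}'(1)|^\alpha$ via the chain rule for the composition $\psi_{\eta_2}=\psi_{\tilde\eta_2}\circ\psi_{\eta_1}$. Your additional remarks on the topological configuration (why $\eta_1$ stays in the left component of $\bbH\setminus\eta_2$, and how to interpret the pocket decomposition when $\eta_1$ hits $(0,\infty)$) make explicit what the paper leaves implicit, but the core argument is the same.
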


\begin{proof}
	For $\alpha=0$ case, the result is straightforward from the flow line conditioning argument in \cite[Section 6]{MS16a} as drawn in Figure \ref{fig-ig-resample}. If $\alpha\neq 0$, let $\mathcal{P}$ be the measure on curves $(\hat{\eta}_1,\hat{\eta}_2)$ as in the statement with $\alpha=0$. If we let $\mathcal{L}$ be the joint law of $(\eta_1,\eta_2)$ constructed from the second way (i.e.\ start with $\eta_2$ then sample $\eta_1$) 
	\begin{equation}\label{eqn-sle-reverse-1}
		\frac{d\mathcal{L}}{d\mathcal{P}}(\eta_1,\eta_2) = |\psi_{\eta_2}'(1)|^{\alpha}.
	\end{equation}
	Meanwhile, if we first sample $\eta_1$ and then $\eta_2$ conditioned on $\eta_1$ as in the statement and let $\tilde{\mathcal{L}}$ be the joint law of $(\eta_1,\eta_2)$, then 
	\begin{equation}\label{eqn-sle-reverse-2}
		\frac{d\tilde{\mathcal{L}}}{d\mathcal{P}}({\eta}_1,{\eta}_2) = |\psi_{{\eta}_1}'(1)|^{\alpha}|\psi_{{\eta}_2|{\eta}_1}'(1)|^{\alpha}
	\end{equation}
	where $\psi_{{\eta}_2|{\eta}_1}$ is the conformal map from the component of $\bbH\backslash\psi_{{\eta}_1}({\eta}_2)$ containing 1 to $\bbH$ fixing 1 and sending the first (resp.\ last) point hit by $\psi_{{\eta}_1}({\eta}_2)$ to 0 (resp.\ $\infty$). Then we observe that $\psi_{{\eta}_2} = \psi_{{\eta}_2|{\eta}_1}\circ\psi_{{\eta}_1}$ and therefore the two Radon-Nikodym derivatives \eqref{eqn-sle-reverse-1} and \eqref{eqn-sle-reverse-2} are the same.
\end{proof}

\begin{proof}[Proof of Theorem~\ref{thm-sle-reverse}]
	We start with the case {$\rho_-\leq 0$. The $\rho_-=0$ case is already covered in Theorem~\ref{thm-dapeng}.}. 
	We sample a curve $\eta_1$ from $\widetilde{\SLE}_\kappa(\rho_++\rho_1,-\rho_1;0;\frac{\rho_1(4-\kappa)}{2\kappa})$ and a curve $\eta_2\sim \SLE_\kappa(\rho_-;-\rho_--2)$ with force points at $0^-;0^+$ on the right component of $\bbH\backslash\eta_1$. Then by Theorem \ref{thm-dapeng} we know that the marginal law of $\mathcal{R}(\eta_1)$ is now $\SLE_\kappa(0;\rho^+,\rho_1)$; furthermore, by \cite[Theorem 1.1]{MS16b}, the conditional law of  $\mathcal{R}(\eta_2)$ given $\mathcal{R}(\eta_1)$ is $\SLE_\kappa(-\rho_--2;\rho_-)$. Therefore by Lemma~\ref{lm:ig-commute}, the conditional law of $\mathcal{R}(\eta_1)$ given $\mathcal{R}(\eta_2)$ is precisely $\SLE_\kappa(\rho_-;\rho_+,\rho_1)$. See Figure \ref{fig-sle-reverse-1}.
	
	\begin{figure}[ht]
		\begin{tabular}{ccc} 
			\includegraphics[width=0.49\textwidth]{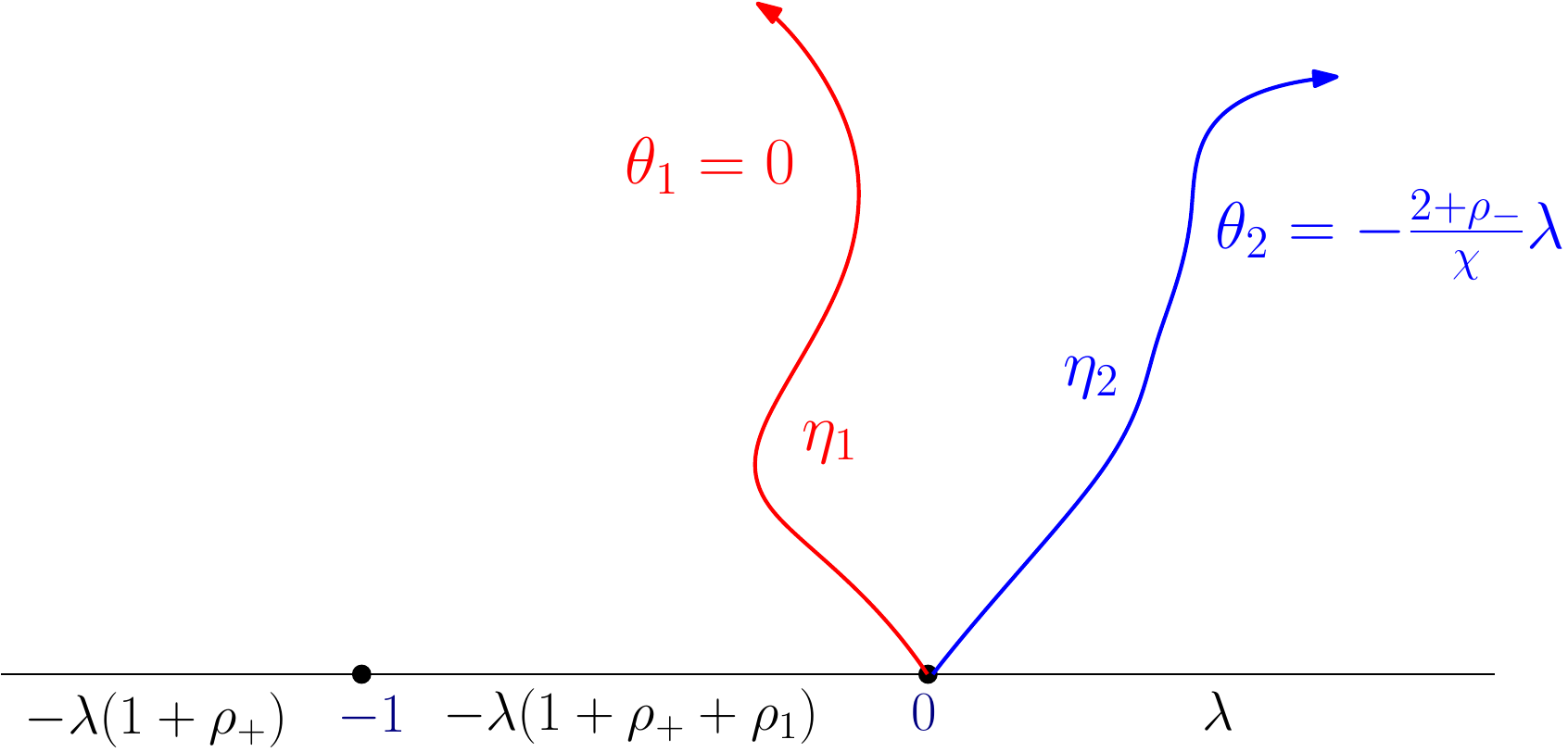}
			& &
			\includegraphics[width=0.49\textwidth]{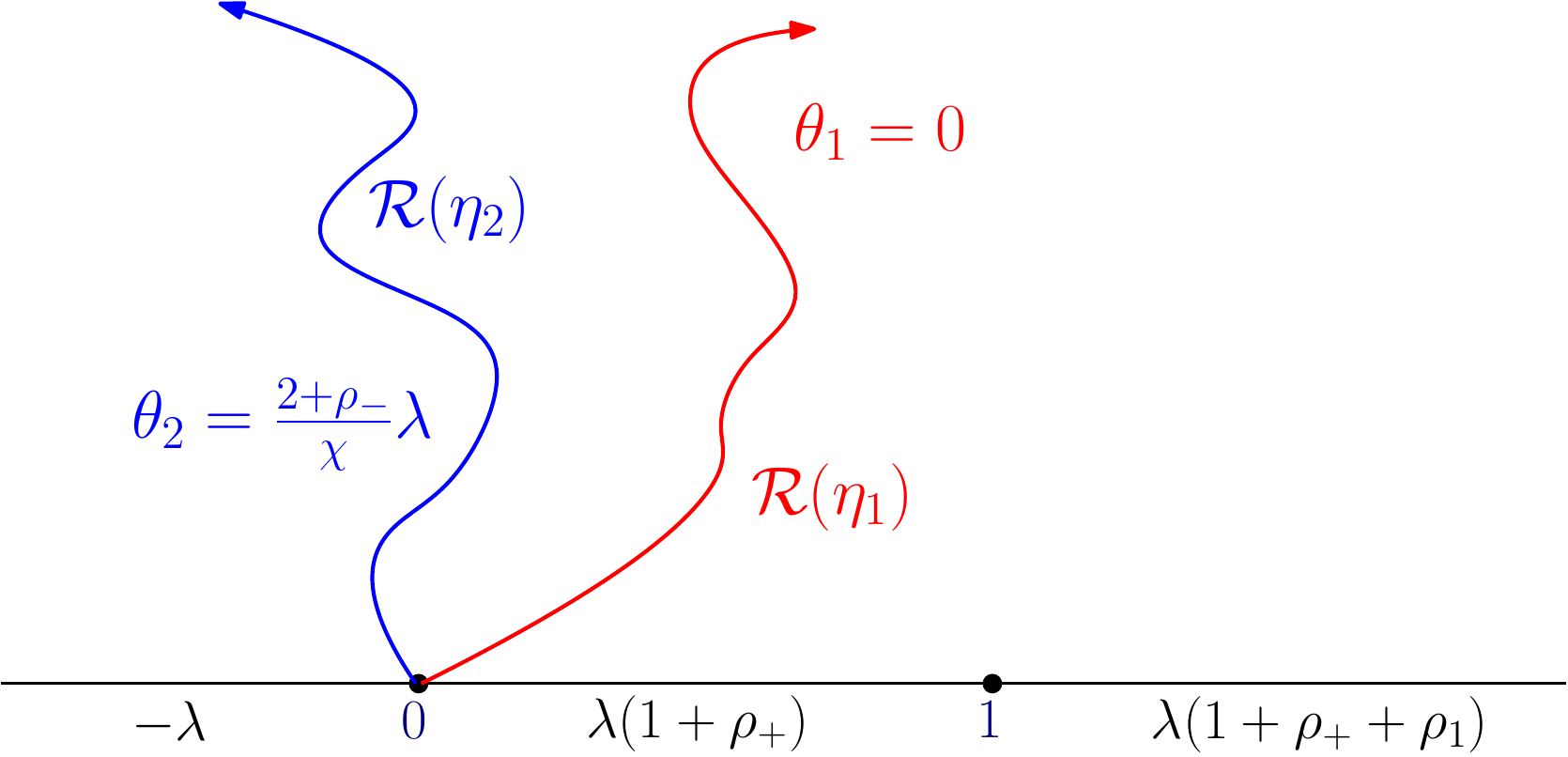}
		\end{tabular}
		\caption{\textbf{Left:} The curves $(\eta_1,\eta_2)$, whose the law has Radon-Nikodym derivative $|\psi_{\eta_1}'(-1)|^{\frac{\rho_1(4-\kappa)}{2\kappa}}$ with respect to the corresponding flow lines of the GFF with the depicted boundary values. By Lemma~\ref{lm:ig-commute} the conditional law of $
			\eta_1$ given $\eta_2$ is $\widetilde{\SLE}_\kappa(\rho_++\rho_1,-\rho_1;\rho_-;\frac{\rho_1(4-\kappa)}{2\kappa})$.   \textbf{Right:} An Imaginary Geometry coupling of $\mathcal{R}(\eta_1)$ and $\mathcal{R}(\eta_2)$ where the marginal law of $\mathcal{R}(\eta_1)$ is $\SLE_\kappa(0;\rho^+,\rho_1)$ by Theorem~\ref{thm-dapeng} and the law of $\mathcal{R}(\eta_2)$ given $\mathcal{R}(\eta_1)$ is $\SLE_\kappa(-\rho_--2;\rho_-)$. Another application of Lemma~\ref{lm:ig-commute} gives that the law of $\mathcal{R}(\eta_1)$ given $\eta_2$ is $\SLE_\kappa(\rho_-;\rho_+,\rho_1)$.  }\label{fig-sle-reverse-1}
	\end{figure}

	{Now suppose $\rho^-\in (0,2]$}. We first sample a curve $\eta_2$ on $\bbH$ from 0 to $\infty$ from $\widetilde{\SLE}_\kappa(2+\rho_++\rho_1,-\rho_1;\rho_--2;\frac{\rho_1(4-\kappa)}{2\kappa})$ and then $\eta_1$ from $\widetilde{\SLE}_\kappa(\rho_++\rho_1,-\rho_1;0;\frac{\rho_1(4-\kappa)}{2\kappa})$ on the left component of $\mathbb{H}\backslash\eta_2$. Then using the same conformal map composing argument, we observe that the Radon-Nikodym derivative of the law of $(\eta_1,\eta_2)$ with respect to the flow lines of the GFF with the corresponding boundary values in Figure \ref{fig-sle-reverse-2} is $|\psi_{\eta_1}'(-1)|^{\frac{\rho_1(4-\kappa)}{2\kappa}}$, and the marginal law of $\eta_1$ is $\widetilde{\SLE}_\kappa(\rho_++\rho_1,-\rho_1;\rho_-;\frac{\rho_1(4-\kappa)}{2\kappa})$. By Theorem \ref{thm-dapeng}, we know that the conditional law of $\mathcal{R}(\eta_1)$ given $\mathcal{R}(\eta_2)$ is $\SLE_\kappa(0;\rho_+,\rho_1)$, while by what we have just proved, since $\rho_--2\le 0$,  the marginal law of $\mathcal{R}(\eta_2)$ is $\SLE_\kappa(\rho_--2;2+\rho^+,\rho_1)$. Therefore using the Imaginary Geometry coupling we observe that the marginal law of $\mathcal{R}(\eta_1)$ is $\SLE_\kappa(\rho_-;\rho_+,\rho_1)$, which concludes the proof for $\rho_-\in[0,2]$ case. Also see Figure \ref{fig-sle-reverse-2} for an illustration.
	
	Finally we notice that the above argument (i.e., the coupling in Figure \ref{fig-sle-reverse-2}) can be iterated, giving the reversibility for  $\rho_-\in (2,4], (4,6]$, ..., etc.. This finishes the proof of Theorem \ref{thm-sle-reverse}.  
	\begin{figure}[ht]
		\begin{tabular}{ccc} 
			\includegraphics[width=0.49\textwidth]{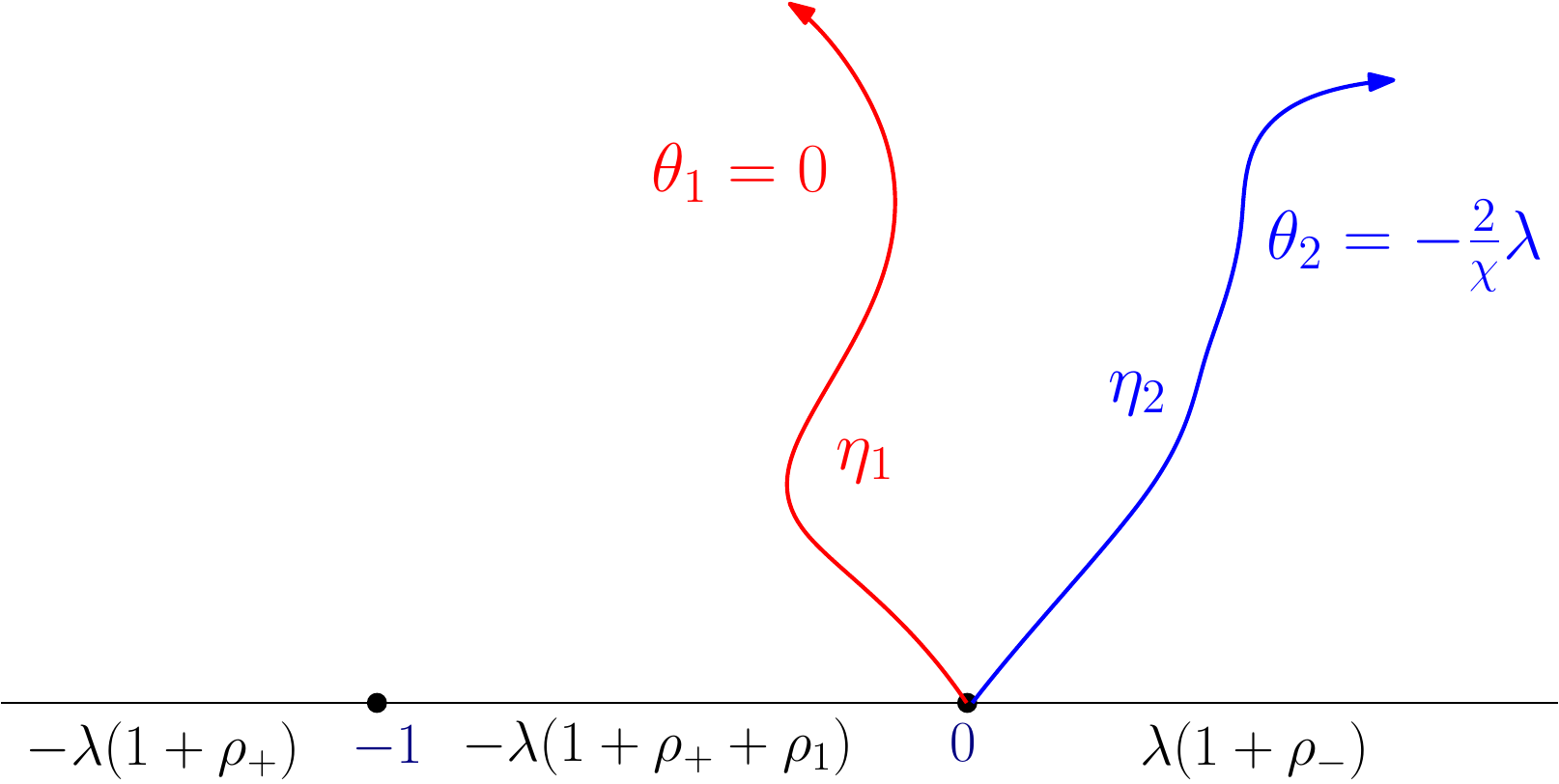}
			& &
			\includegraphics[width=0.49\textwidth]{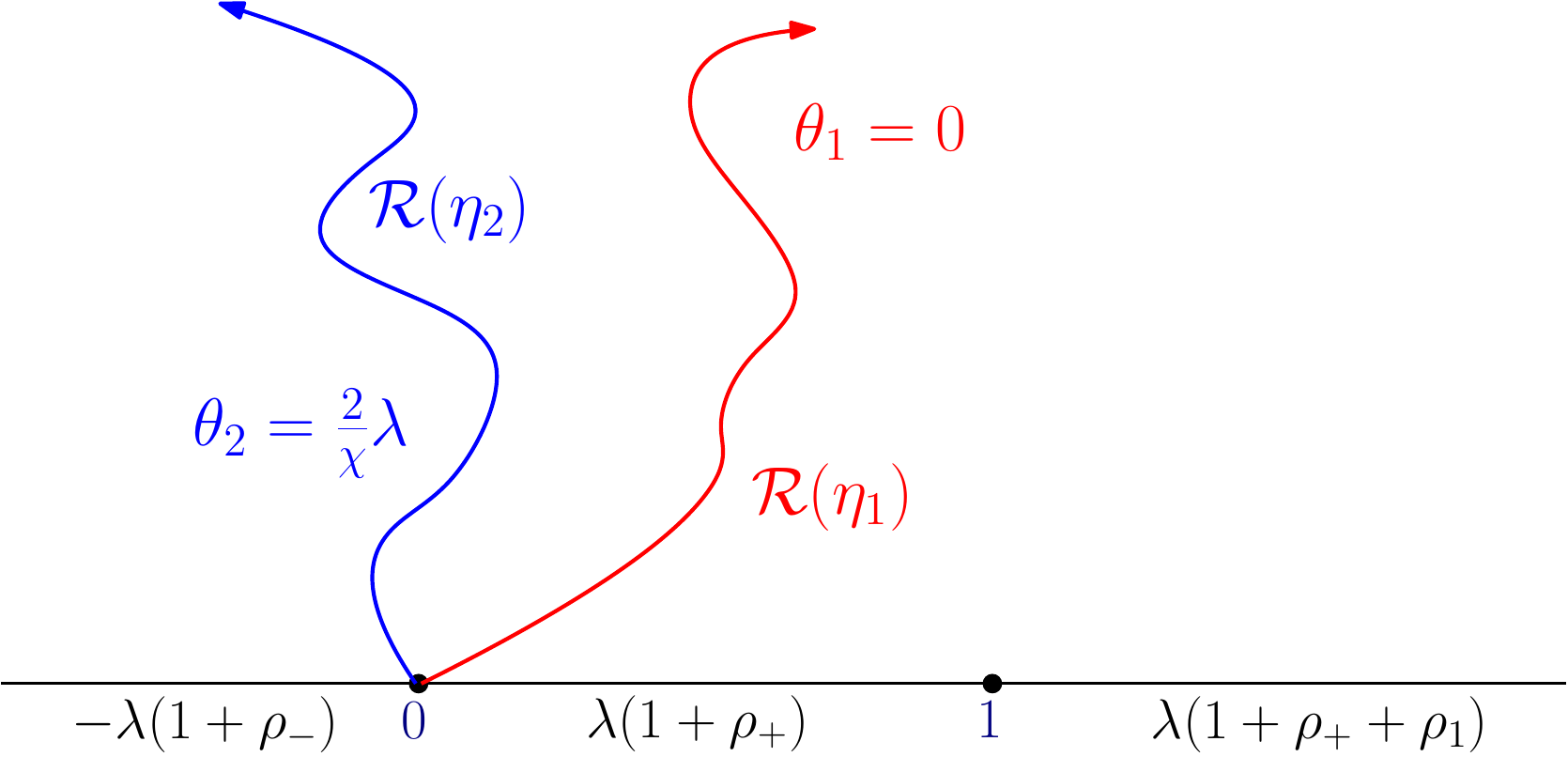}
		\end{tabular}
		\caption{\textbf{Left:} The curves $(\eta_1,\eta_2)$, whose the law has Radon-Nikodym derivative $|\psi_{\eta_1}'(-1)|^{\frac{\rho_1(4-\kappa)}{2\kappa}}$ with respect to the corresponding flow lines of the GFF with the depicted boundary values. One can show that the marginal law of $
			\eta_1$ is $\widetilde{\SLE}_\kappa(\rho_++\rho_1,-\rho_1;\rho_-;\frac{\rho_1(4-\kappa)}{2\kappa})$.   \textbf{Right:} An Imaginary Geometry coupling of $\mathcal{R}(\eta_1)$ and $\mathcal{R}(\eta_2)$ where the marginal law of $\mathcal{R}(\eta_2)$ is $\SLE_\kappa(\rho_--2;2+\rho^+,\rho_1)$ and the conditional law of $\mathcal{R}(\eta_1)$ given $\mathcal{R}(\eta_2)$ is $\SLE_\kappa(0;\rho_+,\rho_1)$. Then we see that the marginal law of $\mathcal{R}(\eta_1)$ is $\SLE_\kappa(\rho_-;\rho_+,\rho_1)$.  }\label{fig-sle-reverse-2}
	\end{figure}
\end{proof}

\section{Conformal welding of $\QT(W,W,2)$ and two thin quantum disks}\label{sec:WW2}

{In this section we prove Proposition~\ref{thm-w-2u}, a result on the conformal welding of $\QT(W,W,2)$ and two thin quantum disks. See Figure \ref{fig-w-2u} for an illustration.  This is the first step towards Theorem~\ref{thm:M+QTp-rw} where $W_1\neq W_2$, since Proposition~\ref{thm-w-2u} involves the conformal welding of a quantum disk and a quantum triangle such that the weights of the quantum triangle vertices along the interface are not equal.}



\begin{proposition}\label{thm-w-2u}
	Fix $W>\frac{\gamma^2}{2}$ and $U\in(0,\frac{\gamma^2}{2})$. Take a triangle from $\QT(W+U,W+U,2+2U)$ embedded as $(\bbH, \phi, 0, \infty, 1)$ with 1 being the weight $2+2U$ point. Then there exists some constant $c$ depending only on $W$ and $U$, and some probability measure $\mathsf{m}(W;U)$ of pairs of curves $(\eta_1,\eta_2)$ where $\eta_1$ runs from 1 to 0 and $\eta_2$ runs from 1 to $\infty$  such that the following welding equation holds:
	\begin{equation}\label{eqn-w-2u}
		\QT(W+U,W+U,2+2U)\otimes\mathsf{m}(W;U)  = c\int_0^\infty \int_0^\infty \Wd(\QT(W,W,2;\ell,\ell'), \Md_2(U;\ell), \Md_2(U;\ell')d\ell d\ell'
	\end{equation}
	where $\QT(W,W,2;\ell,\ell')$ is disintegration over the length of the two boundary arcs containing the weight 2 vertex and $\Wd$ stands for identifying the edges of lengths $\ell$, $\ell'$. 
\end{proposition}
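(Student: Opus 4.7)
Plan. The statement asserts that a ``Y-shape'' welding of two thin disks $\Md_2(U)$ to the two boundary arcs of $\QT(W,W,2)$ meeting at the weight-$2$ vertex produces a surface whose marginal law is proportional to $\QT(W+U, W+U, 2+2U)$. Since $\QT(W,W,2) \propto \mathcal{M}_{2,\bullet}^{\mathrm{disk}}(W)$ by Remark~\ref{remark-qt-3-disk} and Proposition~\ref{prop-m2dot}, the weight-$2$ vertex is simply a typical boundary marked point on $\Md_2(W)$. My plan is to derive this double welding from the AHS21 symmetric (single-arc) welding result, using the Poisson-point-process (PPP) structure of thin quantum disks and a limiting procedure.

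The first step is to invoke the AHS21 symmetric welding (via the identification in Remark~\ref{remark-qt-3-disk}), which gives
\[
\QT(W_0+U, W_0+U, W_3) \otimes \SLE \;=\; c_1 \cdot \Wd(\Md_2(U), \QT(W_0, W_0, W_3))
\]
for $U, W_0 > 0$ and suitable $W_3$, with welding along the boundary arc between the two equal-weight vertices. The second (and main) step is to derive the Y-shape welding from such single-arc weldings. Following the intuition described in the introduction, I would start with the AHS20 welding $\Wd(\Md_2(W), \Md_2(U)) \propto \Md_2(W+U) \otimes \SLE$ from~\eqref{eq:2pt-weld}, then introduce a marked pinch point on the thin $\Md_2(U)$ side. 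By the PPP structure (Definition~\ref{def-thin-disk}), this pinch splits $\Md_2(U)$ into two independent copies $\Md_2(U) \otimes \Md_2(U)$, yielding a three-piece welding $\Wd(\Md_2(W), \Md_2(U), \Md_2(U))$ decorated with a marked boundary point. Promoting the marked point on the $\Md_2(W)$ factor to a typical boundary marker via Proposition~\ref{prop-m2dot} converts $\Md_2(W)$ into $\QT(W,W,2)$, and AHS21-type symmetric welding arguments identify the underlying surface as $\QT(W+U, W+U, 2+2U)$ up to a multiplicative constant. The measure $\mathsf{m}(W;U)$ is then defined as the conditional law of the two curve interfaces $(\eta_1, \eta_2)$ given the underlying surface, obtained by disintegration.

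The main obstacle is matching the weights of the resulting triangle: a naive chain welding of $\Md_2(U) + \Md_2(W) + \Md_2(U)$ together with a typical boundary marked point on $\Md_2(W+2U)$ would suggest weights $(W+2U, W+2U, 2)$, whereas the target is $(W+U, W+U, 2+2U)$. The limiting procedure based on~\cite{AHS21} --- combined with the boundary-length integrability of Propositions~\ref{prop-qt-bdry-thick} and~\ref{prop-qt-bdry-thin} and the Liouville CFT boundary three-point function in Proposition~\ref{prop-rz20a} --- is precisely what accounts for this weight redistribution, which arises because the Y-shape topology (two arcs meeting at a common apex vertex) redistributes the PPP cut-point contribution differently than the chain topology. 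Making the limit rigorous and verifying the Liouville-field matching at the level of joint boundary-length laws is the technical core of this step, and is where the input from AHS21 is essential.
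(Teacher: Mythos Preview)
Your high-level intuition matches the paper's: the Y-shape welding should arise from the two-pointed welding $\Wd(\Md_2(W),\Md_2(U))$ after splitting the thin $\Md_2(U)$ at a marked point. But there are two genuine gaps.

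First, you conflate ``marking a pinch point on the thin $\Md_2(U)$'' with ``marking a quantum-typical point on the $\Md_2(W)$-boundary''. These are different operations: the pinch points of $\Md_2(U;\ell)$ are \emph{not} distributed according to quantum length along the interface, so splitting at a pinch point does not turn the $\Md_2(W)$ factor into $\QT(W,W,2)$. The correct reformulation of the right-hand side (the paper's Lemma~\ref{lm-w-2u-a}) is to sample $r\sim\mathrm{Leb}_{[0,\ell]}$, mark the corresponding point on the $\Md_2(W)$-boundary (yielding $\Md_{2,\bullet}(W)$), and then weld two \emph{independent} copies $\Md_2(U;r)$ and $\Md_2(U;\ell-r)$. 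This is not the same as choosing a cut point of a single $\Md_2(U;\ell)$.

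Second, and more importantly, you do not specify the limiting procedure --- and this is precisely where the weight $2+2U$ comes from. The paper's mechanism is to weld $\Md_2(W)$ with the three-pointed thin disk $\Md_{2,\bullet}(U;\beta)$ (which by Definition~\ref{def-m2dot-thin} decomposes as $\Md_2(U)\times\Md_{2,\bullet}(\gamma^2-U;\beta)\times\Md_2(U)$), obtaining $\Md_{2,\bullet}(W+U;\beta)$ with interface $\widetilde{\SLE}_\kappa(W-2;U-2,0;1-\Delta_\beta)$ via Proposition~\ref{prop-3-pt-disk}; then send $\beta\downarrow\beta_0:=\gamma-\tfrac{2U}{\gamma}$. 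At $\beta_0$ the mass $|\widetilde{\SLE}_\kappa(W-2;U-2,0;1-\Delta_\beta)|$ blows up while $|\Md_{2,\bullet}(W+U;\beta;\ell_0)|$ stays finite, and an explicit moment estimate (the paper's Lemma~\ref{lm-w-2u}, using Propositions~\ref{prop-disk-bdry-law} and~\ref{prop-qt-bdry-thick}) shows this forces the quantum length of the middle bubble $\Md_{2,\bullet}(\gamma^2-U;\beta)$ to vanish. The point is that $\beta_0=\gamma+\tfrac{2-(2+2U)}{\gamma}$ corresponds exactly to weight $2+2U$, so the limiting surface is $\QT(W+U,W+U,2+2U)$ automatically; there is no separate ``weight redistribution'' to justify. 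Your proposal names the right reference for the limit but not the right parameter or why it lands on $2+2U$.

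Finally, your sentence ``$\mathsf{m}(W;U)$ is then defined as the conditional law \dots\ obtained by disintegration'' does not address why this conditional law is \emph{independent of the field} (equivalently, why the product form $\QT\otimes\mathsf{m}$ holds). The paper establishes this by reading off from the pre-limit picture that the conditional law of $\eta_1$ given $\eta_2$ (and vice versa) is an ordinary $\SLE_\kappa(U-2;W-2)$ in the complementary domain, and then invoking the flow-line resampling uniqueness of Proposition~\ref{prop:ig-flow-descriptions} to conclude the joint law of $(\eta_1,\eta_2)$ is determined and independent of the surface.
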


\begin{figure}[ht]
	\centering
	\includegraphics[width=0.7\textwidth]{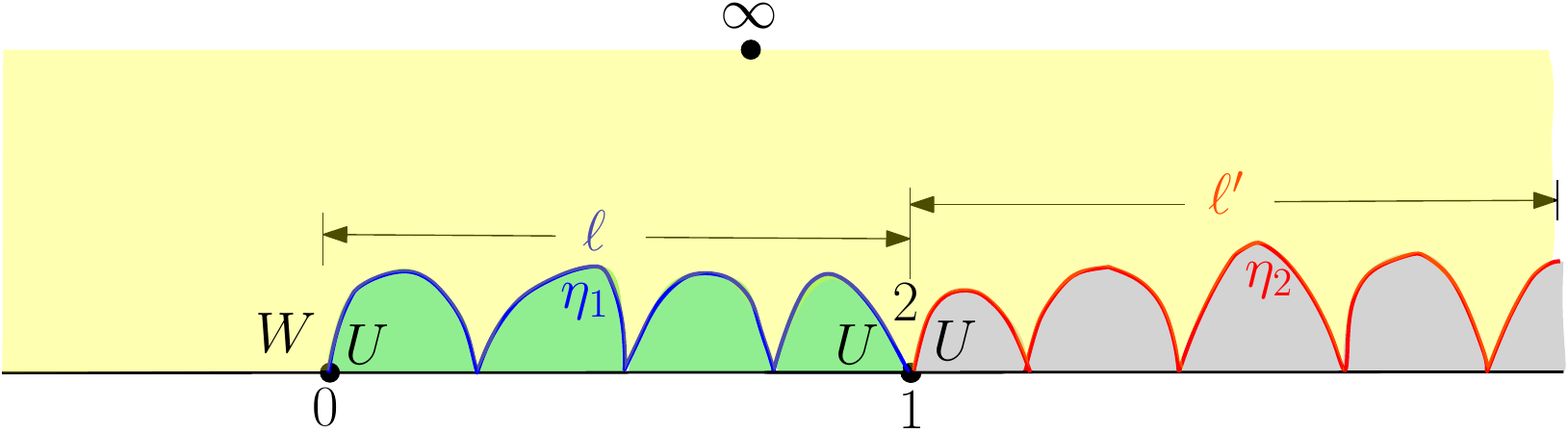}
	\caption{Setup of Proposition~\ref{thm-w-2u}. We claim that welding a thick disk from  $\Md_{2,\bullet}(W)$ (which is the same as $\QT(W,W,2)$) with two weight $U$ thin disks along the boundary arc containing the third marked point produces a three-pointed disk with law $\QT(W+U,W+U,2+2U)$ embedded as $(\bbH, \phi, 0,\infty,1)$.  }\label{fig-w-2u}
\end{figure}

This section is organized as follows. In Section \ref{sec-w2u-weld} we recall the notion of conformal welding and the result from \cite[Proposition 4.5]{AHS21}, which states the welding of a two-pointed disk with a three-pointed disk. Then using a limiting procedure over this result, in Section \ref{sec-w2u-proof} we give the proof of Proposition~\ref{thm-w-2u}.

\subsection{Conformal welding of two-pointed and three-pointed disks}\label{sec-w2u-weld}

We first recall the  the \emph{conformal welding} of quantum surfaces. Let $n\ge 1$ and $\mathcal{M}^1, ..., \mathcal{M}^n$ be measures on quantum surfaces. Fix some boundary arcs $\tilde{e}_1, e_1, ...,  \tilde{e}_n, e_n$ such that $e_i$ and $\tilde{e}_i$ are different boundary arcs on samples from $\mathcal{M}_i$. Suppose we have the disintegration 
$$\mathcal{M}^i = \int_0^\infty\int_0^\infty  \mathcal{M}^i(\ell_{i-1}, \ell_i)d\ell_id\ell_{i-1}\ \ \text{for}\ \ i=1, ..., n  $$
over the quantum lengths of $e_i$ and $\tilde{e}_i$.  Given a tuple of \emph{independent} surfaces from $\mathcal{M}^1(\ell_0,\ell_1)\times \mathcal{M}^2(\ell_1;\ell_2)\times\cdots\mathcal{M}^n(\ell_{n-1},\ell_n)$, suppose that  they can a.s.\ be \emph{conformally welded} along the pairs of arcs $(e_1,\tilde{e}_{i+1})$ for $i=1,..., n-1$,  yielding a large surface decorated with interfaces from the gluing. We write $$\text{Weld}( \mathcal{M}^1(\ell_0,\ell_1), \mathcal{M}^2(\ell_1,\ell_2),...,  \mathcal{M}^n(\ell_{n-1},\ell_n))$$ for the law of the resulting curve-decorated surface.  On the other hand, suppose we have a quantum surface sampled from some measure $\mathcal{M}$ and embedded on domain $D$ and we also sample an \emph{independent} family of curves on $D$ from some measure $\mathcal{P}$ with conformal invariance property. Then we write $\mathcal{M}\otimes \mathcal{P}$ for the law of this curve-decorated surface.

We emphasize that for all the quantum surfaces discussed in this paper, including the (two and three pointed) quantum disks and quantum triangles, the conformal welding as above is well-defined. This is because near a point $x$ with weight $W\ge \frac{\gamma^2}{2}$, the field is locally absolutely continuous to that of a weight $W$ quantum wedge near its finite-volume endpoint, while near a point $x$ with weight $W< \frac{\gamma^2}{2}$ the surface is a Possionian chain of weight $\gamma^2-W$ disks so local absolute continuity with respect to the weight $W$ quantum wedge still holds. Therefore from the conformal welding of quantum wedges \cite[Theorem 1.2]{DMS14}, our conformal weldings for quantum disks and triangles are well-defined. See e.g. \cite{She16a}, \cite[Section 3.5]{DMS14}  or \cite[Section 4.1]{GHS19} for more background on conformal welding.

We state the conformal welding of two-pointed quantum disks as below. Recall the notion of the measure $\mathcal{P}^{\text{disk}}(W_1, ..., W_n)$ in \cite[Definition 2.25]{AHS20} on tuple of curves $(\eta_1,..., \eta_{n-1})$ in a domain $(D,x,y)$, which is the same as $\textup{SLE}_\kappa (W_1-2;W_2-2)  $ from $x$ to $y$ for $n=2$ and  defined recursively for $n\ge 3$ by first sampling $\eta_{n-1}$ from $\textup{SLE}_\kappa (W_1+...+W_{n-1}-2;W_n-2)  $ then $(\eta_1,...,\eta_{n-2})$ from $\mathcal{P}^{\text{disk}}(W_1, ..., W_{n-1})$ on each connected component $(D',x',y')$ on the left of $D\backslash\eta_{n-1}$ where $x'$ and $y'$ are the first and the last point hit by $\eta_{n-1}$.
\begin{theorem}[Theorem 2.2 of \cite{AHS20}]\label{thm-disk-2}
	Fix $W_1, ..., W_n>0$ and $W = W_1+...+W_n$. Then there exists a constant $c=c_{W_1, ..., W_n}\in (0,\infty)$ such that for all $\ell, r>0$, the identity
	\begin{equation}
		\begin{split}
			&\mathcal{M}_2^{\textup{disk}}(W;\ell,r)\otimes \mathcal{P}^{\textup{disk}}(W_1, ..., W_n) \\&= c\iiint_0^\infty \textup{Weld}( \mathcal{M}_2^{\textup{disk}}(W_1;\ell,\ell_1),\mathcal{M}_2^{\textup{disk}}(W_2;\ell_1,\ell_2),..., \mathcal{M}_2^{\textup{disk}}(W_n;\ell_{n-1},r))d\ell_1...d\ell_{n-1}
		\end{split}
	\end{equation}
	holds as measures on the space of curve-decorated quantum surfaces.
\end{theorem}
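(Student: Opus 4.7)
The plan is to prove the identity by induction on $n$, with the substance of the argument contained in the base case $n=2$.

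For the base case $n=2$, the target reads
\[
\Md_2(W_1+W_2;\ell,r)\otimes \SLE_\kappa(W_1-2;W_2-2) = c\int_0^\infty \Wd(\Md_2(W_1;\ell,\ell_1),\Md_2(W_2;\ell_1,r))\,d\ell_1 .
\]
The main input I would use is the analogous welding result for quantum wedges proved in \cite{DMS14}: conformally welding a weight-$W_1$ and a weight-$W_2$ quantum wedge along their boundaries yields a weight-$(W_1+W_2)$ quantum wedge decorated by an $\SLE_\kappa(W_1-2;W_2-2)$ interface. The task is to transfer this infinite-volume statement to the finite-volume two-pointed disk.

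To do so I would exploit that near a finite-volume marked point, a weight-$W$ two-pointed quantum disk is locally absolutely continuous with respect to a weight-$W$ quantum wedge, with a tractable Radon--Nikodym derivative coming from the field-average processes recalled in Definition~\ref{def-thick-disk} (Brownian motion conditioned to stay negative on each side for the disk, versus the corresponding Bessel-type process for the wedge). Embedding both sides of the target identity on $\bbH$ with the two common marked points at $0$ and $\infty$ and disintegrating over the three boundary lengths $\ell$, $\ell_1$, $r$, I would localize in a small semidisk around $0$ and use the wedge welding there to identify the law of the interface stopped on exit as $\SLE_\kappa(W_1-2;W_2-2)$. Exhausting by nested semidisks and applying the Markov property of the GFF (Proposition~\ref{prop:Markov}) would then promote this description to the entire interface. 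The multiplicative constant $c$ is pinned down at the end by comparing total masses on any common event of finite positive measure, e.g.\ conditioning on $\ell,\ell_1,r$ in a fixed box and comparing with the known Bessel/LCFT integrals from Section~\ref{sec-pre-qt}.

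The inductive step from $n-1$ to $n$ is essentially bookkeeping given the recursive definition of $\mathcal P^{\textup{disk}}$. Conditioning on the outermost curve $\eta_{n-1}$, which by definition has marginal law $\SLE_\kappa(W_1+\dots+W_{n-1}-2;W_n-2)$, I apply the $n=2$ case with weights $(W_1+\dots+W_{n-1},W_n)$ to split the left-hand side along $\eta_{n-1}$ into a welding of $\Md_2(W_1+\dots+W_{n-1})$ and $\Md_2(W_n)$. On the first factor, the remaining curves $(\eta_1,\dots,\eta_{n-2})$ are distributed as $\mathcal P^{\textup{disk}}(W_1,\dots,W_{n-1})$ by the recursive definition, and the inductive hypothesis decomposes that factor into a welding of $n-1$ two-pointed disks. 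Concatenating the two weldings gives the $n$-fold identity, and the constant $c_{W_1,\dots,W_n}$ is the product of the $n=2$ constants appearing at each step.

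The main obstacle is the base case $n=2$, specifically matching the measure-theoretic structures on the two sides: the quantum wedge is an infinite measure on surfaces of infinite area and boundary length (with the welded curve going to infinity), whereas $\Md_2$ is $\sigma$-finite with finite per-slice area and length (with the interface joining two finite boundary points). Carrying out the disintegration over matched and unmatched boundary lengths consistently, and controlling the Radon--Nikodym derivative between the disk and wedge across this disintegration so that the wedge welding descends cleanly, is where the real technical work lies; once this transfer is completed, the rest of the proof is straightforward.
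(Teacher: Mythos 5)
This statement is quoted verbatim from \cite{AHS20} (their Theorem~2.2), and the present paper does not reprove it, so there is no in-paper proof to compare against; I will evaluate your proposal against the known argument in \cite{AHS20} and on its own logical terms.

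Your inductive step is correct and is essentially bookkeeping, exactly as you say: it follows directly from the recursive definition of $\mathcal{P}^{\textup{disk}}$ once the $n=2$ case is known, and this is also how the general $n$ is obtained in \cite{AHS20}. The gap is in your base case. The step ``localize in a small semidisk around $0$, use the wedge welding there to identify the law of the interface stopped on exit as $\SLE_\kappa(W_1-2;W_2-2)$, then exhaust by nested semidisks using the GFF Markov property'' does not establish what is needed, for two related reasons. First, local mutual absolute continuity between $\Md_2(W_i)$ and the weight-$W_i$ wedge near the marked point gives you that the stopped interface law in the disk picture is \emph{absolutely continuous} with respect to $\SLE_\kappa(W_1-2;W_2-2)$, not equal to it; the Radon--Nikodym derivative is nontrivial and depends on the field. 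Second, and more seriously, the change of measure from (wedge, wedge) to (disk, disk) is a function of the \emph{two individual} field-average processes, which after welding become functionals of the pair $(h,\eta)$, not of $h$ alone. For the product structure $\Md_2(W_1+W_2)\otimes\SLE_\kappa(W_1-2;W_2-2)$ to come out, one must show this Radon--Nikodym factor collapses to a functional of the welded field $h$ only — that is precisely the nontrivial content of the theorem, and your sketch assumes rather than proves it. The Markov property of the GFF (Proposition~\ref{prop:Markov}) decomposes the field across a boundary; it gives no mechanism to ``promote'' a description of the curve in a small semidisk to a global independence statement, since the interface law in the full domain is not determined by its restriction to a neighborhood of one endpoint.

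What the actual argument in \cite{AHS20} does, beyond the wedge input from \cite{DMS14}, is control this change of measure globally: roughly, they work with the explicit Brownian-motion description of the radial part of the field on the strip (as in Definition~\ref{def-thick-disk}), add an auxiliary boundary marked point and take a limit in which the disk degenerates to a wedge, and carefully track the resulting reweighting through the welding so that it is seen to depend only on the glued field and not on the interface. Your proposal identifies the right ingredients (DMS14 welding, local absolute continuity near marked points, Girsanov for the field-average process), but the localization-plus-exhaustion mechanism is not a valid substitute for that global bookkeeping, and pinning the constant ``by comparing total masses'' at the end presupposes the product structure you have not yet established.
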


Next we present the welding of two-pointed quantum disk with three-pointed quantum disks as in \cite[Proposition 4.5]{AHS21}, which adds a marked point to the boundary arc in Theorem \ref{thm-disk-2} above.
Recall the notion of SLE weighted by conformal radius in Section \ref{sec-sle-reverse}.

\begin{proposition}\label{prop-3-pt-disk}
	Suppose $W_1,W_2>0$, {$W_1+W_2\neq \frac{\gamma^2}{2}$} and $W_2\neq\frac{\gamma^2}{2}$. Then there exists a constant $c_{W_1, W_2}\in (0,\infty)$ such that for all $\beta\in\mathbb{R}$ and $\ell>0$, 
	\begin{equation}\label{eqn-3-pt-disk}
		\begin{split}
			\mathcal{M}_{2,\bullet}^{\textup{disk}}(W_1+W_2;\beta;\ell)&\otimes \widetilde{\SLE}_\kappa(W_1-2;W_2-2,0;1-\Delta_{\beta})\\ &= c_{W_1, W_2}\int_0^\infty \textup{Weld}( \mathcal{M}_2^{\textup{disk}}(W_1;\ell,x), \mathcal{M}_{2,\bullet}^{\textup{disk}}(W_2;\beta;x))dx.
		\end{split}
	\end{equation} 
	where again $\Delta_\beta$ is determined by \eqref{eqn-delta-alpha}.
\end{proposition}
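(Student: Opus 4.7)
The plan is to reduce to the two-pointed welding identity of Theorem~\ref{thm-disk-2} and introduce a third marked point on the $W_2$-side, first as a quantum typical point (equivalent to a $\gamma$-insertion via Proposition~\ref{prop-m2dot}) and then with arbitrary insertion weight $\beta$ obtained by a Girsanov-type reweighting. The conformal-derivative exponent $1-\Delta_\beta$ appearing in $\widetilde{\SLE}$ will arise from comparing the Girsanov reweighting on the two sides of the welding identity, and reflects the conformal covariance of the Liouville field with an insertion.

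First I handle the special case $\beta=\gamma$. Applying Theorem~\ref{thm-disk-2} with $n=2$ and integrating out the right boundary length yields an identity between $\mathcal{M}_2^{\textup{disk}}(W_1+W_2;\ell)\otimes\SLE_\kappa(W_1-2;W_2-2)$ and the corresponding welding integral. Weighting both sides by the quantum length of the right boundary arc of $\eta$ (equivalently, the unwelded side of the $W_2$-disk) and using that quantum-typical marking corresponds to a $\gamma$-insertion gives the $\beta=\gamma$ case
\[\mathcal{M}_{2,\bullet}^{\textup{disk}}(W_1+W_2;\gamma;\ell)\otimes\SLE_\kappa(W_1-2;W_2-2,0)=c\int_0^\infty\Wd\bigl(\mathcal{M}_2^{\textup{disk}}(W_1;\ell,x),\mathcal{M}_{2,\bullet}^{\textup{disk}}(W_2;\gamma;x)\bigr)\,dx.\]
Since $Q-\gamma/2=2/\gamma$ gives $\Delta_\gamma=1$, the conformal-derivative weight $|\psi_\eta'(1)|^{1-\Delta_\gamma}$ is trivial, so this matches \eqref{eqn-3-pt-disk} for $\beta=\gamma$.

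For general $\beta$ I apply the Girsanov-type identity of Lemma~\ref{lmm-lf-insertion-bdry}: reweighting by $\varepsilon^{(\beta^2-\gamma^2)/4}e^{\frac{\beta-\gamma}{2}\phi_\varepsilon(s_0)}$ at the insertion point $s_0$ and letting $\varepsilon\downarrow 0$ converts a $\gamma$-insertion at $s_0$ into a $\beta$-insertion at $s_0$. On the LHS this is carried out in the welded $\mathbb{H}$-coordinates at $s_0=1$ with the welded field $\Phi$; on the RHS in the canonical $W_2$-disk embedding $\mathbb{H}'$ at $s_0=1$ with the pre-welding field $h_2$. Because the welding satisfies $h_2=\Phi\circ\psi_\eta^{-1}+Q\log|(\psi_\eta^{-1})'|$, where $\psi_\eta\colon D_\eta\to\mathbb{H}'$ is the conformal map fixing $1$ and sending the first (resp.\ last) hits of $\eta$ on $\partial D_\eta$ to the endpoints of the unmarked arc (so $\psi_\eta$ is exactly the map appearing in the definition of $\widetilde{\SLE}$), one has $h_2(1)=\Phi(1)-Q\log|\psi_\eta'(1)|$. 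Tracking circle averages through the conformal rescaling $\tilde\varepsilon=\varepsilon/|\psi_\eta'(1)|$ produces, in the limit,
\[\varepsilon^{(\beta^2-\gamma^2)/4}e^{\frac{\beta-\gamma}{2}h_{2,\varepsilon}(1)}=|\psi_\eta'(1)|^{(\beta^2-\gamma^2)/4-Q(\beta-\gamma)/2}\cdot\tilde\varepsilon^{(\beta^2-\gamma^2)/4}e^{\frac{\beta-\gamma}{2}\Phi_{\tilde\varepsilon}(1)},\]
and a short algebra using $\Delta_\gamma=1$ simplifies the exponent of $|\psi_\eta'(1)|$ to $\Delta_\gamma-\Delta_\beta=1-\Delta_\beta$. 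Absorbing this factor into the law of the welding interface converts the plain SLE into $\widetilde{\SLE}_\kappa(W_1-2;W_2-2,0;1-\Delta_\beta)$, giving \eqref{eqn-3-pt-disk}.

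The main obstacle is justifying the joint $\varepsilon\downarrow 0$ limit on the two sides of the welding identity in a uniform sense and carefully controlling the circle-average comparison under the conformal covariance of the GFF; this requires uniform integrability of the Gaussian multiplicative chaos integrands and standard estimates on $|\psi_\eta'(1)|$ under the $\SLE_\kappa(W_1-2;W_2-2)$ law (ensuring in particular that $|\psi_\eta'(1)|>0$ almost surely so the rescaling is well-defined). For thin weights $W_2<\gamma^2/2$ or $W_1+W_2<\gamma^2/2$, the argument extends via the Poissonian-chain definitions of thin disks (Definition~\ref{def-thin-disk}, Definition~\ref{def-m2dot-thin}) together with local absolute continuity between thin and thick regimes near the weight-$W$ marked points; the excluded values $W_2=\gamma^2/2$ and $W_1+W_2=\gamma^2/2$ correspond to the critical $Q^-$ Liouville insertion of Section~\ref{subsec:critical} and could be reached a posteriori by the limiting procedure of Proposition~\ref{prop-limit-crit}.
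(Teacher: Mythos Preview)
Your approach is essentially correct and is in fact the proof of \cite[Proposition~4.5]{AHS21}, which the paper simply cites for the case $W_1\ge\frac{\gamma^2}{2}$. So for that range you are reproving a result the paper imports.

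Where the two arguments diverge is the thin case $W_1<\frac{\gamma^2}{2}$. The paper does not appeal to ``local absolute continuity'' or work directly with Poissonian chains; instead it embeds the problem in a larger one by welding an auxiliary weight-$2$ disk on the left. Concretely, one samples $\mathcal{M}_{2,\bullet}^{\textup{disk}}(2+W_1+W_2;\beta)$ with two curves $(\eta_1,\eta_2)$, where $\eta_2\sim\widetilde{\SLE}_\kappa(W_1;W_2-2,0;1-\Delta_\beta)$ and, given $\eta_2$, $\eta_1\sim\SLE_\kappa(0;W_1-2)$ on its left. The already-established $W_1\ge\frac{\gamma^2}{2}$ case (applied with first weight $2$) identifies the surface to the right of $\eta_1$ as $\mathcal{M}_{2,\bullet}^{\textup{disk}}(W_1+W_2;\beta)$; Lemma~\ref{lm:ig-commute} then gives the conditional law of $\eta_2$ given $\eta_1$ as $\widetilde{\SLE}_\kappa(W_1-2;W_2-2,0;1-\Delta_\beta)$, and disintegrating over $\eta_1$ yields \eqref{eqn-3-pt-disk}. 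This sidesteps having to run the Girsanov limit inside a beaded surface. Your route would also go through once you observe that the reweighting only touches the core bead containing the marked point, but as written your thin-case paragraph is too vague to count as a proof; the paper's auxiliary-disk trick is more self-contained.
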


Note that if $W_1+W_2<\frac{\gamma^2}{2}$, the interface above is understood as a chain of $\SLE_\kappa(W_1-2;W_2-2)$ curves except that the segment of curve on the disk containing the marked point is replaced by $\widetilde{\SLE}_\kappa(W_1-2;W_2-2,0;1-\Delta_{\beta})$. If {$\beta=\gamma$} then since $\Delta_\gamma=1$, the interface is simply $\SLE_\kappa(W_1-2;W_2-2)$ without any reweighting.

\begin{proof}
	When $W_1\ge \frac{\gamma^2}{2}$, the statement is precisely the same as \cite[Proposition 4.5]{AHS21}. Now suppose $W_1< \frac{\gamma^2}{2}$. We start with a sample from $\mathcal{M}_{2,\bullet}^{\textup{disk}}(2+W_1+W_2;\beta)$ embedded as $(\mathbb{H}, \phi, 0, \infty, 1)$ where the $\beta$-insertion is located at 1. Sample an independent curve $\eta_2$ from $\widetilde{\SLE}_{\kappa}(W_1,W_2-2;1-\Delta_\beta)$, and given $\eta_2$, independently sample a curve $\eta_1$ from SLE$_\kappa(0;W_1-2)$ on the left component of $\mathbb{H}\backslash\eta_2$. Let $\tilde{\mathcal{P}}(2,W_1, W_2)$ be the joint law of $(\eta_1, \eta_2)$. 
	Then by Theorem \ref{thm-disk-2}, we obtain that for some $c=c_{W_1,W_2}\in (0,\infty)$,
	\begin{equation}\label{eqn-3-pt-disk-a}
		\mathcal{M}_{2,\bullet}^{\textup{disk}}(2+W_1+W_2;\beta)\otimes \tilde{\mathcal{P}}(2,W_1, W_2) = c\iint_{[0,\infty)^2}\text{Weld}(\mathcal{M}_2^{\textup{disk}}(2;\ell),\mathcal{M}_2^{\textup{disk}}(W_1;\ell,x),\mathcal{M}_{2,\bullet}^{\textup{disk}}(W_2;\beta;x))dxd\ell.
	\end{equation}
	
	On the other hand, using the same trick as in the proof of Theorem \ref{thm-sle-reverse}, the marginal law of $\eta_1$ under $\tilde{\mathcal{P}}$ is $\widetilde{\SLE}_\kappa(0;W_1+W_2-2,0;1-\Delta_\beta)$, and by the existing argument for $W_1\ge\frac{\gamma^2}{2}$, given the interface and its quantum length $\ell$, the quantum surface to the right of $\eta_1$ has law  $\mathcal{M}_{2,\bullet}^{\textup{disk}}(W_1+W_2;\beta;\ell)$. The law of $\eta_2$ given $\eta_1$ is $\widetilde{\SLE}_\kappa(W_1-2;W_2-2,0;1-\Delta_\beta)$ on the right component of $\mathbb{H}\backslash\eta_1$, and therefore disintegrating \eqref{eqn-3-pt-disk-a} over $\ell$ and $\eta_1$ yields the proposition.
\end{proof}

Recall from Remark \ref{remark-qt-3-disk}, if $W_3>\frac{\gamma^2}{2}$ and $\beta_3 =\gamma+ \frac{2-W_3}{\gamma}$, then the measure $\mathcal{M}_{2,\bullet}^{\textup{disk}}(W;\beta_3;\ell) $ is some multiple constant of our quantum triangle QT$(W, W, W_3;\ell)$. Therefore we can rewrite \eqref{eqn-3-pt-disk} as  
\begin{equation}\label{eqn-qt-disk}
	\begin{split}
		\text{QT}(W_1+W_2, W_1+W_2, W_3;\ell)&\otimes \widetilde{\SLE}_\kappa(W_1-2;W_2-2,0;1-\Delta_{\beta_3}) = \\&c_{W_1, W_2}\int_0^\infty \textup{Weld}( \mathcal{M}_2^{\textup{disk}}(W_1;\ell,x), \text{QT}(W_2, W_2, W_3;x))dx.
	\end{split}
\end{equation} 
We emphasize that \eqref{eqn-qt-disk} continues to hold for $W_3<\frac{\gamma^2}{2}$ by the thick-thin duality. This is because  concatenating weight $W_3$ quantum disks to both sides of \eqref{eqn-qt-disk} (with $W_3$ replaced by $\gamma^2-W_3$) does not affect the equation, while from \eqref{eqn-delta-alpha}, the corresponding $\Delta_\beta$'s are the same for $W_3$ and $\gamma^2-W_3$ and therefore the interfaces are the same. 

\subsection{Proof of Proposition~\ref{thm-w-2u}}\label{sec-w2u-proof}

The idea of proving Proposition~\ref{thm-w-2u} is as follows. First assume $W\in (\frac{\gamma^2}{2},2]$ and $U\in(0,\frac{\gamma^2}{2})$. We take $(W_1,W_2)$ to be $(W,U)$ in Proposition \ref{prop-3-pt-disk}
and let $\beta\downarrow\beta_0:=\gamma-\frac{2U}{\gamma}$. In this limiting procedure, we will show that the SLE excursion containing the point 1 shrinks into a single point, yielding the desired welding picture. Finally if $W>2$, we can split the weight $W$ disk into a weight $W-2$ quantum disk and a weight $2$ quantum disk and apply Proposition \ref{prop-3-pt-disk}. 

\begin{figure}[ht]
	\centering
	\begin{tabular}{c} 
		\includegraphics[width=0.8\textwidth]{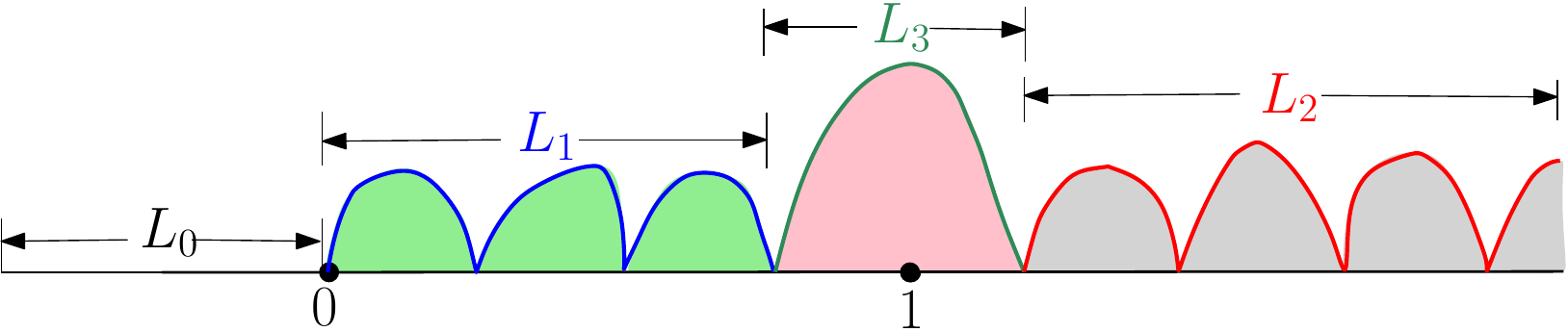}\\
		\includegraphics[width=0.8\textwidth]{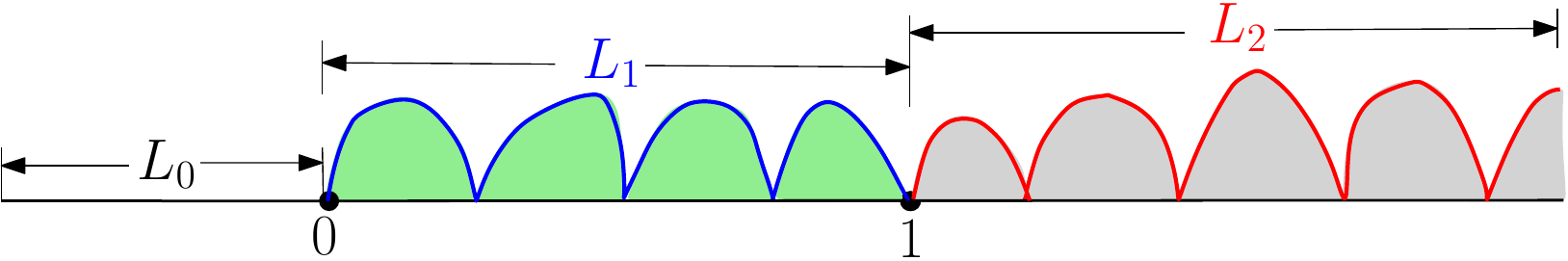}
	\end{tabular}
	\caption{Illustration of the proof of Proposition~\ref{thm-w-2u}. \textbf{Top:} A three-pointed disk from $\Md_{2,\bullet}(W+U;\beta;\ell_0)$ embedded as $(\bbH,\phi,0,\infty,1)$ decorated by an independent $\widetilde{\SLE}_\kappa(W-2;U-2,0;1-\Delta_{\beta})$ curve on the top. This splits the surface into a weight $W$ thick disk and a three-pointed disk of weight $U$, which can further be decomposed into two weight $U$ disks (on the left and right of 1) and a disk from $\Md_{2,\bullet}(\gamma^2-U;\beta)$. \textbf{Bottom:} As $\beta\downarrow\beta_0$, the disk containing the point 1 shrinks to a single point, yielding the picture in Proposition~\ref{thm-w-2u}. }\label{fig-w-2u-a}
\end{figure}

Consider a three-pointed quantum disk from $\Md_{2,\bullet}(W+U;\beta;\ell_0)^\#$ embedded as $(\bbH,\phi_\beta,0,\infty,1)$ (with 1 being the $\beta$-insertion and $\ell_0$ being the quantum length of $(-\infty,0)$), and draw an independent curve $\eta$ from $\widetilde{\SLE}_\kappa(W-2;U-2,0;1-\Delta_{\beta})^\#$. Note that by \cite[Theorem 1.1]{AHS21} $|\widetilde{\SLE}_\kappa(W-2;U-2,0;1-\Delta_{\beta})|<\infty$ for $\beta\in (\beta_0,\gamma)$ and $|\widetilde{\SLE}_\kappa(W-2;U-2,0;1-\Delta_{\beta_0})|=\infty$. This curve is boundary-hitting, and let $\tau$ (resp.\ $\sigma$) be the start (resp.\ end) time of the excursion containing the point 1. Let $L_1$, $L_2$, $L_3$ be the quantum lengths of $\eta|_{[0,\tau]}$,$\eta|_{[\sigma,\infty)}$  and $\eta|_{[\tau,\sigma]}$. (See also Figure \ref{fig-w-2u-a}.) By Proposition \ref{prop-3-pt-disk}, given the interface $\eta$ and its quantum length $\ell$, the surface above $\eta$ is a weight $W$ quantum disk from $\Md_2(W;\ell)$, while the beaded surface below $\eta$ is a three-pointed quantum disk from $\Md_{2,\bullet}(U;\beta;\ell)$, which by Definition \ref{def-m2dot-thin} can further be realized as $\Md_2(U)\times\Md_2(\gamma^2-U;\beta)\times\Md_2(U)$.

\begin{lemma}\label{lm-w-2u}
	In the above setting, assume $W\in (\frac{\gamma^2}{2},2]$ and $\beta<\gamma$. Then as $\beta\downarrow\beta_0$, under the normalized measure $\Md_{2,\bullet}(W+U;\beta;\ell_0)^\#\otimes \widetilde{\SLE}_\kappa(W-2;U-2,0;1-\Delta_{\beta})^\#$, {$L_3$} converges to 0 in probability.
\end{lemma}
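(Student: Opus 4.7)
The plan is to apply the welding identity \eqref{eqn-qt-disk} with $(W_1, W_2) = (W, U)$ and $W_3 = 2 + \gamma^2 - \gamma \beta$, which gives
\[
M_\beta := \mathcal{M}_{2, \bullet}^{\textup{disk}}(W+U; \beta; \ell_0) \otimes \widetilde{\SLE}_\kappa(W-2; U-2, 0; 1 - \Delta_\beta) = c_{W, U} \int_0^\infty \Wd\bigl( \Md_2(W; \ell_0, x), \QT(U, U, W_3; x) \bigr)\, dx.
\]
Since $U < \gamma^2/2$ and $W_3 > \gamma^2/2$ (because $\beta \in (\beta_0, \gamma)$), by Definition~\ref{def-qt-thin} the triangle $\QT(U, U, W_3)$ decomposes as a thick core $\QT(\gamma^2 - U, \gamma^2 - U, W_3)$ with two independent weight-$U$ thin disks attached at its two weight-$(\gamma^2 - U)$ vertices. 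The component of $\bbH \setminus \eta$ containing $1$ is precisely this core, so the excursion $\eta|_{[\tau, \sigma]}$ traces the core's unmarked arc (opposite to the $W_3$ vertex), and $L_3$ is the quantum length of that arc.

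I will then disintegrate $M_\beta$ over $L_3$. Applying Proposition~\ref{prop-qt-bdry-thick} to the core---whose insertion weights $\beta_1 = \beta_2 = (2+U)/\gamma$ and $\beta_3 = \beta$ satisfy the Seiberg bounds \eqref{eqn-seiberg} strictly at $\beta = \beta_0$---a short calculation yields $\bar\beta - 2Q = \beta - \beta_0$, so writing $\epsilon := (\beta - \beta_0)/\gamma$,
\[
M_\beta(L_3 \in dL_3) = C(\beta)\, L_3^{\epsilon - 1}\, G(L_3)\, dL_3,
\]
where $C(\beta) > 0$ is continuous at $\beta_0$ (the $\bar H$-prefactor from Proposition~\ref{prop-qt-bdry-thick}) and $G(L_3) \propto \int_0^\infty |\Md_2(W; \ell_0, L_3 + s)| (u_U * u_U)(s)\, ds$ with $u_U(y) := |\Md_2(U; y)|$ is a $\beta$-independent, positive continuous function of $L_3$ collecting the contributions from the upper disk and the two thin side-disks. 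The conclusion then follows from elementary singular-integral asymptotics: for any fixed $\delta > 0$,
\[
M_\beta^\#(L_3 > \delta) = \frac{\int_\delta^\infty L_3^{\epsilon - 1} G(L_3)\, dL_3}{\int_0^\infty L_3^{\epsilon - 1} G(L_3)\, dL_3}.
\]
Choosing $\delta_0 \in (0, \delta)$ with $G(L_3) \geq G(0)/2$ on $[0, \delta_0]$, the denominator is at least $(G(0)/2)\delta_0^\epsilon/\epsilon \sim G(0)/(2\epsilon)$, whereas the numerator is bounded by $\delta^{\epsilon - 1} \int_\delta^\infty G(L_3)\, dL_3$, which stays bounded as $\epsilon \downarrow 0$. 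Therefore $M_\beta^\#(L_3 > \delta) = O(\epsilon) \to 0$.

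The main technical point is the regularity of $G$: strict positivity and continuity at $L_3 = 0$, and sufficient decay at $L_3 = \infty$ to ensure $\int_\delta^\infty G(L_3)\, dL_3 < \infty$. Positivity and continuity at $0$ reduce via Theorem~\ref{thm-disk-2} to the nonvanishing of the welding total mass of $\Wd(\Md_2(W; \ell_0), \Md_2(U), \Md_2(U))$; the integrability at infinity follows from the boundary-length tail estimates of Proposition~\ref{prop-disk-bdry-law}, where the assumptions $W \in (\gamma^2/2, 2]$ and $U \in (0, \gamma^2/2)$ secure the relevant moment conditions.
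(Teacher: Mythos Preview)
Your overall strategy coincides with the paper's: both use the welding identity (Proposition~\ref{prop-3-pt-disk} together with the thick--thin decomposition of the weight-$U$ triangle) to express $M_\beta$ as an explicit integral, then argue that the mass of $\{L_3>\delta\}$ stays bounded while the total mass diverges as $\beta\downarrow\beta_0$. Your factorization $M_\beta(L_3\in dL_3)=C(\beta)\,L_3^{\epsilon-1}G(L_3)\,dL_3$ with $G$ independent of $\beta$ is correct, and the denominator estimate (via $G(0)\in(0,\infty)$) is fine.

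The gap is in your numerator bound. You bound $\int_\delta^\infty L_3^{\epsilon-1}G(L_3)\,dL_3\le \delta^{\epsilon-1}\int_\delta^\infty G(L_3)\,dL_3$ and assert the last integral is finite, citing Proposition~\ref{prop-disk-bdry-law}. That proposition only gives the \emph{marginal} law of one boundary arc, not the decay of $|\Md_2(W;\ell_0,t)|$ in $t$, so the justification is inadequate; more importantly, the claim is false for part of the parameter range. Take $W=2$, where $|\Md_2(2;\ell_0,t)|=C(\ell_0+t)^{-4/\gamma^2-1}$ by Lemma~\ref{lem-special-weights}. A direct asymptotic gives $G(L_3)\sim c\,L_3^{\,1-4(1+U)/\gamma^2}$ as $L_3\to\infty$, so $\int_\delta^\infty G(L_3)\,dL_3$ diverges whenever $\gamma^2\ge 2+2U$, i.e.\ whenever $\kappa>2$ and $U\le \tfrac{\kappa}{2}-1$. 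Thus the crude bound $L_3^{\epsilon-1}\le\delta^{\epsilon-1}$ throws away exactly the decay needed at infinity.

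The paper handles this by keeping a fractional power of $L_3$ in the estimate: it writes the numerator as $c\int_\epsilon^\infty |\Md_2(W;\ell_0,\ell)|\,\ell^{\beta/\gamma-2U/\gamma^2}\int_{\epsilon/\ell}^1(1-x)^{1-4U/\gamma^2}x^{(\beta-\beta_0)/\gamma-1}\,dx\,d\ell$, bounds the inner Beta-type integral by $C(\delta')(\epsilon/\ell)^{-\delta'}$ uniformly in $\beta$ for a small auxiliary $\delta'>0$, and then verifies $\int_0^\infty|\Md_2(W;\ell_0,\ell)|\,\ell^{1-U/\gamma^2}\,d\ell<\infty$ by recognizing this as the mass of a conformal welding (a weight-$W$ disk with a $\QT(U/2,U/2,2)$) and invoking Proposition~\ref{prop-qt-bdry-thick}. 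Your argument can be repaired along the same lines: for any fixed $\epsilon_0>0$, dominate $L_3^{\epsilon-1}$ by $\delta^{-1}$ on $[\delta,1]$ and by $L_3^{\epsilon_0-1}$ on $[1,\infty)$, and note that $\int_1^\infty L_3^{\epsilon_0-1}G(L_3)\,dL_3\le C(\beta_0+\gamma\epsilon_0)^{-1}|M_{\beta_0+\gamma\epsilon_0}|<\infty$.
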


\begin{proof}
	From Proposition \ref{prop-qt-bdry-thin} we know that $|\Md_{2,\bullet}(W+U;\beta;\ell_0)|$ is finite for $\beta\in[\beta_0,\gamma]$ while $|\widetilde{\SLE}_\kappa(W-2;U-2,0;1-\Delta_{\beta_0})|=\infty$, it suffices to prove that for any $\e>0$, there is some constant $C>0$ not depending on $\beta\in(\beta_0,\gamma)$ such that under  $\Md_{2,\bullet}(W+U;\beta;\ell_0)\otimes \widetilde{\SLE}_\kappa(W-2;U-2,0;1-\Delta_{\beta})$, the event $\{\ell_3>\e\}$ has measure no larger than $C$.
	
	By Proposition \ref{prop-3-pt-disk} and Definition \ref{def-m2dot-thin}, there exists some constants $c$ depending only on $\gamma,U,W$ (which might vary in the lines of the equation) but not on $\beta$ such that 
	\begin{equation}\label{eqn-w-2u-lm-1}
		\begin{split}
			&\ \ \ \ \big(\Md_{2,\bullet}(W+U;\beta;\ell_0)\otimes \widetilde{\SLE}_\kappa(W-2;U-2,0;1-\Delta_{\beta})\big)\big[{L_3>\e}\big]\\&= c\int_{\e}^\infty\int_0^\infty\int_0^\infty |\Md_2(W;\ell_0,\ell_1+\ell_2+\ell_3)||\Md_2(U;\ell_1)||\Md_2(U;\ell_2)||\Md_2(\gamma^2-U;\beta;\ell_3)|d\ell_1d\ell_2d\ell_3\\
			&= c\int_\e^\infty\int_{\e}^\ell\int_0^{\ell-\ell_3}|\Md_2(W;\ell_0,\ell)|\ell_1^{-\frac{2U}{\gamma^2}}(\ell-\ell_3-\ell_1)^{-\frac{2U}{\gamma^2}}\ell_3^{\frac{1}{\gamma}(\beta-\gamma+\frac{2U}{\gamma})-1}d\ell_1d\ell_3d\ell\\
			&=c\int_{\e}^\infty\int_{\e}^\ell |\Md_2(W;\ell_0,\ell)|(\ell-\ell_3)^{-\frac{4U}{\gamma^2}+1}\ell_3^{\frac{1}{\gamma}(\beta-\gamma+\frac{2U}{\gamma})-1}d\ell_3d\ell\\
			&=c\int_{\e}^\infty |\Md_2(W;\ell_0,\ell)|\ell^{\frac{\beta}{\gamma}-\frac{2U}{\gamma^2}}\int_{\frac{\e}{\ell}}^1(1-x)^{-\frac{4U}{\gamma^2}+1}x^{\frac{1}{\gamma}(\beta-\beta_0)-1}dxd\ell
		\end{split}
	\end{equation}
	where in the third line we used Proposition \ref{prop-disk-bdry-law} and Proposition \ref{prop-qt-bdry-thick}. Now we fix $\delta>0$ small and observe that 
	\begin{equation}\label{eqn-w-2u-lm-2}
		\begin{split}
			&\int_s^1(1-x)^{-\frac{4U}{\gamma^2}+1}x^{\frac{1}{\gamma}(\beta-\beta_0)-1}dx\le s^{-\delta}\int_0^1(1-x)^{-\frac{4U}{\gamma^2}+1}x^{\frac{1}{\gamma}(\beta-\beta_0)-1+\delta}dx 
			\le C(\delta)s^{-\delta}.
		\end{split}
	\end{equation}
	Plugging \eqref{eqn-w-2u-lm-2} in, we observe that the quantity in \eqref{eqn-w-2u-lm-2} is controlled by 
	\begin{equation}
		C\int_{\e_0}^\infty |\Md_2(W;\ell_0,\ell)|\ell^{\frac{\beta}{\gamma}-\frac{2U}{\gamma^2}+\delta}d\ell
	\end{equation}
	where $C=C(\delta,\epsilon,\gamma,W,U)$ is some constant. Now we take $\delta = \frac{U}{\gamma^2}$ so $\frac{\beta}{\gamma}-\frac{2U}{\gamma^2}+\delta$ varies between $(0,1-\frac{U}{\gamma^2})$. To conclude the proof, it suffices to verify that
	\begin{equation}\label{eqn-w-2u-lm-3}
		\int_0^\infty |\Md_2(W;\ell_0,\ell)|\ell^{1-\frac{U}{\gamma^2}} d\ell<\infty.
	\end{equation}
	We observe that by Proposition \ref{prop-qt-bdry-thin}, a three-pointed disk from $\Md_{2,\bullet}(\frac{U}{2};\gamma)$ (or equivalently $\QT(\frac{U}{2}, \frac{U}{2}, 2)$) has unmarked boundary length law $c\ell^{1-\frac{U}{\gamma^2}}d\ell$, and by Proposition \ref{prop-3-pt-disk}, \eqref{eqn-w-2u-lm-3} is a constant times
	\begin{equation}
		\int_0^\infty |\Md_2(W;\ell_0,\ell)||\Md_{2,\bullet}(\frac{U}{2};\gamma;\ell)|d\ell = c|\Md_{2,\bullet}(W+\frac{U}{2};\gamma;\ell_0)|.
	\end{equation}
	However, we know from Proposition \ref{prop-qt-bdry-thick} that $|\Md_{2,\bullet}(W+\frac{U}{2};\gamma;\ell_0)|<\infty$, which concludes the proof.
\end{proof}

The next lemma gives the interpretation of the right hand side of \eqref{eqn-w-2u}. We write $\bar{\mathcal{M}}_2(U)$ for the law of the surface constructed by concatenating a pair of samples from $\Md_2(U)\times\Md_2(U)$, giving the disintegration
\begin{equation}\label{eqn-w-2u-barm}
	\bar{\mathcal{M}}_2(U;\ell) = \int_0^\ell \Md_2(U;r)\times\Md_2(U;\ell-r)dr.
\end{equation}

\begin{lemma}\label{lm-w-2u-a}
	The triply marked surface on the right hand side of \eqref{eqn-w-2u} is the same as
	\begin{equation}
		\int_0^\infty \Wd(\Md_2(W;\ell),\bar{\mathcal{M}}_2(U;\ell))d\ell.
	\end{equation}
\end{lemma}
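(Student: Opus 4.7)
The identity is essentially the statement that the third marked point of a sample from $\QT(W,W,2)$ can be viewed as a quantum-typical point placed on a boundary arc of a weight-$W$ two-pointed quantum disk, and that welding two weight-$U$ disks on either side of this marked point is equivalent to welding a single beaded surface of two concatenated weight-$U$ disks along the entire arc.

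The plan is as follows. First, invoke Remark~\ref{remark-qt-3-disk} with $W_3=2$ (so $\beta_3=\gamma$) to rewrite $\QT(W,W,2)$ as a multiplicative constant times $\Md_{2,\bullet}(W)$, absorbing the constant into the overall prefactor. By Definition~\ref{three-pointed-disk}, the joint law under $\Md_{2,\bullet}(W)$ of the pair $(\phi, s)$, where $\phi$ is an embedded two-pointed disk on $\mathcal{S}$ and $s\in\bbR$ is the marked point on the bottom arc, is exactly $\Md_2(W)[d\phi]\cdot \nu_\phi(ds)$. Given such $(\phi, s)$, write $\ell = \nu_\phi((-\infty,s))$ and $\ell' = \nu_\phi((s,+\infty))$. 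By the associativity of conformal welding along $\gamma$-LQG boundary length, welding $\Md_2(U;\ell)$ on the sub-arc $(-\infty,s)$ and $\Md_2(U;\ell')$ on $(s,+\infty)$ yields the same triply-marked (beaded) surface, with $s$ playing the role of the middle marked point, as welding along the entire bottom arc a single beaded surface consisting of two concatenated weight-$U$ disks joined at $s$.

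With this geometric identification in hand, Fubini together with the change of variables $(\ell, \ell') \mapsto (L, r) = (\ell+\ell', \ell)$, using the fact that $\nu_\phi|_{\bbR}$ pushes forward to Lebesgue measure on $[0,L]$ under arclength parametrization, gives
\begin{align*}
\int_0^\infty \int_0^\infty &\Wd(\Md_{2,\bullet}(W;\ell,\ell'),\Md_2(U;\ell),\Md_2(U;\ell'))\, d\ell\, d\ell' \\
&= \int_0^\infty \Wd\!\left(\Md_2(W; L),\ \int_0^{L}\Md_2(U;r)\times \Md_2(U;L-r)\,dr\right) dL,
\end{align*}
and the inner integral equals $\bar{\mathcal{M}}_2(U;L)$ by~\eqref{eqn-w-2u-barm}. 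The main technical subtlety is in bookkeeping the marked point through the welding: one must carefully verify that the concatenation joint on the beaded weight-$U$ surface becomes exactly the third marked point of the resulting triply-marked welded surface, which follows from how the concatenation point is preserved as a distinguished point under conformal welding. Once that is laid out, the computation above gives the desired equality.
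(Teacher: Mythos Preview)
Your proposal is correct and follows essentially the same route as the paper. Both arguments identify $\QT(W,W,2)$ with a constant multiple of $\Md_{2,\bullet}(W)$ (you cite Remark~\ref{remark-qt-3-disk}; the paper invokes Definition~\ref{three-pointed-disk} and Proposition~\ref{prop-m2dot}), use that the third marked point is quantum-typical on a boundary arc, and then perform the change of variables $(\ell,\ell')\mapsto(L,r)=(\ell+\ell',\ell)$ together with the definition~\eqref{eqn-w-2u-barm} of $\bar{\mathcal{M}}_2(U;L)$ to conclude.
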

\begin{proof}
	We start with a sample from $\Md_{2}(W;\ell)$ where $\ell$ is its right boundary length. Then sample $r\sim\text{Leb}_{[0,\ell]}$ and mark the point on the right boundary arc with distance $r$ to the top endpoint. Recall that from Definition \ref{three-pointed-disk} and Proposition \ref{prop-m2dot}, once given $r$, after adding a third point onto the right boundary of weight $W$ disk,  the law of the surface we get is precisely $\Md_{2,\bullet}(W;r,\ell-r)$. Therefore the lemma follows by simultaneously welding a pair of samples from $\Md_2(U;r)\times\Md_2(U;\ell -r)$ to the right boundary arc according to quantum length and recalling the definition \eqref{eqn-w-2u-barm}.
\end{proof}
We notice that as in the proof of Lemma \ref{lm-w-2u}, 
\begin{equation}
	|\int_0^\infty \Wd(\Md_2(W;\ell_0,\ell),\bar{\mathcal{M}}_2(U;\ell))d\ell| = c\int_0^\infty |\Md_2(W;\ell_0,\ell)|\ell^{1-\frac{4U}{\gamma^2}}d\ell = c|\Md_{2,\bullet}(W+2U;\ell_0)|<\infty,
\end{equation}
which means that we may sample a quantum surface from the normalized version of the measure on the right hand side of \eqref{eqn-w-2u} embedded as $(\bbH, \tilde{\phi}, 0, \infty, 1)$ with $(\tilde{\eta}_1, \tilde{\eta}_2)$ being curves joining 1 with 0 and $\infty$.
To prove the theorem, we need to show that the law of $(\bbH, \tilde{\phi}, 0, \infty, 1)$  is $\Md_{2,\bullet}(W+U;\beta_0;\ell_0)^\#$, and  $(\tilde{\eta}_1, \tilde{\eta}_2)$ are independent of the surface. 

We go back to the setting as in Lemma \ref{lm-w-2u} and Figure \ref{fig-w-2u-a}.  Let $S_\beta$ be the connected component of $\bbH\backslash\eta$ containing 1, and $x_\beta$ be the quantum midpoint of the left boundary of $(\bbH,\phi_\beta,0,\infty,1)$ (i.e.\ $\nu_{\phi_\beta}((-\infty, x_\beta)) = \nu_{\phi_\beta}((x_\beta, 0)) = \frac{\ell_0}{2}$). Consider  the conformal map $g_\beta$ from $\bbH\backslash S_\beta$ to $\bbH$ that fixes 0, $\infty$ and $x_\beta$. For any $\e>0$ let $\bbH_\e = \{z\in\bbH:|z-1|>\e\}$. Since it is clear that the law of $(\bbH_\e, {\phi}_\beta, 0, \infty)$ converges in total variation to $(\bbH_\e, {\phi}_{\beta_0}, 0, \infty)$ (which could be seen from the LCFT definition and the disintegration description in \eqref{eqn-qt-disintegration-thick}), we may couple $(\bbH_\e, {\phi}_\beta, 0, \infty)$ with $(\bbH_\e, {\phi}_{\beta_0}, 0, \infty)$ such that the corresponding $x_\beta$ agrees with $x_{\beta_0}$ with probability $1-o_\beta(1)$. We shall work on the surface $(\bbH\backslash S_\beta, 0, \infty)$, which is equivalent to $(\bbH, \hat{\phi}_\beta, 0, \infty)$ where $\hat{\phi}_\beta = \phi_\beta\circ g_\beta^{-1}+Q\log |(g_\beta^{-1})'|$.

\begin{lemma}\label{lm-w-2u-b}
	Fix $\e>0$. Under the measure $\Md_{2,\bullet}(W+U;\beta;\ell_0)^\#\otimes\widetilde{\SLE}_\kappa(W-2;U-2,0;1-\Delta_{\beta})^\#$, as $\beta\downarrow\beta_0$, the law of the surface $(\bbH_\e, \hat{\phi}_\beta, 0, \infty)$ converges {weakly} to that of $(\bbH_\e, \tilde{\phi}, 0, \infty)$.
\end{lemma}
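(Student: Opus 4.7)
The plan is to realise both $\hat\phi_\beta$ and $\tilde\phi$ as weldings of the same weight $W$ thick disk with two weight $U$ thin disks along an SLE interface, the only difference being a small thick bead $S_\beta$ in the $\beta>\beta_0$ picture that degenerates to the point $1$ in the limit. Coupling these two pictures and controlling the conformal uniformisation on $\bbH_\e$ will give the stated weak convergence.

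By Proposition~\ref{prop-3-pt-disk} with $(W_1,W_2)=(W,U)$, the joint law of $(\phi_\beta,\eta)$ is (up to a positive constant) the conformal welding of $\Md_2(W;\ell_0,\ell)$ with $\Md_{2,\bullet}(U;\beta;\ell)$ along $\eta$. Definition~\ref{def-m2dot-thin} expands the second factor as the concatenation of a weight $U$ thin disk of outer length $L_1$, a three-pointed thick disk $\Md_{2,\bullet}(\gamma^2-U;\beta;L_3)$ carrying the $\beta$-insertion at $1$, and another weight $U$ thin disk of outer length $L_2$, subject to $L_1+L_3+L_2=\ell$. The bubble $S_\beta$ is precisely this middle thick disk. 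On the other hand, by Lemma~\ref{lm-w-2u-a}, $\tilde\phi$ admits the analogous decomposition with no middle bead: it is the welding of $\Md_2(W;\ell_0,\ell_1+\ell_2)$ with two independent thin disks $\Md_2(U;\ell_1),\Md_2(U;\ell_2)$ along an SLE interface, and $1\in\partial\bbH$ is the pinch point between the two thin disks.

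Under this common description I would build a coupling of the $\beta$-law with the target in which the weight $W$ disk and the two outer weight $U$ thin disks are identical in the two pictures, and the only extra structure on the $\beta$-side is the bead $S_\beta$. By Lemma~\ref{lm-w-2u}, $L_3\to 0$ in probability as $\beta\downarrow\beta_0$; combining this with the boundary-length law of $\Md_{2,\bullet}(\gamma^2-U;\beta)$ from Propositions~\ref{prop-qt-bdry-thick} and~\ref{prop-qt-bdry-thin}, together with the conditional law of the thick bead given its (vanishing) boundary length, both the quantum area and the LQG diameter of $S_\beta$ tend to $0$ in probability. Since the two pinch points bounding $S_\beta$ lie at the images, under the conformal welding, of the endpoints of a disk whose boundary length collapses to $0$ at the fixed Euclidean location $1$ of the $\beta$-insertion, one concludes that the Euclidean diameter $\mathrm{diam}(S_\beta)\to 0$ in probability as well.

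With this Euclidean degeneration in hand, the conclusion follows quickly. On the event $\{S_\beta\subset B(1,\delta)\}$, which has probability tending to $1$ for any fixed $\delta<\e/2$, the normalised conformal map $g_\beta:\bbH\setminus S_\beta\to\bbH$ fixing $0,\infty,x_\beta$ converges to the identity uniformly on compact subsets of $\bbH_\e$ by a standard Carath\'eodory kernel argument, because the excised region shrinks to the single interior boundary point $\{1\}$. Combined with the coupling above and the total variation convergence of the underlying fields on $\bbH_\e$, this yields weak convergence of $(\bbH_\e,\hat\phi_\beta,0,\infty)$ to $(\bbH_\e,\tilde\phi,0,\infty)$. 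The principal obstacle is converting the LQG-length degeneration $L_3\to 0$ into the Euclidean statement $\mathrm{diam}(S_\beta)\to 0$; this requires exploiting that the $\beta$-insertion at the fixed Euclidean point $1$ pins both pinch points of $S_\beta$ to $1$ as the quantum mass of $S_\beta$ vanishes.
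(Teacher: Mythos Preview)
Your decomposition of both $\hat\phi_\beta$ and $\tilde\phi$ via Proposition~\ref{prop-3-pt-disk}, Definition~\ref{def-m2dot-thin}, and Lemma~\ref{lm-w-2u-a} is correct, and you rightly identify $L_3\to 0$ as the driving mechanism. However, two steps do not go through as written.

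First, the coupling in which ``the weight $W$ disk and the two outer weight $U$ thin disks are identical in the two pictures'' cannot be made literal. In the $\beta$-picture the weight-$W$ disk has right boundary length $L_1+L_2+L_3$, and after excising $S_\beta$ it carries a free boundary arc of quantum length $L_3$ near $1$; in the target picture the big disk has right boundary length $L_1+L_2$ and no free arc. These are genuinely different quantum surfaces, and matching them as $L_3\to 0$ requires continuity of the disintegration $\Md_2(W;\ell_0,\cdot)$ in its second argument, which you never invoke. This continuity (from \cite[Proposition 2.23]{AHS20} and \cite[Lemma 5.17]{AGS21}) is the core of the paper's proof.

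Second, your passage from $L_3\to 0$ to $\mathrm{diam}(S_\beta)\to 0$ is not justified; the heuristic that ``the $\beta$-insertion pins both pinch points to $1$'' is not a proof, and it is not clear the Euclidean diameter statement is even true. The paper bypasses Euclidean diameter entirely: since $g_\beta$ is normalised at $x_\beta$, the relevant smallness is the harmonic measure of $\partial S_\beta$ in $\bbH\setminus S_\beta$ viewed from $x_\beta$ (after reflection), and this tends to zero directly from $L_3\to 0$ by conformal covariance of quantum length. With that in hand, the adaptation of \cite[Lemma 5.16]{AGS21} plus the continuity of disintegration yields $\hat\phi_\beta\to\tilde\phi$ on $\bbH_\e$. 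Your Carath\'eodory argument does appear in the paper, but in Step~1 of the proof of Proposition~\ref{thm-w-2u}, where it establishes the \emph{separate} convergence $\hat\phi_\beta\to\phi_{\beta_0}$; combining that with the present lemma is what ultimately identifies the law of $\tilde\phi$.
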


\begin{proof}
	From Lemma \ref{lm-w-2u}, the quantum length  {$L_3$} is converging in probability to zero. {In particular, using conformal covariance property of quantum length, this also implies that the harmonic measure of $\partial S_\beta$ in $\bbH\backslash\partial S_\beta$ viewed from $x_\beta$ (after a reflection over $\bbR^-$) converges in probability to zero. Then adapting the the same proof in~\cite[Lemma 5.16]{AGS21},  the claim follows   from} the continuity of the disintegration of quantum disks over quantum length (see e.g. \cite[Proposition 2.23]{AHS20} and \cite[Lemma 5.17]{AGS21}) and the description provided by Lemma \ref{lm-w-2u-a}.
\end{proof}

\begin{proof}[Proof of Proposition~\ref{thm-w-2u}]
	\emph{Step 1. Identifying the field.}
	Assume that we are in the setting of Lemma \ref{lm-w-2u} and \ref{lm-w-2u-b}, and $W\in (\frac{\gamma^2}{2},2]$. We prove that, for any $\e>0$,  the distributions $\hat{\phi}_\beta$ converges weakly to $\phi_{\beta_0}$ in the domain $\bbH_\e$, where again $\phi_{\beta_0}$ is sampled from $\Md_{2,\bullet}(W+U;\beta_0;\ell_0)^\#$. Then Lemma \ref{lm-w-2u-b} implies that the law of $(\bbH, \tilde{\phi}, 0, \infty, 1)$ is $\QT(W+U,W+U,2+2U;\ell_0)^\#$.
	
	We start by extending  $g_\beta$ to the conformal map from $\mathbb{C}\backslash (S_\beta^*\cup\bbR_+)$ to $\mathbb{C}\backslash \bbR_+$ via  Schwartz reflection, where $S_\beta^* = S_\beta\cup \{z:\bar{z}\in S_\beta\}$. Fix $\delta>0$ and work on the event that $x_\beta<-\delta$, which has probability $1-o_\delta(1)$. Then since the quantum length $\ell_3$ goes to 0 in probability, if we let $\beta\downarrow\beta_0$, the probability that an independent Brownian motion starting from {$x_\beta$} exits $\mathbb{C}\backslash(S_\beta^*\cup\bbR_+)$ through $\partial S_\beta$ goes to 0.
	
	Consider the conformal map $\psi_\beta$ from $\mathbb{C}\backslash\bbR_+$ to the unit disk sending $x_\beta$ to 0 and $\infty$ to 1. Then the Beurling estimate (see e.g.\ \cite[Section 3.8]{Law08}) implies that for any fixed $\e>0$, with probability 1$-o_\beta(1)$, the set $\psi_\beta(S_\beta^*)$ is contained in $\{z:1-\e<|z|<1\}$. This implies that the kernel of the set  $\mathbb{C}\backslash (S_\beta^*\cup\bbR_+)$ is $\mathbb{C}\backslash\bbR_+$ with probability $1-o_\delta(1)$, and the Carath\'{e}odory kernel theorem (see e.g.\ \cite[Section 3.6]{Law08}) implies that the conformal maps $g_\beta^{-1}$ converges uniformly on compact sets of $\mathbb{C}\backslash \bbR_+$ to the identity function. Then in $\bbH_\e$, since $\hat{\phi}_\beta = \phi_\beta\circ g_\beta^{-1}+Q\log |(g_\beta^{-1})'|$ and $\phi_\beta$ converges in total variation distance to $\phi_{\beta_0}$, it is clear that as we first send $\beta\downarrow\beta_0$ and then $\delta\to0$, $\hat{\phi}_\beta$ converges weakly to  $\phi_{\beta_0}$, which concludes the first step of the proof.
	
	\emph{Step 2. Identifying the interface.} In Step 1 we have shown that in the $(\bbH, \tilde{\phi}, 0, \infty, 1)$ obtained by welding two weight $U$ disks with a weight $W$ disk, the field $\tilde{\phi}$ is precisely the Liouville field.  
	Now we show that the law of the interfaces $(\tilde{\eta}_1,\tilde{\eta}_2)$ on the right hand side of \eqref{eqn-w-2u} can be characterized by some SLE resampling property and independent of the field.	
	
	Recall that in curve-decorated surface $\Md_{2,\bullet}(W+U;\beta;\ell_0)\otimes \widetilde{\SLE}_\kappa(W-2;U-2,0;1-\Delta_{\beta})$, as we remove the bubble $S_\beta$, the interfaces $(\eta_1, \eta_2)$ are given by $(\eta|_{[0,\tau]},\eta|_{[\sigma,\infty)})$ where $\tau$ (resp.\ $\sigma$) be the start (resp.\ end) time of the excursion containing the point 1. Then by SLE Markov property, given $S_\beta$ and ${\eta_1}$, the law of $\eta_2$ is $\SLE_\kappa(W-2;U-2)$ with force points at $0$ and $\eta(\sigma)_+$ in the right connected component of $\bbH\backslash\eta|_{[0,\sigma]}$. Similarly, using the SLE reversibility statement \cite[Theorem 1.1]{MS16b}, the law of $\eta_1$ given $S_\beta$ and $\eta_2$ is the $\SLE_\kappa(U-2;W-2)$ process from 1 to 0 in the left connected component of $\bbH\backslash\eta|_{[\tau,\infty)}$ with force points at $\eta(\tau)_-$ and $\infty$. Therefore it follows from Lemma \ref{lm-w-2u-b} that the law of $\tilde{\eta}_1$ given $\tilde{\eta}_2$ is the $\SLE_\kappa(U-2;W-2)$ process from 1 to 0 in the left connected component of $\bbH\backslash\tilde{\eta}_2$ with force points at $1^-$ and $\infty$, while the law of $\tilde{\eta}_2$ given $\tilde{\eta}_1$ is the $\SLE_\kappa(W-2;U-2)$ process from 1 to $\infty$ in the left connected component of $\bbH\backslash\tilde{\eta}_2$ with force points at $0$ and $1^+$. Therefore it follows from the SLE resampling property (Proposition~\ref{prop:ig-flow-descriptions}) that the joint law of $(\tilde{\eta}_1, \tilde{\eta}_2)$ is unique and independent of the field, and thus concluding the proof for $W\in (\frac{\gamma^2}{2}, 2]$.
	
	\emph{Step 3. Extension to $W>2$.} In Figure \ref{fig-w-2u}, by Theorem \ref{thm-disk-2}, we can weld the weight $W$ disk into a weight $W-2$ disk on the left and a weight 2 disk on the right with interface $\eta_0$. Then by Steps 1 and 2, the law of the quantum surface on the right of $\eta_0$ is a three-pointed disk $\Md_{2,\bullet}(2+U;\beta_0)$, and therefore by Proposition \ref{prop-3-pt-disk} the whole surface has law   $\Md_{2,\bullet}(W+U;\beta_0)$. Moreover the marginal law of $\eta_0$ is $\widetilde{\SLE}_\kappa(W-4;U,0;1-\Delta_{\beta_0})$, while the law of the interfaces $(\eta_1, \eta_2)$ given $\eta_0$ are characterized by the SLE resampling properties. Therefore the law of $(\eta_1,\eta_2)$ is independent of the field, which concludes the proof of the Theorem.
\end{proof}

\section{Proof of Theorem \ref{thm:M+QTp-rw} for a restricted range}\label{sec:resampling}

{In this section we prove Theorem~\ref{thm:u2v}, which is Theorem~\ref{thm:M+QTp-rw} for a restricted parameter range.}

\begin{theorem}\label{thm:u2v}
	Suppose $0 < U < \frac{\gamma^2}2 < W$. 
	Sample a curve-decorated quantum surface from 
	\[\int_0^\infty \mathrm{Weld}(\Md_2(U; \ell), \QT(W,2,W; \ell)) \, d\ell\]
	where the welding identifies a boundary edge of the quantum disk with a boundary edge of the quantum triangle with endpoints of weights $2,W$. 
	Embed it as {$(\bbH, \phi, \eta,  \infty,0,1)$}, where the boundary points with weights {$(U+W, U+2, W)$} are mapped to $(\infty,0,1)$. Then there is a finite constant $C = C(U,W)$ such that the law of $(\phi, \eta)$ is $C \LF_{\bbH}^{(\beta_1,\infty), (\beta_2, 0),(\beta_3, 1)} \times \SLE_\kappa(U-2; 0, W-2)$, where {$\beta_1 = Q+\frac{\gamma}{2} - \frac{W+U}{\gamma}$, $\beta_2 = Q+\frac\gamma2 - \frac {2+U}\gamma$ and $\beta_3 = Q + \frac\gamma2 - \frac{W}\gamma$. In other words, Theorem~\ref{thm:M+QT} holds for $(W,W_1,W_2,W_3) = (U,W,2,W)$.} 
\end{theorem}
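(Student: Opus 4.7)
I would derive Theorem~\ref{thm:u2v} from Proposition~\ref{thm-w-2u} by cutting the two-sided welding there along one of its two interfaces and identifying what remains, at the level of the field via a Markovian characterization of the three-pointed Liouville field, and at the level of the curve via SLE resampling.

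First, applying Proposition~\ref{thm-w-2u} with the same parameters $W, U$ produces a sample from $\QT(W+U, W+U, 2+2U) \otimes \mathsf{m}(W;U)$ embedded as $(\bbH, \phi, 0, \infty, 1)$, decorated by interfaces $\eta_1$ from $1$ to $0$ and $\eta_2$ from $1$ to $\infty$. The components of $\bbH \setminus (\eta_1 \cup \eta_2)$ carry, respectively, a sample of $\QT(W, W, 2)$ in the middle and an independent $\Md_2(U)$ on each side. Cutting along $\eta_1$ and discarding the thin disk it separates leaves exactly the welded surface of Theorem~\ref{thm:u2v}: the middle triangle $\QT(W, W, 2) = \QT(W, 2, W)$ with one copy of $\Md_2(U)$ glued along an edge with endpoints of weights $2$ and $W$, up to a Möbius/reflection relabeling of the three marked points (which is harmless since both the Liouville field and $\SLE$ are reflection-symmetric).

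Next I would establish the Markovian characterization of the three-pointed Liouville field advertised in Section~\ref{sec:resampling}. The statement should be that $\LF_{\bbH}^{(\beta_1,\infty),(\beta_2,0),(\beta_3,1)}$ is characterized by the conditional laws of the field on the two sides of any simple curve going between two distinct boundary arcs, together with the induced quantum-length data on the curve. Combined with the preceding step and the known conditional law of $\eta_1$ given $\phi$, this forces the marginal law of $\phi$ on the $\eta_2$-containing component of $\bbH \setminus \eta_1$ to be proportional to $\LF_{\bbH}^{(\beta_1,\infty),(\beta_2,0),(\beta_3,1)}$, where $\beta_1, \beta_2, \beta_3$ correspond to the new vertex weights $W+U, 2+U, W$. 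The constant $C = C(U, W)$ is then pinned down by matching the $\prod (Q - \beta_i)^{-1}$-normalization of Definition~\ref{def-qt-thick} against the explicit weld constant of Proposition~\ref{thm-w-2u}. The conditional law of the surviving interface $\eta_2$ follows from the SLE resampling characterization (Proposition~\ref{prop:ig-flow-descriptions}): given $\eta_1$, $\eta_2$ is an $\SLE_\kappa(U-2; 0, W-2)$ in its component with force points at $0^-, 0^+, 1$, matching Theorem~\ref{thm:u2v}, and resampling uniqueness implies that this curve is independent of the field conditional on the three-pointed Liouville structure.

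The main obstacle is the Markovian characterization of the Liouville field with three genuinely distinct insertions. In~\cite{AHS21} analogous identifications were only carried out when two of the three insertions coincide, where a Girsanov/reflection argument removes the asymmetry; that shortcut is no longer available here. The planned technical engine is a GFF-level resampling statement for the three-pointed Liouville field, established by combining a radial-lateral decomposition of the free-boundary GFF near each thick insertion with a uniqueness argument in the spirit of SLE commutation, matching conditional laws across the separating curve $\eta_1$. The hypotheses $U < \gamma^2/2 < W$ are used to ensure that both welded vertices (of weights $2+U$ and $W+U$) are thick, so that the resulting intermediate triangle lies in the Definition~\ref{def-qt-thick} regime and no thin-vertex reduction is needed at this stage.
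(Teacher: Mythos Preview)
Your high-level strategy---start from the two-sided welding of Proposition~\ref{thm-w-2u} and peel off one thin disk---is indeed one ingredient the paper uses, but as written there is a genuine gap in the ``discard'' step, and the Markovian characterization you sketch supplies only one of three pieces needed. If you simply forget the $U$-disk on one side of $\eta_1$, the remaining curve-decorated surface does \emph{not} have law $\int_0^\infty \Wd(\Md_2(U;\ell),\QT(W,2,W;\ell))\,d\ell$: marginalizing over the forgotten disk introduces a factor $|\Md_2(U;\ell')|\propto (\ell')^{-2U/\gamma^2}$ in the discarded interface length $\ell'$, which tilts the retained $\QT(W,W,2)$ piece. The paper repairs this with a weighting trick (Lemma~\ref{lem-weight-other-side}): after reweighting one $U$-disk by its outer boundary length so that $|\tildeMd_2(U;\ell')|$ is constant in $\ell'$, forgetting it really does leave a constant multiple of the target welding.

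Even with that fix, the cut from Proposition~\ref{thm-w-2u} only gives the domain Markov property of the field near the weight-$W$ vertex (this is Lemma~\ref{lem-markovinfty}). To pin down a three-insertion Liouville field you need Markov resampling near \emph{all three} insertions, and the other two require different auxiliary weldings: the paper introduces a second curve cutting the $\QT(W,2,W)$ piece---via the known $W=2$ case when $W>2$, or via a separate welding (Lemma~\ref{lem-cut-V}) when $W<2$---so that the remaining vertices sit inside regions where a known Liouville law already applies (Lemmas~\ref{lem-markov1} and~\ref{lem-markov0}). The field is then identified by an irreducible Markov chain argument (Lemma~\ref{lem-irreducible}), not by matching conditional laws across a single curve. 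The same auxiliary curve is what identifies the interface law and proves independence from the field (Propositions~\ref{prop-indep-curves-thick} and~\ref{prop-indep-curves-thinner}); knowing the conditional law of $\eta_2$ given $\eta_1$ in the original picture does not by itself give the marginal law of $\eta_2$ after $\eta_1$ is mapped out, nor its independence from the resulting field.
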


We point out that in the special case $W = 2$ this is already known.
\begin{proposition}\label{prop-W=2}
	Theorem~\ref{thm:u2v} holds when $W = 2$.
\end{proposition}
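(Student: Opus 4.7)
The plan is to obtain Proposition~\ref{prop-W=2} as a direct consequence of Proposition~\ref{prop-3-pt-disk} applied with parameters $(W_1, W_2, \beta) = (U, 2, \gamma)$. First, I would observe that with $\beta = \gamma$ one has $\Delta_\gamma = \frac{\gamma}{2}(Q - \frac{\gamma}{2}) = 1$, so the conformal-derivative reweighting in~\eqref{eqn-3-pt-disk} is trivial ($1 - \Delta_\gamma = 0$) and the interface law reduces to ordinary $\SLE_\kappa(U-2; 0, 0)$. Since a force point of weight $0$ has no effect on the driving SDE~\eqref{eqn-def-sle-rho}, this coincides with the $\SLE_\kappa(U-2; 0, W-2)$ required by Theorem~\ref{thm:u2v} when $W = 2$.

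Next, I would translate each three-pointed disk measure in~\eqref{eqn-3-pt-disk} into the corresponding quantum triangle. By Remark~\ref{remark-qt-3-disk}, $\Md_{2,\bullet}(W; \gamma)$ agrees up to a finite multiplicative constant with $\QT(W, W, 2)$ for every $W > \gamma^2/2$. Applied to $W = 2$, this identifies $\Md_{2,\bullet}(2; \gamma)$ on the right-hand side with $\QT(2, 2, 2)$; the ``unmarked arc'' in Definition~\ref{def-m2dot-alpha} becomes precisely the edge of $\QT(W,2,W)|_{W=2}$ between the two vertices of weights $W$ and $2$ being welded in Theorem~\ref{thm:u2v}, and the disintegration of the two measures along that arc length agrees by Lemma~\ref{lem-qt-disintegration-thick}. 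Applied to $W = U + 2$, Remark~\ref{remark-qt-3-disk} identifies the left-hand side $\Md_{2,\bullet}(U+2; \gamma)$ with a constant multiple of $\QT(U+2, U+2, 2) = \QT(U+W, U+2, W)|_{W=2}$.

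Finally, I would translate the strip description of $\QT(U+2, U+2, 2)$ into the $(\bbH, \phi, \infty, 0, 1)$ embedding required by Theorem~\ref{thm:u2v}. By Definition~\ref{def-qt-thick}, a sample from $\QT(U+2, U+2, 2)$ has field law (up to constant) $\textup{LF}_\mathcal{S}^{(\beta_1, +\infty), (\beta_2, -\infty), (\beta_3, 0)}$; applying the exponential conformal map $z \mapsto e^z$ and invoking Lemma~\ref{lmm-lcft-H-strip} produces the field law $\LF_\bbH^{(\beta_1, \infty), (\beta_2, 0), (\beta_3, 1)}$, and the insertion exponents match those in Theorem~\ref{thm:u2v} by direct substitution into $\beta_i = \gamma + (2 - W_i)/\gamma$. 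Since the interface in Proposition~\ref{prop-3-pt-disk} is independent of the field, the product form $C\,\LF_\bbH^{(\beta_1, \infty), (\beta_2, 0), (\beta_3, 1)} \times \SLE_\kappa(U-2; 0, W-2)$ required by Proposition~\ref{prop-W=2} follows. There is no substantive obstacle: the argument is a bookkeeping exercise verifying that boundary arcs, insertion points, and force points are identified consistently across Proposition~\ref{prop-3-pt-disk}, Remark~\ref{remark-qt-3-disk}, Definition~\ref{def-qt-thick}, and Lemma~\ref{lmm-lcft-H-strip}.
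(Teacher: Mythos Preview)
Your proposal is correct and is essentially the paper's approach unpacked: the paper's one-line proof cites \cite[Lemma~4.4]{AHS21} with $(W_-,W_+)=(U,2)$, which is precisely the $\beta=\gamma$ specialization of Proposition~\ref{prop-3-pt-disk} (itself \cite[Proposition~4.5]{AHS21}) that you work through. Your version has the small advantage of being self-contained within the present paper, and the bookkeeping (the $\Delta_\gamma=1$ computation, the identification via Remark~\ref{remark-qt-3-disk}, and the strip-to-$\bbH$ transfer) is all in order.
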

\begin{proof}
	This is \cite[Lemma 4.4]{AHS21} with the parameters $(W_-, W_+) = (U, 2)$.
\end{proof}

We prove the $W > 2$ case in Section~\ref{sec-W>2} and the $W \in (\frac{\gamma^2}2, 2)$ case in Section~\ref{sec-W<2}, and thus complete the proof of Theorem~\ref{thm:u2v}. 
The key is a Markovian characterization of Liouville fields with three insertions.

\begin{proof}[Proof of Theorem~\ref{thm:u2v}]
	The various cases are proved in Propositions~\ref{prop-W=2},~\ref{prop-W>2} and~\ref{prop-W<2}.
\end{proof}

\subsection{The case $W >2$}\label{sec-W>2}

{
	In this section we prove the $W>2$ case (which we state as Proposition~\ref{prop-W>2}).
	
	Recall $(\phi, \eta)$ in Theorem~\ref{thm:u2v}. Roughly speaking, Proposition~\ref{prop-indep-curves-thick} shows that $\phi$ and $\eta$ are independent and identifies the law of $\eta$. Lemma~\ref{def-markov-kernel} gives a Markov property for the Liouville field which is directly inherited from that of the Gaussian free field. Using this and Proposition~\ref{thm-w-2u}, we obtain Markov properties for $\phi$, where one can resample the field in three subsets od $\bbH$ which together cover $\bbH$ (Lemmas~\ref{lem-markov1},~\ref{lem-markov0} and~\ref{lem-markovinfty}). Finally, these three resampling properties are enough to characterize the law of $\phi$ and hence complete the proof of Proposition~\ref{prop-W>2}.  } 

\begin{proposition}\label{prop-indep-curves-thick}
	In the setting of Theorem~\ref{thm:u2v} with $W > 2$, let $M$ be the law of the field $\phi$. Then the joint law of $(\phi, \eta)$ is $M \times \SLE_\kappa(U-2; 0, W-2)$. 
\end{proposition}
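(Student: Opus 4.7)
The plan is to extract Proposition~\ref{prop-indep-curves-thick} by marginalizing one of the two interfaces from the two-sided welding of Proposition~\ref{thm-w-2u}. The welding $\mathrm{Weld}(\Md_2(U), \QT(W, 2, W))$ of Theorem~\ref{thm:u2v} arises as an intermediate stage in the two-sided welding $\mathrm{Weld}(\QT(W,W,2), \Md_2(U), \Md_2(U))$ of Proposition~\ref{thm-w-2u}: performing the three weldings sequentially, the result after welding only one $\Md_2(U)$ to $\QT(W,W,2)$ along a $W$--$2$ edge is exactly the welded surface of Theorem~\ref{thm:u2v} (here $\QT(W, W, 2)$ equals $\QT(W, 2, W)$ as measures on quantum triangles, up to relabeling of the vertices). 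Combining this with the conformal covariance of SLE yields the desired product law.

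Concretely, I would sample $(\bbH, \phi', \eta_1, \eta_2)$ from $\QT(W+U, W+U, 2+2U)^\# \otimes \mathsf{m}(W;U)$, with vertices of weights $(W+U, W+U, 2+2U)$ at $(0, \infty, 1)$, and let $D_L$ be the left component of $\bbH \setminus \eta_2$. By the SLE resampling characterization (Proposition~\ref{prop:ig-flow-descriptions}) underlying $\mathsf{m}(W;U)$, the conditional law of $\eta_1$ given $\eta_2$ is $\SLE_\kappa(U-2; W-2)$ in $D_L$ from $1$ to $0$, with force points at $1^-$ and at the endpoint of $\eta_2$ at $\infty$; for $W>2$ this conditional description is obtained from Step 3 of the proof of Proposition~\ref{thm-w-2u} by marginalizing over the auxiliary curve $\eta_0$, invoking Theorem~\ref{thm-sle-reverse} as needed. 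Let $\psi: D_L \to \bbH$ be the conformal map sending the $(W+U)$-vertex at $0$, the $(2+2U)$-vertex at $1$, and the endpoint of $\eta_2$ at $\infty$ to $(\infty, 0, 1)$ respectively. The pushforward of $(\phi'|_{D_L}, \eta_1)$ under $\psi$ gives a curve-decorated surface $(\bbH, \phi, \eta, \infty, 0, 1)$ whose marked points now carry weights $(W+U, 2+U, W)$: one $U$-disk is removed from the weight-$2$ core vertex (reducing its weight from $2+2U$ to $2+U$) and from the right weight-$W$ core vertex (reducing its weight from $W+U$ to $W$). Matching boundary length disintegrations via Propositions~\ref{prop-qt-bdry-thick} and~\ref{prop-qt-bdry-thin}, together with the conformal covariance of the Liouville field (Lemma~\ref{lmm-lcft-H-conf-infty}), identifies this pushforward with the joint law of $(\phi, \eta)$ from Theorem~\ref{thm:u2v}.

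For the independence and marginal SLE law, I use that $\phi'$ and $(\eta_1, \eta_2)$ are jointly independent under $\QT(W+U, W+U, 2+2U)^\# \otimes \mathsf{m}(W;U)$. Hence conditional on $\eta_2$, the restriction $\phi'|_{D_L}$ and the curve $\eta_1$ are conditionally independent, with $\eta_1$ following the stated SLE in $D_L$. By conformal covariance, under $\psi$ this SLE becomes $\SLE_\kappa(U-2; 0, W-2)$ in $\bbH$ from $0$ to $\infty$ with force points at $(0^-; 0^+, 1)$ -- the same fixed law for every realization of $\eta_2$, since the three image points $\{\infty, 0, 1\}$ are pinned by our choice of $\psi$. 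Since $\phi$ is a measurable function of $(\phi'|_{D_L}, \eta_2)$, conditional independence of $\phi'|_{D_L}$ and $\eta_1$ given $\eta_2$ passes to conditional independence of $(\phi, \eta)$ given $\eta_2$, with $\eta$ following the fixed SLE law for every $\eta_2$; integrating out $\eta_2$ yields the product law $M \times \SLE_\kappa(U-2; 0, W-2)$. The main obstacle is the measure-matching step: rigorously verifying that the $\psi$-pushforward of the two-sided welding (after marginalizing $\eta_2$ and the right $U$-disk) agrees with the welding measure of Theorem~\ref{thm:u2v} requires careful bookkeeping of boundary length disintegrations and Liouville insertion weights, including handling of the conformal factor $Q\log|(\psi^{-1})'|$ in the field transformation.
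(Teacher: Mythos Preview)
Your route via Proposition~\ref{thm-w-2u} is genuinely different from the paper's. The paper never leaves the embedding of Theorem~\ref{thm:u2v}: it adds an auxiliary curve $\eta_2\sim\SLE_\kappa(0;W-4)$ in the component of $\bbH\setminus\eta$ containing $1$, which (via~\eqref{eqn-qt-disk}) cuts $\QT(W,2,W)$ into $\QT(2,2,2)$ and $\Md_2(W-2)$ and produces the three-piece welding~\eqref{eq-three-disks}. Theorem~\ref{thm-disk-2} then gives the conditional law of $\eta$ given $(\phi,\eta_2)$ as $\SLE_\kappa(U-2;0)$ in the region left of $\eta_2$, and Proposition~\ref{prop:ig-flow-descriptions} identifies the conditional law of $(\eta,\eta_2)$ given $\phi$, hence the marginal of $\eta$. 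Because one stays in the same domain throughout, no measure-matching is needed.

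Your measure-matching step, by contrast, fails as stated. Marginalizing out the right $U$-disk and $\eta_2$ from Proposition~\ref{thm-w-2u}, the law of the remaining curve-decorated surface $(D_L,\phi',\eta_1)/{\sim_\gamma}$ is not the Theorem~\ref{thm:u2v} welding but that welding reweighted by $|\Md_2(U;\ell')|\propto(\ell')^{-2U/\gamma^2}$, where $\ell'$ is the quantum length of the arc that was glued to the removed disk. So the ``careful bookkeeping'' you anticipate is not about tracking constants: the measures are genuinely different. The argument is still salvageable, because after embedding $\ell'=\nu_\phi(I)$ for a fixed boundary arc $I\subset\partial\bbH$ and hence the reweighting depends only on $\phi$; thus the product structure you establish for the tilted pair transfers to the untilted one. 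Equivalently, replace the second $\Md_2(U)$ by the weighted disk $\tildeMd_2(U)$ of Lemma~\ref{lem-weight-other-side} so that $|\tildeMd_2(U;\ell')|$ is constant --- this is precisely the device the paper employs in Lemma~\ref{lem-markovinfty}. You should also justify the conditional law of $\eta_1$ given $\eta_2$ under $\mathsf{m}(W;U)$ when $W>2$: Step~3 of the proof of Proposition~\ref{thm-w-2u} only establishes independence of $(\eta_1,\eta_2)$ from the field and does not compute that conditional law, so a separate flow-line marginalization (over the auxiliary $\eta_0$) is required.
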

\begin{proof}
	Let $P_{U,W}$ be the law of $(\eta_1,\eta_2)$ from Figure~\ref{fig-ig-resample} where $(a_1, a_2, a_3) = (\lambda(1-U), \lambda, \lambda(W-1))$ and $(\theta_1, \theta_2) = (0, -\frac{2\lambda}\chi)$, so the curve $\eta_1$ is $\SLE_\kappa(U-2;0, W-2)$ from $0$ to $\infty$, and $\eta_2$ is $\SLE_\kappa(U, 0; W-4)$ from $1$ to $\infty$. 
	Let $D_2$ be the connected component of $\bbH \backslash \eta$ having $1$ on its boundary, and let $\eta_2$ be $\SLE_\kappa(0; W-4)$ in $D_2$ independent of $\phi$. By {Proposition~\ref{prop-W=2}} the law of $(\bbH, \phi, \eta, \eta_2, 0,1, \infty)/{\sim_\gamma}$ is 
	\begin{equation}\label{eq-three-disks}
		C \iint_0^\infty \mathrm{Weld}(\Md_2(U; \ell), \QT(2,2,2;\ell,\ell'), \Md_2(W-2;\ell'))\,d\ell\,d\ell'.
	\end{equation}
	Theorem~\ref{thm-disk-2} implies that the conditional law of $\eta$ given $(\phi, \eta_2)$ is $\SLE(U-2;0)$ in $(D_1, 0, \infty)$ where $D_1$ is the connected component of $\bbH \backslash \eta_2$ having $0$ on its boundary. 
	By Proposition~\ref{prop:ig-flow-descriptions}, conditioned on $\phi$, the conditional law of $(\eta, \eta_2)$ is $P_{U,V}$, and so the conditional law of $\eta$ is $\SLE_\kappa(U-2;0, W-2)$ as desired. 
\end{proof}

{Recall from Proposition~\ref{prop:Markov} that Gaussian free fields satisfy the domain Markov property. We now show that Liouville fields with three insertions satisfy a variant of the domain Markov property.  
	In Proposition~\ref{prop-W>2} we will show that this Markov property characterizes such  Liouville fields. This will allow us to identify $M$ from Proposition~\ref{prop-indep-curves-thick}  hence prove Theorem~\ref{thm:u2v} in the $W>2$ case.} {Since $\LF_\bbH^{(\beta_1,0),(\beta_2,1),(\beta_3,\infty)}$ is an infinite measure, we first need to specify the definition of conditioning in terms of Markov kernels as below.
	\begin{definition}\label{def-markov-kernel}
		Suppose $(\Omega, \cF)$ and $(\Omega', \cF')$ are measurable spaces. We say $\Lambda: \Omega \times \cF' \to [0,1]$ is a Markov kernel if $\Lambda(\omega, \cdot)$ is a probability measure on $(\Omega', \cF')$ for each $\omega \in \Omega$, and $\Lambda(\cdot, A)$ is $\cF$-measurable for each $A \in \cF'$.  If $(X,Y)$ is a sample from $\Lambda(x, dy) \mu(dx)$ for a measure $\mu$ on $(\Omega, \cF)$, we say the conditional law of $Y$ given $X$ is $\Lambda(X, \cdot)$.
\end{definition}}

\begin{lemma}\label{lem-markov-LF}
	Suppose $\psi\sim \LF_\bbH^{(\beta_1, 0), (\beta_2, 1), (\beta_3, \infty)}$, and the random set $  {S = S(\psi)} \subset \bbH$  is either the empty set or a bounded neighborhood of $0$ with $\ol S \cap [1,+\infty) = \emptyset$. Suppose that for any open $U \subset \bbH$, the event $\{ (\bbH \backslash S) \subset U\}$ is measurable with respect to  {$\psi|_U$}. Then conditioned on $(S, \psi|_{\bbH \backslash S})$ and on $\{S \neq \emptyset\}$ {in the sense of Defintion~\ref{def-markov-kernel}}, we have $\psi|_S \stackrel d= h + \mathfrak h + \frac{\beta_1}2 G_S(\cdot, 0)$ where $h$ is a GFF on $S$ with zero (resp.\ free) boundary conditions on $\partial S \cap \bbH$ (resp.\ $\partial S \cap \bbR$), $\mathfrak h$ is the harmonic extension of $\psi|_{\bbH \backslash S}$ to $S$ with normal derivative zero on $\partial S \cap \bbR$, and $G_S$ is the Green function of $h$. 
	
	The same holds if $S$ is either the empty set or a bounded neighborhood of $1$ with $\ol S \cap (-\infty, 0] = \emptyset$, and we replace $\frac{\beta_1}2 G_S(\cdot, 0)$ with $\frac{\beta_2}2 G_S(\cdot, 1)$. 
	
	The same holds if $S$ is either the empty set or a neighborhood of $\infty$ bounded away from $\{0,1\}$, and we replace $\frac{\beta_1}2 G_S(\cdot, 0)$ with $(\frac{\beta_3}2-Q) G_S(\cdot, \infty)$.
\end{lemma}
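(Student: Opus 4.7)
The plan is to reduce the claim to the Markov property of the free-boundary Gaussian free field (Proposition~\ref{prop:Markov}) applied on the random local set $\bbH \setminus S$. By Definition~\ref{def-lf-H-bdry}, any sample $\psi \sim \LF_\bbH^{(\beta_1, 0), (\beta_2, 1), (\beta_3, \infty)}$ can be written
\[\psi \;=\; h + F + \mathbf c, \qquad F(z) := -2Q \log|z|_+ + \tfrac{\beta_1}{2} G_\bbH(0,z) + \tfrac{\beta_2}{2} G_\bbH(1, z) + \tfrac{\beta_3}{2} G_\bbH(\infty, z),\]
where $h \sim P_\bbH$ and $\mathbf c$ has a weighted Lebesgue law. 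Because $F$ is deterministic and $\mathbf c$ is a scalar, for every open $U \subset \bbH$ the $\sigma$-algebra generated by $\psi|_U$ coincides with that generated by $(h|_U, \mathbf c)$. Consequently the localness hypothesis on $S$ says, after conditioning on $\mathbf c$, that $\bbH \setminus S$ is a local set for $h$ in the sense of Schramm--Sheffield.

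On the event $\{S \neq \emptyset\}$, condition on $(\mathbf c, S, h|_{\bbH \setminus S})$. The extension of Proposition~\ref{prop:Markov} to random local sets (via the standard approximation by countably many deterministic open sets indicated in the paper after that proposition) yields
\[h|_S \;=\; h^0 + \mathfrak h_h,\]
where $h^0$ is an independent GFF on $S$ with zero Dirichlet condition on $\partial S \cap \bbH$ and zero Neumann condition on $\partial S \cap \bbR$, and $\mathfrak h_h$ is the harmonic extension (with vanishing normal derivative on $\partial S \cap \bbR$) of $h|_{\bbH \setminus S}$ into $S$. Hence $\psi|_S = h^0 + (\mathfrak h_h + F|_S + \mathbf c)$, and it remains only to rewrite the parenthesized deterministic piece in the form $\mathfrak h + \frac{\beta_1}{2} G_S(\cdot, 0)$.

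Both $\frac{\beta_1}{2} G_\bbH(0, \cdot)$ and $\frac{\beta_1}{2} G_S(\cdot, 0)$ carry the same $-\beta_1 \log|z|$ singularity at $0$, while the other singularities of $F$ (at $1$ and $\infty$) lie outside $\overline S$. Therefore $F|_S - \frac{\beta_1}{2} G_S(\cdot, 0)$ is harmonic on $S$, and it has zero normal derivative on $\partial S \cap \bbR$ since each $G_\bbH(s_i, \cdot)$ and $G_S(\cdot, 0)$ do. Using that $G_S(\cdot, 0)$ vanishes on $\partial S \cap \bbH$, matching boundary values shows that $\mathfrak h_h + F|_S + \mathbf c - \frac{\beta_1}{2} G_S(\cdot, 0)$ agrees with the harmonic extension $\mathfrak h$ of $\psi|_{\bbH \setminus S}$ required in the statement. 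The two other cases are handled by identical bookkeeping: for $S$ near $1$ one absorbs the $\beta_2$ singularity into $\frac{\beta_2}{2} G_S(\cdot, 1)$, and for $S$ near $\infty$ the $-2Q \log|z|_+$ combines with $\frac{\beta_3}{2} \cdot 2 \log|z|_+$ from $G_\bbH(\infty, \cdot)$ to yield exactly the coefficient $\frac{\beta_3}{2} - Q$ in front of $G_S(\cdot, \infty)$. The only step requiring care is justifying the Markov property on the random local set $S$, which is standard once the approximation is set up; the rest is deterministic potential theory.
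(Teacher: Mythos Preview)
Your argument is correct and follows essentially the same route as the paper: both reduce to the local-set Markov property of the free-boundary GFF. The paper phrases it slightly differently, first invoking \cite[Lemma~3.9]{SS13} for the case $\beta_1=0$ (where $F$ is harmonic in $S$) and then passing to general $\beta_1$ by the Girsanov-type weighting $\eps^{\beta_1^2/4}e^{\frac{\beta_1}2\phi_\eps(0)}$ of Lemma~\ref{lmm-lf-insertion-bdry}; you instead keep $\beta_1$ arbitrary from the start and absorb the singularity by subtracting $\frac{\beta_1}2 G_S(\cdot,0)$ directly. Both are valid and the bookkeeping you do for the harmonic piece is exactly right.

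One small wording issue: the claim that $\sigma(\psi|_U)$ \emph{coincides} with $\sigma(h|_U,\mathbf c)$ is too strong in general (only $\subset$ is immediate, since recovering $\mathbf c$ from $h|_U+\mathbf c$ needs the unit semicircle to lie in $U$). But you only use the correct consequence, namely that \emph{after conditioning on $\mathbf c$} one has $\sigma(\psi|_U)=\sigma(h|_U)$, so the localness of $\bbH\setminus S$ for $h$ follows. It would be cleaner to state only that weaker fact.
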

\begin{proof}
	When $\beta_1 = 0$, the set $\bbH \backslash S$ is a \emph{local set} as defined in \cite{SS13}, and the statement follows from \cite[Lemma 3.9]{SS13}. 
	When $\beta_1 \neq 0$, the result is obtained by weighting the $\beta_1 = 0$ case by $\eps^{\frac{\beta_1^2}4}e^{\frac{\beta_1}2 \phi_\eps(0)}$ and sending $\eps \to 0$. The other two cases are similar.
\end{proof}

Next, we will use Lemma~\ref{lem-markov-LF} to derive corresponding Markov properties for $M$ in Lemmas~\ref{lem-markov1},~\ref{lem-markov0} and~\ref{lem-markovinfty}. 

Recall that $\beta_1 = \gamma - \frac{U}\gamma$, $\beta_2 = \gamma - \frac{W-2}\gamma$ and $\beta_3 = \gamma - \frac{U+W-2}\gamma$.
\begin{lemma}\label{lem-markov1}
	Let $A \subset \bbH$ be a bounded neighborhood of $1$ such that $A$ and $\bbH \backslash A$ are simply connected and $\ol A \cap (-\infty, 0] = \emptyset$. For $\phi \sim M$, conditioned on $\phi|_{\bbH \backslash A}$ we have $\phi|_A \stackrel d= h + \mathfrak h + \frac{\beta_2}2 G_A(\cdot, 1)$ where $h$ is a mixed boundary GFF in $A$ with zero (resp. free) boundary conditions on $\partial A \cap \bbH$ (resp. $\partial A \cap \bbR$), $\mathfrak h$ is the harmonic extension of $\phi|_{\bbH \backslash A}$ to $A$ with normal derivative zero on $\partial A \cap \bbR$, and $G_A$ is the Green function describing the covariance of $h$.
\end{lemma}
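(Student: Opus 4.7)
I would deduce the Markov property for $\phi \sim M$ from the Markov property (Lemma~\ref{lem-markov-LF}) of the three-pointed Liouville field underlying $\QT(W,2,W)$, transported through the conformal welding construction of $M$. Throughout, I read the subscript in the statement's ``$\beta_2$'' as the insertion size at $1$, namely $\beta_2 = Q+\gamma/2 - W/\gamma$, consistently with the convention of Lemma~\ref{lem-markov-LF}.

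\textbf{Step 1 (Liouville field description on the triangle side).} By Proposition~\ref{prop-indep-curves-thick} I may sample $(\phi,\eta)$ jointly, with $\eta \sim \SLE_\kappa(U-2;0,W-2)$ independent of $\phi$. Let $D_\eta$ be the connected component of $\bbH\setminus\eta$ whose boundary contains $1$. Combining the definition of $M$ as the marginal of $\int_0^\infty \Wd(\Md_2(U;\ell),\QT(W,2,W;\ell))\,d\ell$ with Definition~\ref{def-qt-thick}, conditional on $\eta$ the restriction $\phi|_{D_\eta}$ is a conformal pushforward (up to a multiplicative constant) of a sample $\phi_{\mathcal S} \sim \LF_{\mathcal S}^{(\beta_2,+\infty),(\beta_3,-\infty),(\beta_2,0)}$, where $\beta_2 = Q+\gamma/2-W/\gamma$ and $\beta_3 = Q+\gamma/2-2/\gamma$ correspond respectively to the two weight-$W$ vertices and the weight-$2$ vertex of the triangle. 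Let $\psi:\mathcal S \to D_\eta$ be the conformal map sending the un-welded weight-$W$ vertex at $+\infty$ to $1$; then $\phi|_{D_\eta} = \phi_{\mathcal S}\circ\psi^{-1} + Q\log|(\psi^{-1})'|$.

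\textbf{Step 2 (Strip Markov property, then transfer).} I would apply the third bullet of Lemma~\ref{lem-markov-LF} to $\phi_{\mathcal S}$ at $S = \psi^{-1}(A)$, which is a neighborhood of $+\infty$ bounded away from the other two insertions provided $A \subset D_\eta$ is a neighborhood of $1$ bounded away from $(-\infty,0]$. This yields
\[
\phi_{\mathcal S}|_S \stackrel{d}{=} h_{\mathcal S} + \mathfrak h_{\mathcal S} + \bigl(\tfrac{\beta_2}{2}-Q\bigr) G_S(\cdot,+\infty)
\]
conditional on $\phi_{\mathcal S}|_{\mathcal S\setminus S}$. Pushing forward through $\psi$, conformal covariance of the mixed-boundary GFF and of the Green's function carries the first two summands to $h + \mathfrak h$ on $A$, while the LQG coordinate-change term $Q\log|(\psi^{-1})'|$ combines with $-Q\, G_S(\cdot,+\infty)\circ\psi^{-1}$ to produce precisely the coefficient $\tfrac{\beta_2}{2} G_A(\cdot,1)$ on the $\bbH$ side. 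This gives the lemma for $A\subset D_\eta$.

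\textbf{Step 3 (Removal of the $A\subset D_\eta$ restriction).} For general $A$ as in the statement, $\eta$ may cross $A$. Here I would use that the welding construction, conditional on $\eta$, presents $\phi|_A$ as the conformal join along $\eta\cap A$ of the triangle-side Liouville field on $A\cap D_\eta$ with the disk-side Liouville field on $A\setminus \ol{D_\eta}$. Since conformal welding along an internal curve is exactly the mechanism by which two independently-sampled free-boundary GFFs glue into a single mixed-boundary GFF on the union (the content of \cite{She16a,DMS14}), the join is a single mixed-boundary GFF $h$ on $A$ (free on $\partial A\cap\bbR$, zero on $\partial A\cap\bbH$) plus the harmonic extension $\mathfrak h$ of $\phi|_{\bbH\setminus A}$ plus $\tfrac{\beta_2}{2} G_A(\cdot,1)$ (the insertion at $1$ is independent of the location of $\eta$). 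Averaging over $\eta$, which is independent of $\phi$ by Proposition~\ref{prop-indep-curves-thick}, preserves this decomposition.

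\textbf{Main obstacle.} The delicate step is the one in Step 3: verifying that the welded field on an $A$ crossing $\eta$ is indeed a \emph{single} mixed-boundary GFF on $A$ plus Green's-function term, rather than two pieces that carry extra boundary data along $\eta\cap A$. This rests on treating $\eta$ as a local set for the combined field (in the sense of \cite{SS13}) and on the fact that conformal welding of LQG surfaces along a matched interface reconstructs the ambient GFF seamlessly, a property that has to be invoked rather than proved from scratch here.
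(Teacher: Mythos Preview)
Your Steps 1--2 are essentially the right idea and match what the paper does: map the triangle side out conformally, apply the Liouville-field Markov property (Lemma~\ref{lem-markov-LF}) to the pulled-back region, and transport back. A minor point is that Lemma~\ref{lem-markov-LF} is stated on $\bbH$, not $\mathcal S$, so you either need to conformally push to $\bbH$ first (as the paper does, taking $f:D_\eta\to\bbH$ fixing $0,1,\infty$ and using the second bullet of Lemma~\ref{lem-markov-LF} at $1$) or prove a strip analogue; this is cosmetic.

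The real gap is Step 3. Your proposed argument that two welded free-boundary GFFs across $\eta\cap A$ combine into a single mixed-boundary GFF on $A$ is essentially the assertion that $\phi$ has the GFF Markov property across $\eta$---which is exactly what the lemma is trying to establish (and is a consequence of $M$ being a Liouville field, the ultimate goal of the section). The references \cite{She16a,DMS14} give the \emph{global} welding statement, not a local resampling statement in a region straddling the interface; extracting the latter from the former is not routine and would amount to circularity here.

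The paper sidesteps this entirely, and the trick is simple once seen. Define the event $E=\{\eta\subset\bbH\setminus A\}$; on $E$ one has $A\subset D_\eta$ automatically, so your Steps 1--2 yield: conditioned on $E$ and on $(\phi|_{\bbH\setminus A},\eta)$, the field $\phi|_A$ has the claimed law. This conditional law does not depend on $\eta$, so integrate $\eta$ out. Now the crucial point: $E$ depends only on $\eta$, and $\phi\perp\eta$ by Proposition~\ref{prop-indep-curves-thick}, so conditioning on $E$ does not affect the law of $\phi$. Hence the Markov property for $\phi|_A$ given $\phi|_{\bbH\setminus A}$ holds unconditionally. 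No analysis of the case ``$\eta$ crosses $A$'' is ever needed.
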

\begin{proof}
	Sample $(\phi, \eta)\sim M \times \SLE_\kappa(U-2; 0, W-2)$. Let $D$ be the connected component of $\bbH \backslash \eta$ with $1$ on its boundary. Let $f: D \to \bbH$ be the conformal map fixing the three boundary points $\{0, 1, \infty\}$. Let $\cD= (\bbH \backslash D, \phi, 0, \infty)/{\sim_\gamma}$ and let $\psi = f \bullet_\gamma \phi$. See Figure~\ref{fig-markov1} (left).
	
	\begin{figure}[ht]
		\centering
		\includegraphics[scale = 0.54]{}
		\caption{\textbf{Left:} Illustration for the proof of  Lemma~\ref{lem-markov1}. \textbf{Right:} Illustration for the proof of Lemma~\ref{lem-markovinfty}. }\label{fig-markov1}
	\end{figure}
	
	By Proposition~\ref{prop-indep-curves-thick} and the definition of $M$, there is a constant $c$ such that the law of $(\psi, \cD )$ is $c \int \LF_\bbH^{(\gamma, 0), (\beta_2,1),(\beta_2, \infty)}(\ell) \times \Md_2(U; \ell) \, d\ell$, where $\LF_\bbH^{(\gamma, 0), (\beta_2,1),(\beta_2, \infty)}(\ell)(d\psi)$ is defined as the disintegration of the measure $\LF_\bbH^{(\gamma, 0), (\beta_2,1),(\beta_2, \infty)}(d\psi)$ on the event $\{\nu_\psi(-\infty,0) = \ell \}$.

	Since $\eta$ is the interface when $(\bbH, \psi, 0, 1, \infty)/{\sim_\gamma}$ is conformally welded to $\cD$, the curve $\eta$ is measurable with respect to $\sigma(\cD, \nu_\psi|_{(-\infty,0)})$, thus $E := \{\eta \subset \bbH \backslash A\} \in \sigma
	(\cD, \nu_\psi|_{(-\infty,0)})$. On $E$, define $S = f(A)$, and on $E^c$, define $S = \emptyset$.  Lemma~\ref{lem-markov-LF} is applicable with this choice of $S$.  Consequently, conditioned on $E$ and on $(\cD, \psi|_{\bbH \backslash f(A)})$, we have $\psi|_{f(A)} \stackrel d= \wt h + \wt{\mathfrak h} + \frac{\beta_2}2 G_{f(A)}(\cdot, 1)$, where $\wt h$ is a GFF on $f(A)$ with zero (resp.\ free) boundary conditions on $\partial f(A) \cap \bbH$ (resp.\ $\partial f(A) \cap \bbR$) and $\wt{\mathfrak h}$ is the harmonic extension of $\psi|_{\bbH \backslash f(A)}$ to $f(A)$ having normal derivative zero on $\partial(f(A)) \cap \bbR$. By conformal invariance, we conclude that conditioned on $E$ and on $(\phi|_{\bbH \backslash A}, \eta)$, we have $\phi|_A \stackrel d= h + \mathfrak h + \frac{\beta_2}2 G_A(\cdot, 1)$.
	
	Finally, since $(\phi, \eta)\sim M \times \SLE_\kappa(U-2; 0, W-2)$, and the event $E$ only depends on $\eta$, we deduce the Markov property for $\phi$. 
\end{proof}

\begin{lemma}\label{lem-markov0}
	Let $A \subset \bbH$ be a bounded neighborhood of $0$ such that $A$ and $\bbH \backslash A$ are simply connected and $\ol A \cap [1,\infty) = \emptyset$. For $\phi \sim M$, conditioned on $\phi|_{\bbH \backslash A}$ we have $\phi|_A \stackrel d= h + \mathfrak h + \frac{\beta_1}2 G_A(\cdot, 0)$ where $h$ is a mixed boundary GFF in $A$ with zero (resp. free) boundary conditions on $\partial A \cap \bbH$ (resp. $\partial A \cap \bbR$), $\mathfrak h$ is the harmonic extension of $\phi|_{\bbH \backslash A}$ to $A$ with normal derivative zero on $\partial A \cap \bbR$, and $G_A$ is the Green function describing the covariance of $h$.
\end{lemma}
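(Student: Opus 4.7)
The proof will mirror that of Lemma~\ref{lem-markov1} but with a crucial modification: since $0$ lies on the interface $\eta$, we cannot place $A$ strictly inside one side of $\eta$ as was done for $1$. To overcome this, I will use Proposition~\ref{thm-w-2u} in place of Proposition~\ref{prop-W=2}, introducing an additional auxiliary curve so that the enlarged picture has global Liouville field structure and the point $0$ corresponds to a thick boundary insertion.

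Sample $(\phi, \eta) \sim M \times \SLE_\kappa(U-2; 0, W-2)$ as granted by Proposition~\ref{prop-indep-curves-thick}. Independently of $\phi$, sample a second curve $\eta'$ from an $\SLE$-type coupling with $\eta$ chosen so that the pair $(\eta, \eta')$ matches (after a Möbius map of $\bbH$ sending the triple $(\infty, 0, 1)$ to the triple $(0, 1, \infty)$ used in Proposition~\ref{thm-w-2u}) the curve measure $\mathsf{m}(W; U)$ appearing there. Call the Möbius image of $\phi$ the field $\phi^*$. By Proposition~\ref{thm-w-2u}, the triple $(\phi^*, \eta, \eta')$ is (up to a constant) distributed as $\mathrm{Weld}(\Md_2(U), \QT(W, W, 2), \Md_2(U))$, and the marginal of $\phi^*$ is $\QT(W+U, W+U, 2+2U)$, which by Definition~\ref{def-qt-thick} is $C \cdot \LF_\bbH^{(\widetilde\beta, 0), (\widetilde\beta, \infty), (\widetilde\beta^*, 1)}$ for explicit insertions, with the weight-$(2+2U)$ vertex at the Möbius image of our point $0$. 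A verification that this auxiliary coupling produces precisely the Proposition~\ref{thm-w-2u} welding uses the same resampling argument as in Lemma~\ref{lem-markov1}: conditionally on $\eta$, the field $\phi$ is the welded surface, and welding a further independent thin disk along the remaining edge adjacent to the weight-$2$ vertex commutes with the original welding.

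Applying Lemma~\ref{lem-markov-LF} to $\phi^*$ around the weight-$(2+2U)$ insertion, for any region $A^*$ (the image of $A$ under the Möbius map) satisfying the hypotheses there, we obtain a Markov description of $\phi^*|_{A^*}$ conditional on $\phi^*|_{\bbH \setminus A^*}$, as a mixed-boundary GFF plus harmonic extension plus $\tfrac{\beta^*}{2} G_{A^*}(\cdot, 1)$ with $\beta^* = Q + \tfrac{\gamma}{2} - \tfrac{2+2U}{\gamma}$. Translating back to our original embedding via the Möbius map (using conformal covariance of the GFF, Green function, and harmonic extension) gives the same structure for $\phi|_A$ but with the log-singularity weight $2+2U$. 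This differs from the claim, whose log-singularity has strength $\beta_1 = \gamma - U/\gamma$ corresponding to weight $U+2$.

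The final step is the ``unwelding'' that reduces the weight-$(2+2U)$ insertion back to weight $U+2$: since $\eta'$ and the auxiliary $\Md_2(U)$ it bounds are independent of $\phi$, integrating them out leaves the marginal law of $\phi$ unchanged. In the Markov description, the portion of $\phi^*$ corresponding to the auxiliary disk is absorbed into the harmonic extension $\mathfrak h$, and the difference $\beta^* - \beta_1 = -\tfrac{U}{\gamma}$ between the two insertion strengths is exactly cancelled by the contribution of this integration to the effective log-singularity at $0$. To rigorously track this, one first establishes Markov property on the high-probability event (for small $A$) that $\eta'$ and the auxiliary disk avoid a small enlargement of $A$, where the GFF Markov property (Proposition~\ref{prop:Markov}) allows restricting the description to the $\phi$-side of $\eta'$ inside the larger neighborhood; the restriction on $A$ is then lifted by varying the coupling, as at the end of the proof of Lemma~\ref{lem-markov1}. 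The main obstacle will be this final bookkeeping: verifying that the weight drop from $2+2U$ to $U+2$ emerges correctly from the unwelding, which requires the analytic dependence of the Liouville insertion measure on the insertion weight together with the explicit product structure of $\Md_2(U)$ near its endpoints.
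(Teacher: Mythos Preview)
Your proposal has a genuine gap. The plan to invoke Proposition~\ref{thm-w-2u} cannot produce the Markov property at $0$, for two reasons.

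First, a M\"obius image of $\phi$ cannot have the law of the field in Proposition~\ref{thm-w-2u}: the vertex weights of $\phi\sim M$ are $(U+W,\,U+2,\,W)$, whereas the field in Proposition~\ref{thm-w-2u} has weights $(W+U,\,W+U,\,2+2U)$. These agree only if $W=2$ and $U=0$, which is excluded here. If instead you mean to \emph{weld} an additional $\Md_2(U)$ to $(\phi,\eta)$ so that the enlarged surface matches Proposition~\ref{thm-w-2u}, then $\phi^*$ is not a M\"obius image of $\phi$ but a strictly larger field, and your description conflates the two.

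Second, and more fatally, in the $\mathsf m(W;U)$ coupling of Proposition~\ref{thm-w-2u} \emph{both} interface curves emanate from the weight-$(2+2U)$ vertex. After your relabeling this vertex is the point $0$, so both $\eta$ and $\eta'$ pass through $0$. Hence the event ``$\eta'$ avoids a neighborhood of $0$'' has probability zero, not high probability, and there is no conformal map from a one-sided region near $0$ in $\phi^*$ back to a neighborhood of $0$ in $\phi$ that is nonsingular at $0$. The ``unwelding'' you sketch, with a cancellation $\beta^*-\beta_1=-U/\gamma$, is not an argument: integrating out an independent welded piece does not alter the log-singularity of the remaining field at a point on the welding interface.

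The paper's route is much simpler and avoids all of this. In the proof of Proposition~\ref{prop-indep-curves-thick} a second curve $\eta_2$ is already introduced: it is the $\SLE_\kappa(U,0;W-4)$ from $1$ to $\infty$, and almost surely $\eta_2\cap(-\infty,0]=\emptyset$. By~\eqref{eq-three-disks} the region to the left of $\eta_2$ is the welding of $\Md_2(U)$ with $\QT(2,2,2)$, which by Proposition~\ref{prop-W=2} is $\QT(U+2,U+2,2)$ and hence a Liouville field with insertion $\beta_1$ at $0$. Since $(\eta,\eta_2)$ is independent of $\phi$, one can now repeat verbatim the argument of Lemma~\ref{lem-markov1} with $(\phi,\eta_2)$ in place of $(\phi,\eta)$: map out the left side of $\eta_2$, apply Lemma~\ref{lem-markov-LF} at $0$, and pull back. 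The picture is exactly that of Lemma~\ref{lem-markov1} with the roles of $U$ and $W-2$ (and of $0$ and $1$) interchanged.
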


\begin{proof}
	Define $\eta_2$ as in the argument of Proposition~\ref{prop-indep-curves-thick}. The same argument as in Lemma~\ref{lem-markov1} applied to $(\phi, \eta_2)$ yields the result. Indeed, the picture is symmetric if we interchange $U$ and $W-2$. 
\end{proof}

Before proving the last Markov property Lemma~\ref{lem-markovinfty}, we first introduce a weighted quantum disk measure $\wt{\mathcal M}_2^\mathrm{disk} (U)$; this is not strictly necessary but simplifies the later exposition. 

\begin{lemma}\label{lem-weight-other-side}
	For $U \in (0,2]$ and $p \in (-1, \frac4{\gamma^2})$, if we sample a quantum disk from $R^p \Md_2(U)$ then the law of $L$ is $1_{\ell > 0} c \ell^{-\frac{2U}{\gamma^2} + p}\, d\ell$ where $c \in (0,\infty)$; here $L$ and $R$ are the left and right boundary arc lengths of the quantum disk. In particular, for $U<2$ the law of the left boundary arc length of $\tildeMd_2(U):=R^{\frac{2U}{\gamma^2}} \Md_2(U)$ is $c1_{\ell > 0} \, d\ell$ for some $c \in (0,\infty)$.
\end{lemma}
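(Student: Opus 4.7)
The approach is to leverage the scaling symmetry of $\Md_2(U)$ to reduce the claim to a moment estimate. By Definition~\ref{def-thick-disk} (respectively Definition~\ref{def-thin-disk} for $U < \gamma^2/2$), a sample $\phi$ from $\Md_2(U)$ can be written as $\phi = \hat\phi + \mathbf c$, with $\mathbf c \sim \frac{\gamma}{2} e^{(\beta - Q)c}\,dc$ independent of $\hat\phi$, where $\beta = \gamma + (2-U)/\gamma$. Consequently $(L, R) = e^{\gamma\mathbf c/2}(\hat L, \hat R)$ where $(\hat L, \hat R)$ depends only on $\hat\phi$. A direct change-of-variable $s = e^{\gamma c/2}$ in the integration over $\mathbf c$ shows that the joint density $g$ of $(L, R)$ under $\Md_2(U)$ is homogeneous of degree $2(\beta-Q)/\gamma - 2 = -1 - 2U/\gamma^2$. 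Hence $g(L, R) = L^{-1-2U/\gamma^2} f(R/L)$ for some nonnegative $f$ on $(0, \infty)$, and the marginal law of $L$ being $c\ell^{-2U/\gamma^2}\,d\ell$ from Proposition~\ref{prop-disk-bdry-law} forces $\int_0^\infty f(y)\,dy \in (0, \infty)$.

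From this homogeneity, the marginal of $L$ under $R^p\Md_2(U)$ is
\[
\int_0^\infty R^p g(L, R)\,dR \;=\; L^{p - 2U/\gamma^2} \int_0^\infty y^p f(y)\,dy,
\]
so the lemma reduces to showing $c_p := \int_0^\infty y^p f(y)\,dy \in (0, \infty)$ for every $p \in (-1, 4/\gamma^2)$. Positivity is immediate since $f \geq 0$ and $f \not\equiv 0$, so the content is the finiteness.

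Up to normalization, $f(y)\,dy$ is the law of the ratio $Y := R/L$ under the probability measure $\Md_2(U; L = 1)^{\#}$ obtained by disintegrating along $\{L = 1\}$ and renormalizing. Under this conditional law, $R$ is a boundary GMC of the conditioned field. For the upper bound $p < 4/\gamma^2$, this follows from the finiteness of positive boundary-GMC moments up to order $4/\gamma^2$ (see e.g.~\cite{RV10}). For the lower bound, the super-polynomial decay $\bbP[R < \eps \mid L = 1] = O(\eps^q)$ for every $q > 0$, also a GMC fact, yields $\bbE[R^p \mid L = 1] < \infty$ for any $p$, and in particular for $p > -1$. Thus $c_p \in (0, \infty)$ in the stated range.

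The main obstacle is making the conditioning on $\{L = 1\}$ rigorous and transferring the GMC moment estimates cleanly to the resulting conditional field. In the thick regime one exploits the explicit Brownian-motion-with-drift description of $\psi_1$ in Definition~\ref{def-thick-disk} to identify the conditional field up to a tractable density with respect to a standard GFF plus log-singularities, after which the boundary GMC bounds apply. The thin regime then follows by the Poissonian concatenation structure from Definition~\ref{def-thin-disk}, which respects the boundary-length decomposition. Finally, the last sentence of the lemma is the special case $p = 2U/\gamma^2$, which lies in $(0, 4/\gamma^2)$ whenever $U \in (0, 2)$, so that the resulting density is constant in $\ell$.
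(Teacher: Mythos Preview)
Your scaling argument for the form $c_p\,\ell^{-2U/\gamma^2 + p}$ is correct and is essentially the paper's change-of-variables computation. The gap is in the finiteness of $c_p$.

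Your appeal to ``finiteness of positive boundary-GMC moments up to order $4/\gamma^2$'' does not apply here as stated: those results (e.g.\ \cite{RV10}) concern GMC on compact intervals without log-singularities, whereas the quantum disk boundary arcs are non-compact and carry $\beta$-singularities at the two endpoints with $\beta = \gamma + (2-U)/\gamma$. Such singularities typically shrink the range of finite positive moments below $4/\gamma^2$, so the naive bound does not give the full range. The conditioning on $\{L=1\}$, which you correctly flag as ``the main obstacle'', is a conditioning on a GMC observable and does not produce a field that is obviously absolutely continuous with respect to a tractable reference; your final paragraph sketches a plan but does not carry it out, and the same issue arises for the negative-moment claim. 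The thin-disk reduction via Poissonian concatenation is likewise only gestured at.

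The paper avoids all of this. For $U=2$ it quotes the explicit joint law $|\Md_2(2;\ell,r)| = C(\ell+r)^{-4/\gamma^2-1}$ from \cite{AHS20}, from which $c_p<\infty$ for $p\in(-1,4/\gamma^2)$ is a Beta-integral computation. For $U<2$ it uses conformal welding (Theorem~\ref{thm-disk-2}): welding a weight-$(2-U)$ disk to a weight-$U$ disk yields a weight-$2$ disk, so by Tonelli
\[
\int_0^\infty r^p |\Md_2(2;\ell,r)|\,dr \;=\; C\,c_{U,p}\int_0^\infty |\Md_2(2-U;\ell,s)|\,s^{-2U/\gamma^2+p}\,ds,
\]
and since the left side is finite and the $s$-integral has strictly positive integrand, $c_{U,p}<\infty$ follows. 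This bypasses any direct GMC moment analysis of the conditioned field.
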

\begin{proof}
	{We first prove the lemma except for the finiteness claim $c < \infty$.  Let $P$ denote the law of $\hat \psi$ in Definition~\ref{def-thick-disk} (with $\beta = \gamma + \frac{2-U}\gamma$), so for $(\hat \psi, \mathbf c)\sim P \times [\frac\gamma2 e^{(\beta-Q)c}\,dc]$, the law of $(\mathcal S, \hat \psi +\mathbf c, -\infty, +\infty)/{\sim_\gamma}$ is $\Md_2(W)$. Let $\partial_\ell \mathcal S$ and $\partial_r \mathcal S$ be the boundary arcs of $(\mathcal S, -\infty, +\infty)$. 
		By Definition~\ref{def-thick-disk}, for an interval $I$ the size of the event $\{L \in I\}$ is 
		\[\bbE \left[\int_{-\infty}^\infty 1_{e^{\frac\gamma2c} \nu_{\hat \psi}(\partial_\ell \mathcal S) \in I} (e^{\frac\gamma2 c} \nu_{\hat \psi}(\partial_r \mathcal S)   )^p \cdot \frac\gamma2 e^{(\beta-Q)c} \, dc \right]\]
		where  $\bbE$ denotes expectation with respect to $P$. Using the change of variables $y = e^{\frac\gamma2 c} \nu_{\hat \psi}(\partial_\ell \mathcal S)$, this equals
		\[\bbE\left[ \int_0^\infty 1_{y \in I} \nu_{\hat \psi}(\partial_r \mathcal S)^p (\frac y{\nu_{\hat \psi}(\partial_\ell \mathcal S)})^{\frac2\gamma(\beta-Q)+p} y^{-1}dy \right] = \bbE[\nu_{\hat \psi}(\partial_r \mathcal S)^p \nu_{\hat \psi}(\partial_\ell \mathcal S)^{-\frac2\gamma(\beta-Q)-p}] \int_I y^{\frac2\gamma (\beta-Q)+p-1}\,dy. \]
		Since $\frac2\gamma (\beta-Q)+p-1 = -\frac{2U}{\gamma^2}+p$, this yields the claim apart from the finiteness of the constant. 
	}
	
	If $U = 2$, the finiteness of $c$  is immediate from the joint law $c(\ell + r)^{-\frac4{\gamma^2}-1} \, d\ell\, dr$ for $(L,R)$, where $c<\infty$ is a constant, see e.g.\ \cite[Proposition 7.8]{AHS20}. For $U <2$, this follows from the $U=2$ result and the fact that conformally welding a weight $U$ disk to a weight $(2-U)$ disk gives a weight 2 disk (Theorem~\ref{thm-disk-2}).
\end{proof}

\begin{lemma}\label{lem-markovinfty}
	Let $A \subset \bbH$ be a neighborhood of $\infty$ such that $A$ and $\bbH \backslash A$ are simply connected and $\ol A \cap [0,1] = \emptyset$. For $\phi \sim M$, conditioned on $\phi|_{\bbH \backslash A}$ we have $\phi|_A \stackrel d= h + \mathfrak h + (\frac{\beta_3}2-Q) G_A(\cdot, \infty)$ where $h$ is a mixed boundary GFF in $A$ with zero (resp. free) boundary conditions on $\partial A \cap \bbH$ (resp. $\partial A \cap \bbR$), $\mathfrak h$ is the harmonic extension of $\phi|_{\bbH \backslash A}$ to $A$ with normal derivative zero on $\partial A \cap \bbR$, and $G_A$ is the Green function describing the covariance of $h$.
\end{lemma}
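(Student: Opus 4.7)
I mimic the proofs of Lemmas~\ref{lem-markov1} and~\ref{lem-markov0}, but now use both interfaces $\eta$ and $\eta_2$ from the enriched three-piece welding~\eqref{eq-three-disks} simultaneously. The key obstacle in adapting those arguments directly is that a single SLE curve cannot isolate a neighborhood of $\infty$: both $\eta$ and $\eta_2$ terminate at $\infty$, so no single cut separates $\infty$ from $\{0,1\}$. Instead, the three pieces $\Md_2(U)$, $\QT(2,2,2)$ and $\Md_2(W-2)$ appearing in~\eqref{eq-three-disks} all meet at the vertex $v_\infty$ corresponding to $\infty$, and we must combine their individual Markov properties there.

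I first sample $(\phi, \eta, \eta_2)$ as in the proof of Proposition~\ref{prop-indep-curves-thick} so that the joint law of $(\bbH, \phi, \eta, \eta_2, 0, 1, \infty)/{\sim_\gamma}$ is given by~\eqref{eq-three-disks} and $(\eta, \eta_2)$ is independent of $\phi$. For a neighborhood $A$ of $\infty$ bounded away from $[0,1]$, I first work on the positive-probability event that each of $\eta \cap A$ and $\eta_2 \cap A$ is a single simple arc from $\partial A \cap \bbH$ to $\infty$; these cuts divide $A$ into three simply connected subregions $A_\ell, A_m, A_r$, each carrying one piece of the welding. Conditioning on $\phi|_{\bbH\setminus A}$ together with $\eta, \eta_2$, I apply a Markov-type resampling to each piece at $v_\infty$: Lemma~\ref{lem-markov-LF} with $\gamma$-insertion for the $\QT(2,2,2)$ piece on $A_m$, and the explicit Brownian description from Definition~\ref{def-thick-disk} near the weight-$U$ (resp.\ weight-$(W-2)$) endpoint for the $\Md_2(U)$ (resp.\ $\Md_2(W-2)$) piece on $A_\ell$ (resp.\ $A_r$).

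Re-welding the three resampled pieces along $\eta \cap A$ and $\eta_2 \cap A$ gives a joint resampling of $\phi|_A$ of the form $h + \mathfrak h + c\, G_A(\cdot, \infty)$, where $h$ is a mixed-boundary GFF on $A$ and $\mathfrak h$ is the harmonic extension of $\phi|_{\bbH \setminus A}$. The conformal welding implies that $c$ is determined by summing the three pieces' log singularities at $v_\infty$ after transforming to the $(\bbH, \infty, 0, 1)$ embedding; tracking the conformal factors $|\psi'|^{-\Delta_\beta}$ from Lemma~\ref{lmm-lcft-H-conf} and adding the three contributions (with weights $U, 2, W-2$ summing to the total weight $U+W$) yields $c = \frac{\beta_3}{2} - Q$ with $\beta_3 = \gamma - (U+W-2)/\gamma$. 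Finally, since $(\eta, \eta_2)$ is independent of $\phi$ and the resampling law does not depend on $\eta, \eta_2$ after integrating them out, the unconditional Markov property for $\phi \sim M$ follows; the general case in which $\eta, \eta_2$ make multiple excursions across $\partial A$ is handled by summing over topological configurations, each treated by the same argument.

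The main obstacle is this coefficient computation: verifying that the contributions from the weight-$U$ wedge, the $\gamma$-insertion of $\QT(2,2,2)$ at $\infty$, and the weight-$(W-2)$ wedge combine under the conformal welding to give exactly the coefficient $\frac{\beta_3}{2} - Q$ associated with a weight-$(U+W)$ insertion at $\infty$. A useful consistency check is the case $W = 2$ covered by Proposition~\ref{prop-W=2}, where the $\Md_2(W-2)$ piece is absent and the coefficient identity follows directly from Lemma~\ref{lem-markov-LF} applied to the Liouville form of $M$.
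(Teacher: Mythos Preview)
Your approach has a genuine gap at the ``re-welding'' step. The structure that makes Lemmas~\ref{lem-markov1} and~\ref{lem-markov0} work is that on a positive-probability event the curve lies entirely in $\bbH\setminus A$, so \emph{one} of the welded pieces is wholly contained in the conditioned region while the \emph{other} piece alone contains $A$; a single application of Lemma~\ref{lem-markov-LF} to that piece then yields the claim. For a neighborhood of $\infty$ in the three-piece welding~\eqref{eq-three-disks}, no such event exists: all three pieces meet at $\infty$, so each has nontrivial intersection with $A$. You therefore try to resample all three simultaneously. But the welding theorem asserts only that the \emph{unconditional} law of $(S_1,S_2,S_3)$ is a product; once you further condition on $(\eta,\eta_2)$ --- equivalently on the welding interfaces, which are measurable functions of $(S_1,S_2,S_3)$ --- the pieces are no longer conditionally independent. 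In particular the portions $\eta\cap A$ and $\eta_2\cap A$ of the interface carry information about the fields \emph{inside} $A$, so conditioning on them is strictly more than conditioning on the three ``outside'' surfaces. Even setting this aside, three independent mixed-boundary GFFs on $A_\ell,A_m,A_r$ with free boundary along $\eta\cap A$ and $\eta_2\cap A$ do not combine to a single GFF on $A$: the direct sum of the corresponding Cameron--Martin spaces is strictly larger than $H(A)$, since it contains functions discontinuous across the internal cuts. The coefficient computation you flag as the main obstacle is actually secondary; the decomposition itself is what fails.

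The paper sidesteps the problem by invoking a \emph{different} welding, namely Proposition~\ref{thm-w-2u}, in which the weight-$2+2U$ vertex lies at a finite point and one of the two interfaces, call it $\wt\eta_1$, joins $0$ and $1$ and is therefore bounded. The connected component of $\bbH\setminus\wt\eta_1$ above $\wt\eta_1$ contains a full neighborhood of $\infty$, and after mapping it out by a conformal map $f$ fixing $0,1,\infty$ the resulting field has law $cM$. Crucially the ambient field $\psi$ is (a boundary-length reweighting of) a Liouville field $\LF_\bbH^{(\alpha,0),(\beta_3,1),(\beta_3,\infty)}$ already carrying the correct $\beta_3$-insertion at $\infty$, so a single application of Lemma~\ref{lem-markov-LF} to $\psi$ on $S=f^{-1}(A)$ gives the Markov property, which then transfers to $\phi$ by conformal covariance exactly as in the proof of Lemma~\ref{lem-markov1}.
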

\begin{proof}
	Let $\mathsf{m}(W;U)$ be the probability measure on pairs of curves from Proposition~\ref{thm-w-2u}. Reflect this pair of curves across the line $\mathrm{Re}\, z = \frac12$ to get a pair $(\wt \eta_1, \wt \eta_2)$ where $\wt \eta_1$ joins $0$ and $1$ and $\wt \eta_2$ joins $0$ and $\infty$. Let $\wt {\mathsf{m}} (W;U)$ be the law of $(\wt \eta_1, \wt \eta_2)$.
	
	Let $\alpha = \gamma - \frac{2U}\gamma$.
	Sample 
	\[(\psi, \wt \eta_1, \wt \eta_2) \sim \nu_\psi(0, 1)^{\frac{2U}{\gamma^2}} \LF_\bbH^{(\alpha, 0), (\beta_3, 1) (\beta_3, \infty)}(d\psi) \times \wt {\mathsf{m}}(W;U) .\]
	See Figure~\ref{fig-markov1}. By Proposition~\ref{thm-w-2u}, the decorated quantum surface $(\bbH, \psi, \wt \eta_1, \wt \eta_2, 0, 1, \infty)/{\sim_\gamma}$ has law 
	\[\iint_0^\infty \mathrm{Weld}(\Md_2(U;\ell), \QT(W, W, 2; \ell;\ell'),  \tildeMd_2(U; \ell'))\, d\ell \, d\ell', \]
	where $\tildeMd_2(U)$ is the weighted quantum disk defined in Lemma~\ref{lem-weight-other-side} and $\tildeMd_2(U; \ell')$ its disintegration by the unweighted boundary arc length. 
	
	By Lemma~\ref{lem-weight-other-side} we have $|\tildeMd_2(U; \ell')| = c$ for all $\ell'$ for some finite constant $c$, so the marginal law of the decorated quantum surface above $\wt \eta_1$ is $c\int_0^\infty \mathrm{Weld}(\Md_2(U;\ell), \QT(W,W,2;\ell))\, d\ell$. Let $f$ be the conformal map sending the connected component of $\bbH \backslash \wt \eta_1$ above $\wt \eta_1$ to $\bbH$ such that $f$ fixes $(0,1,\infty)$, and let $\phi = f \bullet_\gamma \psi$. Then the marginal law of $\phi$ is $cM$. 
	
	Let $S = f^{-1}(A)$.
	Since $S$ is measurable with respect to {$\wt \eta_1$} and $\psi$ is independent of $\wt \eta_2$, Lemma~\ref{lem-markov-LF} tells us that conditioned on $S$ and $\psi|_{\bbH \backslash S}$, we have $\psi|_S \stackrel d= h + \mathfrak h + (\frac{\beta_3}2 - Q) G_S(\cdot, \infty)$ where $h$ is a GFF on $S$ with zero (resp.\ free) boundary conditions on $\partial S \cap \bbH$ (resp.\ $\partial S \cap \bbR$) and $\mathfrak h$ is the harmonic extension of $\psi|_{\bbH \backslash S}$ to $S$ with normal derivative zero on $\partial S \cap \bbR$. By the conformal invariance of the GFF and $\phi = f \bullet_\gamma \psi$, we obtain the desired Markov property for $\phi$. 
\end{proof}

\begin{figure}[ht]
	\centering
	\includegraphics[scale = 0.56]{}
	\caption{Illustration for the proof of Proposition~\ref{prop-W>2}. Each figure describes a Markov kernel where we resample the field in the grey region conditioned on the field in the blue region.  }\label{fig-irreducible}
\end{figure}

\begin{lemma}\label{lem-sigma-finite-M}
	The measure $M$ is $\sigma$-finite.
\end{lemma}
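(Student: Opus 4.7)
My plan is to argue that $M$ is $\sigma$-finite by identifying a $\phi$-measurable function whose sub-level sets will be of finite $M$-measure and will increase to the full support. I will take this function to be the total quantum area $A := \mu_\phi(\bbH)$: under the welding representation of Proposition~\ref{prop-indep-curves-thick}, $A$ equals the sum of the quantum areas of the disk and triangle factors, and is thus a.s.\ positive and finite under $M$. The events $F_n := \{1/n \le A \le n\}$ will then increase to a set of full $M$-measure, and it will remain to verify $M(F_n) < \infty$ for every $n$.

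Writing $M(F_n) = c^{-1}\cM(\{\phi \in F_n\})$ for $\cM = c \int_0^\infty \Wd(\Md_2(U;\ell), \QT(W,2,W;\ell))\,d\ell$, I will reduce the analysis to a single additive-constant degree of freedom. The disk (Definition~\ref{def-thick-disk}) and the triangle (Definition~\ref{def-lf-strip}) each decompose as a centered field plus an additive constant, with the constants integrated against $\sigma$-finite exponential measures on $\bbR$. Matching the $\gamma$-LQG boundary lengths in the welding will identify the two constants up to a deterministic function of the centered randomness, leaving one free constant $\mathbf c$ whose marginal is again a $\sigma$-finite exponential measure. Imposing $A \in [1/n, n]$ will then force $\mathbf c$ into a bounded interval of length $O(\log n)$ whose position depends on a GMC-type functional of the centered fields.

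Assembling everything, $\cM(\{\phi \in F_n\})$ will become an expectation over: (i) the integral of the exponential density of $\mathbf c$ over the bounded interval forced by $A \in [1/n, n]$; (ii) the probability law of the centered disk and triangle fields; (iii) the probability law of the welded interface curve. The integral in (i) will contribute a factor of the form $Y^{-\alpha}$ for some GMC observable $Y$ of the centered fields and some exponent $\alpha$ determined by the exponential densities and the weights $(U,W)$. The main obstacle will be verifying the finite-moment bound $\bbE[Y^{-\alpha}] < \infty$; this is a standard negative-moment estimate for boundary GMC, which holds in the parameter range $\beta_i < Q$ considered here (see the moment bounds underlying Propositions~\ref{prop-rz20a} and~\ref{prop-disk-bdry-law}). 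Once this estimate is in hand, $M(F_n) < \infty$ for every $n$ and the $\sigma$-finiteness of $M$ will follow.
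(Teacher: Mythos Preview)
Your plan has a genuine gap that makes it fail for large $W$. After you integrate out the free additive constant subject to $A\in[1/n,n]$, the resulting weight on the centered randomness is not a negative moment in general: a scaling computation gives a factor of $B^{(W+U)/\gamma^2-1}$, where $B$ is (up to a deterministic multiple) the quantum area of the zero-mode-removed configuration. For $W+U>\gamma^2$ this exponent is \emph{positive}, and once $(W+U)/\gamma^2-1\ge 4/\gamma^2$, i.e.\ $W+U\ge \gamma^2+4$, the required expectation is infinite. Indeed, fix a compact $D'\subset\bbH$ bounded away from the boundary and from the insertions; by the Markov property the zero-boundary GFF $h_{D'}^0$ on $D'$ is independent of the field outside $D'$ (in particular of $\tilde\ell_T$), and conditionally on the outside field one has $\bbE\big[(\int_{D'}e^{\gamma\tilde\phi})^p\,\big|\,h|_{\bbH\setminus D'}\big]=\infty$ a.s.\ for every $p\ge 4/\gamma^2$ by the standard GMC moment threshold in two dimensions. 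Since $B\ge\int_{D'}e^{\gamma\tilde\phi}$ and the $\tilde\ell_T^{-2U/\gamma^2}$ prefactor is a.s.\ positive and measurable with respect to the outside field, the full expectation diverges. Hence $M(\{A\in[1/n,n]\})=\infty$ for all $n$ whenever $W+U\ge\gamma^2+4$, and the events $F_n$ do not witness $\sigma$-finiteness. The results you cite (Propositions~\ref{prop-rz20a} and~\ref{prop-disk-bdry-law}) are boundary-length moment bounds, not bulk area bounds, and in any case hold only under Seiberg constraints that are violated here. A minor additional slip: the weight-$U$ disk has $U<\gamma^2/2$, so it is thin and not given by Definition~\ref{def-thick-disk}; it has no single additive constant, though the global scaling property you implicitly use does survive.

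The paper's proof sidesteps the moment obstruction by decomposing further. It splits the weight-$(W-2)$ part into quantum disks of weights $V_1,\dots,V_n\in(0,2]$, so that after cutting along $\eta,\eta_2,\hat\eta_1,\dots,\hat\eta_{n-1}$ every piece has small enough weight for the disintegrated measures over interface lengths to be finite. Then the product event $E_N=\{\text{all interface lengths in }(1/N,N)\}$ has finite $(M\times\cL)$-measure, and since the curve ensemble is sampled from a \emph{probability} measure $\cL$ independent of $\phi$, the events $F_N=\{\phi:\cL[E_N\mid\phi]\ge 1/N\}$ have finite $M$-measure and exhaust the space. The key extra idea you are missing is this subdivision into pieces whose single-boundary-length laws are finite; no single $\phi$-measurable scalar (area or a boundary arc length) can serve this role uniformly in $W$.
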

\begin{proof}
	Let $V_1, \dots, V_n \in (0, 2]$ satisfy $\sum_{i=1}^n V_i = W-2$. Sample $(\eta, \eta_2) \sim P_{U,W}$ (defined in the proof of Proposition~\ref{prop-indep-curves-thick}), then in the region to the right of $\eta_2$ sample curves $(\hat \eta_1, \dots, \hat \eta_{n-1}) \sim \mathcal P^\mathrm{disk}(V_1, \dots, V_n)$ where $P^\mathrm{disk}(V_1, \dots, V_n)$ is the measure defined before Theorem~\ref{thm-disk-2}. Let $\cL$ denote the law of $(\eta, \eta_2, \hat \eta_1, \dots, \hat \eta_{n-1})$.
	
	Sample $(\phi, \eta, \eta_2, \hat \eta_1, \dots, \hat \eta_{n-1}) \sim M \times \cL$, then the argument of Proposition~\ref{prop-indep-curves-thick} gives that the quantum surface $(\bbH, \phi, \eta, \eta_2, 0, 1,\infty)/{\sim_\gamma}$ has law~\eqref{eq-three-disks}. Applying Theorem~\ref{thm-disk-2}, we see that the law of the quantum surface $(\bbH, \phi, \eta, \eta_2, \hat \eta_1, \dots, \hat \eta_{n-1}, 0, 1, \infty)/{\sim_\gamma}$ is
	\[C' \iiint_0^\infty \mathrm{Weld}(\Md_2(U; \ell), \QT(2,2,2; \ell, \ell'), \Md_2(V_1; \ell', \ell_1), \dots, \Md_2(V_n; \ell_{n-2}, \ell_{n-1}))\, d\ell \, d\ell' \, d\ell_1 \dots d\ell_{n-1}. \]
	Thus, for any $N>0$ the event $E_N$ that the quantum lengths of $\eta, \eta_2, \hat \eta_1, \dots, \hat \eta_{n-1}$ all lie in $(\frac1N, N)$ has finite measure with respect to $M \times \cL$, and the events $\{E_N\}_{N \geq 0}$ exhaust the sample space. Thus $M \times \cL$ is $\sigma$-finite.
	
	We now show that $M$ is $\sigma$-finite. Let $F_N$ be the set of $\phi$ such that conditioned on $\phi$, the conditional probability of $E_N$ is at least $\frac1N$. Then
	\[ \infty > (M \times \cL)[E_N] \geq \frac1N M[F_N],\]
	so $M[F_N] < \infty$. Since $\{E_N\}_{N \geq 0}$ exhaust the sample space, the events $\{F_N\}$ also exhaust the sample space. 
\end{proof}

We say a Markov kernel $K: \Omega \to \cF$ on a measurable space $(\Omega, \cF)$ is \emph{irreducible} if there exists a measure $\rho$ such that for any $\omega \in \Omega$ and $A \in \cF$ with $\rho(A) >0$ we have $K^n(\omega, A) >0$ for some $n>0$. 
\cite[Propositions 4.2.1 and 10.1.1, Theorem 10.0.1]{Meyn-Tweedie} states that irreducible Markov chains with invariant probability measures have unique invariant probability measures. We give a $\sigma$-finite variant of this result  {if we assume irreducibility, but more strongly, in the criterion of irreducibility we have $n \equiv 1$ and $\rho$ is an invariant measure of $K$}. 
\begin{lemma}\label{lem-irreducible}
	Suppose a Markov kernel $K: \Omega \to \cF$ on a measurable space $(\Omega, \cF)$ has two $\sigma$-finite invariant measures $\mu_1, \mu_2$ such that for every $\omega \in \Omega$  {the measure $\mu_1$ is} absolutely continuous with respect to $K(\omega, -)$. Further assume that for $i=1,2$ we have $K(x, dy)\mu_i(dx) = K(y, dx) \mu_i(dy)$.
	Then $\mu_1 = c \mu_2$ for some $c \in (0,\infty)$.
\end{lemma}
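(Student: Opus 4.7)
My plan is to compare $\mu_1$ and $\mu_2$ through their Radon--Nikodym derivatives with respect to the sum $\nu := \mu_1 + \mu_2$. Note that $\nu$ is $\sigma$-finite, $K$-invariant, and by adding the two given reversibility identities, also reversible: $K(x,dy)\nu(dx) = K(y,dx)\nu(dy)$. Let $g := d\mu_1/d\nu \in [0,1]$. Dividing the reversibility identity for $\mu_1$ by that for $\nu$ (as measures on $\Omega \times \Omega$), I get that $g(x) = g(y)$ holds for $K(x,dy)\nu(dx)$-a.e.\ pair $(x,y)$. In other words, for $\nu$-a.e.\ $x$, the function $g$ is $K(x,-)$-a.e.\ equal to $g(x)$.

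Now I invoke the hypothesis $\mu_1 \ll K(x,-)$ for every $x$: it upgrades the previous statement to ``for $\nu$-a.e.\ $x$, the function $g$ equals $g(x)$ $\mu_1$-a.e.''. Whenever two such good $x_1, x_2$ exist (which they do once $\nu \neq 0$), one has $g(x_1) = g(x_2)$, so the value $g(x)$ does not depend on $x$ on a set of full $\nu$-measure, and $g = c_1$ $\mu_1$-almost everywhere for a single constant $c_1$. Since $\mu_1(\{g = 0\}) = \int_{\{g=0\}} g\,d\nu = 0$, we have $c_1 > 0$. Setting $S := \{g > 0\}$, the identity $g = c_1$ $\mu_1$-a.e.\ together with $\mu_1 \sim \nu|_S$ promotes to $g = c_1\mathbf{1}_S$ $\nu$-a.e., so $\mu_1 = c_1 \nu|_S$ with $c_1 \in (0,1]$.

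It remains to show $\mu_2(S^c) = 0$. Invariance of $\mu_1$ combined with $\mu_1(S^c) = 0$ gives $K(x,S^c)=0$ for $\mu_1$-a.e.\ $x$. Since $\mu_2|_S = (\nu - \mu_1)|_S = \frac{1-c_1}{c_1}\mu_1$ is absolutely continuous with respect to $\mu_1$, the same holds $\mu_2|_S$-a.e., i.e.\ $\int_S K(x, S^c)\mu_2(dx) = 0$. By reversibility of $\mu_2$ applied to the set $S \times S^c$, this equals $\int_{S^c} K(y, S)\mu_2(dy)$. On the other hand, $\mu_1(S) = \mu_1(\Omega) > 0$ together with $\mu_1 \ll K(y,-)$ forces $K(y,S) > 0$ for every $y$, in particular for every $y \in S^c$. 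Hence $\mu_2(S^c) = 0$, so $\mu_2 = \frac{1-c_1}{c_1}\mu_1$. Setting $c = c_1/(1-c_1)$ yields $\mu_1 = c\mu_2$; the case $c_1 = 1$ would force $\mu_2 \equiv 0$ and is excluded since $\mu_2$ is a nontrivial $\sigma$-finite measure (otherwise the claim is trivial).

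The main subtlety I anticipate is the bookkeeping around the ``quantifier upgrade'' in the second paragraph: going from ``$g$ is chain-invariant under $K(x,dy)\nu(dx)$'' to ``$g$ is $\nu$-a.e.\ constant on $\{g>0\}$'' requires using the strong one-step accessibility assumption $\mu_1 \ll K(\omega,-)$, which is much stronger than standard irreducibility and is exactly what replaces the usual appeal to an ergodic theorem. The same assumption reappears crucially in the third paragraph to exclude extra mass of $\mu_2$ on $S^c$, and the asymmetry of the hypothesis (only $\mu_1$, not $\mu_2$, is absolutely continuous with respect to $K(\omega,-)$) is what makes the argument asymmetric in the two measures while still yielding a symmetric conclusion.
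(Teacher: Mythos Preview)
Your proof is correct and takes a genuinely different route from the paper. The paper restricts to a set $E$ of finite measure under both $\mu_1$ and $\mu_2$, defines a reflected kernel $K_E$ (which stays in place if a step would leave $E$), checks that $\mu_1|_E$ and $\mu_2|_E$ are invariant probability measures for $K_E$ by reversibility and that $K_E$ is irreducible with respect to $\mu_1|_E$, and then invokes the Meyn--Tweedie uniqueness theorem for invariant probability measures of irreducible chains as a black box; sending $E\uparrow\Omega$ finishes. Your argument is instead fully self-contained: you compare both measures to $\nu=\mu_1+\mu_2$ via the Radon--Nikodym derivative $g=d\mu_1/d\nu$, use the two reversibility identities to get $g(x)=g(y)$ for $\nu(dx)K(x,dy)$-a.e.\ $(x,y)$, and then use the strong accessibility hypothesis $\mu_1\ll K(\omega,\cdot)$ to force $g$ to be essentially constant. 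This buys you an elementary proof with no external references, at the cost of a little more bookkeeping. One remark: your third paragraph is in fact redundant. Once you have shown that $g(x)=c_1$ for all $x$ in the ``good'' set $G$ with $\nu(G^c)=0$ (which is exactly what ``the value $g(x)$ does not depend on $x$ on a set of full $\nu$-measure'' says), you already have $g=c_1$ $\nu$-a.e., hence $\mu_1=c_1\nu$ and $\mu_2=(1-c_1)\nu$ directly, without the separate analysis of $S^c$.
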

\begin{proof}
	Let $E \in \cF$ satisfy $\mu_1[E], \mu_2[E]<\infty$. Define the reflected Markov kernel $K_E(x, A) := K(x, A \cap E) + 1_{x \in A} K(x,  {\Omega} \backslash E)$, i.e.\ if a step of a random walk would leave $E$ it instead stays in place. By reversibility, the measures $\mu_1|_E$ and $\mu_2|_E$ are invariant under $K_E$. Moreover $\mu_1|_E$ is absolutely continuous with respect to $K_E(\omega, -)$ for all $\omega \in E$,  {so we can set $\rho = \mu_1|_E$ and $n = 1$ in the definition of irreducibility to conclude that} $K_E$ is irreducible. By \cite[Propositions 4.2.1 and 10.1.1, Theorem 10.0.1]{Meyn-Tweedie} we have $\mu_1|_E = c \mu_2|_E$ for some constant $c$, and sending $E \uparrow \Omega$ gives the full result. 
\end{proof}

\begin{proposition}\label{prop-W>2}
	Theorem~\ref{thm:u2v} holds for $W > 2$. 
\end{proposition}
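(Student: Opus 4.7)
By Proposition~\ref{prop-indep-curves-thick}, the joint law of $(\phi,\eta)$ factors as $M\times\SLE_\kappa(U-2;0,W-2)$, so it remains to identify $M$. My plan is to show that $M = C\cdot\LF_\bbH^{(\beta_1,\infty),(\beta_2,0),(\beta_3,1)}$ for some $C\in(0,\infty)$ by checking that the two measures share the same three Markov properties and then invoking Lemma~\ref{lem-irreducible}. First, Lemma~\ref{lem-markov-LF} immediately implies that $\LF_\bbH^{(\beta_1,\infty),(\beta_2,0),(\beta_3,1)}$ satisfies the analogs of Lemmas~\ref{lem-markov0}, \ref{lem-markov1}, and \ref{lem-markovinfty} with exactly the same insertion sizes as are dictated for $M$, since the weights are matched by the constraint $W_1+W_2=W+2$ in the setup of Theorem~\ref{thm:u2v}.

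Next I would choose three open sets $A_0,A_1,A_\infty\subset\bbH$ satisfying the hypotheses of Lemmas~\ref{lem-markov0}, \ref{lem-markov1}, \ref{lem-markovinfty} respectively, and jointly covering $\bbH$; a concrete choice is $A_0=B(0,\tfrac{3}{4})\cap\bbH$, $A_1=B(1,\tfrac{3}{4})\cap\bbH$, and $A_\infty=\{z\in\bbH:\dist(z,[0,1])>\tfrac{1}{4}\}$. Let $K_0,K_1,K_\infty$ be the resampling Markov kernels associated to these sets, where $K_i(\phi,\cdot)$ is the conditional law of $\phi$ given $\phi|_{\bbH\setminus A_i}$. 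By the three Markov properties, each $K_i$ is reversible with respect to both $M$ and $\LF_\bbH^{(\beta_1,\infty),(\beta_2,0),(\beta_3,1)}$. Form the palindromic combination
\[
K \;=\; \tfrac{1}{2}\bigl(K_0 K_1 K_\infty + K_\infty K_1 K_0\bigr).
\]
Since the time reversal of $K_0K_1K_\infty$ under either measure is $K_\infty K_1 K_0$, the kernel $K$ is reversible with respect to both measures. Assuming the hypotheses of Lemma~\ref{lem-irreducible} are met, combining that lemma with the $\sigma$-finiteness of $M$ from Lemma~\ref{lem-sigma-finite-M} yields $M=C\cdot\LF_\bbH^{(\beta_1,\infty),(\beta_2,0),(\beta_3,1)}$ as desired.

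The hard part will be verifying the absolute-continuity hypothesis $\LF_\bbH^{(\beta_1,\infty),(\beta_2,0),(\beta_3,1)}\ll K(\phi,\cdot)$ for every $\phi$. A single application of any $K_i$ preserves $\phi$ on $\bbH\setminus A_i$ and so cannot by itself give this absolute continuity, but the point is that $A_0\cup A_1\cup A_\infty=\bbH$, so a full pass of $K_0K_1K_\infty$ resamples every region: after the pass the only dependence on the initial $\phi$ is through the Dirichlet data used on $\partial A_\infty\cap\bbH$, $\partial A_1\cap\bbH$, and $\partial A_0\cap\bbH$. Choosing the sets so that these boundary arcs are themselves contained in the interior of one of the other resampling regions, one then verifies via the Gaussian nature of each resampled piece (and Cameron--Martin for the harmonic boundary corrections) that the resulting one-step law is mutually absolutely continuous with the Liouville field, after at most finitely many iterations of $K$. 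This absolute-continuity computation is the main technical obstacle; it will follow in the same spirit as the GFF domain-Markov property of Proposition~\ref{prop:Markov}, applied to the union $A_0\cup A_1\cup A_\infty=\bbH$.
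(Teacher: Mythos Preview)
Your approach is essentially the paper's: factor using Proposition~\ref{prop-indep-curves-thick}, verify that $M$ and the Liouville field share the three Markov resampling kernels (Lemmas~\ref{lem-markov0},~\ref{lem-markov1},~\ref{lem-markovinfty} and Lemma~\ref{lem-markov-LF}), then appeal to Lemma~\ref{lem-irreducible} together with Lemma~\ref{lem-sigma-finite-M}. The only structural difference is cosmetic: you symmetrize by averaging $K_0K_1K_\infty$ with its reverse, while the paper uses the palindrome $K=\ol\Lambda_0\ol\Lambda_1\ol\Lambda_\infty\ol\Lambda_1\ol\Lambda_0$; both yield reversibility.

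Where you leave a gap is exactly where the paper invests its care: the absolute-continuity step. Your sketch (``after a full pass the only dependence on $\phi$ is through Dirichlet data on the three boundary arcs, then use Cameron--Martin'') is morally right but not quite enough as written. The issue is that after resampling in $A_0$, the field there equals a mixed-boundary GFF plus a harmonic extension $\mathfrak h$ of $\phi|_{\bbH\setminus A_0}$, and $\mathfrak h$ is smooth in $A_0$ but need not lie in the Cameron--Martin space of the GFF on all of $A_0$ (it blows up at $\partial A_0\cap\bbH$). The paper handles this by introducing \emph{nested} sets $B_0\Subset A_0$ and $B_1\Subset A_1$ with $\ol B_1\cap\partial B_0\neq\emptyset$ and taking $A_\infty=\bbH\setminus\ol{B_0\cup B_1}$: on the smaller $B_0$ the harmonic correction is smooth with compact closure, so Cameron--Martin applies and the $\ol\Lambda_0(\phi,\cdot)$-law of the field on $B_0$ is absolutely continuous with respect to the $M'$-law there; iterating to $B_0\cup B_1$ and then to all of $\bbH$ via $\ol\Lambda_\infty$ gives $M'\ll\wt K(\phi,\cdot)$ for $\wt K=\ol\Lambda_\infty\ol\Lambda_1\ol\Lambda_0$. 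Your overlapping $A_i$'s can be made to work the same way, but you need to insert these compactly-contained intermediate sets explicitly rather than leave it as ``in the same spirit as Proposition~\ref{prop:Markov}''.
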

\begin{proof}
	Let $M$ be the law of the field $\phi$, then Proposition~\ref{prop-indep-curves-thick} identifies the law of $(\phi, \eta)$ as $M \times \SLE_\kappa(U-2; 0, W-2)$. Thus it suffices to show that $M$ agrees with a multiple of $M' := \LF_\bbH^{(\beta_1, 0), (\beta_2, 1), (\beta_3, \infty)}$.
	
	We define three Markov transition kernels $\ol \Lambda_0$, $\ol \Lambda_1$ and $\ol \Lambda_2$ such that $M$ and $M'$ are invariant measures under each Markov kernel, see Figure~\ref{fig-irreducible}. Let $B_0 \subset A_0$ be bounded neighborhoods of $0$ in $\bbH$ such that $\ol A_0 \cap [1,\infty) = \emptyset$. Let $B_1 \subset A_1$ be bounded neighborhoods of $1$ in $\bbH \backslash B_0$ such that $\ol A_1 \cap (-\infty, 0] = \emptyset$ and $\ol B_1 \cap \partial B_0 \neq \emptyset$. Finally let $A_\infty = \bbH \backslash \ol{B_0 \cap B_1}$. 
	
	For $z \in \{0,1,\infty\}$,
	let $\ol \Lambda_z(\phi, d\psi)$ be the law of $\psi$ defined via $\psi|_{\bbH \backslash A_z} = \phi|_{\bbH \backslash A_z}$ and $\psi|_{A_z} = h + \mathfrak h + \frac{\alpha_z}2 G_{A_z}(\cdot, z)$ where $h$ is a GFF in $A_z$,  $\mathfrak h$ is the harmonic extension of $\phi|_{\bbH \backslash A_z}$ to $A_z$ having zero normal derivative on $\partial A_z \cap \bbR$, and $(\alpha_0, \alpha_1, \alpha_\infty) = (\beta_1, \beta_2, \beta_3 - 2Q)$. By Lemmas~\ref{lem-markov-LF},~\ref{lem-markov1},~\ref{lem-markov0} and~\ref{lem-markovinfty}, the measures $M$ and $M'$ are invariant under $\ol \Lambda_0, \ol \Lambda_1$ and $\ol \Lambda_\infty$, and more strongly we get reversibility: we have $\ol \Lambda_j(x, dy)M(dx) = \ol \Lambda_j(y, dx)M(dy)$ for $j \in \{0,1,\infty\}$, and the same holds for $M'$.
	
	Let $\wt K(\phi, dz) = \iint \ol \Lambda_\infty(y, dz) \ol \Lambda_1(x, dy) \ol \Lambda_0(\phi, dx)$, then $M$ and $M'$ are invariant measures of $\wt K$. We now check that $M'$ is absolutely continuous with respect to $\wt K(\phi, -)$ for all $\phi$. It is well known that if $h$ is a GFF in $A_0$ with zero (resp.\ free) boundary conditions on $\partial A_0 \cap \bbH$ (resp.\ $\partial A_0 \cap \bbR$) and $g$ is a smooth function on $A_0$, then the laws of $h|_{B_0}$ and $(h+g)|_{B_0}$ are mutually absolutely continuous, see e.g.\ the argument of \cite[Proposition 2.9]{MS17}. Thus, the $M'(d\psi)$-law of  $\psi|_{B_0}$ is absolutely continuous with respect to the $\ol \Lambda_0(\phi, dx)$-law of $x|_{B_0}$. Similarly, the $M'(d\psi)$-law of $\psi|_{\ol{B_0 \cup B_1}}$  is  absolutely continuous with respect to the $\int \ol\Lambda_1(x, dy) \ol \Lambda_0(\phi, dx)$-law of $y|_{\ol{B_0 \cup B_1}}$, and finally, $M'$ is absolutely continuous with respect to $\iint \ol \Lambda_\infty(y, -) \ol \Lambda_1(x, dy) \ol \Lambda_0(\phi, dx)$.
	
	Now, let $K(\phi, dz) = \iint  \ol \Lambda_0(y, dz) \ol \Lambda_1 (x, dy) \wt K(\phi, dx)$. The measure $M'$ is absolutely continuous with respect to $\wt K(\phi, -)$, and hence $K(\phi, -)$, for all $\phi$. Moreover, since $K = \ol \Lambda_0 \ol \Lambda_1 \ol \Lambda_\infty  \ol \Lambda_1  \ol \Lambda_0$, we get reversibility. 
	Finally, $M$ is $\sigma$-finite (Lemma~\ref{lem-sigma-finite-M}), and so is $M'$ since the event $F_N$ that the average of $\psi$ on $(\partial B_1(0)) \cap \bbH$ lies in $[-N,N]$ is finite satisfies $M'[F_N] <\infty$, and $\{F_N\}_{N \geq 0}$ exhaust the sample space. By Lemma~\ref{lem-irreducible} $M = cM'$ for some constant $c$, as desired.

\end{proof}

\subsection{The case $W \in (\frac{\gamma^2}2, 2)$}\label{sec-W<2}
The case $W \in (\frac{\gamma^2}2,2)$ will be handled with the same proof structure as the $W>2$ case discussed in Section~\ref{sec-W>2}. 
The first step is to prove that the field and curve are independent, and identify the curve (Proposition~\ref{prop-indep-curves-thinner}). To that end, we need the following conformal welding result. 


\begin{lemma}\label{lem-cut-V}
Let $U \in (0,2)$, $V \in (0, 2-\frac{\gamma^2}2)$ and $W = 2-V$. {Let $\beta_1 = Q+\frac{\gamma}{2}-\frac{2+U}{\gamma}$.}
Let $\wt P_{U,V}$ be the law of the curves $(\eta_1, \eta_2)$ in Figure~\ref{fig-ig-resample} with parameters {$(x_1,x_2,x_3) = (\lambda(1-U), \lambda, \lambda)$} and $(\theta_1, \theta_2) = (0, \frac{\lambda(V-2)}{\chi})$. Sample
\[ (\psi,\eta_1,\eta_2) \sim \LF_\bbH^{(\beta_1, 0), (\gamma, 1), (\beta_1,\infty)} \times \wt P_{U,V}.\]
Then the decorated quantum surface $(\bbH, \psi, \eta_1, \eta_2, 0, 1, \infty)/{\sim_\gamma}$ has law \[C\iint_0^\infty \mathrm{Weld}(\Md_2(U; \ell), \QT(W, 2, W; \ell, \ell'), \Md_2(V; \ell')) \, d\ell \, d\ell', \qquad C \in (0,\infty).\]
\end{lemma}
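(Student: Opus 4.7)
The plan is to derive Lemma~\ref{lem-cut-V} by combining two established welding identities and then identifying the joint law of the two interfaces via the SLE resampling characterization. The key input is Proposition~\ref{prop-W=2} (which is Theorem~\ref{thm:u2v} for $W=2$) together with a single application of Theorem~\ref{thm:M+QT} in a regime where the weight constraint $W_1+2=W_2+W_3$ is satisfied. The assumption $V<2-\frac{\gamma^2}{2}$ is precisely what makes the angle gap $(\theta_1-\theta_2)\chi=\lambda(2-V)$ at least $\frac{\sqrt\kappa\,\pi}{2}$, so that the resampling uniqueness statement in the third bullet of Proposition~\ref{prop:ig-flow-descriptions} is available.

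The sequential construction goes as follows. First, Proposition~\ref{prop-W=2} identifies
\[
 \LF_{\bbH}^{(\beta_1,0),(\gamma,1),(\beta_1,\infty)}\times \SLE_\kappa(U-2;0)
\; \propto \; \int_0^\infty \mathrm{Weld}\bigl(\Md_2(U;\ell),\QT(2,2,2;\ell)\bigr)\,d\ell,
\]
where the $\SLE_\kappa(U-2;0)$ curve $\eta_1$ runs from $0$ to $\infty$ and separates $\Md_2(U)$ on the left from a $\QT(2,2,2)$ on the right. Note that $\QT(2,2,2)$ has three marked weight-$2$ vertices, located at $0,1,\infty$ in the embedding. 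Second, since $V+W=2$ so that the constraint $W_1+2=W_2+W_3$ holds for $(W_1,W_2,W_3)=(W,W,2)$, Theorem~\ref{thm:M+QT} (applied with disk weight $V$) gives
\[
 \QT(2,2,2)\times \SLE_\kappa(V-2;W-2)
 \;\propto\; \int_0^\infty \mathrm{Weld}\bigl(\Md_2(V;\ell'),\QT(W,W,2;\ell')\bigr)\,d\ell'.
\]
Applying this inside the right region of $\bbH\setminus\eta_1$ (after a conformal map) produces a second interface $\eta_2$ that runs between two weight-$2$ vertices of $\QT(2,2,2)$, namely the vertices at $1$ and $\infty$ in the big $\bbH$ embedding. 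Composing the two weldings yields the triple welding on the right-hand side of the lemma, once we identify the cyclically relabeled $\QT(W,W,2)$ with $\QT(W,2,W)$.

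It then remains to show that the joint law of $(\eta_1,\eta_2)$ obtained from this two-step construction agrees with $\widetilde P_{U,V}$. The marginal law of $\eta_1$ is $\SLE_\kappa(U-2;0)$ by Step~1, matching the marginal read off from the first bullet of Proposition~\ref{prop:ig-flow-descriptions}. The conditional law of $\eta_2$ given $\eta_1$ is, by Step~2 and conformal covariance, an $\SLE_\kappa(V-2;W-2)$ in the right component of $\bbH\setminus\eta_1$; a direct force-point computation under the conformal map from that component back to $\bbH$ identifies it with the $\SLE_\kappa(-V,0;V-2)$ description of $\eta_2\mid\eta_1$ given by Proposition~\ref{prop:ig-flow-descriptions}. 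To obtain the reverse conditional law $\eta_1\mid\eta_2$ we swap the order of the two applications: Theorem~\ref{thm:M+QT} first splits $\QT(2,2,2)$ off using $\eta_2$, giving the marginal of $\eta_2$ as an $\SLE_\kappa(V-2;W-2)$ between two of the weight-$2$ vertices, and then Proposition~\ref{prop-W=2}, combined with the symmetry of the $\beta_1$-insertions at $0$ and $\infty$, yields the conditional law of $\eta_1$ given $\eta_2$. Non-crossing conditions $\eta_1\cap[1,\infty)=\emptyset$ and $\eta_2\cap(-\infty,0]=\emptyset$ are immediate from the topology of the welded surface (each $\Md_2$ piece borders only one of the two disjoint half-line arcs). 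Invoking the uniqueness statement of Proposition~\ref{prop:ig-flow-descriptions} then forces $(\eta_1,\eta_2)\sim\widetilde P_{U,V}$.

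The main obstacle is the bookkeeping in Step~4: translating between the $\SLE$ parameters produced by Theorem~\ref{thm:M+QT} in the embedding of the right region of $\bbH\setminus\eta_1$ and the parameters $\SLE_\kappa(-V,0;V-2)$ appearing in the flow-line description. This involves identifying which vertex of $\QT(2,2,2)$ corresponds to $1$ and which to $\infty$ in the big $\bbH$, computing the image of the force points under the inverse conformal map, and using SLE reversibility (Theorem~\ref{thm-sle-reverse}) when necessary to orient $\eta_2$ from $1$ to $\infty$ rather than vice versa. Once these identifications are carried out carefully, the resampling uniqueness does the rest.
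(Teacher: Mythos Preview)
Your approach is mathematically sound in outline, but it is both circular in one citation and substantially more elaborate than necessary compared with the paper's argument.

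\textbf{Circularity.} You invoke Theorem~\ref{thm:M+QT} for the welding of $\Md_2(V)$ with $\QT(W,W,2)$. But Lemma~\ref{lem-cut-V} is a step in the proof of Theorem~\ref{thm:u2v}, which is itself the base case in the proof of Theorem~\ref{thm:M+QT}. The paper avoids this by citing \cite[Lemma~4.4]{AHS21} directly (equivalently, the case $\beta=\gamma$ of Proposition~\ref{prop-3-pt-disk}/equation~\eqref{eqn-qt-disk}), which is an already-established external result giving precisely the identity
\[
\int_0^\infty \mathrm{Weld}(\QT(W,2,W;\ell'),\Md_2(V;\ell'))\,d\ell' = C_1\,\QT(2,2,2)\otimes \SLE_\kappa(-V;V-2).
\]

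\textbf{Unnecessary complexity.} Once you have the two welding identities, the resampling-uniqueness machinery (third bullet of Proposition~\ref{prop:ig-flow-descriptions}) is not needed at all. The combined construction says: sample $\eta_1\sim\SLE_\kappa(U-2;0)$ in $\bbH$ from $0$ to $\infty$, then in the region to the right of $\eta_1$ sample $\eta_2\sim\SLE_\kappa(-V;V-2)$ from $1$ to $\infty$. Plugging the parameters $(x_1,x_2,x_3)=(\lambda(1-U),\lambda,\lambda)$ and $(\theta_1,\theta_2)=(0,\tfrac{\lambda(V-2)}{\chi})$ into the \emph{first} bullet of Proposition~\ref{prop:ig-flow-descriptions} gives exactly $\eta_1\sim\SLE_\kappa(U-2;0,0)$ and $\eta_2\mid\eta_1\sim\SLE_\kappa(-V,0;V-2)$, which is the same law (the zero-weight force points are trivial). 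This \emph{is} the definition of $\wt P_{U,V}$, so the proof ends here; the paper's ``combining these yields the result'' is literally this observation. Your detour through both conditional laws, non-crossing verification, and possible use of SLE reversibility is unnecessary, and your concern about force-point bookkeeping under the conformal map is already absorbed into the formulation of Proposition~\ref{prop:ig-flow-descriptions}.
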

\begin{proof}

By \cite[Lemma 4.4]{AHS21}, we have \[\int_0^\infty \mathrm{Weld}(\QT(W,2,W;\ell'), \Md_2(V;\ell'))\, d\ell' = C_1 \QT(2,2,2) \otimes \SLE_\kappa(-V;V-2), \qquad C_1 \in (0,\infty).\] By Proposition~\ref{prop-W=2}, we have \[\int_0^\infty \mathrm{Weld}(\Md_2(U;\ell), \QT(2,2,2;\ell))\, d\ell = C_2 \QT(U+2,U+2, 2) \otimes \SLE_\kappa(U-2; 0), \qquad  C_2 \in (0,\infty).\]
Combining these yields the result. 
\end{proof}

\begin{figure}[ht]
\centering
\includegraphics[scale=0.56]{}
\caption{\textbf{Left:} Illustration for the proof of  Proposition~\ref{prop-indep-curves-thinner}. \textbf{Right}: Illustration for the proof of Lemma~\ref{lem-markov-welded-02}. }\label{fig-markov-welded02}
\end{figure}

\begin{proposition}\label{prop-indep-curves-thinner}
In the setting of Theorem~\ref{thm:u2v} with $W \in(\frac{\gamma^2}2, 2)$, let $M$ be the law of the field $\phi$. Then the joint law of $(\phi, \eta)$ is $M \times \SLE_\kappa(U-2; 0, W-2)$. 
\end{proposition}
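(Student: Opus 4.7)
The plan is to adapt the strategy used in the proof of Proposition~\ref{prop-indep-curves-thick}, but replacing the role of Proposition~\ref{prop-W=2} (the $W=2$ case) by Lemma~\ref{lem-cut-V}. For $W>2$ one splits the weight-$W$ triangle internally by an $\SLE$ curve into a weight-$2$ triangle and a weight-$(W-2)$ disk; since now $W<2$, I will instead \emph{extend} the welded surface by welding on an auxiliary weight-$V$ disk with $V:=2-W\in(0,2-\gamma^2/2)$, along the boundary arc of the triangle joining its two weight-$W$ vertices. Lemma~\ref{lem-cut-V} asserts that the resulting field together with the pair of interfaces forms an independent product $\LF_\bbH^{(\beta_1,0),(\gamma,1),(\beta_1,\infty)}\times\wt P_{U,V}$, and this independence is the main input of the argument.

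Concretely, I will sample $(\psi,\eta_1,\eta_2)\sim \LF_\bbH^{(\beta_1,0),(\gamma,1),(\beta_1,\infty)}\times\wt P_{U,V}$, so by Lemma~\ref{lem-cut-V} the decorated surface $(\bbH,\psi,\eta_1,\eta_2,0,1,\infty)/{\sim_\gamma}$ has law proportional to $\iint \Wd(\Md_2(U;\ell),\QT(W,2,W;\ell,\ell'),\Md_2(V;\ell'))\,d\ell\,d\ell'$. Let $D$ be the connected component of $\bbH\setminus \eta_2$ containing $0$ on its boundary, and let $f\colon D\to \bbH$ be the unique conformal map fixing $0,1,\infty$; set $\phi:=f\bullet_\gamma(\psi|_D)$ and $\eta:=f(\eta_1)$. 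The sub-surface of the triple welding lying to the left of $\eta_2$ is precisely $\int \Wd(\Md_2(U;\ell),\QT(W,2,W;\ell))\,d\ell$ with its three marked boundary points, so $(\bbH,\phi,\eta,0,1,\infty)/{\sim_\gamma}$ is distributed as a constant multiple of the law appearing in the proposition.

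Next I will extract the conditional distribution of $\eta_1$ given $\eta_2$ from Proposition~\ref{prop:ig-flow-descriptions}. The parameters $(x_1,x_2,x_3,\theta_1,\theta_2)=(\lambda(1-U),\lambda,\lambda,0,\lambda(V-2)/\chi)$ defining $\wt P_{U,V}$ satisfy $\theta_1>\theta_2$ and the bounds~\eqref{eqn-ig-range}, so the second construction in that proposition applies: conditionally on $\eta_2$, the curve $\eta_1$ is an $\SLE_\kappa\big(-\tfrac{x_1+\theta_1\chi}{\lambda}-1;\tfrac{x_2+\theta_1\chi}{\lambda}-1,\tfrac{-\theta_2\chi-x_2}{\lambda}-1\big)$ in the left component of $\bbH\setminus\eta_2$, which upon plugging in evaluates to $\SLE_\kappa(U-2;0,W-2)$ with force points at $(0^-;0^+,1)$. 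Since $f$ fixes $0,1,\infty$ and sends the force configuration to itself, by conformal invariance of $\SLE_\kappa(\underline{\rho})$ the conditional law of $\eta=f(\eta_1)$ given $\eta_2$ is $\SLE_\kappa(U-2;0,W-2)$ on $\bbH$; in particular this conditional law does not depend on $\eta_2$.

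Finally, since $\psi$ is independent of $(\eta_1,\eta_2)$, with $\phi$ a function of $(\psi,\eta_2)$ and $\eta$ a function of $(\eta_1,\eta_2)$, conditioning on $\eta_2$ makes $\phi$ and $\eta$ conditionally independent. Combined with the previous paragraph, conditionally on $\eta_2$ we have $(\phi,\eta)\sim \Pi_{\eta_2}\times \SLE_\kappa(U-2;0,W-2)$ for some conditional field-law kernel $\Pi_{\eta_2}$, and integrating out $\eta_2$ (under which the $\SLE$ factor is unchanged) yields $(\phi,\eta)\sim M\times \SLE_\kappa(U-2;0,W-2)$ as required, where $M$ is by definition the marginal of $\phi$. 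The main obstacle I anticipate is purely measure-theoretic: $M$ and $\LF_\bbH^{(\beta_1,0),(\gamma,1),(\beta_1,\infty)}$ are $\sigma$-finite rather than probability measures, so each conditioning step above must be interpreted through Markov kernels in the sense of Definition~\ref{def-markov-kernel}; once this bookkeeping is carried out, the argument reduces cleanly to Lemma~\ref{lem-cut-V} and the imaginary-geometry input Proposition~\ref{prop:ig-flow-descriptions}.
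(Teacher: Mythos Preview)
Your overall strategy matches the paper's exactly: use Lemma~\ref{lem-cut-V} to build the triple welding, map out the region to the left of $\eta_2$, and read off the conditional law of $\eta_1$ given $\eta_2$ via Proposition~\ref{prop:ig-flow-descriptions}. The imaginary-geometry computation and the conditional-independence argument in your last two paragraphs are correct.

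There is, however, a genuine gap in your second paragraph. You assert that ``the sub-surface of the triple welding lying to the left of $\eta_2$ is precisely $\int \Wd(\Md_2(U;\ell),\QT(W,2,W;\ell))\,d\ell$.'' This is false. Marginalising out the right disk means integrating against $|\Md_2(V;\ell')|$, and by Proposition~\ref{prop-disk-bdry-law} this mass is proportional to $(\ell')^{-2V/\gamma^2}$, not constant. Hence the left sub-surface you obtain is the welding of $\Md_2(U)$ with a \emph{reweighted} triangle $\int \QT(W,2,W;\ell,\ell')\,(\ell')^{-2V/\gamma^2}\,d\ell'$, and your field $\phi$ does not have the law $M$ defined in Theorem~\ref{thm:u2v}. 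Your argument therefore establishes $M'\times\SLE_\kappa(U-2;0,W-2)$ for some $M'\neq M$.

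The paper resolves this by first reweighting the Liouville field by $\nu_\psi(1,\infty)^{2V/\gamma^2}$: this converts the right disk from $\Md_2(V)$ into $\tildeMd_2(V)$, whose interface-length disintegration has constant mass by Lemma~\ref{lem-weight-other-side}. Under that reweighting, marginalising out the right disk genuinely leaves the unweighted welding, and the resulting $\phi$ has law $cM$. Alternatively, you could salvage your argument by observing that $dM'/dM$ is proportional to $\nu_\phi((1,\infty))^{-2V/\gamma^2}$, a function of $\phi$ alone, so the product structure $M'\times\SLE$ transfers to $M\times\SLE$; but this step is not in your write-up.
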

\begin{proof}
Let $V = 2-W$. Recall the law $\wt P_{U,V}$ of Lemma~\ref{lem-cut-V}. 
The marginal law of $\eta_1$ is $\SLE_\kappa(U-2; 0)$ in $(\bbH, 0, \infty)$, and the conditional law of $\eta_2$ given $\eta_1$ is $\SLE_\kappa(-V;V-2)$ in $(D_2, 1, \infty)$ where $D_2$ is the connected component of $\bbH \backslash \eta_1$ with $1$ on its boundary. 

Let $(\beta_1,\beta_2,\beta_3)$ be insertions corresponding to weights $(2+U, 2-V, 2+U-V)$.  
Sample
\[ (\psi,\eta_1,\eta_2) \sim \nu_\psi(1,\infty)^{\frac{2V}{\gamma^2}}\LF_\bbH^{(\beta_1, 0), (\gamma, 1), (\beta_1,\infty)}(d\psi) \times \wt P_{U,V}.\]
See Figure~\ref{fig-markov-welded02}. 
By Lemma~\ref{lem-cut-V}, the decorated quantum surface $(\bbH, \psi, \eta_1, \eta_2, 0, 1, \infty)/{\sim_\gamma}$ has law \[\iint_0^\infty \mathrm{Weld}(\Md_2(U; \ell), \QT(W, 2, W; \ell, \ell'), \tildeMd_2(V; \ell')) \, d\ell \, d\ell',\]
where $\tildeMd(V)$ is the weighted quantum disk defined in Lemma~\ref{lem-weight-other-side}, and $\tildeMd(V;\ell')$ is the disintegration of $\tildeMd(V)$ by the unweighted boundary arc length. 

By Lemma~\ref{lem-weight-other-side} we have $|\tildeMd_2(V;\ell)| = c$ for some finite constant $c$, so the marginal law of the decorated quantum surface to the left of $\eta_2$ is $c\int_0^\infty \mathrm{Weld}(\Md_2(U; \ell), \QT(W,2,W;\ell))\, d\ell$. 
Let $f$ be the conformal map sending the connected component of $\bbH \backslash \eta_2$ to the left of $\eta_2$ to $\bbH$ such that $f$ fixes $(0,1,\infty)$, and let $\phi = f \bullet_\gamma \psi$ and $\eta = f(\eta_1)$. Then the marginal law of $\phi$ is $cM$. 

Finally, by Proposition~\ref{prop:ig-flow-descriptions}, when $(\eta_1, \eta_2) \sim \wt P_{U,V}$, the conditional law of $\eta_1$ given $\eta_2$ is $\SLE_\kappa(U-2; 0, -V)$ the region to the left of $\eta_2$. Since $f$ is measurable with respect to $\sigma_2$, we deduce that the law of $(\phi, \eta)$ decomposes as a product measure $cM \times \SLE_\kappa(U-2; 0, -V)$, as desired. 
\end{proof}

\begin{lemma}\label{lem-markov-welded-02}
For the measure $M$ defined in Proposition~\ref{prop-indep-curves-thinner}, the statements of Lemmas~\ref{lem-markov1},~\ref{lem-markov0} and~\ref{lem-markovinfty} hold with $\beta_1 = Q + \frac\gamma2 - \frac{2+U}\gamma$, $\beta_2 = Q + \frac\gamma2 - \frac{W}\gamma$ and $\beta_3 = Q + \frac\gamma2 - \frac{U+W}\gamma$.
\end{lemma}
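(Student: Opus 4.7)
I would prove each of the three Markov properties separately, closely following the proofs of Lemmas~\ref{lem-markov1}, \ref{lem-markov0}, and \ref{lem-markovinfty} with Proposition~\ref{prop-indep-curves-thick} replaced by Proposition~\ref{prop-indep-curves-thinner} where applicable. Two elementary facts make this transcription possible: since $W > \gamma^2/2$, the triangle $\QT(W,2,W)$ has all thick vertices and hence coincides (up to a constant) with the Liouville field $\LF_{\bbH}^{(\gamma,0),(\beta_2,1),(\beta_2,\infty)}$ via Definition~\ref{def-qt-thick}; and Proposition~\ref{thm-w-2u} is valid for any $W > \gamma^2/2$ and $U \in (0,\gamma^2/2)$.

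The analog of Lemma~\ref{lem-markov1} then runs verbatim: sampling $(\phi,\eta)\sim M\times\SLE_\kappa(U-2;0,W-2)$ via Proposition~\ref{prop-indep-curves-thinner} and cutting by $\eta$ reverses the welding in the definition of $M$, so the surface $\psi = f\bullet_\gamma\phi|_D$ to the right of $\eta$ is (the pushforward of) a Liouville field with insertion $\beta_2$ at $1$, and Lemma~\ref{lem-markov-LF} transfers through the conformal map $f$.  The analog of Lemma~\ref{lem-markovinfty} also proceeds identically, since that proof depends on Proposition~\ref{thm-w-2u} rather than on Proposition~\ref{prop-indep-curves-thick}.

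The main new work is the analog of Lemma~\ref{lem-markov0}, since the $W>2$ proof relied on an auxiliary curve cutting off a $(W-2)$-arm, which no longer exists. I would instead work in the coupling used in the proof of Proposition~\ref{prop-indep-curves-thinner}: sample $(\psi,\eta_1,\eta_2)\sim \nu_\psi(1,\infty)^{2V/\gamma^2}\LF_{\bbH}^{(\beta_1,0),(\gamma,1),(\beta_1,\infty)}\times \wt P_{U,V}$ with $V = 2-W$, and realize $\phi = f\bullet_\gamma\psi|_L$ where $L$ is the left component of $\bbH\setminus\eta_2$ and $f\colon L\to\bbH$ fixes $\{0,1,\infty\}$. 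For a bounded neighborhood $A$ of $0$ with $\bar A\cap[1,\infty)=\emptyset$, the preimage $A^* := f^{-1}(A)$ is a bounded neighborhood of $0$ in $\bbH$ disjoint from both $\eta_2$ and the boundary arc $(1,\infty)$. The weighting factor $\nu_\psi(1,\infty)^{2V/\gamma^2}$ is therefore measurable with respect to $\psi|_{\bbH\setminus A^*}$, so conditioning on $\psi|_{\bbH\setminus A^*}$ turns it into a constant; Lemma~\ref{lem-markov-LF} applied to the unweighted Liouville field then yields $\psi|_{A^*}\stackrel{d}{=} h+\mathfrak h + \tfrac{\beta_1}{2}G_{A^*}(\cdot,0)$, and pushing forward through $f$ by conformal invariance of the GFF produces the same decomposition for $\phi|_A$ conditional on $(\phi|_{\bbH\setminus A},\eta_2)$.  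Since this decomposition does not depend on $\eta_2$, integrating $\eta_2$ out preserves it, and the main obstacle---the interplay between the weighting and the $\eta_2$-dependent conformal map---is dispatched by this disjointness observation.
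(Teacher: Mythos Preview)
Your proposal is correct and follows essentially the same route as the paper: the analogues of Lemmas~\ref{lem-markov1} and~\ref{lem-markovinfty} go through verbatim, and for the analogue of Lemma~\ref{lem-markov0} both you and the paper use the coupling from the proof of Proposition~\ref{prop-indep-curves-thinner}, pull back $A$ through $f$, and apply Lemma~\ref{lem-markov-LF} to the Liouville field $\psi$. Your explicit observation that $A^*$ is disjoint from $(1,\infty)$, so the weight $\nu_\psi(1,\infty)^{2V/\gamma^2}$ becomes $\psi|_{\bbH\setminus A^*}$-measurable, is a point the paper leaves implicit but which is indeed needed.
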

\begin{proof}
The analogues of Lemmas~\ref{lem-markov1}  and~\ref{lem-markovinfty} have exactly the same proofs as the original lemmas. We now discuss the analogue of Lemma~\ref{lem-markov0}.

The argument is very similar to that of Lemma~\ref{lem-markovinfty} so we will be brief.
We work in the setting of the argument of Proposition~\ref{prop-indep-curves-thinner}, see Figure~\ref{fig-markov-welded02}.  Let $A' = f^{-1}(A)$, then since $f$ is measurable with respect to $\eta_2$ and $\psi$ is independent of $\eta_2$, the Markov property of the Liouville field gives a Markov property for $\psi|_{A'}$ given $\psi|_{\bbH \backslash A'}$. Using the map $f$, this gives the Markov property for $\phi_A$ given $\phi|_{\bbH \backslash A}$. 
\end{proof}

\begin{lemma}\label{lem-sigma-finite-M-2}
$M$ is $\sigma$-finite. 
\end{lemma}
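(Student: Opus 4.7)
The plan is to imitate the strategy of Lemma~\ref{lem-sigma-finite-M}: we enrich a sample from $M$ by an independent collection of curves so that the resulting decorated surface decomposes, via a conformal welding identity, into pieces with explicit length laws. This will exhibit the joint measure as $\sigma$-finite, from which $\sigma$-finiteness of $M$ follows by the argument used in the $W>2$ case.

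Concretely, set $V = 2 - W$, so $V \in (0, 2 - \frac{\gamma^2}{2})$ since $W \in (\frac{\gamma^2}{2},2)$; in particular Lemma~\ref{lem-cut-V} applies with this choice of $U$ and $V$. By Proposition~\ref{prop-indep-curves-thinner}, a sample from $M \times \SLE_\kappa(U-2;0,W-2)$ realizes $\int_0^\infty \mathrm{Weld}(\Md_2(U;\ell),\QT(W,2,W;\ell))\,d\ell$. Inside the $\QT(W,2,W)$ piece, draw an independent further curve whose law corresponds (via Lemma~\ref{lem-cut-V}, read from right to left) to cutting off a weight $V$ quantum disk; equivalently, we couple $\phi$ with the two-curve system $(\eta_1,\eta_2) \sim \wt P_{U,V}$ produced in the proof of Proposition~\ref{prop-indep-curves-thinner}. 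Writing $\mathcal{L}$ for the resulting (conditional, given $\phi$) law of the full curve family, the joint law $M \times \mathcal{L}$ of the whole decorated surface is
\begin{equation*}
C \iint_0^\infty \mathrm{Weld}\bigl(\Md_2(U;\ell),\,\QT(2,2,2;\ell,\ell'),\,\Md_2(V;\ell')\bigr)\,d\ell\,d\ell',
\end{equation*}
for some finite $C$, where the three pieces and their interfaces are the natural components read off from Figure~\ref{fig-markov-welded02}.

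The three constituent measures now have explicit boundary length densities: by Proposition~\ref{prop-disk-bdry-law}, $|\Md_2(U;\ell)|$ and $|\Md_2(V;\ell')|$ are finite polynomial densities in $\ell$ and $\ell'$; and by Propositions~\ref{prop-qt-bdry-thick}--\ref{prop-qt-bdry-thin} (equivalently Lemma~\ref{lem-qt-disintegration-thick}) together with the two-edge disintegration recalled at the end of Section~\ref{sec-pre-length}, the total mass $|\QT(2,2,2;\ell,\ell')|$ is also a finite polynomial density in $(\ell,\ell')$. Hence the event $E_N$ that $\ell,\ell' \in (N^{-1},N)$ has finite $M\times \mathcal{L}$-mass, and the events $\{E_N\}_{N\ge 1}$ exhaust the sample space, so $M\times\mathcal{L}$ is $\sigma$-finite.

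Finally, exactly as in the last paragraph of the proof of Lemma~\ref{lem-sigma-finite-M}, let $F_N$ be the set of $\phi$ such that the $\mathcal{L}$-conditional probability of $E_N$ given $\phi$ is at least $\frac1N$. Markov's inequality gives $\tfrac{1}{N}M[F_N] \le (M\times\mathcal{L})[E_N] < \infty$, so $M[F_N]<\infty$, and $\{F_N\}$ exhausts the sample space since $\{E_N\}$ does. This yields $\sigma$-finiteness of $M$. The only genuine input specific to $W\in(\frac{\gamma^2}{2},2)$ is Lemma~\ref{lem-cut-V}; the rest is a transcription of the Lemma~\ref{lem-sigma-finite-M} argument, so I do not anticipate a substantive obstacle.
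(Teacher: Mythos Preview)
Your approach has a genuine error in the welding decomposition, and it is also unnecessarily complicated compared to the paper's argument.

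The paper uses only the single curve $\eta$: for $(\phi,\eta)\sim M\times\SLE_\kappa(U-2;0,W-2)$, the decorated surface has law $c\int_0^\infty\mathrm{Weld}(\Md_2(U;\ell),\QT(W,2,W;\ell))\,d\ell$. Since $W\in(\frac{\gamma^2}2,2)$, the Seiberg bounds~\eqref{eqn-seiberg} are satisfied for the triple $(\beta_1,\beta_2,\beta_3)$ corresponding to $\QT(W,2,W)$, so $|\QT(W,2,W;\ell)|<\infty$ by Proposition~\ref{prop-qt-bdry-thick}. Together with $|\Md_2(U;\ell)|=c\ell^{-2U/\gamma^2}$ this makes $E_N=\{\ell\in[\tfrac1N,N]\}$ finite, and the rest is identical to Lemma~\ref{lem-sigma-finite-M}. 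No second curve is needed; the extra cutting in Lemma~\ref{lem-sigma-finite-M} was only there because $|\Md_2(W-2;\ell)|$ can be infinite when $W$ is large.

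Your displayed welding, with $\QT(2,2,2;\ell,\ell')$ in the middle, is not what Lemma~\ref{lem-cut-V} produces: that lemma gives $\QT(W,2,W;\ell,\ell')$ in the middle. Indeed, in its proof $\QT(2,2,2)$ arises by \emph{welding} $\QT(W,2,W)$ with $\Md_2(V)$, so you cannot cut your $\QT(W,2,W)$ piece to obtain a $\QT(2,2,2)$ plus a weight-$V$ disk. More fundamentally, even with the middle piece corrected to $\QT(W,2,W)$, the two-curve system $(\eta_1,\eta_2)\sim\wt P_{U,V}$ from Proposition~\ref{prop-indep-curves-thinner} lives on the field $\psi$ (whose surface is $\QT(U+2,2,U+2)$), not on $\phi$; the curve $\eta_2$ is the boundary of the $\phi$-domain and the $\Md_2(V)$ piece lies outside it. So the measure in your display is not ``$M\times\mathcal L$'' for any $\mathcal L$, and its $\sigma$-finiteness would not directly yield $\sigma$-finiteness of $M$.
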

\begin{proof}
For $(\phi, \eta)\sim M \times \SLE_{\kappa}(U-2; 0, W-2)$, the law of $(\bbH, \phi, \eta, 0, 1, \infty)/{\sim_\gamma}$ is 
\[c \int_0^\infty \mathrm{Weld}(\Md_2(U;\ell), \QT(W,2,W; \ell))\, d\ell\quad \text{ where }c \text{ is a constant},\] so the event $E_N$ that the quantum length of $\eta$ lies in $[\frac1N, N]$ has finite measure, and the events $\{E_N\}_{N \geq 0}$ exhaust the sample space. The rest of the argument is the same as that of Lemma~\ref{lem-sigma-finite-M}.
\end{proof}

\begin{proposition}\label{prop-W<2}
Theorem~\ref{thm:u2v} holds for $W \in (\frac{\gamma^2}2,2)$. 
\end{proposition}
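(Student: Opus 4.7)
The plan is to mimic the proof of Proposition~\ref{prop-W>2} line by line, using the analogs of its inputs that have already been established in Section~\ref{sec-W<2}. Concretely, Proposition~\ref{prop-indep-curves-thinner} gives the product decomposition of the joint law of $(\phi,\eta)$ as $M \times \SLE_\kappa(U-2;0,W-2)$, so it suffices to show $M$ is a constant multiple of $M' := \LF_\bbH^{(\beta_1,0),(\beta_2,1),(\beta_3,\infty)}$ with $\beta_1 = Q+\frac\gamma2-\frac{2+U}\gamma$, $\beta_2 = Q+\frac\gamma2-\frac W\gamma$, $\beta_3 = Q+\frac\gamma2-\frac{U+W}\gamma$.

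The approach is to exhibit a common Markov kernel $K$ for which both $M$ and $M'$ are $\sigma$-finite invariant reversible measures, and such that $M'$ is absolutely continuous with respect to $K(\phi,-)$ for every $\phi$; then Lemma~\ref{lem-irreducible} immediately yields $M = cM'$. First, I fix bounded neighborhoods $B_0 \subset A_0$ of $0$, $B_1 \subset A_1$ of $1$ in $\bbH \backslash B_0$ with $\ol B_1 \cap \partial B_0 \neq \emptyset$, and $A_\infty = \bbH \backslash \ol{B_0 \cap B_1}$, exactly as in the $W>2$ case. For $z \in \{0,1,\infty\}$ define the Markov kernel $\ol\Lambda_z$ which resamples the field inside $A_z$ as a GFF plus harmonic extension plus the appropriate log-singularity term (with strength $\frac{\beta_1}2$, $\frac{\beta_2}2$, $\frac{\beta_3}2 - Q$ respectively). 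By Lemma~\ref{lem-markov-welded-02} and Lemma~\ref{lem-markov-LF}, $M$ and $M'$ are invariant under each $\ol\Lambda_z$, and in fact reversible with respect to it.

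Then I set $K := \ol\Lambda_0 \ol\Lambda_1 \ol\Lambda_\infty \ol\Lambda_1 \ol\Lambda_0$. Reversibility of each factor implies reversibility of $K$. The absolute continuity of $M'$ with respect to $K(\phi,-)$ is proved exactly as in Proposition~\ref{prop-W>2}: a GFF on $A_0$ plus a smooth function has a law on $B_0$ mutually absolutely continuous with the plain GFF restricted to $B_0$ (cf.\ \cite[Proposition 2.9]{MS17}), so we obtain agreement on $B_0$ after one step; a further step with $\ol\Lambda_1$ gives agreement on $\ol{B_0 \cup B_1}$, and a final step with $\ol\Lambda_\infty$ covers $\bbH$. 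Finally, $\sigma$-finiteness of $M$ is Lemma~\ref{lem-sigma-finite-M-2}, while $\sigma$-finiteness of $M'$ follows from the fact that the field average on $(\partial B_1(0))\cap\bbH$ is an a.s.\ finite real-valued variable, so the events $\{|{\rm average}|\le N\}$ exhaust the sample space and have finite $M'$-mass.

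I do not anticipate any serious obstacle: all three required inputs (the product-law identification, the three Markov properties, and $\sigma$-finiteness) are already in place by Proposition~\ref{prop-indep-curves-thinner}, Lemma~\ref{lem-markov-welded-02}, and Lemma~\ref{lem-sigma-finite-M-2} respectively, and the abstract uniqueness argument via Lemma~\ref{lem-irreducible} is identical to the one in Proposition~\ref{prop-W>2}. The only mild point worth double-checking is that the values of $(\beta_1,\beta_2,\beta_3)$ declared in Lemma~\ref{lem-markov-welded-02} match those in the target measure $M'$ from the statement of Theorem~\ref{thm:u2v}; this is immediate from the relation $\beta = Q+\frac\gamma2-\frac W\gamma$.
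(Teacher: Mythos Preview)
Your proposal is correct and follows essentially the same approach as the paper's own proof: define the three resampling kernels $\ol\Lambda_0,\ol\Lambda_1,\ol\Lambda_\infty$, invoke Lemmas~\ref{lem-markov-LF} and~\ref{lem-markov-welded-02} for invariance and reversibility, set $K=\ol\Lambda_0\ol\Lambda_1\ol\Lambda_\infty\ol\Lambda_1\ol\Lambda_0$, use Lemma~\ref{lem-sigma-finite-M-2} and the field-average argument for $\sigma$-finiteness, and conclude via Lemma~\ref{lem-irreducible}. The paper's proof is the same, simply pointing back to the argument of Proposition~\ref{prop-W>2}.
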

\begin{proof}
The argument is identical to that of Proposition~\ref{prop-W>2}. We can define Markov kernels $\ol \Lambda_z(\phi, d\psi)$ for $z \in \{0,1,\infty\}$, under which $M$ and $M' := \LF_\bbH^{(\beta_1, 0), (\beta_2, 1), (\beta_3, \infty)}$ are invariant by Lemmas~\ref{lem-markov-LF} and~\ref{lem-markov-welded-02}. We then define the Markov kernel $K = \ol \Lambda_0 \ol \Lambda_1 \ol \Lambda_\infty  \ol \Lambda_1  \ol \Lambda_0$; it has $M$ and $M'$ as invariant measures, and by the argument of Proposition~\ref{prop-W>2} $K$ is irreducible. Finally, $M$ is $\sigma$-finite by Lemma~\ref{lem-sigma-finite-M-2}, and $M'$ is $\sigma$-finite by the same argument of Proposition~\ref{prop-W>2}, so Lemma~\ref{lem-irreducible} yields $M = cM'$ for some constant $c$. 
\end{proof}

\section{Proof of Theorem \ref{thm:M+QTp-rw} in the full range}\label{sec-6-thin-qt}

In this section we prove Theorem~\ref{thm:M+QTp-rw}. The first three subsections aim to establish the following weaker version of Theorem \ref{thm:M+QTp-rw}. 

Let $\mathsf m$ be a measure on the space of curves in $(\bbH, 0, 1, \infty)$ from $0$ to $\infty$ which do not hit $1$. Let $W,W_1,W_2,W_3 > 0$ such that none of $W+W_1, W+W_2, W_1, W_2, W_3$ equal $\frac{\gamma^2}2$. 
Sample a pair $S, \eta$ from $\QT(W+W_1,W+W_2,W+W_3) \times \mathsf m$, let $(D, a_1, a_2, a_3)$ be an embedding of $S$ and let $(\tilde D, \tilde a_1, \tilde a_2, \tilde a_3)$ be the corresponding embedding of the core of $S$. If $a_1 \neq \tilde a_1$ let $\eta_1$ be independent $\SLE_{\kappa}(W-2; W_1-2)$ in each component in the interior of $D$ from $\tilde a_1$ to $a_1$. Likewise if $a_2 \neq \tilde a_2$ let $\eta_2$ be independent $\SLE_\kappa(W-2; W_2-2)$ in each component in the interior of $D$ from $a_2$ to $\tilde a_2$. Let $ \tilde \eta \subset D$ be the image of $\eta$ under the conformal map sending $\bbH$ to $(\tilde D, \tilde a_2, \tilde a_3, \tilde a_1)$, and let $\eta'$ be the concatenation of $\tilde \eta$ with whichever of  $\eta_2, \eta_1$ we have defined. Let $\QT(W+W_1, W+W_2, {W}_3)\otimes \mathsf{m}(W;W_1,W_2,W_3)$ be the law of the decorated quantum surface $(D, \phi, a_1, a_2, a_3, \eta')$.

\begin{proposition}\label{prop-associate-c}
There exists a measure  $\mathsf{m}(W;W_1,W_2,W_3)$  and a constant $c = c_{W,W_1,W_2}\in (0,\infty)$ such that
\begin{equation}\label{eqn-qt-change-weight-2}
	\QT(W+W_1, W+W_2, {W}_3)\otimes \mathsf{m}(W;W_1,W_2,W_3) = c\int_0^\infty \Wd(\Md_2(W;\ell),\QT(W_1,W_2,{W}_3;\ell))d\ell
\end{equation} 
where we are welding the right boundary arc of the weight $W$ quantum disk with the left boundary arc of the quantum triangle linking the weight $W_1$ and $W_2$ vertices.
\end{proposition}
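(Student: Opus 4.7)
The plan is to derive Proposition~\ref{prop-associate-c} as an abstract consequence of Theorem~\ref{thm:u2v} by iterating the conformal welding operation along different edges of a quantum triangle, using the reversibility statement of Theorem~\ref{thm-sle-reverse} as the mechanism that transports a welding identity from one edge to another. Since Proposition~\ref{prop-associate-c} only asserts the existence of $\mathsf{m}(W;W_1,W_2,W_3)$ and of the finite constant $c$, it suffices to produce any decomposition of the welded surface as a quantum triangle of the claimed weights together with some independent-of-the-field curve measure; the explicit $\wt{\SLE}_\kappa$ identification of $\mathsf{m}$ is postponed to the proof of Theorem~\ref{thm:M+QTp-rw}.

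The idea is to start from a case already covered by Theorem~\ref{thm:u2v}, namely welding $\Md_2(U)$ with $\QT(W',2,W')$ for suitable $U<\gamma^2/2<W'$, and then successively transform the parameters by welding a second disk along one of the two other edges of the resulting quantum triangle $\QT(U+W',U+2,W')$. The second welding cannot be applied directly in the form of Theorem~\ref{thm:u2v} because the two endpoints of the new welded edge have different weights in general; this is where Theorem~\ref{thm-sle-reverse} enters. Reversing an $\SLE_\kappa(\rho_-;\rho_+,\rho_1)$ curve from one endpoint to the other introduces a conformal derivative factor $\psi'_\eta(1)^\alpha$ which, transplanted to the right-hand side of a welding identity, is precisely what permits the output triangle to have three distinct weights. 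Iterating finitely many such single-edge welding moves, together with the obvious boundary-length disintegration and the thick/thin duality from Definition~\ref{def-qt-thin} (which lets us attach or extract thin quantum disks at the non-welded vertices without altering the welded edge), we can reach any target triple $(W+W_1,W+W_2,W_3)\in(0,\infty)^3$ avoiding the codimension-one set where any of the relevant weights equals $\gamma^2/2$.

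The main obstacle will be the topological bookkeeping required when either the weight-$(W+W_1)$ or weight-$(W+W_2)$ vertex of the output triangle is thin. In that case the welded interface does not terminate at a boundary point of $\bbH$ in any embedding, but rather ends inside one of the Poissonian beads that form the thin vertex, and we must prescribe how the curve is continued (or terminated) inside the bead in order to produce a well-defined curve measure $\mathsf{m}$. This is exactly the purpose of the $\SLE_\kappa(W-2;W_i-2)$ continuations $\eta_i$ in the independent beads of $D$ described just before the statement of Proposition~\ref{prop-associate-c}. Verifying that this prescription is consistent with the iterated welding construction, and that the resulting $\eta_i$ are genuinely conditionally independent of the core of the triangle with the stated marginal law, is the step that will require the most careful case analysis, handled separately for each configuration of thick/thin vertices of $\QT(W+W_1,W+W_2,W_3)$.
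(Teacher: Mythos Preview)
Your proposal misidentifies the key mechanism. Theorem~\ref{thm-sle-reverse} only reverses the orientation of the interface along the \emph{same} edge; the reversed curve still separates the same two quantum surfaces, so it cannot by itself transport a welding identity to a different edge or allow the output triangle to acquire a third distinct weight. The conformal-derivative weighting you invoke is the content of Proposition~\ref{prop-change-weight}, not of reversibility: reweighting the curve law by $|\psi'_\eta|^{\Delta_{\tilde\beta_3}-\Delta_{\beta_3}}$ changes the insertion at the \emph{non-welded} vertex $a_3$, and does nothing to the weights at the welded endpoints. The paper's engine is instead an associativity argument: one glues a second disk to a \emph{different} edge of the triangle, compares the two orders of welding (one order using Theorem~\ref{thm:u2v} or the induction hypothesis, the other using Proposition~\ref{prop-3-pt-disk} or \eqref{eqn-qt-disk}), forgets the auxiliary interface, and then applies Proposition~\ref{prop-change-weight} to free the third weight. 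This is how one escapes the two-distinct-weights constraint of Theorem~\ref{thm:u2v} without ever invoking reversibility; indeed the paper's proof of Proposition~\ref{prop-associate-c} does not use Theorem~\ref{thm-sle-reverse} at all---reversibility enters only later, in Section~\ref{sec-interface-law}, to pin down the explicit $\wt\SLE$ law when $W_1<W_2$.

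Your handling of thin vertices is also too casual. The thick/thin duality of Definition~\ref{def-qt-thin} lets you attach or detach a thin disk at the free vertex $a_3$, but when $W_1$ or $W_2$ itself lies in $(0,\gamma^2/2)$ the welded edge passes through a Poissonian chain and you cannot simply appeal to the definition. The paper isolates the special line $W_1+W_2=\gamma^2$ (Theorem~\ref{prop-w-gamma2-w}) and proves it by a separate argument exploiting that the concatenation point carries $\beta$-insertion $0$; this is then combined with the associativity argument of Section~\ref{sec-general-thin} to reach all $W_1,W_2>0$. Your outline contains neither of these ingredients.
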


In particular, Theorem \ref{thm:u2v} is the special case of Proposition \ref{prop-associate-c} with $W\in(0,\frac{\gamma^2}{2})$, $W_1=W_3>\frac{\gamma^2}{2}$ and $W_2=2$. In Section \ref{sec-change-weight}, by a reweighting argument, we are going to repeatedly apply Theorem \ref{thm:u2v} and hence prove Proposition \ref{prop-associate-c} in the case where $W_1,W_2>\frac{\gamma^2}{2}$ and $W,W_3>0$. In Section \ref{sec-w-gamma2-w}, we build on this result and work on the special case where $W_1+W_2={\gamma^2}$ using the thick-thin duality. Based on this, in Section \ref{sec-general-thin} we conclude the proof of Proposition \ref{prop-associate-c}, while in Section \ref{sec-interface-law} we identify the law of the curves via the SLE resampling property and thus complete the proof of Theorem \ref{thm:M+QTp-rw} {when none of the weights $W_1,W_2,W+W_1,W+W_2,W_3$ equal $\frac{\gamma^2}{2}$}. {Finally in Section \ref{sec-gamma/2} we prove the full version of Theorem~\ref{thm:M+QTp-rw} which addresses the case when some weight equals $\frac{\gamma^2}2$}, while in Section \ref{sec-pfthm1.1} we prove Theorem \ref{thm:disk} by a quick application of Theorem \ref{thm:M+QTp-rw}.



\subsection{The $W_1, W_2 > \frac{\gamma^2}2$ regime}\label{sec-change-weight}

This section serves to prove the following:
\begin{proposition}\label{prop-thick-welding}
Fix $W_1,W_2>\frac{\gamma^2}{2}$ and $W,W_3>0$. Then there exists some measure $\mathsf{m}(W;W_1,W_2,W_3)$ such that \eqref{eqn-qt-change-weight-2} holds.
\end{proposition}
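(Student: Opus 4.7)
The goal is to extend the restricted welding identity of Theorem~\ref{thm:u2v} (valid for $W=U\in(0,\gamma^2/2)$, $W_1=W_3>\gamma^2/2$, $W_2=2$) to the range required by Proposition~\ref{prop-thick-welding}. The plan proceeds in two stages: first vary $W_1, W_2, W_3$ by a Girsanov-type reweighting of the Liouville field at the three boundary vertices, and then vary $W$ by iterating the thin-disk case and merging two thin disks into a thick one via Theorem~\ref{thm-disk-2}.

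\medskip
\noindent\textbf{Stage 1 (varying $W_1, W_2, W_3$ while keeping $W=U$ thin).} By Definition~\ref{def-qt-thick}, a thick quantum triangle is a Liouville field on $\mathcal S$ with three prescribed boundary insertions. Lemmas~\ref{lmm-lf-insertion-bdry} and~\ref{lmm-lf-insertion-infty} express an arbitrary change of insertion value $\beta_i\mapsto \beta_i+\beta$ as the $\varepsilon\to 0$ limit of the reweighting by $\varepsilon^{\beta^2/4}e^{\beta\phi_\varepsilon(s_i)/2}$, and the relation~\eqref{eq:betaW} converts this into a change of vertex weight. I apply this reweighting at each of the three embedded vertices $\infty, 0, 1$ of the welded surface in Theorem~\ref{thm:u2v}. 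On the left-hand side it transforms $\QT(U+W, U+2, W)$ into $\QT(W+W_1, W+W_2, W_3)$ for arbitrary thick $W_i$. On the right-hand side, since the welded field near a triangle vertex $v$ differs from the pre-welded triangle field at the corresponding point only by the conformal Jacobian $Q\log|(f^{-1})'|$ where $f$ is the welding map, the reweighting descends to a reweighting of $\QT(W, 2, W; \ell)$ at the corresponding vertex, producing $\QT(W_1, W_2, W_3; \ell)$, together with a residual conformal-derivative factor depending on $f$. Because Proposition~\ref{prop-thick-welding} only asserts the existence of some $\mathsf m$, this residual factor can be absorbed into the curve measure. Thin $W_3\in(0,\gamma^2/2)$ is then reached by attaching an independent weight-$W_3$ two-pointed disk at the weight-$(\gamma^2-W_3)$ vertex on both sides, consistent with Definition~\ref{def-qt-thin}.

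\medskip
\noindent\textbf{Stage 2 (extending $W$ into the thick regime).} For $W\in(0,\gamma^2/2)$ Stage 1 suffices. For $W\geq \gamma^2/2$ I iterate: fix $U_1, U_2\in(0,\gamma^2/2)$ with $U_1+U_2=W$ chosen so that all intermediate weights avoid $\gamma^2/2$. Apply the thin-disk case first with disk weight $U_1$ welded to $\QT(W_1, W_2, W_3)$ to obtain $\QT(U_1+W_1, U_1+W_2, W_3)$ decorated by one interface, then apply it again with disk weight $U_2$ welded to the new triangle to obtain $\QT(W+W_1, W+W_2, W_3)$ decorated by two nested interfaces. Composing the two identities exhibits this two-curve decorated surface as the welding of $\Md_2(U_2), \Md_2(U_1)$ and $\QT(W_1, W_2, W_3)$ in that order. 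By Theorem~\ref{thm-disk-2} the welding of $\Md_2(U_1)$ and $\Md_2(U_2)$ is, up to a constant, $\Md_2(W)$ decorated by an independent internal $\SLE_\kappa(U_1-2; U_2-2)$. Marginalizing this internal curve collapses the three-way welding to $\mathrm{Weld}(\Md_2(W), \QT(W_1, W_2, W_3))$ decorated by a single outer interface, and the conditional law of that interface given the rest defines the required $\mathsf m(W; W_1, W_2, W_3)$, yielding~\eqref{eqn-qt-change-weight-2}.

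\medskip
\noindent\textbf{Main obstacle.} The delicate point is the justification of Stage 1. Lemmas~\ref{lmm-lf-insertion-bdry} and~\ref{lmm-lf-insertion-infty} are vague-convergence statements whose normalizing factors diverge, so one must verify that the $\varepsilon\to 0$ limit commutes with the $\int_0^\infty d\ell$ integral and with the disintegration by boundary length on the right-hand side of~\eqref{eqn-qt-change-weight-2}. I expect this to follow from a dominated convergence argument based on the explicit boundary-length densities in Propositions~\ref{prop-qt-bdry-thick} and~\ref{prop-qt-bdry-thin}, together with negative-moment GMC bounds at insertions in the spirit of~\cite{RZ20a}. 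The standing hypothesis that none of $W_1, W_2, W+W_1, W+W_2, W_3$ equals $\gamma^2/2$ keeps every reweighted insertion strictly below $Q$, so all intermediate Liouville fields remain non-critical; the excluded critical cases are handled separately by the limiting procedure of Section~\ref{subsec:critical}.
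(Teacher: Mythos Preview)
Your Stage 2 is essentially correct and parallels the paper's Step 2, but your Stage 1 has a genuine gap that I do not see how to close.

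The Girsanov reweighting via Lemmas~\ref{lmm-lf-insertion-bdry} and~\ref{lmm-lf-insertion-infty} works cleanly only at the vertex $1$, because $1$ lies strictly in the interior of the triangle side of the welding and the uniformizing map $\psi_\eta$ is smooth there; this is precisely Proposition~\ref{prop-change-weight}, which changes $W_3$. At the vertices $0$ and $\infty$ the situation is fundamentally different: these are the \emph{endpoints of the interface} $\eta$. Any ball $B_\varepsilon(0)$ in the welded picture is split by $\eta$ into a piece in the disk region and a piece in the triangle region, so the circle average $\phi_\varepsilon(0)$ mixes the disk field and the triangle field. Your claim that ``the welded field near a triangle vertex $v$ differs from the pre-welded triangle field only by the conformal Jacobian $Q\log|(f^{-1})'|$'' is false at $0$ and $\infty$: the conformal map from the right component of $\bbH\setminus\eta$ to $\bbH$ has a power-law singularity at the curve endpoints, and in any case only sees half of the neighborhood. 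Consequently, reweighting the big field by $e^{\beta\phi_\varepsilon(0)/2}$ does \emph{not} translate into reweighting the triangle's weight-$2$ insertion by the same amount; there is no reason the disk measure $\Md_2(U)$ should survive unchanged under this operation, and indeed the additivity $W+W_2$ at the glued vertex is a consequence of the welding theorem, not of a field identity that Girsanov could invert.

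This is exactly why the paper does not attempt to reweight at $0$ or $\infty$. Instead it changes $W_1$ and $W_2$ by an associativity argument: weld an auxiliary disk along a \emph{different} edge of the triangle (so that the vertex whose weight you want to change becomes the off-interface vertex for that second welding), then compare the two orders of welding using Theorem~\ref{thm:u2v} and Proposition~\ref{prop-3-pt-disk}, and finally forget the auxiliary interface. See Step 1 of the paper's proof and Figure~\ref{fig-associate-a}. Your proposal is missing this mechanism entirely, and without it Stage 1 does not go through.
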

To start with, we extend the idea of changing the weight of the third point as in \cite[Proposition 4.5]{AHS21} (See also Section \ref{sec-w2u-weld}) to quantum triangles with general weights in Proposition~\ref{prop-change-weight}. {Although this section only requires $W_1, W_2 > \frac{\gamma^2}2$, we state and prove  Proposition~\ref{prop-change-weight} for a larger range for later sections.} Suppose we have a curve-decorated surface from $\QT(W+W_1, W+W_2, \tilde{W}_3)\otimes \mathsf{m}(W;W_1,W_2,\tilde{W}_3)$ embedded as $(D,\phi,\eta, a_1,a_2,a_3)$ and $\eta$ is a curve from $a_2$ to $a_1$. We choose $D$ such that each component of the interior of $D$ has smooth boundary. Let $(\tilde D, \tilde a_1, \tilde a_2, \tilde a_3)$ be the corresponding embedding of the core of the quantum triangle, and 
let $\tilde D_\eta$ be the connected component of $\tilde D \backslash \eta$ with $\tilde a_3$ on its boundary. See Figure~\ref{fig-reweighting}.  Let $\psi_\eta:\tilde D_\eta\to \bbH$   be the conformal mapping sending the first (resp.\ last) point on $\partial D_\eta$ (resp.\ $\partial D$) hit by $\eta$ to 0 (resp.\ $\infty$) and sending {$\tilde{a}_3$ to $1$, and $\psi: \tilde D \to \bbH$ be the conformal map sending $(\tilde{a}_2,\tilde{a}_1,\tilde{a}_3)$ to $(0,\infty,1)$}. 

\begin{proposition}\label{prop-change-weight}
Suppose that given $W, W_1, W_2,W_3,\tilde{W}_3>0$, $ W_3,\tilde{W}_3\neq \frac{\gamma^2}{2}$ with $W_1,W_2,W+W_1,W+W_2\neq\frac{\gamma^2}{2}$, there exists some measure  $\mathsf{m}(W;W_1,W_2,\tilde{W}_3)$  on random simple curves in $D$ starting from $a_2$ to $a_1$ and not hitting {$\tilde a_3$} such that we have the conformal welding of quantum triangles as in \eqref{eqn-qt-change-weight-2} with $W_3$ replaced by $\tilde{W}_3$.
Then if we define the measure $\mathsf{m}(W;W_1,W_2,W_3)$ on curves by setting
\begin{equation*}
	\frac{d\mathsf{m}(W;W_1,W_2,W_3)}{d\mathsf{m}(W;W_1,W_2,\tilde{W}_3)}(\eta) = \left|\frac{\psi'_\eta({\tilde a_3})}{\psi'({\tilde a_3})} \right|^{\Delta_{\wt \beta_3} - \Delta_{\beta_3}}
\end{equation*}
where $\beta_3 = \gamma+\frac{2-W_3}{\gamma}$ and $\tilde{\beta_3} = \gamma+\frac{2-\tilde{W}_3}{\gamma}$. Then \eqref{eqn-qt-change-weight-2} holds.

\end{proposition}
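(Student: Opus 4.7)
The plan is to pass from $\tilde W_3$ to $W_3$ by multiplying both sides of the assumed welding identity by a regularized field-dependent factor localized at the core vertex $\tilde a_3$, which changes the Liouville boundary insertion strength at that point. Since the curves sampled from $\mathsf m$ avoid $\tilde a_3$ by hypothesis, the field $\phi$ in a small neighborhood of $\tilde a_3$ is a function of the decorated quantum surface that is identical on both sides of the welding equation, so the reweighting affects both sides uniformly.

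Concretely, for $\eps>0$ I would introduce the reweighting factor
\begin{equation*}
R_\eps := \eps^{(\beta_3^2-\tilde\beta_3^2)/4}\exp\!\left(\tfrac{\beta_3-\tilde\beta_3}{2}\,\phi_\eps(\tilde a_3)\right),
\end{equation*}
where $\phi_\eps(\tilde a_3)$ is the semicircle average of $\phi$ at $\tilde a_3$ at radius $\eps$ in a fixed local coordinate. To analyze the LHS, I would parametrize the core via $\psi:\tilde D\to\bbH$: writing $\phi=\tilde\phi\circ\psi+Q\log|\psi'|$ and comparing the regularization scales $\eps$ and $\eps':=\eps|\psi'(\tilde a_3)|$, a short computation using $\Delta_\beta=\tfrac{\beta}{2}(Q-\tfrac{\beta}{2})$ shows that $R_\eps=|\psi'(\tilde a_3)|^{\Delta_{\beta_3}-\Delta_{\tilde\beta_3}}\,\tilde R_{\eps'}$, where $\tilde R_{\eps'}$ is the analogous regularization at the point $1$ in $\bbH$ coordinates. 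By Lemma~\ref{lmm-lf-insertion-bdry}, as $\eps'\to 0$ the reweighting $\tilde R_{\eps'}\LF_\bbH^{(\beta_1',\infty),(\beta_2',0),(\tilde\beta_3,1)}$ converges vaguely to $\LF_\bbH^{(\beta_1',\infty),(\beta_2',0),(\beta_3,1)}$, so the LHS converges vaguely to $|\psi'(\tilde a_3)|^{\Delta_{\beta_3}-\Delta_{\tilde\beta_3}}\,\QT(W+W_1,W+W_2,W_3)\otimes\mathsf m(W;W_1,W_2,\tilde W_3)$.

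An identical argument applied to the RHS, now parametrizing the core of $\QT(W_1,W_2,\tilde W_3)$ via $\psi_\eta:\tilde D_\eta\to\bbH$, produces the limit
\begin{equation*}
c\int_0^\infty \Wd\!\left(\Md_2(W;\ell),\,|\psi_\eta'(\tilde a_3)|^{\Delta_{\beta_3}-\Delta_{\tilde\beta_3}}\,\QT(W_1,W_2,W_3;\ell)\right) d\ell.
\end{equation*}
Dividing both sides by the LHS prefactor $|\psi'(\tilde a_3)|^{\Delta_{\beta_3}-\Delta_{\tilde\beta_3}}$, which is a function of the core only and thus well defined on both sides, leaves a residual factor $|\psi_\eta'(\tilde a_3)/\psi'(\tilde a_3)|^{\Delta_{\beta_3}-\Delta_{\tilde\beta_3}}$ on the RHS. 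Absorbing this factor into the curve marginal on the LHS reweights $\mathsf m(W;W_1,W_2,\tilde W_3)$ by the inverse, namely $|\psi_\eta'(\tilde a_3)/\psi'(\tilde a_3)|^{\Delta_{\tilde\beta_3}-\Delta_{\beta_3}}$; this is exactly the measure $\mathsf m(W;W_1,W_2,W_3)$ defined in the statement.

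The main obstacle I anticipate is controlling the vague convergence of the regularized reweighting uniformly enough across the configurations arising from the welding, particularly when $W_1$ or $W_2$ is thin so that the core does not fill the whole welded surface and one must argue locally near $\tilde a_3$ using the Markov property of Section~\ref{sec:resampling}. A secondary subtlety is the case in which $W_3$ or $\tilde W_3$ lies below $\gamma^2/2$: the effective core insertion at $\tilde a_3$ is then $2Q-\beta_3$ or $2Q-\tilde\beta_3$ rather than $\beta_3$ or $\tilde\beta_3$, so $R_\eps$ must be redefined with the dual exponents, but because $\Delta_\beta=\Delta_{2Q-\beta}$ the conformal covariance factors and hence the Radon-Nikodym derivative in the statement are unchanged.
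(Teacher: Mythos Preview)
Your approach is correct and is essentially the same as the paper's: both arguments multiply the assumed welding identity by the regularized exponential $\eps^{(\beta_3^2-\tilde\beta_3^2)/4}e^{\frac{\beta_3-\tilde\beta_3}{2}(\text{field at }\tilde a_3)}$ and pass to the limit, picking up the conformal-covariance factor $|\psi_\eta'/\psi'|^{\Delta_{\tilde\beta_3}-\Delta_{\beta_3}}$ from the change of regularization scale.

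A few minor remarks on the comparison. The paper embeds the core directly in $(\bbH,\infty,0,1)$, so $\psi=\mathrm{id}$ and only the $\psi_\eta$ factor appears; you instead carry both $\psi$ and $\psi_\eta$ factors and cancel the constant one at the end, which is harmless. More substantively, on the welding side you invoke Lemma~\ref{lmm-lf-insertion-bdry}, but what is actually needed is its disintegrated refinement (the paper's Lemma~\ref{lm-change-reweight}): to argue that the conditional law of the disk $S_1$ given the triangle piece is unchanged under the reweighting, one must know that the limit holds at fixed interface length $\ell$, not merely after integrating over $\ell$. Finally, your anticipated obstacle with thin $W_1$ or $W_2$ is not a genuine issue and does not require the Markov machinery of Section~\ref{sec:resampling}; since the arm disks do not touch $\tilde a_3$, one simply runs the entire argument on the core $(\tilde D,\tilde a_1,\tilde a_2,\tilde a_3)$, exactly as the paper does in its last paragraph. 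Your treatment of thin $W_3,\tilde W_3$ via the duality $\Delta_\beta=\Delta_{2Q-\beta}$ matches the paper's reduction.
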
	
\begin{figure}[ht]
\centering
\includegraphics[width=0.75\textwidth]{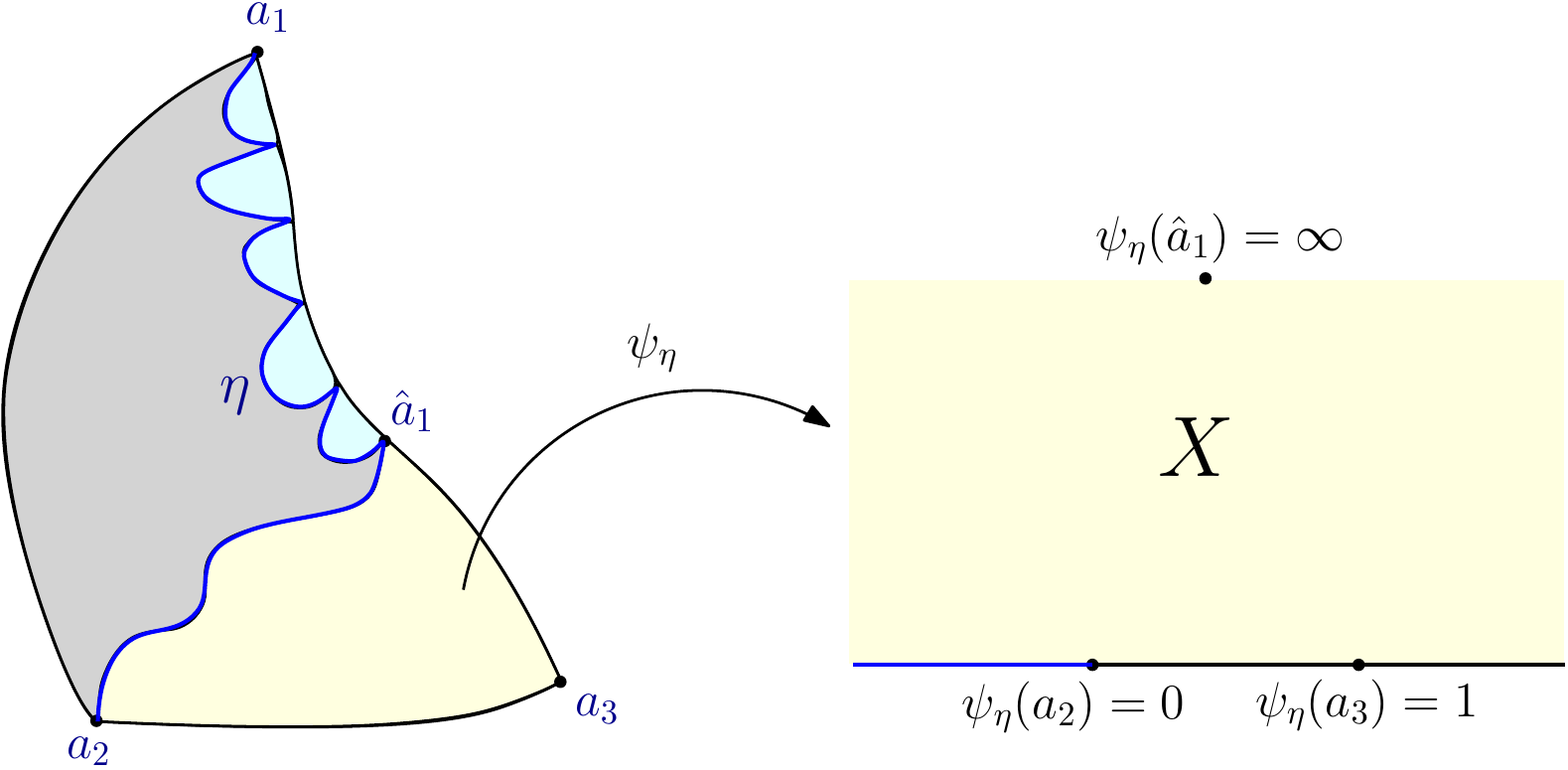}
\caption{Setup of Proposition \ref{prop-change-weight} in the case $W_1<\frac{\gamma^2}{2}$ and $W, W_2, \tilde{W}_3>\frac{\gamma^2}{2}$ and an illustration of the conformal map $\psi_\eta$. By decorating a quantum triangle from $\QT(W+W_1, W+W_2, \tilde{W}_3)$ with an independent curve $\eta$ from $\mathsf{m}(W;W_1,W_2,\tilde{W}_3)$ we get a weight $W$ disk $S_1$ (gray) and a quantum triangle with weights $W_1, W_2, \tilde{W}_3$, which has two parts $S_2$ (blue) and $S_3$ (yellow).   Consider the conformal map $\psi_\eta$ and let $X$ be the corresponding surface embedded on $\bbH$ as on the right panel. Weighting the law of curve-decorated surface by $e^{(\beta_3-\tilde{\beta}_3)X(1)}$ allows us to shift \eqref{eqn-qt-change-weight-2} to $W_3$ from the $\tilde{W}_3$ case.}\label{fig-reweighting}
\end{figure}


We can define a disintegration $\LF_\bbH^{(\beta_1, \infty), (\beta_2, 0), (\beta_3, 1)} = \int_0^\infty \LF_{\bbH, \ell}^{(\beta_1, \infty), (\beta_2, 0), (\beta_3, 1)} \, d\ell$ where for each $\ell>0$ the measure $\LF_{\bbH, \ell}^{(\beta_1, \infty), (\beta_2, 0), (\beta_3, 1)}$ is supported on $\{ \nu_\phi((-\infty, 0)) = \ell\}$, see e.g.\ Definition~\ref{def-qt-disintegration-thick} and Lemma~\ref{lem-qt-disintegration-thick}.
\begin{lemma}\label{lm-change-reweight}
Let $\beta_1,\beta_2<Q$, $\beta_3, \tilde{\beta}_3\in\bbR$ and $\ell>0$. In the sense of weak convergence of measures,
\begin{equation}
	\lim_{\epsilon\to 0} \e^{\frac{\beta_3^2-\tilde{\beta}_3^2}{4}}e^{\frac{\beta_3-\tilde{\beta}_3}{2}\phi_\e(1)}\LF_{\bbH,\ell}^{(\beta_1, \infty), (\beta_2, 0), (\tilde{\beta}_3,1)}(d\phi) = \LF_{\bbH,\ell}^{(\beta_1, \infty), (\beta_2, 0), ({\beta}_3,1)}(d\phi). 
\end{equation}
\end{lemma}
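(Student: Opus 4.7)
My plan is to first establish the un-disintegrated version of the convergence, and then transfer it to the disintegration at fixed boundary length $\ell$ using the explicit description of the Liouville field. The un-disintegrated statement is a direct generalization of Lemma~\ref{lmm-lf-insertion-bdry}: rather than \emph{adding} an insertion, I am \emph{replacing} a $\tilde\beta_3$-insertion at $1$ by a $\beta_3$-insertion, and the proof structure should be essentially identical.

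For the un-disintegrated version I will expand $\phi\sim \LF_{\bbH}^{(\beta_1,\infty),(\beta_2,0),(\tilde\beta_3,1)}$ via Definition~\ref{def-lf-H-bdry} as
\[
\phi(z) = h(z) - 2Q\log|z|_++ \beta_1\log|z|_+ + \tfrac{\beta_2}{2}G_\bbH(0,z) + \tfrac{\tilde\beta_3}{2}G_\bbH(1,z) + \mathbf{c},
\]
and compute the semicircle average at $1$. Since $G_\bbH(1,1+\epsilon e^{i\theta})$ averaged over $\theta\in(0,\pi)$ equals $-2\log\epsilon + O(\epsilon)$ while the remaining deterministic terms contribute $o(1)$ as $\epsilon\to 0$, I obtain $\phi_\epsilon(1) = h_\epsilon(1) - \tilde\beta_3\log\epsilon + \mathbf{c} + o(1)$. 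Multiplying by $\epsilon^{(\beta_3^2-\tilde\beta_3^2)/4}e^{(\beta_3-\tilde\beta_3)\phi_\epsilon(1)/2}$, the $\epsilon$-exponents telescope to $(\beta_3-\tilde\beta_3)^2/4$, the factor $e^{(\beta_3-\tilde\beta_3)\mathbf{c}/2}$ exactly shifts the density of $\mathbf{c}$ from $e^{((\beta_1+\beta_2+\tilde\beta_3)/2-Q)c}\,dc$ to $e^{((\beta_1+\beta_2+\beta_3)/2-Q)c}\,dc$, and the residual $\epsilon^{(\beta_3-\tilde\beta_3)^2/4}e^{(\beta_3-\tilde\beta_3)h_\epsilon(1)/2}$ is a Cameron-Martin weighting of $P_\bbH$. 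Since $\mathrm{Var}(h_\epsilon(1))=-2\log\epsilon + O(1)$, this normalizing constant converges, and Girsanov's theorem identifies the limiting law of $h$ as $P_\bbH$ with $h$ shifted by $\tfrac{\beta_3-\tilde\beta_3}{2}G_\bbH(1,\cdot)$; this shift merges with the existing $\tilde\beta_3$-log singularity at $1$ to produce exactly a $\beta_3$-log singularity. The remaining constants match by the same bookkeeping that normalizes Lemma~\ref{lmm-lf-insertion-bdry} to constant $1$ (and in fact $G_\bbH(0,1) = 0$ simplifies the $C_\bbH^{(\beta_i,s_i)_i}$ prefactors to $1$ in both configurations), yielding the un-disintegrated convergence.

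To pass to the fixed-$\ell$ version, I will use the disintegration parametrization analogous to Definition~\ref{def-qt-disintegration-thick}: $\LF_{\bbH,\ell}^{(\beta_i,s_i)_i}$ is obtained by sampling $h\sim P_\bbH$, forming the raw field $\tilde\phi$ (the insertion log-singularities without $\mathbf{c}$), setting $L=\nu_{\tilde\phi}((-\infty,0))$, and outputting $\tilde\phi + \tfrac{2}{\gamma}\log(\ell/L)$ under a density proportional to $\ell^{a-1}L^{-a}P_\bbH(dh)$ with $a = (\beta_1+\beta_2+\beta_{\mathrm{third}}-2Q)/\gamma$. In this representation $\phi_\epsilon(1) = h_\epsilon(1) - \tilde\beta_3\log\epsilon + o(1) + \tfrac{2}{\gamma}\log(\ell/L)$, so the entire $\ell$-dependence of the weighting factor cleanly separates as $(\ell/L)^{(\beta_3-\tilde\beta_3)/\gamma}$; multiplied by $\ell^{a-1}L^{-a}$ this precisely produces the $\beta_3$-disintegration exponent $a' = a + (\beta_3-\tilde\beta_3)/\gamma$, while the leftover $\epsilon$-dependent piece is the same Cameron-Martin factor as before. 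Running the Girsanov limit—and using that $G_\bbH(1,\cdot)$ is smooth on $(-\infty,0)$ so $L$ deforms continuously into the length $L'$ associated with the $\beta_3$-field—identifies the limit with $\LF_{\bbH,\ell}^{(\beta_1,\infty),(\beta_2,0),(\beta_3,1)}$ for each fixed $\ell>0$. The main obstacle I anticipate is the careful tracking of the deterministic multiplicative constants, in particular the interplay of $C_\bbH^{(\beta_i,s_i)_i}$, the Girsanov normalization and the $\ell^{a-1}$ Jacobian; I will handle this by first pinning down the constant in the un-disintegrated limit (where the ratio to a known limit from Lemma~\ref{lmm-lf-insertion-bdry} forces the constant to be $1$) and then inheriting it through the factorized $\ell$-dependence in the disintegration.
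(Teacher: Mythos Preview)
Your proposal is correct and is exactly the approach the paper has in mind: the paper's own proof consists of the single sentence ``The proof is identical to that of \cite[Lemma~4.6]{AHS21}, which is a direct application of the Girsanov theorem. We omit the details.'' You have carried out precisely this Girsanov computation, including the passage to the fixed-$\ell$ disintegration via the explicit parametrization of Definition~\ref{def-qt-disintegration-thick}.
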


The proof is identical to that of \cite[Lemma 4.6]{AHS21}, which is a direct application of the Girsanov theorem. We omit the details. 

\begin{proof}[Proof of Proposition \ref{prop-change-weight}]

{We first note that if the result holds for $W_3, \tilde W_3 > \frac{\gamma^2}2$, then it holds for the full range $W_3, \tilde W_3 \in (0, \infty) \backslash \{\frac{\gamma^2}2\}$. Indeed, if $W_3 < \frac{\gamma^2}2 < \tilde W_3$, we can use Proposition~\ref{prop-change-weight} with $W_3$ replaced by $\gamma^2 - W_3$ and concatenate a weight $W_3$ quantum disk to the weight $\gamma^2 - W_3$ vertex; since $\Delta_\beta$ takes the same value for $\beta = \gamma + \frac{2-W_3}\gamma$ and $\beta = \gamma + \frac{2-(\gamma^2-W_3)}\gamma$, the law of the curve is as desired. For $W_3$ arbitrary and $\tilde W_3 < \frac{\gamma^2}2$, recall that the quantum triangle with weights $(W_1, W_2, \tilde W_3)$ is obtained by concatenating a quantum triangle with weights $(W_1, W_2, \gamma^2 - \tilde W_3)$ and a quantum disk of weight {$\tilde W_3$}; by forgetting this quantum disk we reduce the problem to the solved case where $\tilde W_3$ is replaced by $\gamma^2 - \tilde W_3$. Henceforth we assume $W_3, \tilde W_3 > \frac{\gamma^2}2$. }


First assume $W_1+W,W_2+W>\frac{\gamma^2}{2}$. The quantum triangle from $\QT(W+W_1, W+W_2, \tilde{W}_3)$ can be embedded as $c\LF_{\bbH}^{(\beta_1, \infty), (\beta_2, 0), (\tilde{\beta}_3,1)}$ with the random curve $\eta$ going from 0 to $\infty$ not hitting 1. Sample $(Y, \eta)$ from $c\LF_{\bbH}^{(\beta_1, \infty), (\beta_2, 0), (\tilde{\beta}_3,1)}\times  \mathsf{m}(W;W_1,W_2,\tilde{W}_3)$, so by definition $(\mathbb{H}, Y, \eta, \infty, 0, 1)/{\sim_\gamma}$ has the law of the left hand side of \eqref{eqn-qt-change-weight-2} with $W_3$ replaced by $\tilde{W}_3$. Let $D_\eta^1$ be the union of the components of $\bbH\backslash\eta$ whose boundaries contain a segment of $(-\infty,0)$, $D_\eta^3$ be the component of $\bbH\backslash\eta$ with 1 on its boundary as defined, and $D_\eta^2$ be the union of the remaining components (if not empty).   Set
\begin{equation}\label{eqn-qt-change-weight-3}
	X = Y\circ\psi_{\eta}^{-1}+Q\log|(\psi_{\eta}^{-1})'|
\end{equation}  
and define the quantum surfaces $S_1=(D_\eta^1, Y, \infty, 0)/{\sim_\gamma}$, $S_2 = (D_\eta^2,Y)/\sim_\gamma$, $S_3 = (\bbH, X, \infty, 0, 1)/\sim_\gamma$. See also Figure~\ref{fig-reweighting} for the setup.
By~\eqref{eqn-qt-change-weight-2} for $\tilde W_3$ instead of $W_3$ and our definition of quantum triangles, $S_2$ and $S_3$ are {conditionally independent given their left boundary lengths},  while  the conditional law of $S_1$ given $(S_2, S_3)$ is $\Md_2(W;L)$ where $L$ is $\nu_X((-\infty,0))$ plus the sum of the quantum lengths of boundary arcs of $S_2$ lying within $\bbH$. We weight the law of $(Y, \eta)$ by $\e^{\frac{\beta_3^2-\tilde{\beta}_3^2}{4}}e^{\frac{\beta_3-\tilde{\beta}_3}{2}X_\e(1)}$ and send $\e\to0$; using the argument of \cite[Proposition 4.5]{AHS21}, the conditional law of $S_1$ given the pair $(S_2, S_3)$ is unchanged. Moreover, by the {conditional independence of $S_2$ and $S_3$ given their left boundary lengths}, by Lemma \ref{lm-change-reweight}, $(S_2, S_3)$ converges in law to $c\QT(W_1,W_2,W_3)$ under the reweighting as $\e\to0$, the joint law of the quantum surfaces $(S_1, S_2,S_3)$ converges to the right hand side of \eqref{eqn-qt-change-weight-2}, while the law of $(Y, \eta)/{\sim_\gamma}$ converges to the left side of \eqref{eqn-qt-change-weight-2}. This finishes the proof for the case $W_1+W,W_2+W>\frac{\gamma^2}{2}$.

For the case where $W_i + W < \frac{\gamma^2}2$ for some $i$, we apply the above argument to the core of the quantum triangle; the proof is then identical. 
\end{proof}   

{Now we are ready to prove Proposition \ref{prop-thick-welding}. In this proof, we will repeatedly glue together quantum disks and quantum triangles. The key inputs are Theorem~\ref{thm:u2v} (to glue a single quantum disk to a single quantum triangle), the commutativity of multiple gluing operations, and  Proposition~\ref{prop-change-weight} (to change the weight of the vertex that is not on the welding interface).}
\begin{proof}[Proof of Proposition \ref{prop-thick-welding}]
\emph{Step 1. $W\in (0,\frac{\gamma^2}{2})$, $W_1>\frac{\gamma^2}{2}$ and $W_2\ge 2$.}
First assume $W_2\in [2,\frac{\gamma^2}{2}+2)$. We start from the weight $(W_1, 2, 2)$ triangle and weld an independent quantum disk from $\Md_2(W)$ to its left boundary and an independent disk from $\Md_2(W_2 - 2)$ to its bottom arc, see  Figure \ref{fig-associate-a}. That is, we work on the measure
\begin{equation}\label{eqn-associate-1-a}
	\int_0^\infty\int_0^\infty \Wd(\Md_2(W;\ell), \QT(W_1, 2,  2;\ell, s), \Md_2(W_2 -2;s))dsd\ell.
\end{equation}
On one hand, if we fix $s$ and integrate over $\ell$ first, i.e., we weld together the quantum triangle and the weight $W$ quantum disk, by {Theorem   \ref{thm:u2v}} and Proposition \ref{prop-change-weight}
, \eqref{eqn-associate-1-a} is a constant multiple of
\begin{equation}\label{eqn-associate-2-a}
	\int_0^\infty \Wd\bigg(\big(\QT(W+2,2,W+W_1   ;s)\otimes \mathsf{m}(W; W_1, 2,  2)\big),\Md_2(W_2 -2;s)   \bigg)ds.
\end{equation}
Integrating over $s$, by {Theorem \ref{thm:u2v}} we see that  \eqref{eqn-associate-2-a} is a constant times 
\begin{equation}\label{eqn-associate-3-a}
	\QT(W+W_1, W+W_2, W_2)\otimes\mathsf{m}_2
\end{equation}
where $\mathsf{m}_2$ is some measure on tuple of curves $(\eta_1,\eta_2)$, such that $\eta_2$ has marginal law $\mathsf{m}(W_2-2; W+2, 2, W+W_1)$ in $\bbH$ from 1 to 0 and the conditional law of $\eta_2$ given $\eta_1$ is $\mathsf{m}(W; W_1, 2, 2)$ in $\bbH\backslash\eta_2$ from $0$ to $\infty$.

On the other hand, if we fix $\ell$ and integrate over $s$ first, i.e., we weld together the quantum triangle and the weight $W_2-\gamma^2+W_1$ quantum disk, by \eqref{eqn-qt-disk}, we get a constant times
\begin{equation}\label{eqn-associate-4-a}
	\int_0^\infty \Wd\bigg(\Md_2(W;\ell), \big(\QT(W_1, W_2, W_2)\otimes \mathsf{m}(W_2-2;2, 2, W_1)\big)  \bigg)d\ell.
\end{equation}
Therefore if we forget about the curve $\eta_2$ and compare \eqref{eqn-associate-3-a} with \eqref{eqn-associate-4-a}, we obtain \eqref{eqn-qt-change-weight-2} in case $W_3 = \tilde{W}_3:=W_2$. Applying Proposition \ref{prop-change-weight} once more yields \eqref{eqn-qt-change-weight-2} for $W\in (0,\frac{\gamma^2}{2})$, $W_1>\frac{\gamma^2}{2}$ and $W_2 {= W_3}\in [2,\frac{\gamma^2}{2}+2)$. {Proposition~\ref{prop-change-weight} then allows us to choose $W_3$ arbitrary, completing the proof in this case.}

Now suppose we have proved \eqref{eqn-qt-change-weight-2} for $W\in (0,\frac{\gamma^2}{2})$, $W_1>\frac{\gamma^2}{2}$ and $W_2\in [2+\frac{k\gamma^2}{2},2+\frac{(k+1)\gamma^2}{2})$ and some $k\ge 0$. Then for $W_2\in [2+\frac{(k+1)\gamma^2}{2},2+\frac{(k+2)\gamma^2}{2})$   we pick $U<\frac{\gamma^2}{2}$ such that $W_2\in [2+\frac{k\gamma^2}{2},2+\frac{(k+1)\gamma^2}{2})$, and replace the the weight $(W_1, 2, 2)$ triangle in ~\eqref{eqn-associate-1-a} with a weight $(W_1, W_2-U, W_2-U)$ quantum triangle and the weight $W_2-2$ quantum disk with a weight $U$ quantum disk. Then~\eqref{eqn-qt-change-weight-2} follows by precisely the same argument and our assumption. This finishes the induction, so Step 1 is complete.

\begin{figure}[ht]
	\centering
	\begin{tabular}{cc} 
		\includegraphics[width=0.4\textwidth]{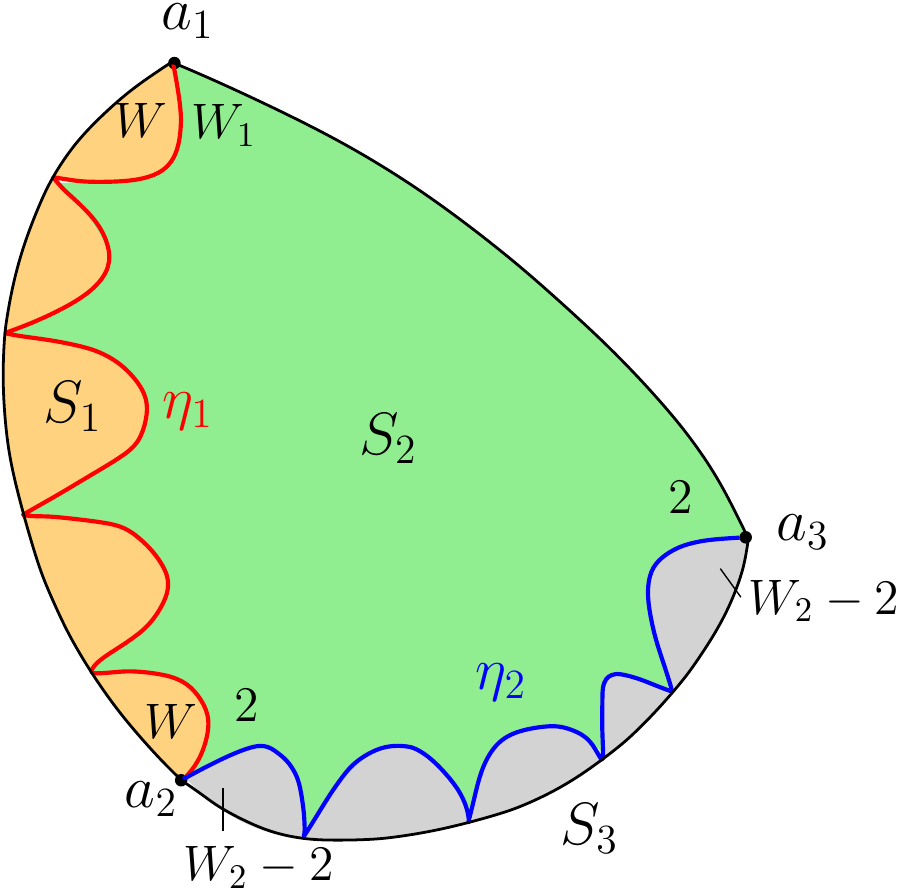}
		& 
		\includegraphics[width=0.45\textwidth]{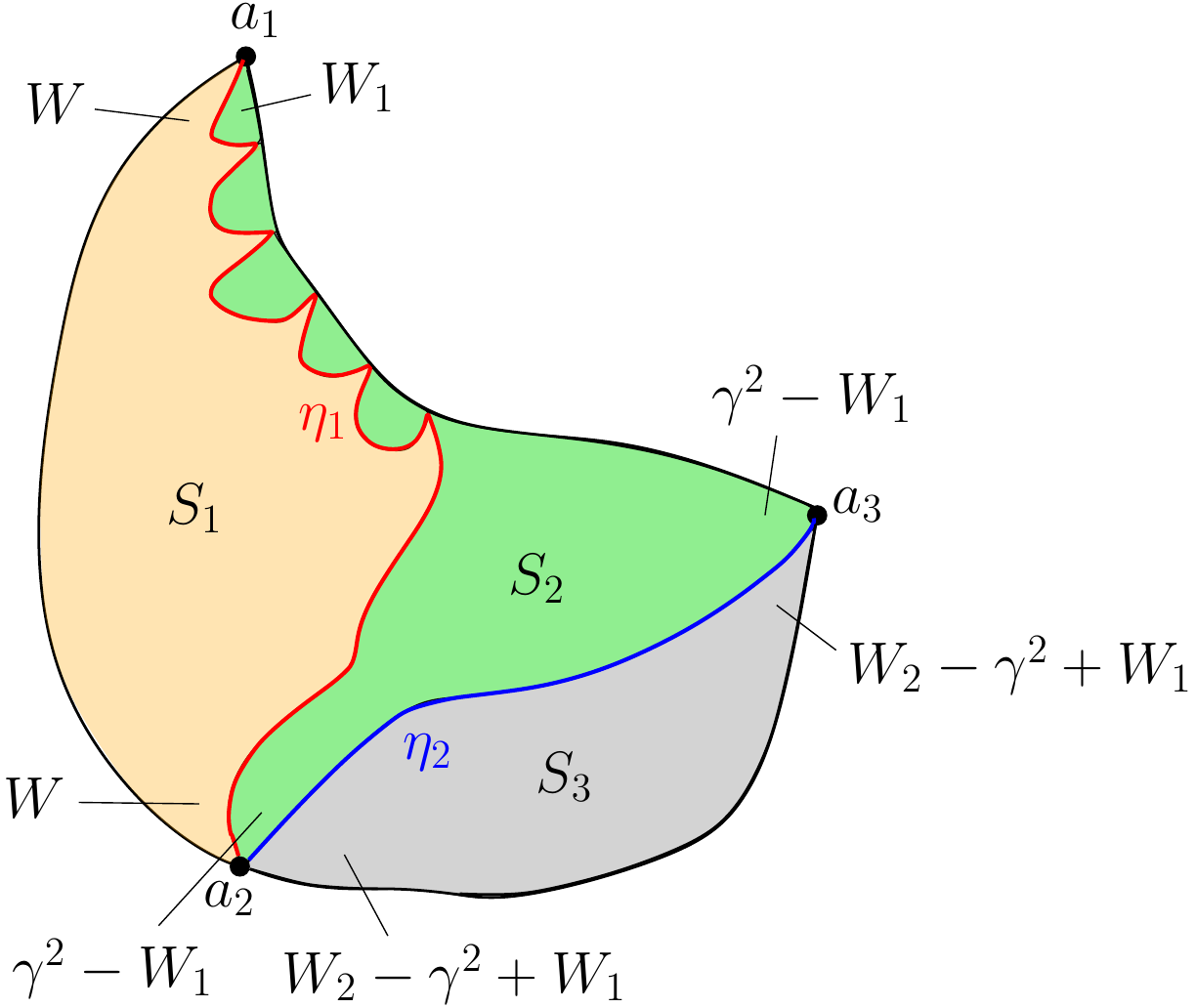}
	\end{tabular}
	\caption{\textbf{Left:} Step 1 of proof of Proposition \ref{prop-thick-welding}. We consider the simultaneous welding of the weight $W$ quantum disk (surface $S_1$), weight $(W_1, 2,2)$ quantum triangle (surface $S_2$) and weight $W_2-2$ quantum disk (surface $S_3$). If we choose to weld $S_1$ with $S_2$ first then $S_3$, we get a  quantum triangle of weight $(W+W_1,W+W_2,W_2)$ decorated with independent curves $(\eta_1,\eta_2)$. On the other hand, if we weld $S_2$ with $S_3$ first, we obtain a quantum triangle of weight $(W_1,W_2,W_2)$ decorated by curve $\eta_2$. Thus  we conclude that by further welding a weight $W$ quantum disk to the left and forgetting about $\eta_2$ we get a quantum triangle of weight $(W+W_1,W+W_2,W_2)$ decorated by an independent curve $\eta_1$.  \textbf{Right:} Step 1 for Proposition \ref{prop-associate-c}. The proof is almost identical and only the weights has been changed.    }\label{fig-associate-a}
\end{figure}

\emph{Step 2: $W>0$ and $\max\{W_1,W_2\}\ge 2$.} By symmetry we may assume $W_2\ge 2$. Suppose \eqref{eqn-qt-change-weight-2} holds for $W\in [\frac{k\gamma^2}{2},\frac{(k+1)\gamma^2}{2})$, $W_1>\frac{\gamma^2}{2}$ and $W_2\ge 2$ where again $k\ge 0$ is an integer. Note that the case $k=0$ follows directly from Step 1. Now if $W\in [\frac{(k+1)\gamma^2}{2},\frac{(k+2)\gamma^2}{2})$, again we pick some $U\in(0,\frac{\gamma^2}{2})$ such that $W-U\in [\frac{k\gamma^2}{2},\frac{(k+1)\gamma^2}{2})$. We start with a weight $(W_1, W_2, W_3)$ quantum triangle. We first glue a weight $U$ quantum disk on its left boundary, inducing an interface $\eta_2$, and then a weight {$W-U$} quantum disk to the left. That is, we are working with the measure
\begin{equation}\label{eqn-associate-5-a}
	\int_0^\infty\int_0^\infty \Wd(\Md_2(W-U;\ell),\Md_2(U;\ell,s), \QT(W_1, W_2,  W_3;s))dsd\ell.
\end{equation}
{See Figure~\ref{fig-associate-e} (left). }Again by Step 1, we can now integrate over $s$ first and weld the weight $U$ quantum disk with the quantum triangle, with the law of the curve-decorated surface being $\QT(W_1+U, W_2+U,  W_3;\ell)\otimes \mathsf{m}(U;W_1,W_2,W_3)$. Then by our induction hypothesis, integrating over $\ell$ once more and welding in the weight $(W-U)$ quantum disk, we obtain a quantum triangle of weight $(W+W_1,W+W_2,W_3)$ decorated by independent curves $(\eta_1,\eta_2)$. On the other hand, if we weld the two disks first, by Theorem \ref{thm-disk-2}, we get
\begin{equation}\label{eqn-associate-6-a}
	\int_0^\infty \Wd\bigg(\big(\Md_2(W;\ell) \otimes \SLE_\kappa(W-U-2;U-2)\big),\QT(W_1, W_2, W_3;\ell)  \bigg)d\ell.
\end{equation}
Thus if we forget about the curve $\eta_1$ and treat $\eta_2$ as the interface, we obtain \eqref{eqn-qt-change-weight-2}. This finishes the induction step and draws the conclusion.

\begin{figure}[ht]
	\centering
	\begin{tabular}{cc} 
		\includegraphics[width=0.36\textwidth]{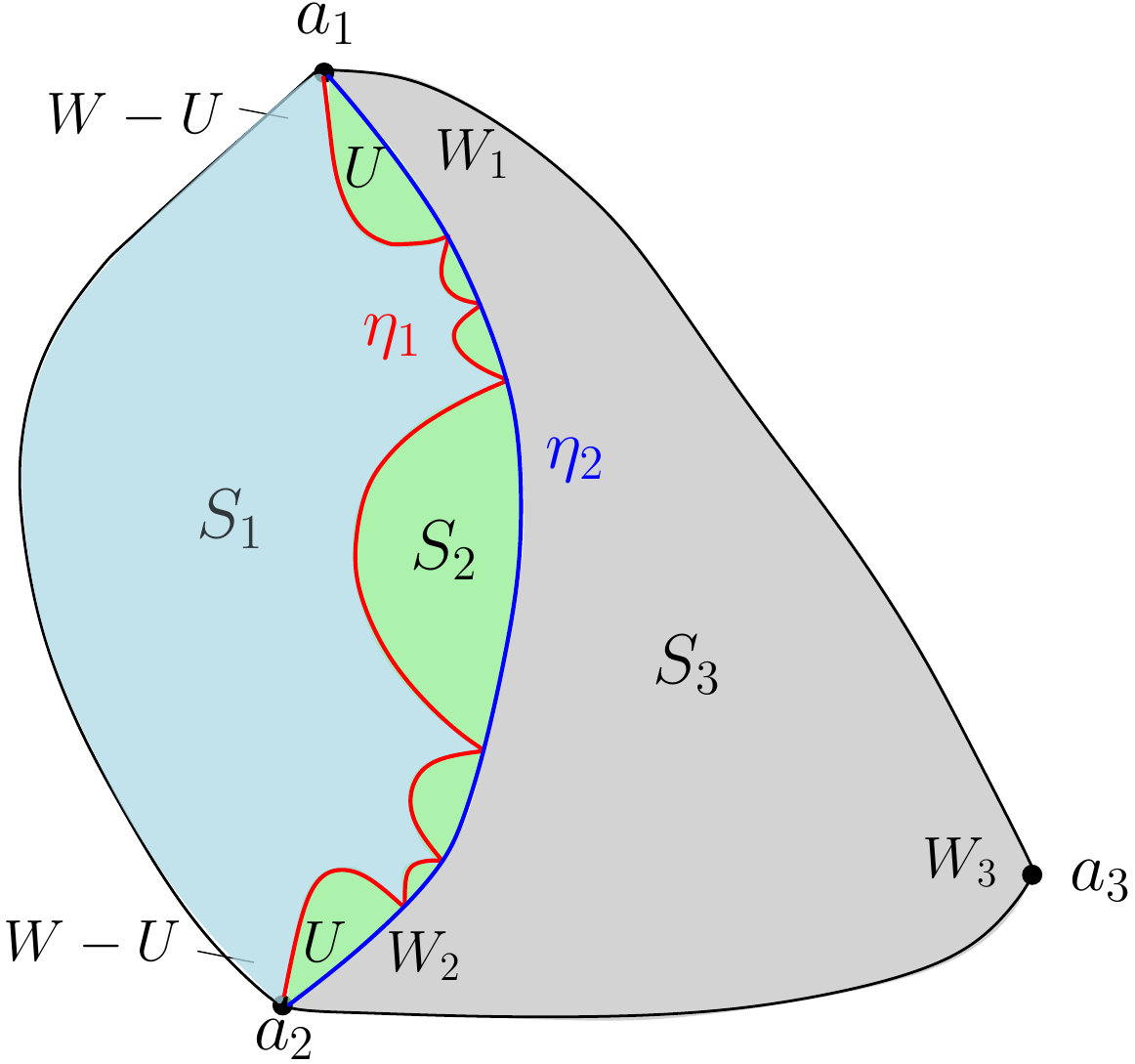}
		& 
		\includegraphics[width=0.4\textwidth]{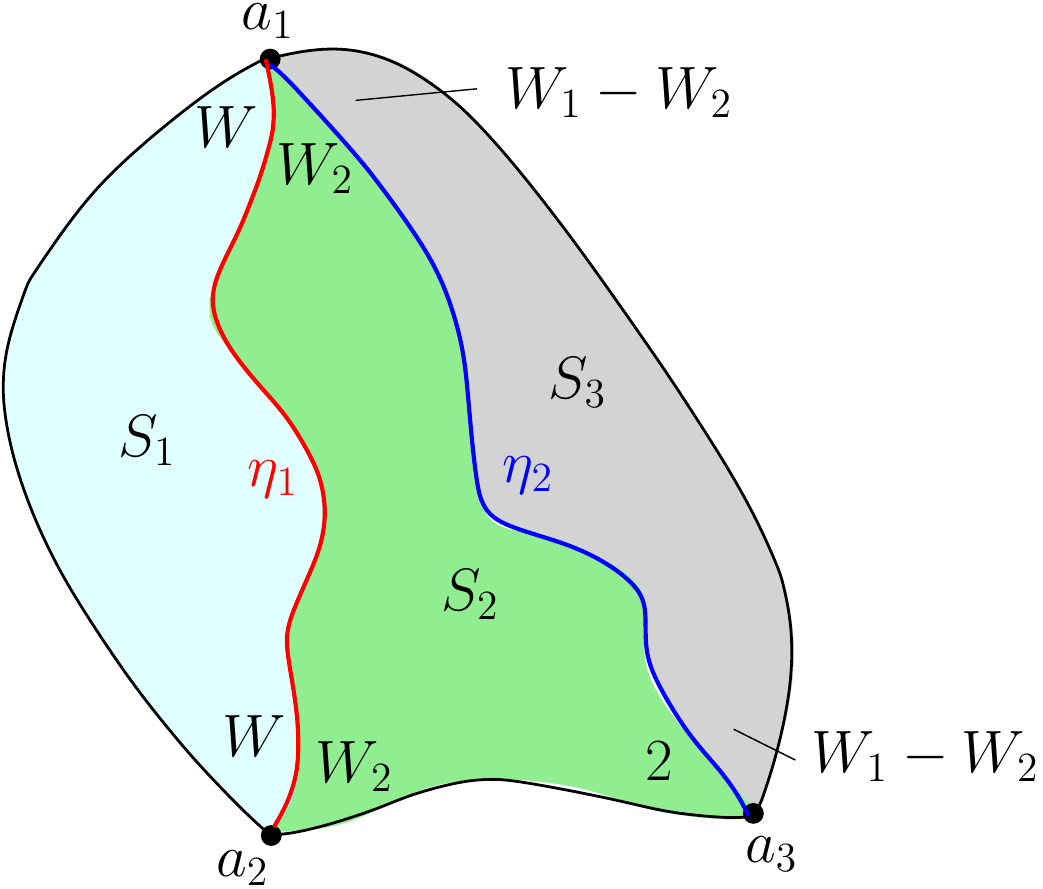}
	\end{tabular}
	\caption{\textbf{Left:} Step 2 of proof of Proposition \ref{prop-thick-welding}. Applying Step 1 and our induction hypothesis we can first weld $S_3$ to the right of $S_2$ and $S_1$ to the left to get a quantum triangle of weight $(W+W_1,W+W_2,W_3)$. If we forget about $\eta_1$ then we get the left hand side of \eqref{eqn-qt-change-weight-2}. On the other hand by Theorem \ref{thm-disk-2} we can first weld $S_1$ and $S_2$ and the picture becomes the right hand side of \eqref{eqn-qt-change-weight-2}.  \textbf{Right:} The same commutation argument in Step 3.}\label{fig-associate-e}
\end{figure}

\emph{Step 3: The general $W>0,W_1,W_2>\frac{\gamma^2}{2}$ case.} By symmetry and Step 2, we may assume $\frac{\gamma^2}{2}< W_2<W_1\le 2$. We consider the setting of the right panel of Figure \ref{fig-associate-e}. Again by comparing the procedure of first welding $S_2$ with $S_1$ (by \eqref{eqn-qt-disk}) and then $S_3$ (by Step 2; and we obtain a quantum triangle with weight $(W+W_1, W+W_2, W_1-W_2+2)$) and first welding $S_2$ with $S_3$ (by Step 3  and we get a quantum triangle of weight $(W_1, W_2, W_1-W_2+2)$) and then $S_1$, we obtain \eqref{eqn-qt-change-weight-2} with $W_3 = W_1-W_2+2$. Thus we conclude the proof by Proposition \ref{prop-change-weight}.
\end{proof}

\subsection{The $W_1 + W_2 = \gamma^2$ regime via thick-thin duality}\label{sec-w-gamma2-w}

Let $W \in (0, \frac{\gamma^2}2)$.
In this section, we establish the conformal welding of a quantum triangle of weights $W, \gamma^2-W, 2$ with a thick quantum disk, via Theorem \ref{prop-w-gamma2-w}. The key observation is that, using the thick-thin duality, the concatenation point on the quantum triangle has weight $2+\gamma^2$ in the global field and hence the $\beta$-value for insertion becomes $\beta = \gamma+\frac{2-(2+\gamma^2)}{\gamma} = 0$.

\begin{theorem}\label{prop-w-gamma2-w}
Fix $W>\frac{\gamma^2}{2}$, $W_1\in (0,\frac{\gamma^2}{2})$ and $W_2=\gamma^2-W_1$. Embed a sample from $\QT(W+W_1, W+W_2, 2)$ as $(\bbH, \phi, \infty, 0, 1)$, where the points $(\infty,0,1)$ corresponds to the weights $(W+W_1,W+W_2,2)$. 
Then there exists some constant $c = c_{W, W_1}\in (0,\infty)$ and some measure $\mathsf{m}(W;W_1,W_2,2)$ on random curves in $\mathbb{H}$ from $0$ to $\infty$ avoiding 1, such that
\begin{equation}\label{eqn-w-gamma2-w}
	\begin{split}
		\QT(W+W_1, W+W_2, 2)\otimes \mathsf{m}(W;W_1,W_2,2) = c\int_0^\infty \Wd(\Md_2(W;\ell),\QT(W_1,W_2,2;\ell))d\ell.
	\end{split}
\end{equation} 
\end{theorem}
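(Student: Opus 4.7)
The plan is to prove the theorem by a three-way conformal welding and associativity argument. Consider the simultaneous welding of three surfaces: $\Md_2(W)$, an auxiliary weight-$W_1$ thin disk $\Md_2(W_1)$ (which will play the role of the ``arm'' in the thin triangle $\QT(W_1,W_2,2)$), and the ``core'' $\QT(W_2,W_2,2)$. The right boundary arc of $\Md_2(W)$ is length-matched with the concatenation of the arm's left arc and the core's arc between its two weight-$W_2$ vertices, with the intermediate point of $\Md_2(W)$'s arc identified simultaneously with the arm's top endpoint and the core's distinguished weight-$W_2$ vertex. The key LCFT observation is that at this triple concatenation point the nominal weights sum to $2+W_1+W_2=2+\gamma^2$, yielding the trivial $\beta$-insertion value $\gamma+(2-(2+\gamma^2))/\gamma=0$.

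In one order of welding (``Scheme A''), first concatenate the arm and core at their shared point (no interface is created, per Definition \ref{def-qt-thin}), producing a sample from $\QT(W_1,W_2,2)$ whose left arc is the concatenation of the arm's left arc and the core's inter-$W_2$-vertex arc. Welding $\Md_2(W)$ along this left arc and integrating via the disintegration \eqref{eqn-qt-disintegration-thin-1} recovers, up to a multiplicative constant, the right-hand side of \eqref{eqn-w-gamma2-w}.

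In the reverse order (``Scheme B''), view $\Md_2(W)$ with its junction point as $\Md_{2,\bullet}(W)\propto\QT(W,2,W)$ via Proposition \ref{prop-m2dot} and apply Proposition \ref{prop-thick-welding} to weld $\Md_2(W_1)$ with $\QT(W,2,W)$ along the arc between its weight-$W$ and weight-$2$ vertices; this produces $\QT(W+W_1,2+W_1,W)$ decorated by an SLE-type interface with law $\mathsf{m}(W_1;W,2,W)$. Next, weld this triangle to the core $\QT(W_2,W_2,2)$ along the arc between its weight-$(2+W_1)$ and weight-$W$ vertices matched with the core's weight-$W_2$-to-weight-$W_2$ arc, identifying endpoints accordingly. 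At the combined corner of summed weight $2+\gamma^2$ the $\beta=0$ insertion is trivial, so the welded surface carries the Liouville field $\LF_\bbH^{(\beta_1,\infty),(\beta_2,0),(\beta_3,1)}$ on $\QT(W+W_1,W+W_2,2)$, decorated by the curve obtained by concatenating the two welding interfaces, which runs from the weight-$(W+W_1)$ vertex to the weight-$(W+W_2)$ vertex and touches the boundary at the $\beta=0$ point lying on the arc between the weight-$(W+W_1)$ and weight-$2$ vertices. Matching the two schemes yields \eqref{eqn-w-gamma2-w} and defines $\mathsf{m}(W;W_1,W_2,2)$ as the marginal law of this composite curve.

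The main obstacle is that the second welding in Scheme B is a triangle-to-triangle welding not directly furnished by Proposition \ref{prop-thick-welding}. I propose overcoming this by adapting the Markov resampling characterization from Section \ref{sec:resampling}: in a neighborhood of each of the three genuine vertices, the local welding picture is absolutely continuous with respect to a disk-to-triangle welding covered by Proposition \ref{prop-thick-welding}, which supplies the Markov resampling property at that vertex (analogous to Lemmas \ref{lem-markov1}--\ref{lem-markovinfty}); the irreducibility argument of Proposition \ref{prop-W>2} then identifies the welded field with the target Liouville field up to a multiplicative constant. Independence of the composite curve from the field, required for the product form on the left-hand side of \eqref{eqn-w-gamma2-w}, follows from the SLE resampling/commutation identities in Proposition \ref{prop:ig-flow-descriptions} applied to each of the two welding interfaces, in analogy with the arguments of Section \ref{sec:WW2}.
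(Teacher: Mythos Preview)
Your three-way welding setup and the $\beta=0$ observation are exactly right, and they match the paper's Lemma \ref{lm-w-gamma2-w}. But the paper does not attempt a triangle-to-triangle welding; it takes a shorter route that avoids the obstacle you flag.

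The paper first welds only \emph{disks}---$\Md_2(W_1)$ and $\Md_2(W_2)$, not $\QT(W_2,W_2,2)$---to the two marked arcs of $\Md_{2,\bullet}(W)$. Two applications of Proposition \ref{prop-thick-welding} give $\QT(W+W_1,W+W_2,\gamma^2+2)\otimes\tilde{\mathsf m}(W,W_1)$. Embedding as $(\bbH,\phi,\infty,0,2)$, the $\beta=0$ observation means the field law is $c\,\LF_\bbH^{(\beta_1,\infty),(\beta_2,0)}$ with \emph{no} insertion at $2$, and $(\eta_1,\eta_2)$ is already independent of $\phi$. Now weight by $\nu_\phi([0,2])$, sample a quantum-typical point $\mathbf x\in(0,2)$---which turns the $\Md_2(W_2)$ piece into $\QT(W_2,W_2,2)$ and produces the right side of \eqref{eqn-w-gamma2-w}---and rescale by $z\mapsto z/\mathbf x$. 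An explicit LCFT computation (equations \eqref{eqn-w-gamma2-w-5}--\eqref{eqn-tildephi-indep-x}, using Lemmas \ref{lmm-lf-insertion-bdry} and \ref{lmm-lcft-H-conf}) shows that the rescaled field $\tilde\phi$ has law $\LF_\bbH^{(\beta_1,\infty),(\beta_2,0),(\gamma,1)}$ \emph{independently of $\mathbf x$}. Since the rescaled composite curve is a function of $(\eta_1,\eta_2,\mathbf x)$, all jointly independent of $\tilde\phi$, the product form follows.

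Your Markov-characterization plan has a genuine gap. In Section \ref{sec:resampling} the logic is: first prove field-curve independence (Proposition \ref{prop-indep-curves-thick}), then use it to pass from ``conditional on $\{\eta\subset\bbH\setminus A\}$'' to an unconditional Markov property for $\phi$. You invoke Proposition \ref{prop:ig-flow-descriptions} for the independence step, but that proposition characterizes pairs of curves with \emph{fixed} boundary endpoints; in your Scheme B the two interfaces meet at the junction point $x_0\in(1,\infty)$ whose location in $\bbH$ is random (it is determined by the relative quantum lengths of the arm and core arcs once you pin the weight-$2$ vertex at $1$), so the resampling/uniqueness statement does not apply as stated. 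Without independence in hand, the Lemma \ref{lem-markov1}--\ref{lem-markovinfty} mechanism cannot be run. The paper's rescaling computation is precisely the device that simultaneously identifies the field law and delivers the needed independence from $\mathbf x$ (hence from the curve); your proposal has no substitute for this step.
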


\begin{figure}[ht]
\begin{tabular}{ccc} 
	\includegraphics[width=0.35\textwidth]{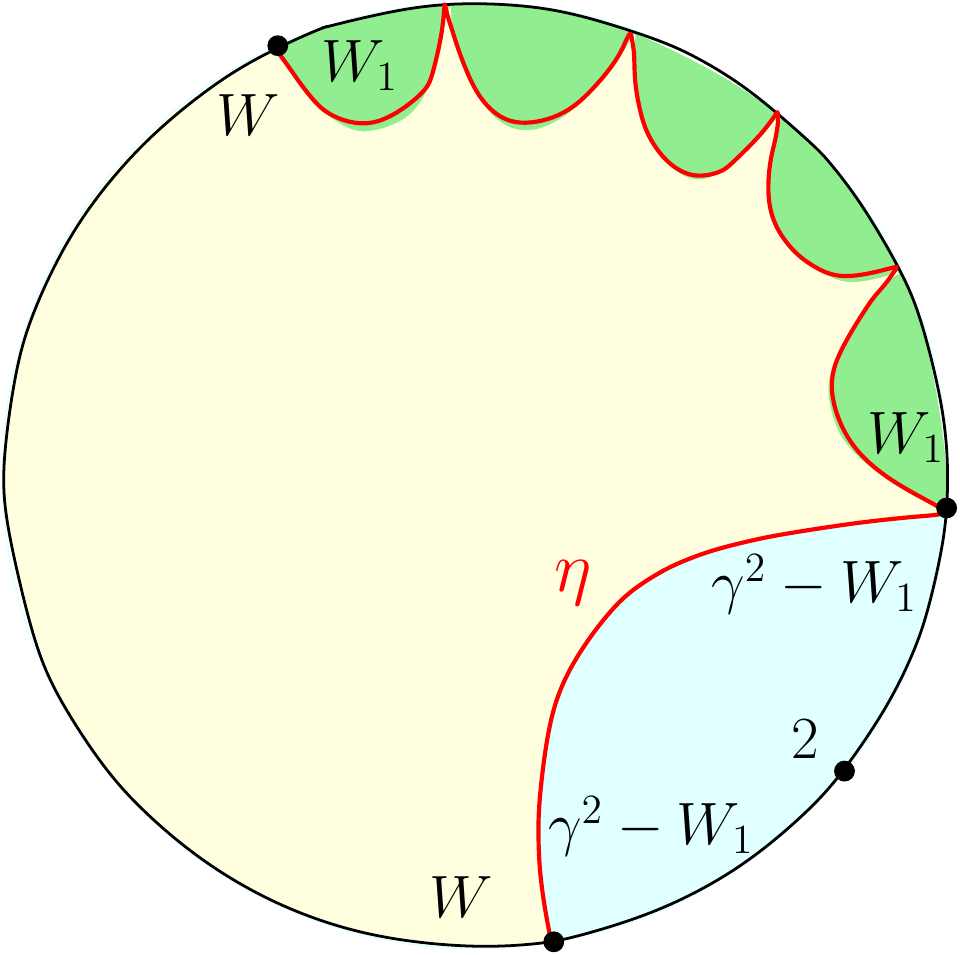}
	& \qquad &
	\includegraphics[width=0.5\textwidth]{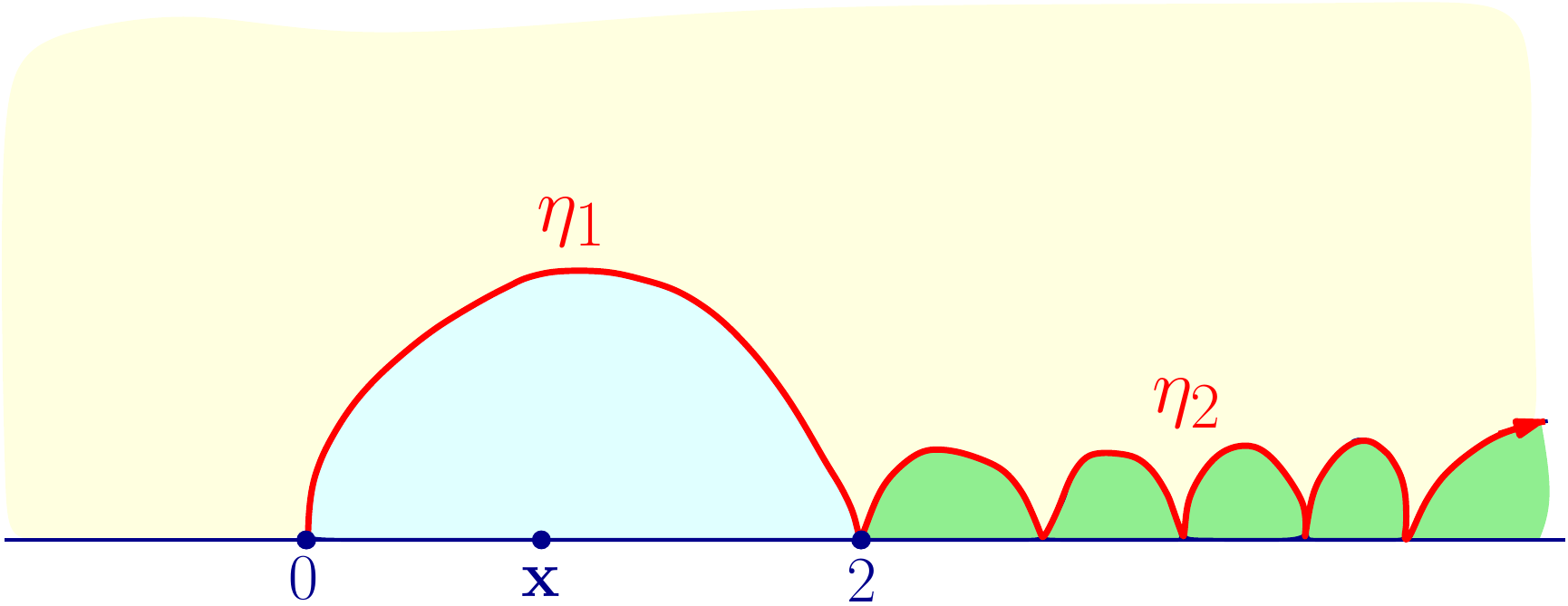}
\end{tabular}
\caption{\textbf{Left:} Setup of Theorem \ref{prop-w-gamma2-w}, where we are proving that cutting a triangle from $\QT(W+W_1, W+\gamma^2-W_1, 2)$ with some independent curve $\eta$ from $\mathsf{m}(W,W_1,\gamma^2-W_1, 2)$ yields the welding of an independent weight $W$ disk and a thin triangle from $\QT(W_1, \gamma^2-W_1, 2)$. \textbf{Right:} Conclusion of Lemma \ref{lm-w-gamma2-w}, embedded as $(\mathbb{H}, \phi,\eta,\infty,0,2)$. We sample a point $\mathbf{x}$ from the quantum length measure on $[0,2]$ and use the scaling $f_\mathbf{x}(z) = \frac{z}{\mathbf{x}}$ to put the added point at 1.  }\label{fig-w-gamma2-w}
\end{figure}

By definition, the quantum surface on the right hand side of \eqref{eqn-w-gamma2-w} consists of three parts: a weight $W$ two-pointed quantum disk, a weight $W_1$ thin quantum  disk, and a three-pointed quantum disk from $\Md_{2,\bullet}(W_2)$. These can be glued together by Proposition \ref{prop-thick-welding}. Parallel to our definition of thin quantum triangles, let $\tilde{\mathcal{M}}_2(W_1)$ be the law of the quantum surface obtained by concatenating a sample from $\Md_2(W_1)\times\Md_2(W_2)$ with $W_2=\gamma^2-W_1$. Then we have the disintegration {on the left boundary length}
\begin{equation}\label{eqn-w-gamma2-w-2}
\tilde{\mathcal{M}}_2(W_1;\ell) = \int_0^r \Md_2(W_1;r)\times\Md_2(W_2;\ell -r)dr.
\end{equation}

\begin{lemma}\label{lm-w-gamma2-w}
In the setting of Theorem \ref{prop-w-gamma2-w}, there exists some constant $c = c_{W, W_1}\in (0,\infty)$ such that 
\begin{equation}\label{eqn-w-gamma2-w-lm}
	\QT(W+W_1, W+\gamma^2-W_1, \gamma^2+2)  \otimes \tilde{\mathsf{m}}(W,W_1)=c\int_0^\infty \Wd(\Md_2(W;\ell), \tilde{\mathcal{M}}_2(W_1;\ell))d\ell
\end{equation}
where  $\tilde{\mathsf{m}}(W,W_1)$ is some law on pairs of the curves describing the two interfaces. 
\end{lemma}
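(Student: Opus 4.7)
My plan is to reduce the claim to the fully thick welding result (Proposition~\ref{prop-thick-welding}) by exploiting two structural features: the thick--thin duality used in defining thin quantum disks, and the fact that $W_3 = \gamma^2 + 2$ corresponds to the trivial insertion $\beta_3 = 0$.

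First I would apply Proposition~\ref{prop-thick-welding} with the all-thick parameters $(W, W_2, W_2, \gamma^2+2)$, which is valid because $W_2 = \gamma^2 - W_1 > \gamma^2/2$. This yields the welding identity
\[
\QT(W+W_2, W+W_2, \gamma^2+2) \otimes \mathsf{m}(W; W_2, W_2, \gamma^2+2) \;=\; c_0 \int_0^\infty \Wd(\Md_2(W;\ell), \QT(W_2, W_2, \gamma^2+2;\ell))\,d\ell.
\]
The triviality of the third insertion means $\QT(W_2, W_2, \gamma^2+2)$ is, up to a constant, the law of $\Md_2(W_2)$ decorated with an extra marked boundary point whose location carries no Liouville singularity; the distribution of this location is inherited from the translation mode of the canonical strip embedding (Lebesgue-typicality).

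Next I would invoke Definition~\ref{def-thin-disk} to rewrite $\tilde{\mathcal{M}}_2(W_1) = \Md_2(W_1) \times \Md_2(W_2)$ as a Poisson chain of weight-$W_2$ thick disks with a marked concatenation point. A direct Poisson-process computation, comparing the boundary length density $|\tilde{\mathcal{M}}_2(W_1;\ell)| \propto \ell^{-1}$ (obtained from Proposition~\ref{prop-disk-bdry-law}) with the density induced by attaching a weight-$W_1$ thin chain to one endpoint of $\Md_2(W_2)$, allows me to identify $\tilde{\mathcal{M}}_2(W_1)$ with the measure obtained from $\QT(W_2, W_2, \gamma^2+2)$ by attaching a weight-$W_1$ thin disk to one of the weight-$W_2$ vertices and then relocating the marked third point from its original position to the resulting junction. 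This relocation is justified by the $\beta_3 = 0$ triviality, which makes both the Lebesgue-typical interior location and the junction location equally admissible as unsingularized third marked points at the level of measures. Substituting this identification into the welding identity, and interpreting the attached thin chain as changing the first thick vertex of $\QT(W+W_2, W+W_2, \gamma^2+2)$ into the thin--thick vertex of $\QT(W+W_1, W+W_2, \gamma^2+2)$ via Definition~\ref{def-qt-thin}, produces the target equation. The pair of interfaces $\tilde{\mathsf{m}}(W, W_1)$ then arises as the natural splitting of $\mathsf{m}(W; W_2, W_2, \gamma^2+2)$ at the junction, and the two halves are $\SLE_\kappa(W-2; W_1-2)$ and $\SLE_\kappa(W-2; W_2-2)$ respectively by Theorem~\ref{thm-disk-2} applied to each side.

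The main obstacle is this identification step: rigorously transferring the marked third vertex of $\QT(W_2, W_2, \gamma^2+2)$, which arises as a Lebesgue-typical boundary point from the strip translation mode, to the specific concatenation cut point of $\tilde{\mathcal{M}}_2(W_1)$. This requires a careful Poisson process computation, essentially a Mecke-type identity for the underlying PPP of weight-$W_2$ beads in $\Md_2(W_1)$, combined with a Markov property argument for the Liouville field near the junction to verify that the relocation of the marked third point does not alter the joint law of the welded quantum surface together with its pair of interface curves.
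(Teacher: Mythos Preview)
Your approach has a genuine gap, and the paper takes a different (and much simpler) route.

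The relocation step cannot work as you describe. Forgetting the weight-$(\gamma^2+2)$ vertex of $\QT(W_2,W_2,\gamma^2+2)$ does \emph{not} return $\Md_2(W_2)$: since $\beta_3=0$, the three-pointed Liouville field equals the two-pointed one, but as a measure on quantum surfaces this corresponds to $\Md_2(W_2)$ integrated against Lebesgue measure on the translation parameter (the ``uniform embedding'' statement in Section~\ref{sec-pre-qt}), hence an infinite-multiplicity copy of $\Md_2(W_2)$. So ``attaching a $W_1$ thin chain and relocating the third point to the junction'' does not produce $\tilde{\mathcal{M}}_2(W_1)$; it produces an infinite measure. No Mecke identity or Markov-property argument can repair this---the two marked points live in genuinely different locations and carry different implicit reference measures.

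There is also a circularity/mismatch problem in how you propose to reach the target triangle. Your first welding yields $\QT(W+W_2,W+W_2,\gamma^2+2)$, and you want to convert one $W+W_2$ vertex into a $W+W_1$ vertex by attaching a thin chain via Definition~\ref{def-qt-thin}. But $W>\gamma^2/2$ forces $W+W_1>\gamma^2/2$, so that vertex is thick and Definition~\ref{def-qt-thin} does not apply. If instead you attach the $W_1$ chain \emph{before} welding, you obtain $\QT(W_1,W_2,\gamma^2+2)$ and must then weld along the $(W_1,W_2)$ edge with $W_1<\gamma^2/2$---precisely the thin case this lemma is a stepping stone toward, so the argument becomes circular.

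The paper's proof avoids all of this by placing the third marked point on the \emph{weight-$W$} disk rather than on the weight-$W_2$ piece. One starts from $\Md_{2,\bullet}(W)=c\,\QT(W,W,2)$, a weight-$W$ disk with a \emph{quantum-typical} boundary point, and welds $\Md_2(W_1)$ and $\Md_2(W_2)$ to the two arcs on either side of that point. Two applications of Proposition~\ref{prop-thick-welding} (the relevant triangle weights at each stage are $W,2$ and then $W,2+W_1$, all $>\gamma^2/2$) give $\QT(W+W_1,W+W_2,\gamma^2+2)$. The quantum-typical marked point automatically coincides with the concatenation point of $\tilde{\mathcal M}_2(W_1)$ after welding, so reinterpreting the right-hand side via~\eqref{eqn-w-gamma2-w-2} is immediate---no relocation is needed.
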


\begin{proof}
We start with a triply marked quantum disk  sampled from $\Md_{2,\bullet}(W)$ and recall that $\Md_{2,\bullet}(W)$ is a constant multiple of $\QT(W,W,2)$ (see Remark \ref{remark-qt-3-disk}). By applying Proposition \ref{prop-thick-welding} twice, we can simultaneously glue quantum disks with weight $W_1$ and $\gamma^2-W_1$ to the marked boundary of the $\Md_{2,\bullet}(W)$ quantum disk, with interface having law $\tilde{\mathsf{m}}(W,W_1)
$ and being independent of the surface. That is, if we write $\Md_{2,\bullet}(W;\ell,r)$ for disintegration of the measure $\Md_{2,\bullet}(W)$ over the length of the two boundary arcs containing the third marked point, then
\begin{equation}\label{eqn-w-gamma2-w-1}
	\begin{split}
		&\QT(W+W_1, W+\gamma^2-W_1, \gamma^2+2)  \otimes \tilde{\mathsf{m}}(W,W_1)\\ &= c\int_0^\infty\int_0^\infty \Wd(\Md_{2,\bullet}(W;r,\ell), \Md_2(W_1;r), \Md_2(\gamma^2-W_1;\ell))d\ell dr\\
		&= c\int_0^\infty\int_0^\ell \Wd(\Md_{2,\bullet}(W;r,\ell-r), \Md_2(W_1;r), \Md_2(\gamma^2-W_1;\ell-r))drd\ell. 
	\end{split}
\end{equation}

Now we study the right hand side of \eqref{eqn-w-gamma2-w-1}. By Definition~\ref{three-pointed-disk}, forgetting the marked point of a sample from $\Md_{2,\bullet} (W; r, \ell - r)$ gives a sample from $\Md_2(W; \ell)$. Combining this with~\eqref{eqn-w-gamma2-w-2}, we conclude that the right hand side of~\eqref{eqn-w-gamma2-w-1} equals that of~\eqref{eqn-w-gamma2-w-lm}.
\end{proof}


\begin{proof}[Proof of Theorem \ref{prop-w-gamma2-w}]
We begin with the setting of Lemma \ref{lm-w-gamma2-w}. Embed the quantum triangle from $\QT(W+W_1, W+\gamma^2-W_1, \gamma^2+2)$  as $(\mathbb{H}, \phi,  \infty,0, 2)$. {The law of $\phi$ is} $c\LF_\mathbb{H}^{(\beta_1, \infty),(\beta_2,0)}$ where $\beta_1 = \gamma+\frac{2-W-W_1}{\gamma}$, $\beta_2 = \gamma+\frac{2-W-\gamma^2+W_1}{\gamma}$ and $c = c_{W, W_1}$ is some constant. We emphasize that there is  no $\beta$-insertion at the marked point 2 since $\beta_3 = \gamma+\frac{2-(2+\gamma^2)}{\gamma} = 0$. The interface $\eta_1$ between the weight $W$ quantum disk and the weight $\gamma^2-W_1$ quantum disk is embedded as a simple curve from 0 to 2, and the interface $\eta_2$ between the weight $W$ disk and the weight $W_1$ thin disk is drawn as a boundary hitting curve from 2 to $\infty$. See Figure \ref{fig-w-gamma2-w} for an illustration of the setup. 

We add a fourth point to the field and rescale via the following procedure. First weight the law of $\phi$ by $\nu_\phi([0,2])$ and sample a point $\mathbf{x}$ on $(0,2)$ from the probability measure proportional to the quantum length measure $\nu_\phi|_{[0,2]}$, and then rescale the field and the curves via $f_\mathbf{x}(z) = \frac{z}{\mathbf{x}}$. Let 
\begin{equation}\label{eqn-w-gamma2-w-4}
	\tilde{\phi} = f_x\bullet_\gamma\phi =  \phi\circ f_{\mathbf{x}}^{-1} + Q\log|(f_{\mathbf{x}}^{-1})'| = \phi(\mathbf{x}\cdot)+Q\log \mathbf{x}.
\end{equation}
Then $(\mathbb{H}, \tilde{\phi}, \infty, 0, 1)/{\sim_\gamma} = (\bbH,\phi,\infty,0,\mathbf{x})/{\sim_\gamma}$. Let $\eta$ be the concatenation of the curves $\mathbf{x}^{-1}\eta_1(\cdot)$ and $\mathbf{x}^{-1}\eta_2(\cdot)$, going from 0 to $\infty$. Note that this point $\mathbf{x}$ is added to the weight $\gamma^2-W_1$ disk according to quantum length measure, and again by Proposition \ref{prop-m2dot} this surface has the same law as $c\QT(\gamma^2-W_1,\gamma^2-W_1,2)$. Therefore, by applying the definition of quantum triangles, the law of the curve-decorated surface $(\mathbb{H}, \tilde{\phi}, \eta, \infty, 0, 1)$ is precisely the same as the right hand side of \eqref{eqn-w-gamma2-w}. We are going to prove that $\tilde{\phi}$ has the same law as $\LF_\mathbb{H}^{(\beta_1, \infty),(\beta_2,0), (\gamma,1)}$ and is independent of $\mathbf{x}$, which further implies that $\eta$ is independent of $\tilde{\phi}$ (since $\eta$ is defined via $\eta_1,\eta_2,\mathbf{x}$, which are all independent of $\tilde{\phi}$). This shows that the law of  $(\mathbb{H}, \tilde{\phi}, \eta, \infty, 0, 1)$ is the same as the left hand side of \eqref{eqn-w-gamma2-w}, which concludes the proof. 

Now suppose that $F$ is a bounded, non-negative and continuous function on $H^{-1}(\mathbb{H})$, and $g$ is a compactly  supported non-negative function on $[0,2]$. Let $\phi_\e(x)$ be the circle average of $\phi$ around the semicircle $\{z:|z-x| = \e\}$. By the change of coordinates \eqref{eqn-w-gamma2-w-4}, $(f_x\bullet_\gamma\phi)_{\frac{\e}{x}}(1) = \phi_\e(x)+Q\log x$. Let $Q(d\phi,dx) = \nu_\phi(dx)\LF_\bbH^{(\beta_1,\infty),(\beta_2,0)}(d\phi)$ be the infinite measure on $H^{-1}(\mathbb{H})\times [0,2]$. Then  
\begin{equation}\label{eqn-w-gamma2-w-5}
	\begin{split}
		Q[F(\tilde{\phi})g(\mathbf{x})] &= \lim_{\epsilon\to 0}\int \int_0^2 F(f_x\bullet_\gamma\phi)g(x)\epsilon^{\frac{\gamma^2}{4}}e^{\frac{\gamma}{2}\phi_\epsilon(x)}dx\LF_\bbH^{(\beta_1,\infty),(\beta_2,0)}(d\phi)\\
		&= \lim_{\epsilon\to 0}\int\int_0^2 F(f_x\bullet_\gamma\phi)g(x)(\frac{\epsilon}{x})^{\frac{\gamma^2}{4}}x^{\frac{\gamma^2}{4}}e^{\frac{\gamma}{2}(f_x\bullet_\gamma\phi)_{\frac{\e}{x}}(1)-\frac{\gamma}{2}Q\log x}dx\LF_\bbH^{(\beta_1,\infty),(\beta_2,0)}(d\phi)\\
		&= \lim_{\epsilon\to 0}\int\int_0^2 F(f_x\bullet_\gamma\phi)g(x)\epsilon^{\frac{\gamma^2}{4}}e^{\frac{\gamma}{2}(f_x\bullet_\gamma\phi)_\e(1)}x^{\frac{\gamma^2-2\gamma Q}{4}}dx\LF_\bbH^{(\beta_1,\infty),(\beta_2,0)}(d\phi)\\
		&= \lim_{\epsilon\to 0}\int_0^2\int F(\tilde{\phi})g(x)\epsilon^{\frac{\gamma^2}{4}}e^{\frac{\gamma}{2}\tilde{\phi}_\e(1)}x^{\frac{\gamma^2-2\gamma Q}{4}}[(f_x)_*\LF_\bbH^{(\beta_1,\infty),(\beta_2,0)}](d\tilde{\phi})dx.
	\end{split}
\end{equation}
Here we have used the fact that $\lim_{\epsilon\to 0}\int_0^2 \e^{\frac{\gamma^2}{4}}e^{\frac{\gamma}{2}h_\e(x)}g(x)dx = \int_0^2 g(x)\nu_h(dx)$ in $L^1$ with respect to $P_\bbH$ (see e.g. \cite[Theorem 1.1]{Ber17}).  Meanwhile, by Lemma \ref{lmm-lf-insertion-infty} and Lemma \ref{lmm-lcft-H-conf},  we have
\begin{equation}\label{eqn-w-gamma2-w-6}
	\begin{split}
		(f_x)_*\LF_\bbH^{(\beta_1,\infty),(\beta_2,0)} &= \lim_{r\to+\infty} r^{2\Delta_{\beta_1}}	(f_x)_*\LF_\bbH^{(\beta_1,r),(\beta_2,0)} \\
		&= \lim_{r\to+\infty} r^{2\Delta_{\beta_1}} x^{-\Delta_{\beta_1}-\Delta_{\beta_2}}\LF_\bbH^{(\beta_1,\frac{r}{x}),(\beta_2,0)} = x^{\Delta_{\beta_1}-\Delta_{\beta_2}}\LF_\bbH^{(\beta_1,\infty),(\beta_2,0)}.
	\end{split}
\end{equation}
Since $\frac{\gamma^2-2\gamma Q}{4} = -1$, plugging \eqref{eqn-w-gamma2-w-6} into \eqref{eqn-w-gamma2-w-5} and using Lemma \ref{lmm-lf-insertion-bdry}, we get
\begin{equation}\label{eqn-tildephi-indep-x}
	\begin{split}
		Q[F(\tilde{\phi})g(\mathbf{x})] &= \lim_{\epsilon\to 0}\int_0^2\bigg(\int F(\tilde{\phi})\epsilon^{\frac{\gamma^2}{4}}e^{\frac{\gamma}{2}\tilde{\phi}_\e(1)}\LF_\bbH^{(\beta_1,\infty),(\beta_2,0)}(d\tilde{\phi})\bigg)g(x)x^{-1+\Delta_{\beta_1}-\Delta_{\beta_2}}dx\\
		&= \bigg(\int F(\tilde{\phi}) \LF_\bbH^{(\beta_1,\infty), (\beta_2, 0), (\gamma, 1)}(d\tilde{\phi}) \bigg)\bigg(\int_0^2g(x)x^{-1+\Delta_{\beta_1}-\Delta_{\beta_2}}dx\bigg) .
	\end{split}
\end{equation}
Therefore the law of $\tilde{\phi}$ is $\LF_\bbH^{(\beta_1,\infty), (\beta_2, 0), (\gamma, 1)}$ and is independent of the choice of $\mathbf{x}$, which concludes our proof.

\end{proof}

\subsection{Extension to general weights}\label{sec-general-thin}

In this section, we finish the proof of Proposition \ref{prop-associate-c} by repeatedly applying the change of weight argument in Proposition \ref{prop-change-weight} along with Theorem \ref{prop-w-gamma2-w}. Since most of the proof is identical to that of Proposition \ref{prop-thick-welding}, we will only list the welding pictures and explain by which theorem we can weld surfaces together. 

\begin{proof}[Proof of Proposition \ref{prop-associate-c}]
{By Proposition~\ref{prop-thick-welding}}, we only need to focus on the case when a least one of $W_1$ and $W_2$ is in  $(0,\frac{\gamma^2}{2})$.

\emph{Step 1: $W_1\in (0,\frac{\gamma^2}{2})$, {$W> \frac{\gamma^2}{2}$},  $W_2\ge \gamma^2-W_1$.} Consider the setting on the right panel of Figure \ref{fig-associate-a}. We start from the weight $(W_1, \gamma^2-W_1, \gamma^2-W_1)$ quantum triangle $S_2$ and weld an independent quantum disk $S_1$ from $\Md_2(W)$ to its left boundary and an independent quantum disk $S_3$ from $\Md_2(W_2 - (\gamma^2-W_1))$ to its bottom arc. If we first weld $S_1$ to the left of $S_2$ (by Theorem \ref{prop-w-gamma2-w}) and then $S_3$ to the bottom (by Proposition \ref{prop-thick-welding}) and forget about $\eta_2$,  the resulting law of the curve-decorated surface is the left hand side of \eqref{eqn-qt-change-weight-2}. Meanwhile, we can also start by welding $S_2$ and $S_3$ together, which (by Proposition \ref{prop-3-pt-disk}) leads to the right hand side of \eqref{eqn-qt-change-weight-2}. This justifies the claim. 

\emph{Step 2: $W_1\in (0,\frac{\gamma^2}{2}),\ {W\in(0,\infty)\backslash\{\frac{\gamma^2}{2}\}},\ W_2\ge \gamma^2$.} By Step 1 we can assume $W\in (0,\frac{\gamma^2}{2})$. Consider the welding on the left panel of Figure \ref{fig-associate-b}. By Proposition \ref{prop-3-pt-disk}, we can first weld $S_1$ and $S_2$ together. Then we weld the disk $S_3$ from below (if $W_1+W<\frac{\gamma^2}{2}$ then this is covered by Step 1; otherwise this is from Proposition \ref{prop-thick-welding}). However we can also apply Theorem \ref{prop-w-gamma2-w} and Proposition \ref{prop-change-weight} to glue $S_2$ and $S_3$ together first. Comparing the two procedures (and applying Proposition \ref{prop-change-weight}) yields \eqref{eqn-qt-change-weight-2}. By symmetry we may also swap $W_1$ and $W_2$ and \eqref{eqn-qt-change-weight-2} holds for $W_1\ge \gamma^2,\ W>0,\ W_2\in (0,\frac{\gamma^2}{2})$.

\begin{figure}[ht]
	\begin{tabular}{cc} 
		\includegraphics[width=0.5\textwidth]{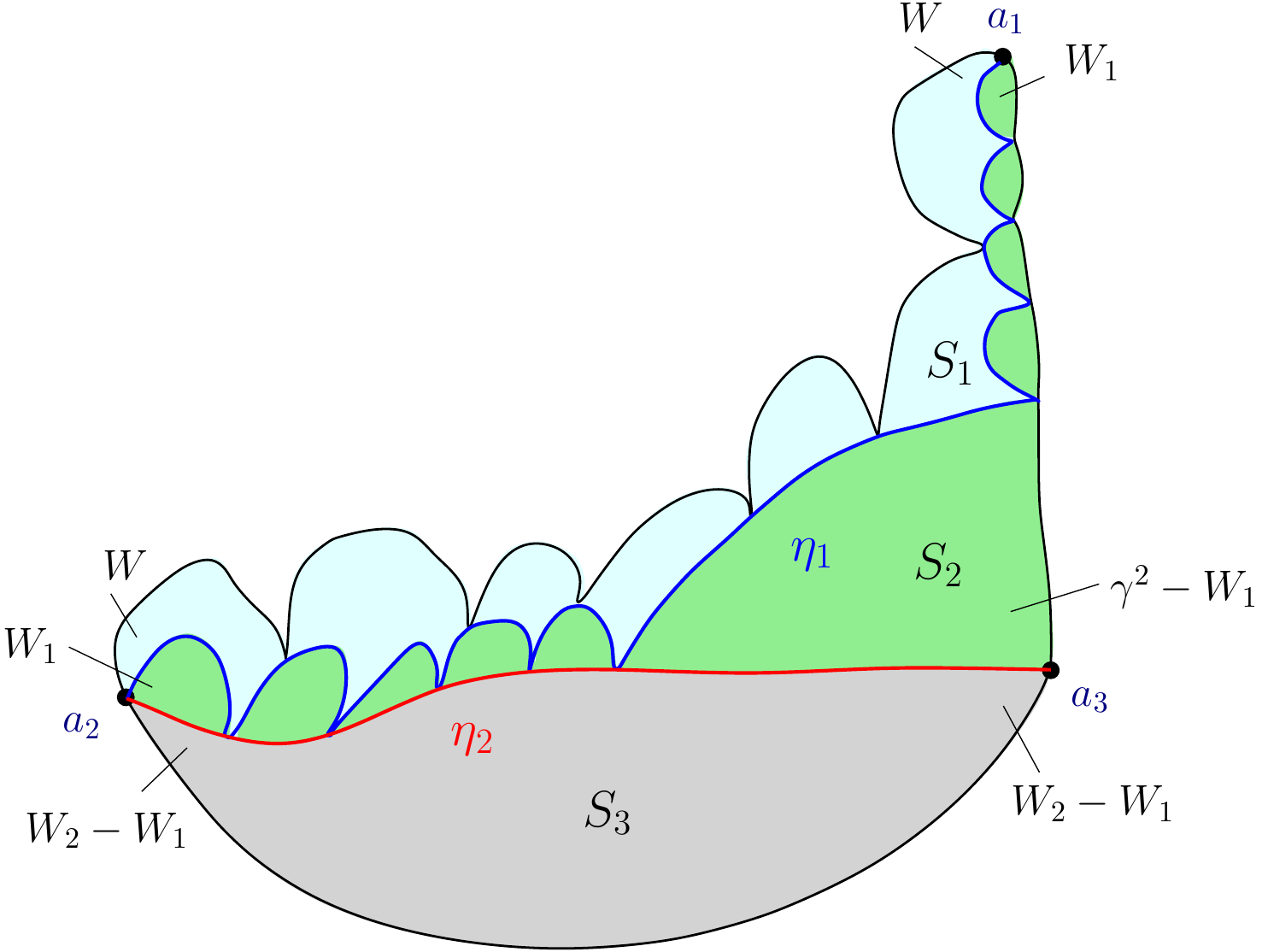}
		& 
		\includegraphics[width=0.35\textwidth]{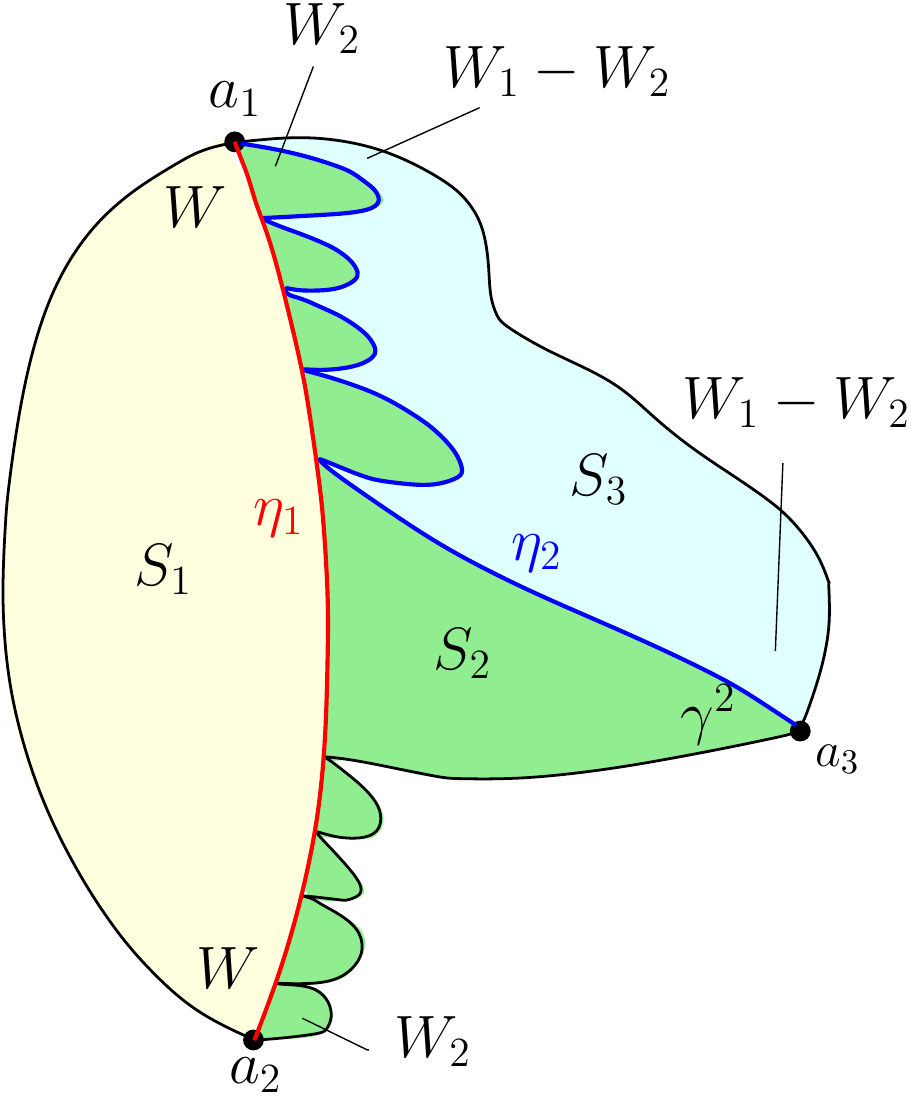}
	\end{tabular}
	\caption{\textbf{Left}: Step 2 of the proof of Proposition \ref{prop-associate-c}. We can weld the three surfaces together by first glue $S_1$ with $S_2$ and then $S_3$ on the bottom. Apply this procedure we get a large triangle of weights $(W+W_1, W+W_2, W_2+\gamma^2-2W_1)$. We can also weld $S_2$ with $S_3$ first and get a triangle of weights $(W_1,W_2,W_2+\gamma^2-2W_1)$. Comparing the two procedures we get \eqref{eqn-qt-change-weight-2} (By Proposition \ref{prop-change-weight}, the weight at vertex $a_3$ can be replaced by any $W_3>0$).   \textbf{Right}: Step 3 of Proposition \ref{prop-associate-c}. We can start by gluing $S_1$ and $S_2$ (Proposition \ref{prop-3-pt-disk}) and then glue $S_3$ to the right (Proposition \ref{prop-thick-welding} and Step 2) to get a triangle of weights $(W+W_1,W+W_2, W_1-W_2+2)$ decorated with independent curves $(\eta_1,\eta_2)$. We can also start with $S_2$ and $S_3$ instead and see that the surface to the right of $\eta_1$ has law $\QT(W_1,W_2, W_1-W_2+2)$. Apply Proposition \ref{prop-change-weight} once more we get the welding equation \eqref{eqn-qt-change-weight-2}.  }\label{fig-associate-b}
\end{figure}

\emph{Step 3. The remaining cases.} 
{First assume $W\neq\frac{\gamma^2}{2}$.} Without loss of generality suppose $W_2<W_1\le \gamma^2$ and $W_2<\frac{\gamma^2}{2}$ (if $W_1=W_2$ then the claim is straightforward from Proposition \ref{prop-3-pt-disk}). Consider a quantum triangle $S_2$ of weight ($W_2, W_2, \gamma^2$). Again by Proposition \ref{prop-3-pt-disk} we can weld a weight $W$ quantum disk $S_1$ to the left. Then we weld a weight $W_1-W_2$ quantum disk $S_3$ to the right and forget about the interface (if $W+W_2<\frac{\gamma^2}{2}$ we apply Step 2; otherwise we apply Proposition \ref{prop-thick-welding}).   Meanwhile we can apply Step 2 to weld $S_2$ and $S_3$ first. Comparing the two procedures (and change the third weight by Proposition \ref{prop-change-weight}) we obtain \eqref{eqn-qt-change-weight-2}. {Finally if $W=\frac{\gamma^2}{2}$ then we may pick $U\in(0,\frac{\gamma^2}{2})$ and argue as in Step 2 of Proposition \ref{prop-thick-welding} (see the Left panel of Figure~\ref{fig-associate-e} where $S_1$ and $S_2$ are thin quantum disks.)}
\end{proof}

\subsection{The interface law}\label{sec-interface-law}

In this section, we identify the interface law $\mathsf{m}$ in Proposition~\ref{prop-associate-c} as $\wt{\SLE}_\kappa(W-2;W_2-2,W_1-W_2,\alpha)$ with $\alpha  = \frac{W_3+W_2-W_1-2}{4\kappa}(W_3+W_1+2-W_2-\kappa)$ using the SLE curve resampling properties, which completes the proof of {Theorem \ref{thm:M+QTp-rw}} {when none of the weights $W+W_1,W+W_2,W_1,W_2,W_3$ equals $\frac{\gamma^2}2$}. We begin with the direct extension of Theorem \ref{thm:u2v} and work on the case where $W_1\ge W_2>0$, while {the case $W_2>W_1$ is covered via the SLE$_\kappa(\rho_-;\rho_+,\rho_1)$ reversibility in Theorem \ref{thm-sle-reverse}.} {Note that if $W+W_1<\frac{\gamma^2}{2}$ and/or $W+W_2<\frac{\gamma^2}{2}$ then as in Proposition~\ref{prop:ig-flow-descriptions} and the discussion at the beginning of Section~\ref{sec-6-thin-qt}, the $\wt{\SLE}_\kappa(W-2;W_2-2,W_1-W_2,\alpha)$ curve is understood as the concatenation of an $\wt{\SLE}_\kappa(W-2;W_2-2,W_1-W_2,\alpha)$ in the core with independent $\SLE_\kappa(W-2;W_1-2)$ and/or $\SLE_\kappa(W-2;W_2-2)$ in each bead of the weight $W+W_1$ and/or $W+W_2$ thin quantum disk.}

\begin{lemma}\label{lmm-curve-1}
Suppose {$W,W_1>0$ and $W_1,W+W_1\neq\frac{\gamma^2}{2}$}.  Then there exists some constant $c = c_{W,W_1}\in (0,\infty)$ such that
\begin{equation}\label{eqn-lmm-curve-1}
	\QT(W+W_1, W+2, W_1)\otimes \SLE_\kappa(W-2;0,W_1-2) = c\int_0^\infty \Wd(\Md_2(W;\ell),\QT(W_1,2,{W}_1;\ell))d\ell.
\end{equation}
\end{lemma}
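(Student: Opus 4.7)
The existence of the welding identity with some curve measure $\mathsf{m}$ in place of $\SLE_\kappa(W-2;0,W_1-2)$ is immediate from Proposition~\ref{prop-associate-c}, so the content of the lemma is to identify $\mathsf{m}$ with $\SLE_\kappa(W-2;0,W_1-2)$. The base case, where $W\in(0,\tfrac{\gamma^2}{2})$ and $W_1>\tfrac{\gamma^2}{2}$, is exactly Theorem~\ref{thm:u2v} after unwinding notation (set $U=W$ and observe that the triangle weights $(W_1,2,W_1)$ match those of Theorem~\ref{thm:u2v}).

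To extend to general $W$ (still assuming $W_1>\tfrac{\gamma^2}{2}$), I plan to introduce an auxiliary curve via a second welding. Pick $V\in(0,\tfrac{\gamma^2}{2})$ and consider the simultaneous welding of $\Md_2(W)$ on the left arc of $\QT(W_1,2,W_1)$ and $\Md_2(V)$ on the right arc (between the two weight-$W_1$ vertices). By Proposition~\ref{prop-3-pt-disk} applied with $\beta=\gamma$ (recall $\QT(W_1,W_1,2)$ is proportional to $\Md^{\mathrm{disk}}_{2,\bullet}(W_1;\gamma)$ and $\Delta_\gamma=1$), welding $\Md_2(V)$ along that arc produces a curve of known law $\SLE_\kappa(V-2;W_1-2,0)$. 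Using the associativity of simultaneous welding, the combined surface $\QT(V+W+W_1,W+2,V+W_1)$ carries two curves $(\eta,\eta')$ whose conditional laws can be read off in the two orderings: in one ordering Theorem~\ref{thm:u2v} (applicable because $V<\tfrac{\gamma^2}{2}<W_1$) gives the conditional law of $\eta$ given $\eta'$ as $\SLE_\kappa(W-2;0,V+W_1-2)$ in a subdomain; in the other ordering Proposition~\ref{prop-3-pt-disk} gives the conditional law of $\eta'$ given $\eta$ as a tractable $\SLE_\kappa(\rho_-;\rho_+,\rho_1)$ in the remaining subdomain.

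Since $\eta$ travels from $0$ to $\infty$ and $\eta'$ from $1$ to $\infty$, the pair fits the setup of Proposition~\ref{prop:ig-flow-descriptions}. I would then choose IG angles $\theta_1>\theta_2$ and boundary values $(x_1,x_2,x_3)$ so that the marginal of $\eta'$ and the conditional of $\eta$ given $\eta'$ match, which forces the parameters to be such that $(\theta_1-\theta_2)\chi\geq\tfrac{\sqrt{\kappa}\pi}{2}$ (achievable by making $V$ appropriately small). The uniqueness part of Proposition~\ref{prop:ig-flow-descriptions} then pins the joint law to that of two imaginary geometry flow lines, and integrating out $\eta'$ identifies the marginal of $\eta$ in the combined triangle; transporting this back via the conformal map that forgets $\Md_2(V)$ identifies $\mathsf{m}(W;W_1,2,W_1)=\SLE_\kappa(W-2;0,W_1-2)$ in the original welding.

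For $W_1\in(0,\tfrac{\gamma^2}{2})$, the triangle $\QT(W_1,2,W_1)$ is thin; by Definition~\ref{def-qt-thin} it is the concatenation of its core $\QT(\gamma^2-W_1,2,\gamma^2-W_1)$ with two independent weight-$W_1$ thin disks at the two weight-$W_1$ vertices. The core falls under the thick case already handled, and across each thin arm the interface decomposes bead-wise into independent $\SLE_\kappa(W-2;W_1-2)$ curves on each bead of the chain; by the bead-wise convention recalled at the start of Section~\ref{sec-interface-law}, this concatenation is exactly $\SLE_\kappa(W-2;0,W_1-2)$ on the full thin triangle. The main obstacle is the careful bookkeeping of the IG parameter matching in the middle step: we must check that the force-point weights and angles produced by the two orderings are jointly realizable as flow lines of a single GFF, and in particular that the admissibility condition on $\theta_1-\theta_2$ in Proposition~\ref{prop:ig-flow-descriptions} is satisfied for the chosen auxiliary weight $V$.
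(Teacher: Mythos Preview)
Your inductive step has a genuine gap. You claim Theorem~\ref{thm:u2v} gives the conditional law of $\eta$ given $\eta'$, but conditioned on $\eta'$, the surface not containing the $V$-disk is exactly the welding $\Md_2(W)+\QT(W_1,2,W_1)$ with interface $\eta$ --- so that conditional law is the unknown $\mathsf{m}(W;W_1,2,W_1)$ itself. Theorem~\ref{thm:u2v} cannot supply it: that theorem requires the \emph{disk} weight to lie in $(0,\tfrac{\gamma^2}{2})$, and the disk adjacent to $\eta$ is $\Md_2(W)$ with general $W$; your parenthetical ``applicable because $V<\tfrac{\gamma^2}{2}<W_1$'' refers to the wrong disk. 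If instead you try to use Theorem~\ref{thm:u2v} for the $V$-disk to get the marginal of $\eta'$, the surface opposite the $V$-disk is $\QT(W+W_1,W+2,W_1)$, which is not of the form $\QT(W',2,W')$. After your auxiliary welding you therefore know only one of the two conditional laws (namely $\eta'\mid\eta$, via Proposition~\ref{prop-3-pt-disk}), which is not enough to invoke Proposition~\ref{prop:ig-flow-descriptions}, and the argument is circular.

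The paper avoids this by using Proposition~\ref{prop-3-pt-disk} --- valid for \emph{all} disk weights, not just thin ones --- as the sole SLE input. The auxiliary disk is given weight $|W_1-2|$, chosen so that the relevant third marked point acquires weight $2$ (equivalently $\beta=\gamma$, hence no conformal-derivative weighting). For $W_1<2$ one welds $\Md_2(2-W_1)$ to the right arc; the combined surface is then $\QT(W+2,W+2,2)$, so Proposition~\ref{prop-3-pt-disk} directly gives both the marginal of $\eta_1$ (as $\SLE_\kappa(W-2)$) and the conditional of $\eta_2$ given $\eta_1$. The commutation in Proposition~\ref{prop:ig-flow-descriptions} then yields the conditional law $\eta_1\mid\eta_2=\SLE_\kappa(W-2;0,W_1-2)$, which is precisely $\mathsf{m}$. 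The case $W_1>2$ is handled analogously with a weight-$(W_1-2)$ auxiliary disk.
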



\begin{figure}[ht]

\begin{tabular}{ccc} 
	\includegraphics[width=0.4\textwidth]{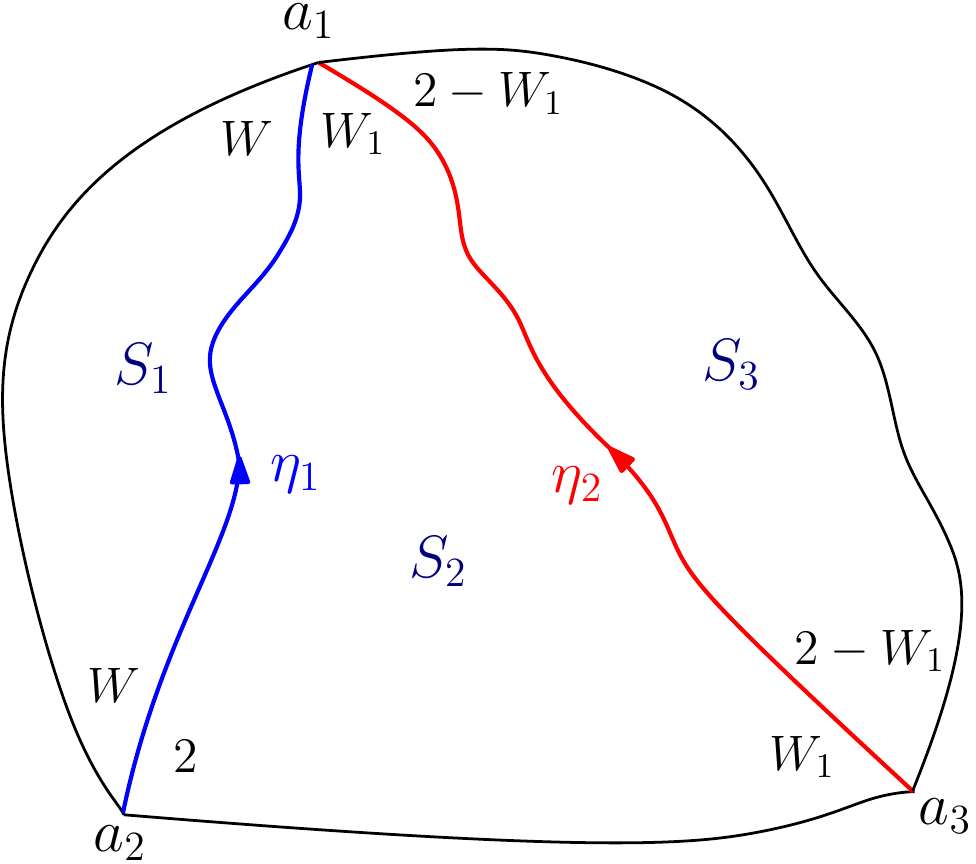}
	& \quad &
	\includegraphics[width=0.4\textwidth]{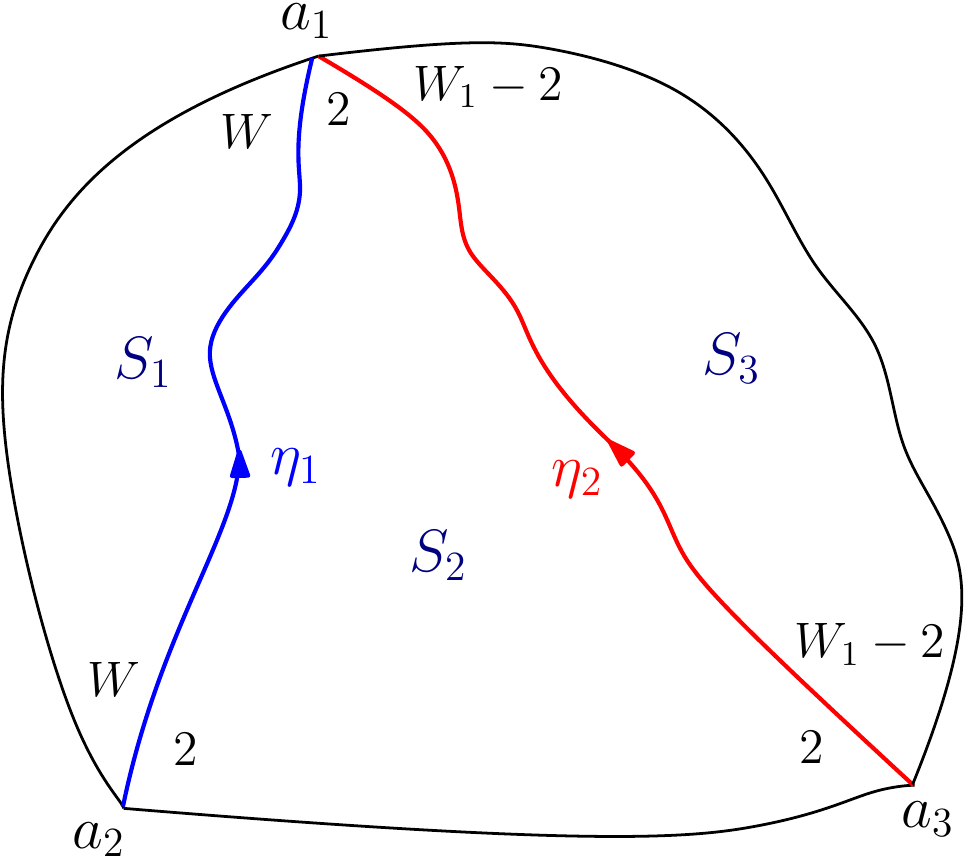}
\end{tabular}
\caption{\textbf{Left}: Suppose $W_1<2$. 
	Then we can weld a quantum disk of weight $2-W_1$ to the right side of the quantum  triangle, as in the left panel. Conditioned on the surface $S_1$ and the interface $\eta_1$, Proposition \ref{prop-3-pt-disk} tells us that the interface $\eta_2$ has law $\SLE_\kappa(W_1-2;-W_1)$  from $a_3$ to $a_1$ on the domain to the right of $\eta_1$, while the marginal law of $\eta_1$ is $\SLE_\kappa(W-2)$. This characterizes the law on pairs $(\eta_1,\eta_2)$, while one can verify by 
	Proposition~\ref{prop:ig-flow-descriptions} that the right panel gives a desired coupling. Furthermore by Imaginary geometry the conditional law of $\eta_1$ given $\eta_2$ is $\SLE_\kappa(W-2;0,W_1-2)$, which justifies \eqref{eqn-lmm-curve-1}. \textbf{Right}: For $W_1>2$, similar to the left panel, by Proposition \ref{prop-3-pt-disk} the conditional law of $\eta_1$ given $\eta_2$ is $\SLE_\kappa(W-2)$, while the law of $\eta_2$ given $\eta_1$ is  $\SLE_\kappa(W_1-4)$. Therefore by Proposition \ref{prop:ig-flow-descriptions} the marginal law of $\eta_1$ is $\SLE_\kappa(W-2;0,W_1-2)$. 
}\label{fig-curve-1}
\end{figure}

\begin{proof}
If $W_1 = 2$, then the lemma follows directly from Proposition \ref{prop-3-pt-disk}. For $W_1\neq 2$, see Figure~\ref{fig-curve-1} for an illustration. 
\end{proof}


Now we deal with the case $W_1\ge W_2>0$. Recall the notion of $\widetilde{\SLE}_\kappa(\rho_-;\rho_+,\rho_1;\alpha)$ in \eqref{eqn-sle-CR}.

\begin{figure}[ht]
\begin{tabular}{ccc} 
	\includegraphics[width=0.4\textwidth]{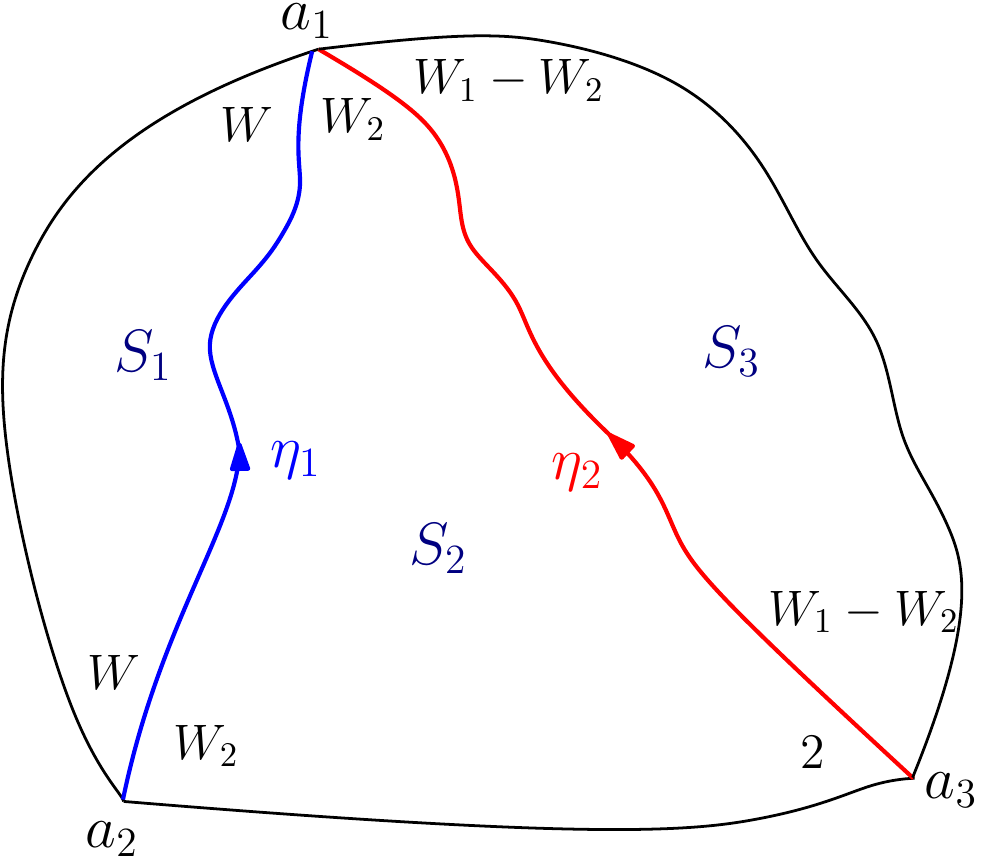}
	& \quad &
	\includegraphics[width=0.4\textwidth]{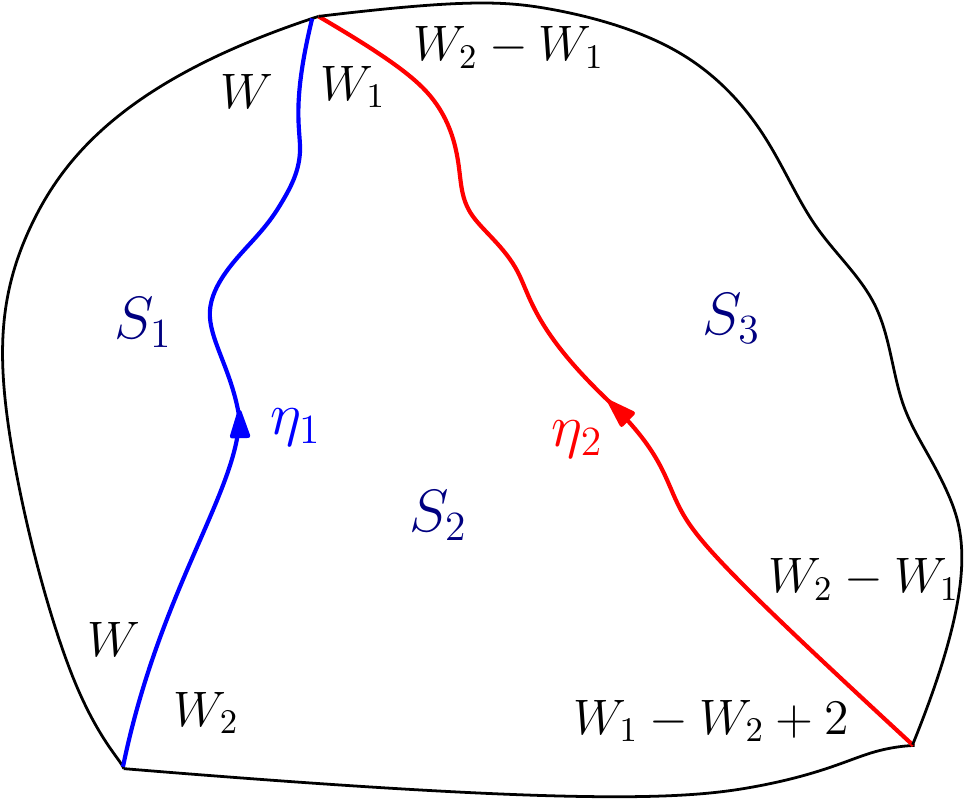}
\end{tabular}
\caption{\textbf{Left:} Suppose $W+W_2\ge\frac{\gamma^2}{2}$. Again consider the welding of the three surfaces as in the left panel. Similar to the explanation in Figure \ref{fig-curve-1}, by Proposition~\ref{prop-3-pt-disk} the conditional law of $\eta_1$ given $\eta_2$ is $\SLE_\kappa(W-2;W_2-2)$, while by Lemma~\ref{lmm-curve-1}, the marginal law of $\eta_2$   is  $\SLE_\kappa(0,W+W_2-2;W_1-W_2-2)$. Therefore by Proposition~\ref{prop:ig-flow-descriptions} we can infer that the marginal law of $\eta_1$ is $\SLE_\kappa(W-2;W_2-2,W_1-W_2)$, which gives \eqref{eq:M+QT2}. \textbf{Right}: Suppose $\max\{W_1,W_2\}\ge 2$ and $|W_1-W_2|<2$. Consider the welding picture in the left panel. By Propositions \ref{prop-3-pt-disk} and \ref{prop-curve-2} we may figure out the joint law of $(\eta_1,\eta_2)$ and therefore recover from Proposition~\ref{prop:ig-flow-descriptions} that the conditional law of $\eta_1$ given $\eta_2$ is $\SLE_\kappa(W-2;W_2-2,W_1-W_2)$.  }\label{fig-curve-3}
\end{figure}

\begin{proposition}\label{prop-curve-2}
{Theorem~\ref{thm:M+QTp-rw} holds when $W_1 \geq W_2$ and $\frac{\gamma^2}2 \not \in \{ W + W_1, W+W_2, W_1, W_2, W_3\}$.}
\end{proposition}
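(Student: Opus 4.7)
The plan is to upgrade Proposition~\ref{prop-associate-c} to the full Theorem~\ref{thm:M+QTp-rw} by identifying the interface law $\mathsf{m}(W;W_1,W_2,W_3)$ as $\widetilde{\SLE}_\kappa(W-2; W_2-2, W_1-W_2; \alpha)$. Since the field marginal is already pinned down, only the curve remains. By the change-of-weight principle (Proposition~\ref{prop-change-weight}), it suffices to treat the pivot case $W_3 = W_1 - W_2 + 2$, where $\alpha = 0$ and the target interface is the plain $\SLE_\kappa(W-2; W_2-2, W_1 - W_2)$; the general $W_3$ case then follows from the identity
\[ \alpha \;=\; \Delta_{\widetilde\beta_3} - \Delta_{\beta_3}, \qquad \widetilde\beta_3 = \gamma + \tfrac{W_2 - W_1}{\gamma},\quad \beta_3 = \gamma + \tfrac{2-W_3}{\gamma}, \]
which is a direct computation with $\Delta_\beta = \frac{\beta}{2}(Q - \frac{\beta}{2})$ and matches the Radon--Nikodym factor supplied by Proposition~\ref{prop-change-weight} (since $\widetilde{\SLE}$ is defined by exactly such a derivative in~\eqref{eqn-sle-CR}).

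For the pivot case ($W_3 = W_1 - W_2 + 2$, $W_1 > W_2$), I would employ the two-curve commutation suggested by the left panel of Figure~\ref{fig-curve-3}. Consider the simultaneous welding of three surfaces: a weight-$W$ quantum disk $S_1$, a core quantum triangle $S_2$ of weights $(W_2, W_2, W_3)$, and a weight-$(W_1-W_2)$ quantum disk $S_3$ glued to the appropriate weight-$W_2$ vertex of $S_2$. By Proposition~\ref{prop-thick-welding}, the joint surface $S_2 \cup S_3$ is $\QT(W_1, W_2, W_3)$ carrying an independent interface $\eta_2$, and by Proposition~\ref{prop-associate-c} the full welded surface is $\QT(W+W_1, W+W_2, W_3)$ decorated with two curves $(\eta_1, \eta_2)$. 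The conditional law of $\eta_1$ given $\eta_2$ is $\SLE_\kappa(W-2; W_2-2)$ on the relevant subdomain (read off from Proposition~\ref{prop-3-pt-disk} or Theorem~\ref{thm-disk-2} applied to the decomposition ``$S_1$ glued to $S_2 \cup S_3$''), while the marginal law of $\eta_2$ is $\SLE_\kappa(0, W+W_2-2; W_1-W_2-2)$ by Lemma~\ref{lmm-curve-1} applied to the complementary welding order ``$S_3$ glued to $S_1 \cup S_2$''. These two ingredients exactly match a flow-line configuration admissible under Proposition~\ref{prop:ig-flow-descriptions}, whose uniqueness clause then forces the marginal of $\eta_1$ to be $\SLE_\kappa(W-2; W_2-2, W_1-W_2)$, as desired.

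The argument above requires $W_1 - W_2 > 0$ together with the angle-gap bound $(\theta_1 - \theta_2)\chi \geq \sqrt{\kappa}\pi/2$ needed for Proposition~\ref{prop:ig-flow-descriptions}'s resampling characterization; the borderline $W_1 = W_2$ is already in the scope of Proposition~\ref{prop-3-pt-disk}, and $\alpha$ vanishes there. For the complementary range $|W_1 - W_2| < 2$, where the first welding's force-point weight $W_1 - W_2 - 2$ would violate the $\rho>-2$ constraint, I would perform the parallel commutation of the right panel of Figure~\ref{fig-curve-3}: the conditional law of $\eta_1$ given $\eta_2$ is again computed via Proposition~\ref{prop-3-pt-disk}, but the marginal of $\eta_2$ is now taken from the large-gap case just proved, so a single application of Proposition~\ref{prop:ig-flow-descriptions} closes the argument. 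The principal technical obstacle is navigating these angle-gap restrictions uniformly and bootstrapping across parameter ranges (analogous to the induction in Proposition~\ref{prop-thick-welding}) to cover the full regime $W_1 \geq W_2$ under the avoidance condition $\frac{\gamma^2}{2}\notin \{W+W_1, W+W_2, W_1, W_2, W_3\}$. Thickness/thinness of the vertices is not a genuine difficulty: only the cores of the two quantum triangles interact with the interface, so after appending the arms at the end, the argument reduces to the thick case.
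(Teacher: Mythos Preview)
Your overall plan matches the paper: weld three pieces, compute the marginal of $\eta_2$ via Lemma~\ref{lmm-curve-1} and the conditional of $\eta_1\mid\eta_2$ via Proposition~\ref{prop-3-pt-disk}, then invoke Proposition~\ref{prop:ig-flow-descriptions} to read off the marginal of $\eta_1$. The reduction to the pivot case $W_3=W_1-W_2+2$ via Proposition~\ref{prop-change-weight} is also exactly what the paper does.

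There are two issues.

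First, you invoke the \emph{uniqueness} clause of Proposition~\ref{prop:ig-flow-descriptions}, which carries the angle-gap restriction $(\theta_1-\theta_2)\chi\ge\sqrt\kappa\,\pi/2$; this is unnecessary and is the source of your spurious case split on $|W_1-W_2|<2$. You already know both the marginal of $\eta_2$ and the conditional of $\eta_1\mid\eta_2$, so the joint law is completely determined: it suffices to check that this matches the second bullet of Proposition~\ref{prop:ig-flow-descriptions} (a parameter computation shows that it does, the third $\rho$ in the conditional law of $\eta_1\mid\eta_2$ being exactly $0$), after which the \emph{equivalence} statement gives the marginal of $\eta_1$ with no angle constraint. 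The paper uses only this equivalence statement; the right panel of Figure~\ref{fig-curve-3} is not used in the proof of Proposition~\ref{prop-curve-2} at all (it appears in the alternative argument of Section~\ref{sec:application-sle-reverse}).

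Second, you miss the genuine case split. When $W+W_2<\gamma^2/2$, the $(W+W_2)$-vertex of the large triangle is thin and the curve $\eta_1$ does not live in a single simply connected core, so Proposition~\ref{prop:ig-flow-descriptions} cannot be applied as written. Your remark that ``only the cores interact'' does not resolve this: the core of $\QT(W+W_1,W+W_2,W_3)$ has weight $\gamma^2-(W+W_2)$ at that vertex, which does not match the IG parameters you need. The paper instead replaces $W$ by $\tilde W=2-W_2$ (so $\tilde W+W_2=2>\gamma^2/2$), runs the above argument, and then threads an auxiliary curve $\eta_0\sim\SLE_\kappa(-W-W_2;W-2)$ through the weight-$\tilde W$ disk to recover the weight-$W$ piece, reading off the conditional law of $\eta_1$ given $\eta_0$.

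A minor correction: in the left-panel decomposition the middle triangle is $\QT(W_2,W_2,2)$, not $\QT(W_2,W_2,W_3)$. With that weight-$2$ vertex, Proposition~\ref{prop-3-pt-disk} produces $\SLE_\kappa(W-2;W_2-2)$ with no conformal-derivative tilt (since $\Delta_\gamma=1$), and Lemma~\ref{lmm-curve-1} applies cleanly to the $\eta_2$ side.
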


\begin{proof}
Again if $W_1= W_2$ then the conclusion is clear from Proposition \ref{prop-3-pt-disk}. Now we start with the case $W_3 = W_1-W_2+2$ so that $\alpha = 0$ and there is no weighting in the $\SLE$ law. The case when $W+W_2\ge\frac{\gamma^2}{2}$ is explained in Figure \ref{fig-curve-3}. If $W+W_2<\frac{\gamma^2}{2}$, then we may first replace $W$ with $\tilde{W} = 2-W_2$ in Figure \ref{fig-curve-3} and draw an independent curve $\eta_0\sim\SLE_\kappa(-W-W_2;W-2)$ in the weight $\tilde{W}$ disk. Using the same argument one can read off the conditional law of $\eta_1$ given $\eta_0$, which coincides with that in \eqref{eq:M+QT2}. Finally for general $W_3>0$, we apply Proposition \ref{prop-change-weight}. In this setting, $\tilde{\beta}_3 = \gamma+\frac{W_2-W_1}{\gamma}$ and $\beta_3 = \gamma+\frac{2-W_3}{\gamma}$, and we finish the proof by calculating
$$\Delta_{\tilde{\beta}_3}-\Delta_{\beta_3} =   \frac{W_3+W_2-W_1-2}{4\kappa}(W_3+W_1+2-W_2-\kappa) = \alpha. $$ 
\end{proof}
Proposition \ref{prop-curve-2} has discussed the interface law for the case $W_1\ge W_2$. Now if $W_1<W_2$,  by applying Theorem \ref{thm-sle-reverse} and reversing the orientation of the curve, we are now able to finish the proof of Theorem \ref{thm:M+QTp-rw} when none of the weights are $\frac{\gamma^2}{2}$.

\begin{proposition}\label{prop:thm-non-gamma/2}
Theorem \ref{thm:M+QTp-rw} holds when $W_1, W_2, W_3, W+W_1, W+W_2\neq\frac{\gamma^2}{2}$.
\end{proposition}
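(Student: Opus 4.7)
The plan is to leverage Proposition~\ref{prop-curve-2}, which already proves Theorem~\ref{thm:M+QTp-rw} in the case $W_1\ge W_2$. For the remaining case $W_2>W_1$ I will use the SLE reversibility result Theorem~\ref{thm-sle-reverse} to swap the roles of the two weights along the welding interface.

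First I would apply Proposition~\ref{prop-curve-2} to the reordered weight tuple $(W, W_2, W_1, W_3)$ (for which $W_2 \ge W_1$), yielding
\begin{equation*}
\QT(W+W_2, W+W_1, W_3) \otimes \wt{\SLE}_\kappa(W-2; W_1-2, W_2-W_1; \alpha') \;=\; c'\, \mathrm{Weld}(\Md_2(W), \QT(W_2, W_1, W_3)),
\end{equation*}
with $\alpha' = \frac{W_3+W_1-W_2-2}{4\kappa}(W_3+W_2-W_1+2-\kappa)$. Since the boundary arc of $\QT(W_2,W_1,W_3)$ welded on the right is the very same unordered arc as the one being welded in~\eqref{eq:M+QT2} for $\QT(W_1,W_2,W_3)$, the resulting three-pointed curve-decorated quantum surfaces coincide after relabeling vertices.

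Next I would transport this identity into the embedding convention of Theorem~\ref{thm:M+QTp-rw} via the anti-conformal involution $\tau\colon z\mapsto 1/\bar z$ of $\bbH$, which fixes $1$ and swaps $0$ and $\infty$. Pushing forward an embedding $(\bbH, \phi, \infty, 0, 1)$ with weight tuple $(W+W_2, W+W_1, W_3)$ by $\tau$ yields an embedding with weights $(W+W_1, W+W_2, W_3)$ at $(\infty, 0, 1)$. Under the same transformation the interface from $0$ to $\infty$ becomes a curve from $\infty$ to $0$, and time reversal converts it into a curve from $0$ to $\infty$ in the new embedding, which is the interface I must identify.

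Finally I would identify the law of this time-reversed, $\tau$-pushed curve using Theorem~\ref{thm-sle-reverse} together with its extension to the $\wt\SLE$-weighted setting. The key observation is that, writing $\bar\eta$ for the time reversal of $\eta$ composed with $\tau$, a short derivative computation using that $\tau$ fixes $1$ gives $|\psi'_{\bar\eta}(1)| = |\psi'_\eta(1)|$, so the Radon-Nikodym weight in~\eqref{eqn-sle-CR} transfers intact. Applying this to $\wt{\SLE}_\kappa(W-2; W_1-2, W_2-W_1; \alpha')$ yields a law proportional to $\wt{\SLE}_\kappa(W-2; W_2-2, W_1-W_2; \alpha' + \frac{(W_2-W_1)(4-\kappa)}{2\kappa})$. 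A short algebraic check (based on the identity $(a-b-2)(a+b+2-\kappa) - (a+b-2)(a-b+2-\kappa) = -2b(4-\kappa)$ with $a=W_3$ and $b=W_1-W_2$) then confirms that this exponent equals $\alpha$ from~\eqref{eqn-alpha}, completing the identification. The main obstacle is the extension of Theorem~\ref{thm-sle-reverse} to the $\wt\SLE$-weighted case, which rests on the conformal-derivative invariance $|\psi'_{\bar\eta}(1)| = |\psi'_\eta(1)|$ under the combined operation of $\tau$ and time reversal; once this invariance is in place, the rest of the argument reduces to bookkeeping and the elementary algebra above.
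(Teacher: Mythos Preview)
Your proposal is correct and follows essentially the same route as the paper's proof: reduce to Proposition~\ref{prop-curve-2} for $W_1\ge W_2$, and for $W_2>W_1$ apply Proposition~\ref{prop-curve-2} to the swapped tuple, then use Theorem~\ref{thm-sle-reverse} together with the invariance $|\psi'_{\bar\eta}(1)|=|\psi'_\eta(1)|$ to identify the reversed interface, finishing with the same algebraic identity $\tilde\alpha+\frac{(W_2-W_1)(4-\kappa)}{2\kappa}=\alpha$. The only point the paper adds that you do not mention is that when $W+W_1<\frac{\gamma^2}2$ or $W+W_2<\frac{\gamma^2}2$, the reversal in the thin beads is handled separately via \cite[Theorem~1.1]{MS16b}; this is a routine detail your argument would absorb in the same way.
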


\begin{proof}
By Proposition \ref{prop-curve-2}, it remains to work on the case where $W_1<W_2$.  Consider the welding as in the right hand side of \eqref{eq:M+QT2} but with the interface going in the reverse direction. Then by Proposition \ref{prop-curve-2} and left-right symmetry, the law of the interface is $\widetilde{\SLE}_\kappa(W_1-2,W_2-W_1;W-2;\tilde{\alpha})$ where
$$\tilde{\alpha} = \frac{W_3+W_1-W_2-2}{4\kappa}(W_3+W_2+2-W_1-\kappa)   .$$
Note that the conformal radius appeared in the definition \eqref{eqn-sle-CR} is invariant under time reversal and the conformal map $z\mapsto-\frac{1}{z}$, therefore as we reverse the direction and let the interface $\eta$ go from $a_2$ to $a_1$, then by Theorem \ref{thm-sle-reverse} $\eta$ has law $\widetilde{\SLE}_\kappa(W-2;W_2-2,W_1-W_2;\tilde{\alpha}+\frac{(W_2-W_1)(4-\kappa)}{2\kappa})$. (Again if any of $W+W_1$, $W+W_2$ is smaller than $\frac{\gamma^2}{2}$ then in each bead given by thin disk part we apply \cite[Theorem 1.1]{MS16b}.) Therefore we conclude the proof by noticing $\tilde{\alpha}+\frac{(W_2-W_1)(4-\kappa)}{2\kappa} = \alpha$ as given in \eqref{eq:M+QT2}.
\end{proof}

\subsection{Welding of quantum triangles with weight $\frac{\gamma^2}2$}\label{sec-gamma/2}

In this section, we finish the proof of Theorem~\ref{thm:M+QTp-rw}. Using Proposition~\ref{prop:thm-non-gamma/2} and taking a limit, we can allow one or more of $W_1, W_2, W_3$ to be $\frac{\gamma^2}2$  {and require $W > \frac{\gamma^2}2$} (Proposition~\ref{prop-thm-most}). This argument is technical because we need to truncate on suitable events to make the measures finite. Finally we remove the remaining constraint $\frac{\gamma^2}2 \not \in \{ W+W_1, W+W_2\}$ in Proposition~\ref{prop-final-constraints} via gluing with an extra quantum disk.

\begin{proposition}\label{prop-thm-most}
Theorem~\ref{thm:M+QTp-rw} holds when $W + W_1, W + W_2 \neq \frac{\gamma^2}2$  {and $W > \frac{\gamma^2}2$}.  
\end{proposition}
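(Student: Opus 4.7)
The plan is to obtain Proposition~\ref{prop-thm-most} by a limiting argument from Proposition~\ref{prop:thm-non-gamma/2}. Namely, for each $i\in\{1,2,3\}$ with $W_i=\frac{\gamma^2}{2}$, I would choose a sequence $W_i^n\downarrow \frac{\gamma^2}{2}$ with $W_i^n>\frac{\gamma^2}{2}$, and for other indices set $W_i^n=W_i$. Since $W>\frac{\gamma^2}{2}$, the sums $W+W_j^n$ stay strictly above $\frac{\gamma^2}{2}$ (possibly after a harmless perturbation to avoid landing on $\frac{\gamma^2}{2}$ for $W_3^n$), so Proposition~\ref{prop:thm-non-gamma/2} applies to $(W,W_1^n,W_2^n,W_3^n)$ and yields, for each $n$, constants $c_n\in(0,\infty)$ with
\[
\QT(W+W_1^n,W+W_2^n,W_3^n)\otimes \wt{\SLE}_\kappa(W-2;W_2^n-2,W_1^n-W_2^n;\alpha_n)=c_n\,\mathrm{Weld}(\Md_2(W),\QT(W_1^n,W_2^n,W_3^n)),
\]
where $\alpha_n$ is defined by~\eqref{eqn-alpha} with the $W_i$'s replaced by $W_i^n$.

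The first technical step is to pass to the limit on the $\QT$ factors. Combining Definition~\ref{def-qt-thick-crit} with Proposition~\ref{prop-limit-crit}, for any truncation event $E_K$ of the form appearing in Proposition~\ref{prop-limit-crit} (bounded boundary-arc lengths, bounded field test against a fixed $\rho\in\mathcal{P}$, and bounded quantum area) we have the vague convergence $\QT(W_1^n,W_2^n,W_3^n)|_{E_K}\to \QT(W_1,W_2,W_3)|_{E_K}$, because the product $\prod_i(Q-\beta_i^n)^{-1}$ in Definition~\ref{def-qt-thick-crit} splits into the factors $\prod_{\beta_i=Q^-}(Q-\beta_i^n)^{-1}$ absorbed by Proposition~\ref{prop-limit-crit} and the factors $\prod_{\beta_i\neq Q^-}(Q-\beta_i)^{-1}$ that survive in the limit. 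The same reasoning applied to the Liouville field at $(\infty,0,1)$ gives $\QT(W+W_1^n,W+W_2^n,W_3^n)|_{E_K}\to \QT(W+W_1,W+W_2,W_3)|_{E_K}$, since $W+W_1,W+W_2\neq \frac{\gamma^2}{2}$ and only the third insertion could require the $Q^-$ renormalization.

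The second step is the convergence of the decorations. On the right-hand side the welding operation is continuous with respect to the boundary-length disintegration (a standard fact used throughout Sections~\ref{sec:WW2}---\ref{sec:resampling}), so on $E_K$ the measure $\mathrm{Weld}(\Md_2(W),\QT(W_1^n,W_2^n,W_3^n))$ converges to $\mathrm{Weld}(\Md_2(W),\QT(W_1,W_2,W_3))$. On the left-hand side, the force-point parameters $W_2^n-2, W_1^n-W_2^n$ and the exponent $\alpha_n$ depend continuously on $W_i^n$, so by continuity of the SLE$_\kappa(\underline{\rho})$ driving process in its force-point parameters (and of the conformal radius at $1$), the law $\wt{\SLE}_\kappa(W-2;W_2^n-2,W_1^n-W_2^n;\alpha_n)$ converges weakly to $\wt{\SLE}_\kappa(W-2;W_2-2,W_1-W_2;\alpha)$ on any compact set of the curve space. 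Combining these inputs forces $c_n$ to converge to a finite positive constant $c$, and produces the desired identity on $E_K$; letting $K\uparrow\infty$ and using $\sigma$-finiteness of all measures involved yields the full statement.

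\textbf{Main obstacle.} The hard part will be coupling the convergences of the field, the induced boundary lengths, and the SLE-type interface at the same time, especially at the critical insertion where $\beta_i\uparrow Q$. Near such a critical vertex the core and the thin arm merge only in a limiting sense, so one must show that the law of the curve-decorated surface (not merely of the field marginal) is jointly continuous in the weights on the truncation event $E_K$. I expect this to reduce to verifying that the boundary-length and area test functionals used to truncate in Proposition~\ref{prop-limit-crit} are enough to also control the welding and the conformal radius $\psi_\eta'(1)$ appearing in the definition~\eqref{eqn-sle-CR} of $\wt{\SLE}$, uniformly in $n$; this in turn follows from uniform moment bounds on boundary GMC in the thick regime $\beta<Q$.
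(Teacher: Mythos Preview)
Your overall strategy---take a limit in Proposition~\ref{prop:thm-non-gamma/2} as the critical weights approach $\frac{\gamma^2}2$ from above, using Proposition~\ref{prop-limit-crit} for convergence of the quantum triangles and continuity of the $\SLE_\kappa(\underline\rho)$ law in its parameters---is exactly what the paper does. You also correctly anticipate that the hard part is the joint limit of field and curve near the critical insertion.

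Where your proposal diverges from the paper, and where there is a genuine gap, is the sentence ``the welding operation is continuous with respect to the boundary-length disintegration (a standard fact used throughout Sections~\ref{sec:WW2}--\ref{sec:resampling})''. This is not standard, is not proved in those sections, and is in fact delicate: conformal welding of quantum surfaces is only defined almost everywhere, and continuity of the map $(\cD,\mathcal T)\mapsto \mathrm{Weld}(\cD,\mathcal T)$ in the inputs would require control of the welding homeomorphism uniformly in $n$. The paper sidesteps this entirely by going in the \emph{opposite} direction. It conditions both sides of~\eqref{eq:M+QT2} on a finite-mass event $E_K$ (so everything becomes a probability measure and the constants $c_n$ disappear), and then uses that the \emph{cutting} map $(Z,\eta)\mapsto (X(Z,\eta),Y(Z,\eta))$, which sends the welded picture to the pair of pieces via explicit conformal maps, is continuous on $\{\mathrm{dist}(1,\eta)>\eps\}$. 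Since Proposition~\ref{prop:thm-non-gamma/2} says that cutting $\cL_n'$ yields $\cL_n$, and both $\cL_n'\to\cL'$ and $\cL_n\to\cL$ hold by Proposition~\ref{prop-limit-crit} plus SLE continuity, the identity ``cutting $\cL'$ yields $\cL$'' follows in the limit. This also removes the need to argue that $c_n$ converges: you claim this is ``forced'', but it is only forced once you know both sides converge, and your convergence of the right-hand side rests precisely on the welding-continuity claim you have not justified. Replacing welding continuity by cutting continuity is the missing idea.
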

\begin{proof}
We may assume that $W_3 \geq \frac{\gamma^2}2$ since the $W_3 < \frac{\gamma^2}2$ case follows from applying the result with the weight $\gamma^2 - W_3$ and concatenating with a thin quantum disk of weight $W_3$. 

We first explain the proof when $W_1, W_2 \geq \frac{\gamma^2}2$, then adapt the argument to the general case. 

For each $i$ such that $W_i \neq \frac{\gamma^2}2$, let $(W_i^n)$ be the constant sequence equal to $W_i$. For $i$ such that $W_i = \frac{\gamma^2}2$, let $(W_i^n)$ be a decreasing sequence with limit $W_i$. 
Let $K > 0$ be a parameter we will later send to $\infty$. Let $\rho$ be a probability measure with compact support in $\{ z \in \mathcal S\::\: |z| \leq 1/K\}$ such that $\iint G_\mathcal{S} (z,w) \rho(dw) \rho(dz) < \infty$. 

Define the event $E_K$ for a pair of fields $X$ and $Y$ on $\mathcal S$: \[E_K = \{\nu_X((-\infty, 1)), \nu_X((1,\infty)), |(X, \rho)|, |(Y, \rho)| < K\}.\]
Let $x_1= +\infty, x_2 = -\infty, x_3 = 1$, and let $\{\LF_{\mathcal{S}, \ell}^{(\beta_i^n, x_i)_i}\}$ be the disintegration of $\LF_{\mathcal{S}}^{(\beta_i^n, x_i)_i}$ with respect to the quantum length of $\bbR \times \{ \pi\}$, where $\beta_i^n := Q + \frac\gamma2 - \frac{W_i^n}\gamma$ for $i = 1,2,3$. 
Sample $(X_n, \cD_n)$ from $\int_0^\infty \LF_\mathcal{S, \ell}^{(\beta_i^n, x_i)_i} \times \Md_2(W; \ell) \, d\ell$, and let $Y_n$ be the field such that $\cD_n = (\mathcal S, Y_n, -\infty, +\infty)/{\sim_\gamma}$ and $\nu_{Y_n}(\bbR) = \ell$ with embedding fixed by specifying $\nu_{Y_n}((-\infty, 0)\times \{\pi\}) = \nu_{Y_n}(( 0,\infty)\times \{\pi\})$. {Let $\cL_n$ be the law of $(X_n, Y_n)$  {conditioned on $E_K$}. Similarly, sample $(X, Y)$ in the same way with $W_i^n$ replaced by $W_i$, and let $\cL$ be the law of $(X, Y)$  {conditioned on $E_K$}.} 

For a field $Z$ in $\mathcal S$ and a curve $\eta$ from $-\infty$ to $+\infty$ in $\mathcal S$ disjoint from $\partial \mathcal S$, define
\[X(Z, \eta) = f \bullet_\gamma Z, \qquad \qquad  Y (Z, \eta) = g \bullet_\gamma Z\]
where $f$ is the conformal map from the connected component of $\mathcal S \backslash \eta$ below $\eta$ to $\mathcal S$ fixing $-\infty, +\infty$ and $1$, and $g$ is the conformal map from the connected component of $\mathcal S \backslash \eta$ above $\eta$ to $\mathcal S$ sending $(-\infty, +\infty, p) \mapsto (-\infty, +\infty, i\pi)$ where $p \in \bbR \times \{ \pi\}$ is the point such that the $\nu_Z$-lengths of the two components of $(\bbR \times \{\pi\} ) \backslash \{p\}$ are the same. Let $\wt E_K$ be the event $\{(Z, \eta) \: : \:  (X(Z, \eta), Y(Z, \eta)) \in E_K\}$. In other words, 
\[  \wt E_K = \{(Z, \eta) \: : \: \nu_Z((-\infty, 1)), \nu_Z ((1, +\infty)), |(X(Z, \eta), \rho)|, |(Y(Z, \eta) , \rho)| < K \}.\]
Let $\tilde \beta_i^n = Q + \frac\gamma2 - \frac{W_i^n + W}\gamma$ for $i = 1,2$ and let $\tilde \beta_3^n = \beta_3^n$.
Let $\mathcal L_n'$ be the law of a field and curve sampled from $\LF_\mathcal{S}^{(\tilde \beta_i^n, x_i)_i} \times \wt\SLE_\kappa(W-2; W_2^n - 2, W_1^n - W_2^n; \alpha)$ and conditioned on $\wt E_K$, and $\mathcal L'$ the corresponding law when the $W_i^n$ are replaced by $W_i$ for $i= 1,2,3$.  
{We need to show that for a fixed $K$, if we sample $(Z, \eta) $ from $ \mathcal L'$, then the law of $(X(Z, \eta), Y(Z, \eta))$ is $\mathcal L$.}

Let $F_\eps = \{Z \:: \: |(Z, \rho)|< 1/\eps \}$ and $G_\eps = \{ \eta \: : \: \mathrm{dist}(1, \eta) > \eps\}$. For fixed $\eps$, as finite measures on the space of curves in $\mathcal S$ (equipped with the Gromov-Hausdorff topology for the two-point compactification of $\mathcal S$) we have  $\lim_{n \to 0} \wt\SLE_\kappa(W-2; W_2^n - 2, W_1^n - W_2^n; \alpha) |_{G_\eps} = \wt\SLE_\kappa(W-2; W_2 - 2, W_1 - W_2; \alpha)|_{G_\eps}$. This and Proposition~\ref{prop-limit-crit} imply that the measure $\cL_n'|_{F_\eps\times G_\eps}$ converges as $n \to \infty$ to $\cL'|_{F_\eps\times G_\eps}$. 

{Proposition~\ref{prop:thm-non-gamma/2} implies that for $(Z_n, \eta_n)\sim \LF_\mathcal{S}^{(\tilde \beta_i^n, x_i)_i} \times \wt\SLE_\kappa(W-2; W_2^n - 2, W_1^n - W_2^n; \alpha)$, the law of $(X(Z_n, \eta_n), Y(Z_n, \eta_n))$ agrees with that of a sample from $C\int_0^\infty \LF_\mathcal{S, \ell}^{(\beta_i^n, x_i)_i} \times \Md_2(W; \ell) \, d\ell$ when the disk is embedded in $\mathcal S$ in the way described above. Since the event $\wt E_K$ for $(Z_n, \eta_n)$ agrees with the event $E_K$ for $(X(Z_n, \eta_n), Y(Z_n, \eta_n))$, conditioning on this event gives the following:}
For $(Z_n, \eta_n) \sim \mathcal L_n'$, the law of $(X(Z_n, \eta_n), Y(Z_n, \eta_n))$ is $\mathcal L_n$. By Proposition~\ref{prop-limit-crit} we have $\mathcal L_n \to \mathcal L$. 
Combining this with $\mathcal L_n'|_{F_\eps \cap G_\eps} \to \mathcal L'|_{F_\eps \cap G_\eps}$ and $\cL'[F_\eps\times G_\eps] = 1-o_\eps(1)$, we conclude that for $(Z, \eta) \sim \mathcal L'$  the law of $(X(Z, \eta), Y(Z, \eta))$ is within $o_\eps(1)$ in total variation distance of $\mathcal L$. In fact, we see that the law is exactly $\mathcal L$ by sending $\eps \to 0$. Finally, sending $K \to \infty$ gives the desired result for $W_1, W_2 \geq \frac{\gamma^2}2$. 

For the general case where $W_1, W_2$ might not both be thick, the argument is essentially identical, just that in various definitions we would have extra thin quantum disks (corresponding to the weights with $W_i<\frac{\gamma^2}2$). For instance, if $W_1 < \frac{\gamma^2}2 \leq W_2, W_3$, we define $\cL_n$ to be the law of $(X, Y, \cD_1)$ where we sample $(\mathcal T, \mathcal D) \sim \int_0^\infty \QT(W_1^n, W_2^n, W_3^n;\ell)\times \Md_2(W; \ell) \, d\ell$, let $Y$ and $X$ be the fields in $\mathcal S$ from suitably embedding $\cD$ and the core of $\mathcal T$, let $\cD_1$ be the (weight $W_1$) thin quantum  disk at the first vertex of $\mathcal T$, and condition on the event $E_K$ that the two sides of $\mathcal T$ adjacent to the weight $W_3$ vertex have quantum lengths at most $K$ and $|(X, \rho)|, |(Y, \rho)| < K$. We similarly modify the definitions of $\cL, \cL'_n, \cL'$; the arguments are otherwise identical. 
\end{proof}

\begin{proposition}\label{prop-final-constraints}
Theorem~\ref{thm:M+QTp-rw} holds.
\end{proposition}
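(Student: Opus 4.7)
The plan is to extend Proposition~\ref{prop-thm-most} (which covers $W > \frac{\gamma^2}2$; note that $W+W_i \neq \frac{\gamma^2}{2}$ is then automatic from $W_i > 0$) to the remaining range $W \in (0, \frac{\gamma^2}2]$. The main tool will be the associativity of conformal welding, combined with an auxiliary quantum disk of weight $V > \frac{\gamma^2}{2}$, so that the two extremal weldings in the resulting three-way picture are both covered by the already-established Proposition~\ref{prop-thm-most}. Proposition~\ref{prop-associate-c} already tells us that the middle welding (of a weight-$W$ disk with the triangle) produces a quantum triangle decorated with some interface of unknown law $\mathsf m(W;W_1,W_2,W_3)$; the task reduces to identifying this interface law as $\wt{\SLE}_\kappa(W-2; W_2-2, W_1-W_2;\alpha)$.

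For $W \in (0, \frac{\gamma^2}{2})$, I would fix $V > \frac{\gamma^2}2$ large enough that $V+W+W_i \neq \frac{\gamma^2}{2}$ and that the resulting SLE pair falls into the parameter range covered by Proposition~\ref{prop:ig-flow-descriptions}, and consider the three-way welding
\begin{equation*}
\mathcal{M} := \iint_0^\infty \Wd\big(\Md_2(V;\ell_0),\ \Md_2(W;\ell_0,\ell_1),\ \QT(W_1,W_2,W_3;\ell_1)\big)\,d\ell_0\,d\ell_1.
\end{equation*}
In Order A, first weld the two disks by Theorem~\ref{thm-disk-2} to produce $\Md_2(V+W)$ with an internal $\SLE_\kappa(V-2;W-2)$, and then weld the result to the triangle by Proposition~\ref{prop-thm-most} (valid since $V+W > \frac{\gamma^2}{2}$). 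This presents $\mathcal{M}$ as a constant multiple of $\QT(V+W+W_1, V+W+W_2, W_3)$ decorated by two curves $(\eta_0,\eta)$, where $\eta$ is marginally $\wt{\SLE}_\kappa(V+W-2;W_2-2,W_1-W_2;\alpha)$ and the conditional law of $\eta_0$ given $\eta$ is $\SLE_\kappa(V-2;W-2)$ in the region to the left of $\eta$. In Order B, first weld the $W$-disk to the triangle via Proposition~\ref{prop-associate-c} (producing $\QT(W+W_1,W+W_2,W_3)$ with an interface $\eta$ of unknown law $\mathsf m$) and then weld the $V$-disk by Proposition~\ref{prop-thm-most}. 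This presents $\mathcal{M}$ as the same triangle decorated by $(\eta_0,\eta)$, where $\eta_0$ is now marginally $\wt{\SLE}_\kappa(V-2;(W+W_2)-2,W_1-W_2;\alpha')$ with $\alpha'$ given by~\eqref{eqn-alpha}, and the conditional law of $\eta$ given $\eta_0$ is $\mathsf m$ transported to the region to the right of $\eta_0$.

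Equating the two descriptions of the joint law of $(\eta_0,\eta)$, the description from Order A is precisely the imaginary geometry coupling of two flow lines from the two ``welded'' boundary points of $\QT(V+W+W_1,V+W+W_2,W_3)$, with angles determined by $V,W,W_1,W_2$ via~\eqref{eq:theta}. The SLE resampling characterization in the second part of Proposition~\ref{prop:ig-flow-descriptions} then uniquely determines the joint law from the marginal of $\eta_0$ and the conditional of $\eta\mid\eta_0$. Matching with Order B forces $\mathsf m = \wt{\SLE}_\kappa(W-2;W_2-2,W_1-W_2;\alpha)$, which is the desired identity. The boundary case $W = \frac{\gamma^2}{2}$ will then follow by sending $W^n \downarrow \frac{\gamma^2}{2}$ in the established identity for $W^n > \frac{\gamma^2}{2}$, using Proposition~\ref{prop-limit-crit} for the convergence on the Liouville-field side and standard weak continuity of $\SLE_\kappa(\underline{\rho})$ in its weight parameters. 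The hardest part will be verifying that the choice of auxiliary $V$ places the angle configuration $(\theta_1,\theta_2)$ arising from $(V,W,W_1,W_2)$ in the range $(\theta_1-\theta_2)\chi \geq \frac{\sqrt{\kappa}\pi}{2}$ required for the resampling characterization; however, by~\eqref{eq:theta} this difference increases with $V$ and so can always be arranged by taking $V$ sufficiently large, at which point uniqueness forces $\mathsf m$ to match the imaginary-geometry prediction.
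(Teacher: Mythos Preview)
Your three-way welding strategy with an auxiliary thick disk is exactly the paper's approach (the paper simply takes $V=2$). However, there is a geometric misidentification that leads you to cite the wrong commutation lemma. Both interfaces $\eta_0$ and $\eta$ run between the \emph{same} pair of vertices---from the weight-$(V+W+W_2)$ vertex to the weight-$(V+W+W_1)$ vertex, i.e.\ from $0$ to $\infty$ in the embedding---because the two disks are glued along the same side of the triangle. Proposition~\ref{prop:ig-flow-descriptions} concerns flow lines emanating from \emph{distinct} boundary points $0$ and $1$, so it does not apply here, and your discussion of arranging the angle-gap constraint $(\theta_1-\theta_2)\chi\ge\frac{\sqrt\kappa\pi}{2}$ by taking $V$ large is beside the point. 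The correct tool is Lemma~\ref{lm:ig-commute}, which for two same-endpoint flow lines directly swaps the sampling order: starting from Order~A's description (marginal of $\eta$ as $\wt{\SLE}_\kappa(V+W-2;W_2-2,W_1-W_2;\alpha)$, conditional of $\eta_0\mid\eta$ as $\SLE_\kappa(V-2;W-2)$) it yields the conditional law $\eta\mid\eta_0=\wt{\SLE}_\kappa(W-2;W_2-2,W_1-W_2;\alpha)$ to the right of $\eta_0$. Equating with Order~B's conditional identifies $\mathsf m$. No resampling-uniqueness characterization is needed at all---a marginal plus a conditional determine a joint law tautologically; what you actually need is to \emph{compute} the other conditional, which is what Lemma~\ref{lm:ig-commute} provides.

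Finally, your separate limiting treatment of $W=\frac{\gamma^2}{2}$ is unnecessary: the associativity argument works verbatim for every $W>0$, since every application of Proposition~\ref{prop-thm-most} in the argument involves a disk of weight $V$ or $V+W$, both already exceeding $\frac{\gamma^2}{2}$.
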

\begin{proof}
See Figure~\ref{fig-soft}. 
We start by sampling 
\begin{equation}\label{eq:pf-thm-most-1}
	( \cD', \cD, \mathcal{T}) \sim \iint_0^\infty \Md_2(2;\ell_1)\times\,\Md_2(W;\ell_1,\ell_2)\times\,\QT(W_1, W_2, W_3;\ell_2) \, d\ell_1 \, d\ell_2.
\end{equation}
By Theorem~\ref{thm-disk-2}, we can weld $\cD'$ to $\cD$ first and then apply Proposition~\ref{prop-thm-most} to weld $\mathcal T$ in. As a consequence, ~\eqref{eq:pf-thm-most-1} is a constant multiple of $\QT(W_1+W+2,W_2+W+2,W_3)\otimes\mathfrak{m}$, where $\mathfrak{m}$ is some measure on the interfaces $(\eta_1,\eta_2)$. If we embed the entire surface as $(\bbH,\infty,0,1)$, then  $
(\eta_1,\eta_2)$ can be produced by (i)  sample $\eta_2$ as a curve from 0 to $\infty$ from $\widetilde{\SLE}_\kappa(W;W_2-2,W_1-W_2;\alpha)$ with $\alpha$ given by~\eqref{eqn-alpha} and force points $0^-;0^+,1$ and (ii) sample $\eta_1$ on the left component of $\bbH\backslash\eta_2$ from the measure $\SLE_\kappa(0;W-2)$ with force points $0^-;0^+$. Then by Lemma~\ref{lm:ig-commute}, we know that a sample $(\eta_1,\eta_2)\sim\mathfrak m$ can also be obtained by (i) sample $\eta_1$ from $\widetilde{\SLE}_\kappa(0;W_2-2,W_1-W_2;\alpha)$ with force points $0^-;0^+,1$ and (ii) sample $\eta_2$ on the right component of $\bbH\backslash\eta_1$ from $\widetilde{\SLE}_\kappa(W-2;W_2-2,W_1-W_2;\alpha)$ with force points $0^-;0^+,1$. Let $\tilde{\mathcal{T}}$ be the {curve-decorated quantum surface given by the} welding of $\cD$ with $\mathcal T$. Then by Proposition~\ref{prop-thm-most}, the law of {$(\cD',\tilde{\mathcal T})$} 
is a constant times
\begin{equation}\label{eq:law-thm-most-pf}
	\int_0^\infty \Md_2(2;\ell)\times\, \big(\QT(W+W_1,W+W_2,W+W_3;\ell)\otimes \widetilde{\SLE}_\kappa(W-2;W_2-2,W_1-W_2;\alpha)\big)\,d\ell.
\end{equation}
Therefore Theorem~\ref{thm:M+QTp-rw} follows by disintegrating the law~\eqref{eq:law-thm-most-pf} over the right boundary length $\ell$ of the disk $\cD'$.
\end{proof}

\begin{figure}[ht]
\begin{center}
	\includegraphics[width=\textwidth]{}
\end{center}
\caption{ Proposition~\ref{prop-final-constraints} follows from Proposition~\ref{prop-thm-most} and Theorem~\ref{thm-disk-2} by conformally welding quantum surfaces in different orders.
}\label{fig-soft}
\end{figure}

{As a consequence of Theorem~\ref{thm:M+QTp-rw}, we have the following. Recall the notion $\Md_{2,\bullet}(W)$ for $W>0$ in Section~\ref{sec-pre-qt}.
\begin{lemma}\label{lem:QT(W,W,2)}
	For some constant depending only on $W$ and $\gamma$, we have  $\Md_{2,\bullet}(W) = C\QT(W,W,2)$.
\end{lemma}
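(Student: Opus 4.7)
The plan is a direct comparison of the defining formulas for $\Md_{2,\bullet}(W)$ and $\QT(W,W,2)$, with the constant coming out to the universal value $C=\tfrac{\gamma(Q-\gamma)}{2}$ in every regime, split into three cases according to whether $W$ is thick, thin, or critical.

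For $W>\tfrac{\gamma^2}{2}$, set $\beta=\gamma+\tfrac{2-W}{\gamma}$. Proposition~\ref{prop-m2dot} identifies $\Md_{2,\bullet}(W)$ as the law of $(\mathcal S,\phi,+\infty,-\infty,0)/{\sim_\gamma}$ when $\phi\sim\tfrac{\gamma}{2(Q-\beta)^2}\LF_{\mathcal S}^{(\beta,\pm\infty),(\gamma,0)}$, while Definition~\ref{def-qt-thick} with $W_1=W_2=W$ and $W_3=2$ (so $\beta_3=\gamma$) identifies $\QT(W,W,2)$ as the law of the same surface when $\phi\sim\tfrac{1}{(Q-\beta)^2(Q-\gamma)}\LF_{\mathcal S}^{(\beta,\pm\infty),(\gamma,0)}$. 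The ratio of the two prefactors is $\tfrac{\gamma(Q-\gamma)}{2}$, independent of $W$, which is the claimed constant $C$.

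For $W\in(0,\tfrac{\gamma^2}{2})$ I would unwind the thin constructions. By Definition~\ref{def-m2dot-thin} with $\alpha=\gamma$, a sample from $\Md_{2,\bullet}(W)=\Md_{2,\bullet}(W;\gamma)$ is built by concatenating two independent thin disks from $(1-\tfrac{2W}{\gamma^2})\Md_2(W)$ to the endpoints of a core sampled from $\Md_{2,\bullet}(\gamma^2-W;\gamma)$. Definition~\ref{def-qt-thin} applied with $(W_1,W_2,W_3)=(W,W,2)$ and $I=\{1,2\}$ gives the parallel recipe, with the core sampled from $\QT(\gamma^2-W,\gamma^2-W,2)$ and the same $(1-\tfrac{2W}{\gamma^2})^2$ weight on the arms. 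Since the cores both lie in the thick regime, the first paragraph gives $\Md_{2,\bullet}(\gamma^2-W)=C\,\QT(\gamma^2-W,\gamma^2-W,2)$, and this proportionality is inherited by the concatenated surfaces.

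For $W=\tfrac{\gamma^2}{2}$, I would pass to the limit $W'\downarrow\tfrac{\gamma^2}{2}$. Definition~\ref{def-qt-thick-crit} builds $\QT(\tfrac{\gamma^2}{2},\tfrac{\gamma^2}{2},2)$ from the Liouville field with $Q^-$ insertions at $\pm\infty$, which by Proposition~\ref{prop-limit-crit} is exactly the renormalised limit of the thick Liouville fields underlying $\QT(W',W',2)$. Combining Proposition~\ref{prop-m2dot} with Proposition~\ref{prop-limit-crit} yields the same renormalised limit for $\Md_{2,\bullet}(W')$, so the identity $\Md_{2,\bullet}(W')=C\,\QT(W',W',2)$ passes to the limit. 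The only delicate step in the whole argument is checking that the $(Q-\beta)^{-2}$ renormalisation is treated identically on both sides at the critical weight, which is guaranteed by the prefactor in Proposition~\ref{prop-m2dot} and the critical framework of Section~\ref{subsec:critical}; this is why the lemma is naturally placed after Theorem~\ref{thm:M+QTp-rw}, whose machinery (Proposition~\ref{prop-limit-crit} in particular) makes the critical limit transparent.
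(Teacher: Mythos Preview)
For $W\neq\tfrac{\gamma^2}{2}$ your direct comparison of the defining Liouville-field measures is correct and is exactly what the paper invokes (via \cite[Proposition~4.4]{AHS20} and \cite[Proposition~2.18]{AHS21}, the latter being Proposition~\ref{prop-m2dot} here). The explicit constant $C=\tfrac{\gamma(Q-\gamma)}{2}$ is a nice byproduct.

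For $W=\tfrac{\gamma^2}{2}$ your limiting argument has a genuine gap. You correctly use Proposition~\ref{prop-limit-crit} to get $\QT(W',W',2)\to\QT(\tfrac{\gamma^2}{2},\tfrac{\gamma^2}{2},2)$ on $E_K$-type events, and you observe that the Liouville-field side of Proposition~\ref{prop-m2dot} has a matching limit. But to conclude you also need $\Md_{2,\bullet}(W')\to\Md_{2,\bullet}(\tfrac{\gamma^2}{2})$, where the right-hand side is the object of Definition~\ref{three-pointed-disk} built from $\Md_2(\tfrac{\gamma^2}{2})$ in Definition~\ref{def-thick-disk} (the $\beta=Q$ case). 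That convergence is nowhere established in the paper: Proposition~\ref{prop-m2dot} is stated only for $W>\tfrac{\gamma^2}{2}$, and Section~\ref{subsec:critical} defines the critical \emph{Liouville field}, not the critical \emph{two-pointed disk}. Your sentence ``guaranteed by the prefactor in Proposition~\ref{prop-m2dot} and the critical framework'' is exactly the step that is missing.

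The paper avoids this entirely with a conformal-welding argument, which is the real reason the lemma is placed after Theorem~\ref{thm:M+QTp-rw}. Weld two weight-$\tfrac{\gamma^2}{4}$ disks via Theorem~\ref{thm-disk-2}; weighting by the right boundary length and sampling a marked point turns the welding into $\Md_{2,\bullet}(\tfrac{\gamma^2}{2})\otimes\SLE_\kappa(\tfrac{\kappa}{4}-2;\tfrac{\kappa}{4}-2)$ by Definition~\ref{three-pointed-disk}. On the other hand, by the already-settled $W\neq\tfrac{\gamma^2}{2}$ case the marked right piece is $\QT(\tfrac{\gamma^2}{4},\tfrac{\gamma^2}{4},2)$, and Theorem~\ref{thm:M+QT} with $(W,W_1,W_2,W_3)=(\tfrac{\gamma^2}{4},\tfrac{\gamma^2}{4},\tfrac{\gamma^2}{4},2)$ identifies the same welding as $\QT(\tfrac{\gamma^2}{2},\tfrac{\gamma^2}{2},2)\otimes\SLE_\kappa(\tfrac{\kappa}{4}-2;\tfrac{\kappa}{4}-2)$. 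Comparing the two descriptions gives the lemma at $W=\tfrac{\gamma^2}{2}$ without any limit.
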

\begin{proof}
	For $W\neq\frac{\gamma^2}{2}$, the claim follows from ~\cite[Proposition 4.4]{AHS20} and~\cite[Proposition 2.18]{AHS21}. If $W=\frac{\gamma^2}{2}$, then consider the conformal welding $(\cD,\eta)$ of two weight $\frac{\gamma^2}{4}$ quantum disks $\cD_1$ and $\cD_2$ as in Theorem~\ref{thm-disk-2}. We weight the law of $(\cD_1,\cD_2)$ by the right boundary length of $\cD_2$ and sample a marked point on the right boundary of $\cD_2$ according to the quantum length measure. Then it follows from Definition~\ref{three-pointed-disk} that the law of $(\cD,\eta)$ is some constant times $\Md_{2,\bullet}(\frac{\gamma^2}{2})\otimes \SLE_\kappa(\frac{\kappa}{4}-2;\frac{\kappa}{4}-2)$. On the other hand, from the $W\neq\frac{\gamma^2}{2}$ case, we can also view $\cD_2$ as a quantum triangle of weights $\frac{\gamma^2}{4},\frac{\gamma^2}{4},2$. Then by Theorem~\ref{thm:M+QT}, the law of $(\cD,\eta)$ is a constant multiple of $\QT(\frac{\gamma^2}{2},\frac{\gamma^2}{2},2)\otimes \SLE_\kappa(\frac{\kappa}{4}-2;\frac{\kappa}{4}-2)$. This concludes the proof.
\end{proof}
}

\subsection{Proof of Theorem~\ref{thm:disk}}\label{sec-pfthm1.1}
We prove the statement by inductively applying Theorem \ref{thm:M+QT}. When $n=1$, there is only one marked point with the law of $\eta_1$ being $\SLE_\kappa(-\frac{\theta_1\chi}{\lambda}-1;\frac{\theta_1\chi}{\lambda}-1)$, and from Theorem \ref{thm-disk-2} we directly see that the weight 2 disk is the welding of a disk of weight $W_0 = 1-\frac{\theta_1\chi}{\lambda}$ and a disk of weight $W_0 = 1+\frac{\theta_1\chi}{\lambda}$.

Suppose we have proved the statement for the case  with $n$ marked boundary points on the real line. Recall that from \cite[Definition 2.3]{AHS21} a sample from $\mathrm{QD}_{0,n+1}$ can obtained by first sampling $(\mathbb{H}, \phi, \infty)/{\sim_\gamma}$ from $\nu_{\phi}(\bbR)^n\mathrm{QD}_{0,1}$ and then independently sampling the marked points $w_1, ..., w_n$ on $\mathbb{R}$ according to $\nu_{\phi}^\#$. We start by claiming that the following two procedures agree:
\begin{enumerate}
\item Sample $(\mathbb{H}, \phi, w_1, ..., w_{n+1}, \infty)/{\sim_\gamma}$ from $\mathrm{QD}_{0,n+2}$ and let $z_1\le...\le z_{n+1}$ be the reordering of $(w_1, ..., w_{n+1})$. Output $(\mathbb{H}, \phi, z_1, ..., z_{n+1}, \infty)/{\sim_\gamma}$.
\item Sample $(\mathbb{H}, \phi, \tilde{w}_1, ..., \tilde{w}_{n}, \infty)/{\sim_\gamma}$ from $(n+1)s_{n+1}\mathrm{QD}_{0,n+1}$ where $\tilde{z}_1\le...\le \tilde{z}_{n}$ is the reordering of $(\tilde{w}_1, ..., \tilde{w}_{n})$ and $s_{n+1} = \nu_{\phi}((\tilde{z}_n,\infty))$. Then sample $\tilde{z}_{n+1}$ from $(\nu_{\phi}|_{(\tilde{z}_n,\infty)})^\#$. Output $(\mathbb{H}, \phi, \tilde{z}_1, ..., \tilde{z}_{n+1}, \infty)/{\sim_\gamma}$.
\end{enumerate}
Let $\mathcal{L}$ and $\tilde{\mathcal{L}}$ be the corresponding law of the quantum surfaces. To prove the claim, we start with a sample $(\mathbb{H}, \phi, \infty)/\sim_\gamma$ from $\mathrm{QD}_{0,1}$ and let $\ell_i$ (resp.\ $\tilde{\ell}_i$) be the quantum length of $(-\infty, z_i)$ (resp.\ $(-\infty, \tilde{z}_i)$). Then by our definition, for any non-negative functions $f_1, ..., f_{n+1}$ on $\bbR$ and $F$ on $H^{-1}(\mathbb{H})$,  
\begin{equation}
\mathcal{L}\big[F(\phi)f_1(\ell_1)...f_{n+1}(\ell_{n+1})\big] = \int\int_{(0,\nu_{\phi}(\bbR))^{n+1}}(n+1)!\mathbf{1}_{\ell_1\le ...\le \ell_{n+1}}f_1(\ell_1)...f_{n+1}(\ell_{n+1})d\ell_1...d\ell_{n+1} \mathrm{QD}_{0,1}(d\phi).
\end{equation}
On the other hand, 
\begin{equation}
\begin{split}
	\tilde{\mathcal{L}}&\big[F(\phi)f_1(\tilde{\ell}_1)...f_{n+1}(\tilde{\ell}_{n+1})\big] =\\ &=\int\int_{(0,\nu_{\phi}(\bbR))^{n}} n!\mathbf{1}_{\tilde{\ell}_1\le ...\le \tilde{\ell}_{n}}f_1(\tilde{\ell}_1)...f_n(\tilde{\ell}_{n})\big(\int_{\tilde{\ell}_{n}}^{\nu_{\phi}(\bbR)} (n+1)f_{n+1}(\tilde{\ell}_{n+1})d\tilde{\ell}_{n+1}\big)d\tilde{\ell}_1... d\tilde{\ell}_{n}\mathrm{QD}_{0,1}(d\phi)\\
	&= \mathcal{L}\big[F(\phi)f_1(\ell_1)...f_{n+1}(\ell_{n+1})\big],
\end{split}
\end{equation}
which justifies our claim.

From this claim, we may first sample the left $n$ marked points $z_1< ...<z_n$, which produces a disk $(\mathbb{H}, \phi, z_1, ..., z_{n}, \infty)/\sim_\gamma$ from the measure $\mathrm{QD}_{0,n+1}$ weighted by the right most boundary arc. Then by our induction hypothesis, as we grow the $\theta_i$ angle flow lines of the zero boundary GFF, this splits the quantum disk into $n+1$ parts given by  
\begin{equation}
\begin{split}
	\int_{[0,\infty)^{n+1}}s_{n+1}&\Wd\Big(\Md_2(W_0;s_1), \QT(W^1_1, W^2_1, W^3_1; s_1, s_2), \\
	&\cdots, \QT(W^{n-1}_1,W^{n-1}_2,W^{n-1}_3; s_{n-1}, s_{n}), \Md_2(\tilde{W}_n;s_n, s_{n+1})\Big)ds_1\cdots ds_{n+1}. 
\end{split}
\end{equation}
where $W_0 = 1-\frac{\theta_1\chi}{\lambda}$, $W_i^1 = \frac{(\theta_i-\theta_{i+1})\chi}{\lambda}$, $W_i^2 = 1+\frac{\theta_i\chi}{\lambda}$, $W_i^3 = 1-\frac{\theta_i\chi}{\lambda}$ for $i = 1, ..., n-1$ and $\tilde{W}_n = 1+\frac{\theta_n\chi}{\lambda}$. Then from Definition \ref{three-pointed-disk} and Definition \ref{def-m2dot-alpha}, as we add the point $z_{n+1}$ onto $(z_n,\infty)$  according to the quantum length measure, the rightmost surface $D_n$ has law $\Md_{2,\bullet}(\tilde{W}_n)$, which {by Lemma~\ref{lem:QT(W,W,2)}} is a constant times $\QT(\tilde{W}_n, \tilde{W}_n, 2)$. Now conditioned on the points $z_1, ..., z_n$ and $\eta_1, ..., \eta_n$, from \cite[Theorem 1.1]{MS16a},
the curve $\eta_{n+1}$ has law $\SLE_\kappa(-\frac{\theta_{n+1}\chi}{\lambda}-1, -\frac{\theta_{n}\chi}{\lambda}-1; \frac{\theta_{n+1}\chi}{\lambda}-1) $ from $z_{n+1}$ to $\infty$ within the surface $D_n$. Therefore by Theorems \ref{thm:M+QT}  we know $\eta_{n+1}$ cuts $D_n$ into a triangle of weight $(W_n^1, W_n^2, W_n^3) = (\frac{(\theta_n-\theta_{n+1})\chi}{\lambda}, 1+\frac{\theta_n\chi}{\lambda}, 1-\frac{\theta_n\chi}{\lambda} )$ and a disk of weight $W_{n+1}=\frac{\theta_{n+1}\chi}{\lambda}+1$. This finishes the induction step and concludes the proof.

\qed

\section{Applications to $\SLE$}\label{sec:application}
As an application of our main theorems, in this section we look into several properties of $\SLE_\kappa(\rho_-;\rho_+,\rho_1)$ curves. We comment on the relationship between the SLE reversibility and our conformal welding in Section \ref{sec:application-sle-reverse}, compute the moment of the SLE conformal radius in Section \ref{sec:application-sle-conf-radius}, and finally in Section \ref{sec:sle-commutation} we describe the SLE commutation relation derived from Theorem \ref{thm:M+QTp-rw}. 

\subsection{Comments on $\SLE_\kappa(\rho_-;\rho_+,\rho_1)$ reversibility}\label{sec:application-sle-reverse}

In Section~\ref{sec-sle-reverse}, we proved the $\SLE_\kappa(\rho_-;\rho_+,\rho_1)$ reversibility statement in Theorem \ref{thm-sle-reverse} by extending the results in \cite{Zhan19} via a conformal map composition argument. It served as a key ingredient in the proof of Theorem \ref{thm:M+QTp-rw}. However, for a certain range of weights we can prove Theorem \ref{thm:M+QTp-rw} \emph{independently} of Theorem \ref{thm-sle-reverse}, {just by  reversing the orientation of the curve in \eqref{eq:M+QT} and applying Proposition \ref{prop-change-weight}. We record this proof because this is how we originally reached the statement of  Theorem \ref{thm-sle-reverse}. Moreover, it demonstrates that conformal welding of finite area LQG surfaces is a natural tool for studying the time reversal of SLE curves.}


\begin{proof}[Alternative proof of Theorem \ref{thm:M+QTp-rw} for $\max\{W_1,W_2\}\ge 2$ and $|W_1-W_2|<2$]
{First assume $\frac{\gamma^2}{2}\notin\{W_1,W_2,W+W_1,W+W_2,W_3,W_1-W_2+2\}$.} By Proposition \ref{prop-curve-2} along with the change weight argument Proposition \ref{prop-change-weight}, we may assume that $0<W_1< W_2$, $W_2>2$ and $W_3=W_1-W_2+2$. (If $W_2=2$ we may apply Lemma \ref{lmm-curve-1}.) Since we know the law of the field by Proposition \ref{prop-associate-c}, it remains to identify the law of the interface without applying Theorem~\ref{thm-sle-reverse}.   
Consider the setting of right panel of Figure \ref{fig-curve-3} where we start with a quantum triangle of weight $(W+W_1,W+W_2,W_3)$ embedded as $(D,\phi,a_1,a_2,a_3)$ and curves $(\eta_1,\eta_2)$ such that the surfaces $(S_1,S_2,S_3)$ are independent quantum disks and triangles from 
$$\iint_{\bbR_+^2}\Wd(\Md_2(W;\ell_1),\QT(W_1,W_2,W_1-W_2+2;\ell_1,\ell_2),\Md_2({W_2-W_1};\ell_2))\ d\ell_1\ d\ell_2 $$
conditioned on having the same interface length as following from Proposition \ref{prop-associate-c}.  
Then we know that the marginal law of $\eta_1$ is $\SLE_\kappa(W-2;W_2-2)$, while by Proposition \ref{prop-curve-2} (since $W_1>W_1-W_2+2$) the conditional law of $\eta_2$ given $\eta_1$ is $\SLE_\kappa(W_1-W_2,2-W_2;W_2-W_1-2)$. Therefore we can read off the conditional law of $\eta_1$ given $\eta_2$, which is $\SLE_\kappa(W-2;W_2-2,W_1-W_2)$, and we conclude the proof by reweighting. 

{Finally if $\frac{\gamma^2}{2}\in\{W_1,W_2,W+W_1,W+W_2,W_3,W_1-W_2+2\}$, the result follows from the same limiting argument as in Section~\ref{sec-gamma/2}. }
	\end{proof}
	
	From this argument,  we immediately obtain the following case of Theorem \ref{thm-sle-reverse}. 
	\begin{proposition}
Theorem \ref{thm-sle-reverse} holds for  $\max\{\rho_+, \rho_++\rho_1\}\ge 0$ and $|\rho_1|\le 2$.
\end{proposition}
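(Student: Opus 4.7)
The strategy, sketched in the introduction just after Theorem~\ref{thm-sle-reverse}, is to apply the alternative proof of Theorem~\ref{thm:M+QTp-rw} above (which does \emph{not} rely on Theorem~\ref{thm-sle-reverse}) to the \emph{same} welded surface viewed in two different ways: once as $\mathrm{Weld}(\Md_2(W),\QT(W_1,W_2,W_3))$ and once as $\mathrm{Weld}(\Md_2(W),\QT(W_2,W_1,W_3))$, and then compare the two resulting interface laws.

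Given $\rho_-,\rho_+,\rho_1$ as in the proposition, set $W=\rho_-+2$, $W_2=\rho_++2$, $W_3=\rho_1+2$, and $W_1=\rho_++\rho_1+2$, so that $W_1+2=W_2+W_3$. The hypothesis $\max\{\rho_+,\rho_++\rho_1\}\ge 0$ becomes $\max\{W_1,W_2\}\ge 2$ and $|\rho_1|<2$ becomes $|W_1-W_2|<2$, which is exactly the range of validity of the alternative proof above. First I would apply that proof to $\mathrm{Weld}(\Md_2(W),\QT(W_1,W_2,W_3))$; embedding the welded triangle as $(\bbH,\phi,\infty,0,1)$ with weights $(W+W_1,W+W_2,W_3)$, this produces a field of law proportional to $\QT(W+W_1,W+W_2,W_3)$ together with an independent interface $\eta$ from $0$ to $\infty$ of law $\SLE_\kappa(\rho_-;\rho_+,\rho_1)$ (formula \eqref{eqn-alpha} gives $\alpha=0$ here since $W_1+2=W_2+W_3$). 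Next I would apply the alternative proof to $\mathrm{Weld}(\Md_2(W),\QT(W_2,W_1,W_3))$, which is legitimate since the range condition is symmetric in $W_1,W_2$; a direct computation in \eqref{eqn-alpha} with the swapped weights yields $\alpha=\frac{\rho_1(4-\kappa)}{2\kappa}$, so the interface of this second welding (in its natural embedding, running from $0$ to $\infty$) has law $\widetilde{\SLE}_\kappa(\rho_-;\rho_++\rho_1,-\rho_1;\alpha)$.

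The key observation is that these two welded curve-decorated surfaces coincide up to the anti-conformal involution of $\bbH$ sending $0\leftrightarrow\infty$ and fixing $1$, namely $\psi(z)=1/\bar z$. Indeed, by the symmetry of the Liouville field with prescribed insertions (Definition~\ref{def-qt-thick} together with Lemma~\ref{lmm-lcft-H-strip}), $\QT(W_2,W_1,W_3)$ is the push-forward of $\QT(W_1,W_2,W_3)$ under the reflection $z\mapsto-\bar z$ of $\mathcal S$, whose image in $\bbH$ under $w=e^z$ is precisely $\psi$; the conformal welding commutes with this reflection since $\Md_2(W)$ is invariant under the involution swapping its two boundary arcs. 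Under $\psi$, the interface $\eta$ in the first picture (going from $0$ to $\infty$) becomes a curve from $\infty$ to $0$ in the second embedding, and the image under $\psi$ of its time reversal, $\psi(\eta^R)=\bar\eta$, coincides with the interface of the second welding (which runs from $0$ to $\infty$). Equating the two conditional laws of the interface given the underlying quantum surface then proves that $\bar\eta$ has law proportional to $\widetilde{\SLE}_\kappa(\rho_-;\rho_++\rho_1,-\rho_1;\frac{\rho_1(4-\kappa)}{2\kappa})$, which is Theorem~\ref{thm-sle-reverse} in the relevant range.

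The main technical point will be the careful bookkeeping when transporting the product structure (field, interface) through the identification via $\psi$: this requires matching embeddings, tracking the positions of the SLE force points at $0^-,0^+,1$ under the reflection, and verifying that curve orientations and time-reversals behave as expected. The boundary case $|\rho_1|=2$ follows from a standard continuity argument in the SLE weights and in the conformal-derivative weighting appearing in the definition of $\widetilde{\SLE}$.
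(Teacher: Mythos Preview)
Your approach is correct and coincides with the paper's. The paper's proof is the one-liner ``the claim follows immediately by reversing the direction of the curve $\eta$ in Theorem~\ref{thm:M+QTp-rw}'', and what you have written is precisely the unpacking of that sentence: apply the alternative (reversibility-free) proof of Theorem~\ref{thm:M+QTp-rw} once in each orientation, identify the two pictures via the anti-conformal involution $z\mapsto 1/\bar z$, and read off the law of $\bar\eta$. Two minor remarks: the reflection invariance you invoke is really the anti-conformal symmetry of the free-boundary GFF (and hence of $\LF$ with boundary insertions), not Lemma~\ref{lmm-lcft-H-strip} per se; and the endpoint $|\rho_1|=2$ is not literally covered by the alternative proof (whose auxiliary weight $W_3=W_1-W_2+2$ must be positive), so your closing continuity remark is indeed needed there.
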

\begin{proof}
{The claim follows immediately by reversing the {direction} of the curve $\eta$ in Theorem~\ref{thm:M+QTp-rw}.}
\end{proof}

\subsection{$\SLE_\kappa(\rho_-;\rho_+,\rho_1)$ conformal radius}\label{sec:application-sle-conf-radius}
In this section, as an application of Theorem \ref{thm:M+QTp-rw}, we shall prove Theorem \ref{thm:sle-conf-radius}. Since the method is almost identical to that in \cite[Section 5]{AHS21}, we will be brief and only list the key steps.  

Recall that by Theorem~\ref{thm:M+QTp-rw}, the weights of the SLE curve are determined by $\rho_-=W-2$, $\rho_+=W_2-2$ and $\rho_1=W_1-W_2$. Define the function $m(\beta_-, \beta_1,\beta_2,\alpha) := \bbE [\psi_{\eta}'(1)^\alpha]$, where $\eta$ is an $\SLE_\kappa(W_--2;W_2-2,W_1-W_2)$ curve {and $\psi_\eta$ is the mapping-out function defined in Section~\ref{subsec-SLE-weighted}}. Recall that $\alpha_0 = \frac{1}{\kappa}(\rho_++2)(\rho_++\rho_1+4-\frac{\kappa}{2}) = \frac{1}{\kappa}W_-(W_1+2-\frac{\kappa}{2})$. To start with, we need the following result on weight 2 and weight $\frac{\gamma^2}{2}$ quantum disks.
\begin{lemma}[Propositions 7.7 and 7.8 of \cite{AHS20}]\label{lem-special-weights}
For $\ell,r>0$,  {there are constants $C_1,C_2$ such that }
\begin{equation}
	|\Md_2(2;l,r)| = C_1(\ell+r)^{-\frac{4}{\gamma^2}-1} \ \ \text{and}\ \ |\Md_2(\frac{\gamma^2}{2};l,r)| = C_2\frac{(\ell r)^{4/\gamma^2-1}}{(\ell^{4/\gamma^2}+r^{4/\gamma^2})^2}.
\end{equation}
\end{lemma}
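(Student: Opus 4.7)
Both identities are explicit joint density computations for the two-pointed quantum disk at the special weights $W=2$ and $W=\tfrac{\gamma^2}{2}$, and are recorded as Propositions 7.7 and 7.8 of~\cite{AHS20}. In each case the starting point is the Liouville-field description of $\Md_2(W)$ combined with the scaling under the field-shift $\phi\mapsto\phi+c$: since lengths rescale as $e^{\gamma c/2}$ and the law of the constant $\mathbf{c}$ in Definition~\ref{def-thick-disk}/Definition~\ref{def-critical} is $e^{(\beta-Q)c}\,dc$, an immediate change-of-variables computation forces
\[
	|\Md_2(W;\lambda\ell,\lambda r)|=\lambda^{2(\beta-Q)/\gamma-2}\,|\Md_2(W;\ell,r)|.
\]
For $W=2$ (so $\beta=\gamma$) this gives homogeneity $-4/\gamma^2-1$; for $W=\tfrac{\gamma^2}{2}$ (so $\beta=Q^-$) it gives homogeneity $-2$, both consistent with the claimed formulas.

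For $W=2$ it remains to pin down the angular dependence. The key input is that $\Md_2(2)=\mathrm{QD}_{0,2}$ has two quantum typical boundary marked points, i.e.\ its law is obtained from the unmarked disk measure $\mathrm{QD}$ by sampling two marked points independently from the $\gamma$-LQG boundary length measure $\nu_\phi$, as recalled after Definition~\ref{def-thin-disk}. Conditionally on the total boundary length $T=\ell+r$, the pair $(\ell,r)$ is therefore distributed as $(TU,T(1-U))$ with $U$ uniform on $[0,1]$, so the joint density $|\Md_2(2;\ell,r)|$ depends only on $\ell+r$. Combined with the homogeneity this pins down $|\Md_2(2;\ell,r)|=C_1(\ell+r)^{-4/\gamma^2-1}$, and the constant $C_1$ is fixed by integrating out $r$ and matching against the one-sided marginal from Proposition~\ref{prop-disk-bdry-law}, giving $C_1=(4/\gamma^2)\bar R(\gamma;1,0)$.

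For $W=\tfrac{\gamma^2}{2}$ the plan is to use the explicit Brownian/Bessel description of the critical Liouville field given by Definition~\ref{def-critical} and Lemma~\ref{lem-Mbeta}: the horizontal field averages on $\mathcal S$ are built from two-sided Brownian motion with zero drift, run up to its running maximum $\mathbf a$ (distributed as $\mathbf 1_{a>0}\,da$) and then completed on either side by independent negated 3-Bessel processes. Combining this with the radial-lateral decomposition of the GFF from Section~\ref{sec-pre-gff} and integrating out the lateral Gaussian part, one rewrites $|\Md_2(\tfrac{\gamma^2}{2};\ell,r)|$ as a two-dimensional integral against the explicit joint law of these Bessel-type processes on the two sides of the reference cross-section $\{0\}\times[0,\pi]$. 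The substitution $u=\ell^{4/\gamma^2}$, $v=r^{4/\gamma^2}$, dictated by the homogeneity and by the $e^{\gamma X_t/2}$ scaling of the GMC, converts the resulting integral into a form producing the denominator $(\ell^{4/\gamma^2}+r^{4/\gamma^2})^2$ after direct evaluation. The main obstacle is precisely this critical case: since $\beta=Q^-$ is on the boundary of the Seiberg bounds, the $\beta\uparrow Q$ limit from Proposition~\ref{prop-limit-crit} must be upgraded to convergence of the two-parameter joint density, which requires uniform GMC moment control near the insertion. A cleaner alternative, once the weight-$2$ formula is in hand, is to substitute $W_1+W_2=\tfrac{\gamma^2}{2}$ into the welding identity of Theorem~\ref{thm-disk-2} with two thin quantum disks on either side and to invert the resulting Mellin-type convolution using the Poisson-point-process description from Definition~\ref{def-thin-disk}; this sidesteps the critical limit at the cost of solving an integral equation.
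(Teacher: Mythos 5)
The paper does not prove this lemma itself; it cites Propositions 7.7 and 7.8 of \cite{AHS20}, so there is no internal argument here to compare your proposal against.

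Your scaling computation is correct: shifting the field constant $\mathbf c$ by $c_0$ rescales all boundary lengths by $\lambda = e^{\gamma c_0/2}$ and tilts the $\mathbf c$-density by $e^{(Q-\beta)c_0}$, and after the two-dimensional change of variables one indeed gets $|\Md_2(W;\lambda\ell,\lambda r)| = \lambda^{2(\beta-Q)/\gamma-2}|\Md_2(W;\ell,r)|$. Your $W=2$ argument is then complete and gives a clean, self-contained alternative to whatever direct computation \cite{AHS20} performs: since $\mathrm{QD}_{0,2}$ arises by weighting the one-marked-point disk by $L$ and sampling a second quantum-typical boundary point, the conditional law of $(\ell,r)$ given $\ell+r$ is uniform on the simplex, so the density is a function of $\ell+r$ alone; homogeneity then forces $C_1(\ell+r)^{-4/\gamma^2-1}$, and $C_1$ is recovered by integrating out $r$ against Proposition~\ref{prop-disk-bdry-law}.

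The $W=\gamma^2/2$ case, however, is not a proof as written, and you correctly flag this yourself. Homogeneity, exchangeability, and the $\ell\leftrightarrow r$ symmetry only determine the formula up to an arbitrary function of the ratio $\ell/r$; the specific angular factor $(\ell r)^{4/\gamma^2-1}/(\ell^{4/\gamma^2}+r^{4/\gamma^2})^2$ carries nontrivial information and has to be computed. Neither of your two routes — the direct Brownian/Bessel evaluation (which you note requires uniform GMC moment control near the critical insertion), nor the Mellin inversion of the welding identity with thin disks — is carried through, and the substitution $u=\ell^{4/\gamma^2}$, $v=r^{4/\gamma^2}$ is asserted to "convert the integral into a form producing the denominator" without actually producing it. One further precision you should fix before attempting the computation: $\Md_2(\frac{\gamma^2}{2})$ is defined directly by Definition~\ref{def-thick-disk} with $\beta=Q$ (zero-drift Brownian motion conditioned to stay nonpositive and Lebesgue constant), not by the $Q^-$-insertion construction of Definition~\ref{def-critical} and Lemma~\ref{lem-Mbeta}, which is the right notion for quantum triangles with a weight-$\frac{\gamma^2}{2}$ vertex. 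The homogeneity exponent $-2$ is the same either way, but the explicit density computation would use the former.
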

{The cases $\beta_- \in \{ \gamma, Q\}$ correspond to $W_- \in \{ \frac{\gamma^2}2, 2\}$, and for weight $W_-$ quantum disks Lemma~\ref{lem-special-weights} gives the boundary lengths law. Combining with our conformal welding result, we solve for special values of $m$.}
\begin{lemma}\label{lm-sle-conf-gamma}
For $\beta_1, \beta_2< Q+\frac{\gamma}{2}$ {with $\beta_1,\beta_2\neq Q$}, $\alpha<\alpha_0$, let $\beta$ be either solution to \eqref{eqn-alpha-beta-relation}. Then  we have
\begin{equation}\label{eqn-conf-gamma}
	m(\gamma, \beta_1,\beta_2,\alpha) = \frac{\Gamma(\frac{2}{\gamma}(Q-\frac{\beta_1+\beta_2-\beta}{2}))\Gamma(\frac{2}{\gamma}(2Q-\frac{\beta_1+\beta_2+\beta}{2}) )  }{ \Gamma(\frac{2}{\gamma}(Q+\frac{2}{\gamma}-\beta_1 )  ) \Gamma(\frac{2}{\gamma}(Q+\frac{\gamma}{2}-\beta_2)   )  },
\end{equation}
\begin{equation}\label{eqn-conf-Q}
	m(Q, \beta_1,\beta_2,\alpha) = \frac{\Gamma(\frac{\gamma}{2}(Q-\frac{\beta_1+\beta_2-\beta}{2}))\Gamma(\frac{\gamma}{2}(2Q-\frac{\beta_1+\beta_2+\beta}{2}) )  }{ \Gamma(\frac{\gamma}{2}(Q+\frac{2}{\gamma}-\beta_1 )  ) \Gamma(\frac{\gamma}{2}(Q+\frac{\gamma}{2}-\beta_2)   )  }.
\end{equation}
\end{lemma}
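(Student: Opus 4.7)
The strategy is to apply Theorem~\ref{thm:M+QTp-rw} in the two special cases $W=2$ (for which the insertion along the welding interface satisfies $\beta_-=\gamma+(2-W)/\gamma=\gamma$) and $W=\gamma^2/2$ (for which $\beta_-=Q$), and to extract $m$ by taking Laplace transforms on both sides. These two values are exactly those for which Lemma~\ref{lem-special-weights} gives the joint boundary-length density $|\Md_2(W;\ell_1,\ell)|$ in closed form, which is what makes the integrals in the welding identity tractable.

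First I would fix $W_1,W_2>0$ with $\beta_1,\beta_2\neq Q$ and let $W_3$ range over $(0,\infty)$; by~\eqref{eqn-alpha} this sweeps $\alpha$ over $(-\infty,\alpha_0)$ with $\beta$ as in~\eqref{eqn-alpha-beta-relation}, so it is enough to verify~\eqref{eqn-conf-gamma} for each $W_3$. I would then disintegrate both sides of~\eqref{eq:M+QT2} over the three boundary arc lengths $\ell_1,\ell_2,\ell_3$ of the welded triangle (labelled so $\ell_1$ is the arc between the two shifted vertices) and take a triple Laplace transform with parameters $\mu_1,\mu_2,\mu_3>0$. The left-hand side becomes
\[
    m(\gamma,\beta_1,\beta_2,\alpha)\cdot |\QT(2+W_1,2+W_2,W_3)|\!\left[e^{-\mu_1 L_1-\mu_2 L_2-\mu_3 L_3}\right],
\]
which, by the area-integration argument used to prove Proposition~\ref{prop-qt-bdry-thick}, is an explicit LCFT three-point expression with insertions $(\beta_1',\beta_2',\beta_3)$, where $\beta_i'=\beta_i-2/\gamma$ for $i=1,2$ encodes the shift $W_i\mapsto W_i+2$; equivalently it is a Gamma factor times $\bar H_{(0,1,0)}^{(\beta_1',\beta_2',\beta_3)}$ times a moment of boundary length against the zero-boundary GFF.

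The right-hand side, after the same disintegration, becomes
\[
    c_{W,W_1,W_2}\int_0^\infty \Big(\int_0^\infty e^{-\mu_1\ell_1}|\Md_2(W;\ell_1,\ell)|\,d\ell_1\Big)\cdot|\QT(W_1,W_2,W_3;\ell)|\!\left[e^{-\mu_2 L_2-\mu_3 L_3}\right] d\ell.
\]
For $W=2$, inserting $|\Md_2(2;\ell_1,\ell)|=C_1(\ell_1+\ell)^{-4/\gamma^2-1}$ and applying the change of variable $u=\ell_1/(\ell_1+\ell)$ reduces the inner integral to a classical Beta integral whose $\ell$-dependence is an explicit power. Combining with the disintegration of $|\QT(W_1,W_2,W_3;\ell)|$ (density $\propto\ell^{(\bar\beta-2Q)/\gamma-1}$ by Proposition~\ref{prop-qt-bdry-thick}) reduces the remaining $\ell$-integral to another Beta integral, and the total mass on the right becomes a Gamma quotient times $\bar H_{(0,1,0)}^{(\beta_1,\beta_2,\beta_3)}$. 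Equating the two expressions and cancelling the ratio $\bar H_{(0,1,0)}^{(\beta_1,\beta_2,\beta_3)}/\bar H_{(0,1,0)}^{(\beta_1',\beta_2',\beta_3)}$ via two applications of the first shift equation in~\eqref{eqn-gamma-shift} (each shift by $2/\gamma$ in the first slot of $\Gamma_{\gamma/2}$ produces an ordinary $\Gamma$ with argument a multiple of $2/\gamma$) yields~\eqref{eqn-conf-gamma}. The overall constant $c_{W,W_1,W_2}$ is fixed by evaluating at $W_3=W_1-W_2+2$, where $\alpha=0$ and $m\equiv 1$, reducing the identity to Theorem~\ref{thm:M+QT}.

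For~\eqref{eqn-conf-Q} I would repeat the argument with $W=\gamma^2/2$. The density from Lemma~\ref{lem-special-weights} is now $C_2(\ell_1\ell)^{4/\gamma^2-1}/(\ell_1^{4/\gamma^2}+\ell^{4/\gamma^2})^2$, and the substitution $u=\ell_1^{4/\gamma^2}/\ell^{4/\gamma^2}$ again reduces the inner integral to a Beta integral. The relevant weight shift is now by $\gamma/2$ in the second slot of $\Gamma_{\gamma/2}$, so the \emph{second} (dual) shift equation in~\eqref{eqn-gamma-shift} is used; this replaces each $2/\gamma$ by $\gamma/2$ inside every $\Gamma$ in~\eqref{eqn-conf-gamma}, giving~\eqref{eqn-conf-Q}. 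The main obstacle is organising the algebra so that the six $\Gamma_{\gamma/2}$ factors comprising $\bar H$ (see~\eqref{eq-H}) cancel cleanly against the Beta-integral Gamma factors: the bookkeeping is mechanical but sensitive to signs and to which of the two shifts in~\eqref{eqn-gamma-shift} is applied at each step. A minor additional point is that the Laplace computation above assumes the Seiberg bounds $\bar\beta>2Q$, $\beta_i<Q$ hold for the original triangle; for parameter values where this fails one first establishes the identity on the Seiberg region and extends by analytic continuation in $(\beta_1,\beta_2,\alpha)$, using that both sides of~\eqref{eqn-conf-gamma} are meromorphic in these variables.
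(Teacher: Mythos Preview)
Your approach is essentially the paper's: apply Theorem~\ref{thm:M+QTp-rw} at $W\in\{2,\gamma^2/2\}$, use the disk densities from Lemma~\ref{lem-special-weights}, equate boundary-length laws on the two sides, simplify the resulting $\bar H$ ratio via the shift equations~\eqref{eqn-gamma-shift}, fix the constant at $\alpha=0$, and extend by analyticity. The only difference is that the paper tracks just the single outer boundary arc (whose marginal is a pure power law by Proposition~\ref{prop-qt-bdry-thick}), which sidesteps your triple Laplace transform and reduces both sides directly to an identity of ordinary Gamma functions.
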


\begin{proof}
Let $\widetilde{\QT}(W_1,W_2,\beta_3)$ be the corresponding quantum surface  when the weight $W_3$ vertex is replaced by a $\beta_3$ Liouville field insertion {and the constant $\frac{1}{\gamma(Q-\beta_1)(Q-\beta_2)(Q-\beta_3)}$ is dropped}. {Then by Proposition~\ref{prop-change-weight}, \eqref{eq:M+QT2}} continues to hold with ${\QT}(W_1,W_2,W_3)$ and $\QT(W_1+W,W_2+W,W_3)$ replaced by  $\widetilde{\QT}(W_1,W_2,\beta_3)$ and $\widetilde{\QT}(W_1+W,W_2+W,\beta_3)$ as long as the Seiberg bounds \eqref{eqn-seiberg} holds for $\widetilde{\QT}(W_1,W_2,\beta_3)$. 
Let $\beta_3\in (\max\{|2Q-\beta_1-\beta_2|, |\beta_1-\beta_2|\}, 4Q-\beta_1-\beta_2)$. A sample from the left hand side of \eqref{eq:M+QT2} now has left boundary length law $1_{x>0}\mathfrak{C}x^{\frac{\beta_1+\beta_2+\beta_3-2Q}{\gamma}-\frac{4}{\gamma^2}-1}dx$ with {$ \mathfrak{C} = \frac{2}{\gamma}|\bar{H}_{(0,1,0)}^{(\beta_1-\frac{2}{\gamma},\beta_2-\frac{2}{\gamma}, \beta_3 )}|m(\gamma, \beta_1,\beta_2,\alpha_3) $ where $\alpha_3$ is determined by $\beta_3$ via \eqref{eqn-alpha-beta-relation}.} On the other hand, evaluating this using the right hand side of \eqref{eq:M+QT2}, we see that {for some constants $c_{\beta_1,\beta_2}, \tilde c_{\beta_1,\beta_2}$ not depending on $\beta_3$,}
\begin{equation}
	\begin{split}
		\mathfrak{C} &= \frac{2}{\gamma}c_{\beta_1,\beta_2}\int_0^\infty |\Md_2(2;1,\ell)|\cdot |\bar{H}_{(0,1,0)}^{(\beta_1,\beta_2, \beta_3 )}|\ell^{\frac{\beta_1+\beta_2+\beta_3-2Q}{\gamma}-1}d\ell\\
		&= \frac{2}{\gamma}\tilde{c}_{\beta_1,\beta_2}{\Gamma(\frac{\beta_1+\beta_2+\beta_3-2Q}{\gamma})\Gamma(\frac{4}{\gamma^2}+1+\frac{2Q-\beta_1+\beta_2+\beta_3}{\gamma}) }.
	\end{split}
\end{equation}
Using the definition of $\bar{H}_{(0,1,0)}^{(\beta_1,\beta_2, \beta_3 )}$ and the shift relations \eqref{eqn-gamma-shift},  this implies that for some constant $C_{\beta_1,\beta_2}$ not depending on $\beta_3$ we have
\begin{equation}\label{eqn-conf-gamma-a}
	m(\gamma, \beta_1,\beta_2,\alpha_3) = C_{\beta_1,\beta_2}\Gamma(\frac{1}{\gamma}(4Q - \beta_1-\beta_2-\beta_3))\Gamma(\frac{1}{\gamma}(2Q-\beta_1-\beta_2+\beta_3) ).
\end{equation} 
Exactly as in \cite[Section A.2]{AHS21}, the input \cite[Theorem 1.8]{MW17} can be bootstrapped to give  $\bbE[\psi_{\eta}'(1)^\alpha]<\infty$ for any $\alpha<\alpha_0$. By Fubini's theorem and Morera's theorem, it is not hard to observe that $\alpha\mapsto \bbE[\psi_{\eta}'(1)^\alpha]$ is holomorphic on $\{\alpha\in\mathbb{C}: \textup{Re}\ \alpha<\alpha_0\}$. From the uniqueness of holomorphic extensions, we observe that the equation \eqref{eqn-conf-gamma-a} extends to any $\alpha<\alpha_0$. Therefore by setting $\alpha = 0$ (and $\beta_3 = \beta_1-\beta_2+\gamma$), we can solve for the constant $C_{\beta_1,\beta_2}$ as $ 	m(\gamma, \beta_1,\beta_2,0)$ is trivially 1.  {We note, but do not need to use, that this also solves the constants in Theorem~\ref{thm:M+QT} and Theorem~\ref{thm:M+QTp-rw} for the case $W=2$ or $W=\frac{\gamma^2}{2}$.} Substituting this expression of $C_{\beta_1,\beta_2}$ in \eqref{eqn-conf-gamma-a} gives \eqref{eqn-conf-gamma}. By a similar argument one obtains \eqref{eqn-conf-Q}.

\end{proof}

The next step is to establish the shift relations by conformal map composition.
\begin{lemma}
For $\beta_-,\tilde{\beta},\beta_1,\beta_2<Q+\frac{\gamma}{2}$ and $\alpha<0$,  we have
\begin{equation}\label{eqn-lm-shift-a}
	m(\beta_-+\tilde{\beta}-Q-\frac{\gamma}{2},\beta_1,\beta_2,\alpha) = m(\tilde{\beta}, \beta_1+\beta_--\gamma-\frac{2}{\gamma}, \beta_2+\beta_--\gamma-\frac{2}{\gamma}, \alpha)m(\beta_-,\beta_1,\beta_2,\alpha ).
\end{equation}
In particular, if $\beta$ solves \eqref{eqn-alpha-beta-relation},  then
\begin{equation}\label{eqn-sle-shift-a}
	\frac{m(\beta_--\frac{2}{\gamma},\beta_1,\beta_2,\alpha)}{m(\beta_-,\beta_1,\beta_2,\alpha)} = \frac{\Gamma(\frac{2}{\gamma}(2Q+\frac{\gamma-\beta_1-\beta_2-2\beta_-+\beta}{2}) ) \Gamma(\frac{2}{\gamma}(3Q+\frac{\gamma-\beta_1-\beta_2-2\beta_--\beta}{2})  ) }{\Gamma(\frac{2}{\gamma}(3Q-\beta_1-\beta_-) ) \Gamma(\frac{2}{\gamma}(2Q+\gamma-\beta_2-\beta_-))      };
\end{equation}
\begin{equation}\label{eqn-sle-shift-b}
	\frac{m(\beta_--\frac{\gamma}{2},\beta_1,\beta_2,\alpha)}{m(\beta_-,\beta_1,\beta_2,\alpha)} = \frac{\Gamma(\frac{\gamma}{2}(2Q+\frac{\gamma-\beta_1-\beta_2-2\beta_-+\beta}{2}) ) \Gamma(\frac{\gamma}{2}(3Q+\frac{\gamma-\beta_1-\beta_2-2\beta_--\beta}{2})  ) }{ \Gamma(\frac{\gamma}{2}(3Q-\beta_1-\beta_-) ) \Gamma(\frac{\gamma}{2}(2Q+\gamma-\beta_2-\beta_-))      }.
\end{equation}
\end{lemma}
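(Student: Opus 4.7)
The plan is to establish~\eqref{eqn-lm-shift-a} via a triple conformal welding argument, and then obtain~\eqref{eqn-sle-shift-a} and~\eqref{eqn-sle-shift-b} by specializing $\tilde\beta \in \{\gamma, Q\}$ together with Lemma~\ref{lm-sle-conf-gamma}. Throughout, I use the correspondence $W = \gamma(Q+\gamma/2-\beta_-)$, $W' = \gamma(Q+\gamma/2-\tilde\beta)$, and $W_i = \gamma(Q+\gamma/2-\beta_i)$ for $i=1,2$, so that $\beta_-+\tilde\beta - Q - \gamma/2$ is the $\beta$-parameter associated with weight $W+W'$, and $\beta_i + \beta_- - \gamma - 2/\gamma$ is the $\beta$-parameter associated with weight $W+W_i$.

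For~\eqref{eqn-lm-shift-a}, I consider the triple conformal welding of $\Md_2(W)$, $\Md_2(W')$ and $\QT(W_1, W_2, W_3)$ along their common boundary arcs (with $\Md_2(W')$ leftmost and the triangle rightmost), and evaluate it in two natural orders. In the first order, I apply Theorem~\ref{thm:M+QTp-rw} to weld $\Md_2(W)$ into the left boundary of $\QT(W_1,W_2,W_3)$, producing $\QT(W+W_1, W+W_2, W_3)$ decorated with an inner interface $\eta_1 \sim \widetilde{\SLE}_\kappa(W-2; W_2-2, W_1-W_2;\alpha)$ of total mass $m(\beta_-, \beta_1, \beta_2, \alpha)$; I then apply Theorem~\ref{thm:M+QTp-rw} again to weld $\Md_2(W')$ into the new left boundary, producing $\QT(W+W'+W_1, W+W'+W_2, W_3)$ with an outer interface of total mass $m(\tilde\beta, \beta_1+\beta_- - \gamma - 2/\gamma, \beta_2+\beta_- - \gamma - 2/\gamma, \alpha)$. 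Because $\alpha$ in~\eqref{eqn-alpha} depends only on $W_3$ and $W_1-W_2$, both welding steps use the same value of $\alpha$, and the inner curve $\eta_1$ remains independent of the outer welding (which is determined only by the boundary length data of the intermediate triangle). In the second order, I first combine $\Md_2(W)$ and $\Md_2(W')$ via Theorem~\ref{thm-disk-2} into $\Md_2(W+W')$ decorated with an $\SLE_\kappa(W-2;W'-2)$ interface of total mass one, and then weld the combined disk into $\QT(W_1, W_2, W_3)$ by Theorem~\ref{thm:M+QTp-rw}, producing a single outer interface of total mass $m(\beta_-+\tilde\beta - Q - \gamma/2, \beta_1, \beta_2, \alpha)$.

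Both orders describe the same curve-decorated quantum surface by associativity of conformal welding, hence as infinite measures they agree up to a positive constant $C$ read off from the constants in Theorems~\ref{thm:M+QTp-rw} and~\ref{thm-disk-2}; in particular $C$ depends only on $W, W', W_1, W_2$ and not on the free parameter $\alpha$, since the $\alpha$-dependence lives entirely in the $\widetilde{\SLE}$ pieces. Equating total masses thus gives
\[
m(\beta_-+\tilde\beta - Q - \gamma/2, \beta_1, \beta_2, \alpha) = C \cdot m(\beta_-, \beta_1, \beta_2, \alpha) \cdot m(\tilde\beta, \beta_1+\beta_- - \gamma - 2/\gamma, \beta_2+\beta_- - \gamma - 2/\gamma, \alpha).
\]
To force $C=1$, I specialize to $\alpha = 0$ (which corresponds to taking $W_3 = W_1 - W_2 + 2$ in~\eqref{eqn-alpha}), where each $m(\cdot,\cdot,\cdot,0)$ equals $1$ because $\SLE_\kappa$ is a probability measure; this establishes~\eqref{eqn-lm-shift-a}. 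Alternatively, one may argue directly via the chain rule $\psi'_{\mathrm{combined}}(1) = \psi'_{\mathrm{inner}}(1) \cdot \psi'_{\mathrm{outer}}(1)$ for the nested mapping-out maps, which shows that the $\alpha$-weightings in the two orderings multiply correctly.

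Finally, for~\eqref{eqn-sle-shift-a} I specialize~\eqref{eqn-lm-shift-a} to $\tilde\beta = \gamma$, which corresponds to $W' = 2$ and makes $\beta_-+\tilde\beta - Q - \gamma/2 = \beta_- - 2/\gamma$; substituting $(\beta_1, \beta_2) \mapsto (\beta_1+\beta_- - \gamma - 2/\gamma, \beta_2+\beta_- - \gamma - 2/\gamma)$ into the explicit formula~\eqref{eqn-conf-gamma} and simplifying the arguments of the four $\Gamma$-factors using $Q = 2/\gamma + \gamma/2$ reproduces the ratio in~\eqref{eqn-sle-shift-a} exactly. Identity~\eqref{eqn-sle-shift-b} follows identically by taking $\tilde\beta = Q$ (so $W' = \gamma^2/2$) and using~\eqref{eqn-conf-Q}. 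The main technical obstacle is the careful justification of independence of the inner interface from the outer welding and the precise matching of multiplicative constants between the two orderings; the $\alpha=0$ normalization cleanly bypasses any explicit computation of these constants, and the parameter ranges $\beta_-, \tilde\beta, \beta_1, \beta_2 < Q + \gamma/2$ together with $\alpha < 0$ ensure that all surfaces, weldings, and moments in the argument are well-defined and finite.
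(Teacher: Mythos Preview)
Your argument is correct, but the paper's own proof is considerably more direct. Rather than invoking the full conformal welding machinery of Theorem~\ref{thm:M+QTp-rw} and then normalizing away the unknown constants via $\alpha=0$, the paper works purely at the SLE level: it couples two flow lines $\eta_1,\eta_2$ from $0$ via imaginary geometry (exactly as in Lemma~\ref{lm:ig-commute}), notes that the conformal map composition $\psi_{\eta_2}=\psi_{\eta_2|\eta_1}\circ\psi_{\eta_1}$ gives $\psi_{\eta_2}'(1)=\psi_{\eta_2|\eta_1}'(1)\cdot\psi_{\eta_1}'(1)$, and uses that the conditional law of $\psi_{\eta_1}(\eta_2)$ given $\eta_1$ is a fixed $\SLE_\kappa(\rho_-;\rho_+,\rho_1)$ in $\bbH$ (hence independent of $\eta_1$). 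Taking the $\alpha$-th moment then yields~\eqref{eqn-lm-shift-a} immediately, with no auxiliary constant to eliminate. This is precisely the chain-rule argument you mention as an ``alternative'' at the end of your write-up; the paper simply makes it the main argument.

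Your welding route is a legitimate proof and is in some sense more in the spirit of the paper's broader theme, but it is longer: you must appeal twice to Theorem~\ref{thm:M+QTp-rw} and once to Theorem~\ref{thm-disk-2}, argue that the welding constants $c_{W,W_1,W_2}$ are independent of $W_3$ (hence of $\alpha$), and then spend an extra step at $\alpha=0$ to fix $C=1$. The paper's direct SLE argument bypasses all of this bookkeeping. For the specializations~\eqref{eqn-sle-shift-a}--\eqref{eqn-sle-shift-b}, both approaches conclude identically by setting $\tilde\beta\in\{\gamma,Q\}$ and citing Lemma~\ref{lm-sle-conf-gamma}.
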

\begin{proof}
Let $\rho_- = \gamma^2-\gamma\beta_-$, $\rho_+ = \gamma^2-\gamma\beta_2$, $\rho_1 = \gamma(\beta_2-\beta_1)$ and $\tilde{\rho} = \gamma^2-\gamma\tilde{\rho}$. Sample an $\SLE_\kappa(\tilde{\rho};\rho_-+\rho_++2, \rho_1  )$ curve $\eta_1$ in $\bbH$ from 0 to $\infty$, and an $\SLE_\kappa({\rho_-};\rho_+, \rho_1  )$ curve $\eta_2$ in the connected component of $\bbH \backslash \eta_1$ with $1$ on its boundary. Let $\psi_{{\eta}_2|{\eta}_1}$ be the conformal map from the right component of $\bbH\backslash\psi_{{\eta}_1}({\eta}_2)$ to $\bbH$ fixing $0,1,\infty$.  As in the proof of Theorem \ref{thm-sle-reverse}, we know $\psi_{\eta_2} = \psi_{\eta_2|\eta_1}\circ\psi_{\eta_1}$ and $\psi_{\eta_2|\eta_1}$ and $\psi_{\eta_1}$ are independent. Moreover, using the imaginary geometry, the marginal law of $\eta_2$ is $\SLE_\kappa({\rho_-+\tilde{\rho}+2};\rho_+, \rho_1  )$. Therefore \eqref{eqn-lm-shift-a} follows from $\bbE[\psi_{\eta_2}'(1)^\alpha] = \bbE[\psi_{\eta_2|\eta_1}'(1)^\alpha]\bbE[\psi_{\eta_1}'(1)^\alpha]$. Equations \eqref{eqn-sle-shift-a} and \eqref{eqn-sle-shift-b} follow by setting $\tilde{\beta} \in \{\gamma, Q\}$ and applying Lemma \ref{lm-sle-conf-gamma}.
\end{proof}

Set $g(\beta_-, \beta_1,\beta_2,\alpha)$ to be the right hand side of \eqref{eqn-sle-conf-radius}, that is
$$g(\beta_-, \beta_1,\beta_2,\alpha) = \frac{F(\beta+\beta_2-\beta_1,\gamma^2,\gamma^2-\gamma\beta_-, \gamma^2-\gamma\beta_2,\gamma(\beta_2-\beta_1))}{F(\gamma,\gamma^2,\gamma^2-\gamma\beta_-, \gamma^2-\gamma\beta_2,\gamma(\beta_2-\beta_1))}. $$
Define $h(\beta_-, \beta_1,\beta_2,\alpha):=\frac{m(\beta_-, \beta_1,\beta_2,\alpha)}{g(\beta_-, \beta_1,\beta_2,\alpha)}$. Using  the argument in \cite[Section A.3]{AHS21}, it is not hard to show that $h$ is meromorphic on $\{\alpha:\textup{Re}\ \alpha <0\}$. By the shift relations \eqref{eqn-gamma-shift}, we see
\begin{equation}\label{eqn-sle-cr-period}
\begin{split}
	&h(\beta_--\frac{2}{\gamma}, \beta_1,\beta_2,\alpha)=h(\beta_-, \beta_1,\beta_2,\alpha) \ \text{for} \ \beta_-,\beta_1,\beta_2<Q+\frac{\gamma}{2};\\
	&h(\beta_--\frac{\gamma}{2}, \beta_1,\beta_2,\alpha)=h(\beta_-, \beta_1,\beta_2,\alpha) \ \text{for} \ \beta_-,\beta_1,\beta_2<Q+\frac{\gamma}{2}.
\end{split}
\end{equation}

\begin{proof}[Proof of Theorem \ref{thm:sle-conf-radius}]
We start with the case where $\alpha<0$.   First suppose $\gamma^2\notin \mathbb{Q}$. Assume $\beta_1,\beta_2\neq Q$. 
{The function $\beta_-\mapsto h(\beta_-, \beta_1,\beta_2,\alpha)$ is well-defined on $(-\infty, Q+\frac{\gamma}{2})$ and is constant} {on a dense subset of $(-\infty, Q+\frac{\gamma}{2})$ by~\eqref{eqn-sle-cr-period}}. Moreover, by Lemma \ref{lm-sle-conf-gamma} we know that $h(\gamma,\beta_1,\beta_2,\alpha) =1 $ and therefore $m(\beta_-, \beta_1,\beta_2,\alpha) = g(\beta_-, \beta_1,\beta_2,\alpha)$ {on a dense subset of $(-\infty, Q+\frac{\gamma}{2})$}. {On the other hand, since $\psi_\eta'(1)>1$, a.s., it follows that $m(\beta_-, \beta_1,\beta_2,\alpha)<1$ whenever $\alpha<0$, and therefore by~\eqref{eqn-lm-shift-a} the function $\beta_-\mapsto m(\beta_-, \beta_1,\beta_2,\alpha)$ is monotone. This proves \eqref{eqn-sle-conf-radius} for $\beta_1,\beta_2\neq Q$, and for $Q\in\{\beta_1,\beta_2\}$, the claim follows by applying~\eqref{eqn-lm-shift-a} to $(Q+\frac{\gamma}{2}-\e,\beta_-,\beta_1+\e,\beta_2+\e)$ for $\e>0$ chosen to be small.}

{Now assume $\gamma^2\in\mathbb{Q}$. By the same SLE continuity argument as in \cite[Lemma A.3]{AHS21}, for $\eta_n\sim\SLE_{\kappa_n}(\rho_-;\rho_+,\rho_1)$ and $\eta\sim\SLE_{\kappa}(\rho_-;\rho_+,\rho_1)$ with $\kappa_n\downarrow\kappa\in(0,4)$, when $\rho_-,\rho_+,\rho_++\rho_1\ge\frac{\kappa}{2}-2$, i.e., the curves are non-boundary hitting, $\psi_{\eta_n}'(1)\to\psi_\eta'(1)$ in probability. This implies $m(\beta_-, \beta_1,\beta_2,\alpha) = g(\beta_-, \beta_1,\beta_2,\alpha)$ for $\beta_-,\beta_1,\beta_2\le Q$ and all $\kappa\in(0,4)$. Then for $\beta_-\le \frac{2}{\gamma}$, $\beta_1,\beta_2<Q+\frac{\gamma}{2}$, $m(\beta_-, \beta_1,\beta_2,\alpha)$ is solved by applying~\eqref{eqn-lm-shift-a} along with~\eqref{eqn-conf-Q}  for the tuple $(Q,\beta_-+\frac{\gamma}{2}, \beta_1,\beta_2)$, and the general  $\beta_-,\beta_1,\beta_2<Q+\frac{\gamma}{2}$ case follows immediately by applying~\eqref{eqn-lm-shift-a} to the tuple $(\beta_-,\frac{2}{\gamma},\beta_1,\beta_2)$.}

Finally, again
by using the holomorphic extension in terms of $\alpha$, \eqref{eqn-sle-conf-radius} extends to the full range $\alpha<\alpha_0$, as desired.
\end{proof}

\subsection{SLE commutation relation}\label{sec:sle-commutation}
In Imaginary geometry theory, the GFF flow line construction neatly characterizes the marginal and conditional laws of interacting $\SLE_\kappa(\underline{\rho})$ curves. On the other hand, we can also read off the interface laws in the conformal welding statement in Theorem \ref{thm:M+QTp-rw}. By considering the different orders of welding quantum disks and triangles, this gives an alternative way of describing the marginal and conditional laws of $\SLE_\kappa(\rho_-;\rho_+,\rho_1)$ curves. Moreover, this also extends to $\widetilde{\SLE} _\kappa(\rho_-;\rho_+,\rho_1;\alpha)$, the $\SLE$ curves weighted by conformal radius.

As a quick application, we prove the following.
\begin{proposition}\label{prop-sle-commutation}
Fix $W,W',W_1,W_2,W_3>0$. 
The following two laws on tuples of curves $(\eta_1,\eta_2)$ differs only by a multiplicative constant. Let $\alpha$ be the same as \eqref{eqn-alpha} and $$\alpha' = \frac{W_2+W_3-W_1-2}{4\kappa}({W_2+W_1+2-W_3-\kappa}).$$
\begin{enumerate}
	\item First sample an $\widetilde{\SLE}_\kappa(W-2;W_2-2,W_1-W_2+W';\alpha)$ (with force points $0^-,0^+,1$) curve $\eta_1$ on $\bbH$ from 0 to $\infty$, and then sample an $\widetilde{\SLE}_\kappa(W_3-2,W_1-W_3;W'-2;\alpha')$ (with force points $1^-,0,1^+$) curve $\eta_2$ to the right of $\eta_1$ in $\bbH\backslash \eta_1$.
	\item First sample an $\widetilde{\SLE}_\kappa(W_3-2,W_1-W_3+W;W'-2;\alpha')$ (with force points $1^-,0,1^+$) curve $\eta_2$ on $\bbH$ from 1 to $\infty$, and then sample an $\widetilde{\SLE}_\kappa(W-2;W_2-2,W_1-W_2;\alpha)$ (with force points $0^-,0^+,1$) curve $\eta_1$ on the left component of $\bbH\backslash \eta_2$.
\end{enumerate}
\end{proposition}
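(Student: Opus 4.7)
The plan is to establish the identity by applying Theorem~\ref{thm:M+QTp-rw} twice, in two opposite orders, to the joint conformal welding of three quantum surfaces: $\Md_2(W)$ glued to the edge of $\QT(W_1,W_2,W_3)$ connecting the weight $W_1$ and weight $W_2$ vertices, and $\Md_2(W')$ glued to the edge connecting the weight $W_3$ and weight $W_1$ vertices. Since the two weldings are performed along distinct edges of the triangle, they may be carried out in either order, and by associativity of conformal welding the resulting quantum triangle of weight $(W+W'+W_1, W+W_2, W'+W_3)$ together with the two interface curves $(\eta_1,\eta_2)$ has the same law regardless of the ordering chosen.

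Performing the $W$-welding first, Theorem~\ref{thm:M+QTp-rw} produces an intermediate quantum triangle $\QT(W+W_1, W+W_2, W_3)$ decorated by $\eta_1$; subsequently welding $\Md_2(W')$ on the edge of this intermediate triangle between the weight $W_3$ and weight $W+W_1$ vertices, a second application of Theorem~\ref{thm:M+QTp-rw} (after cyclically relabelling the vertices so that the welding edge plays the role of the theorem's first edge) produces $\eta_2$ and the final triangle. When the final decorated surface is embedded as $(\bbH, \infty, 0, 1)$, the description of the joint law of $(\eta_1,\eta_2)$ read off from these two theorem applications matches procedure~(1). Swapping the order -- welding $\Md_2(W')$ first to obtain the intermediate triangle $\QT(W_1+W', W_2, W_3+W')$ decorated by $\eta_2$, then welding $\Md_2(W)$ -- and repeating the analysis gives the description in procedure~(2). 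Since both orderings correspond to the same underlying infinite measure on decorated surfaces, their descriptions of the joint law of $(\eta_1,\eta_2)$ are proportional, which is exactly the conclusion of the proposition.

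The main technical work is the bookkeeping of parameters. In each of the four welding steps one must identify which original vertex plays which role $(W_1,W_2,W_3)$ in Theorem~\ref{thm:M+QTp-rw}, and then substitute the relabelled weights into~\eqref{eqn-alpha} to read off the force weights and the $\widetilde{\SLE}$ parameter. The shift ``$+W'$'' appearing in the force weight $W_1-W_2+W'$ for $\eta_1$ in procedure~(1), and the analogous ``$+W$'' in procedure~(2), reflect the fact that in the outer welding the intermediate triangle already incorporates the disk glued during the inner step, so that two of its vertex weights are augmented accordingly; the conditional laws, by contrast, involve only the inner welding and therefore carry no such shifts. Verifying that the two sampling procedures indeed yield the specific $\alpha$ and $\alpha'$ stated in the proposition reduces to a careful but entirely symbolic computation using~\eqref{eqn-alpha} and the conformal invariance of $\widetilde{\SLE}$; no new analytic input beyond Theorem~\ref{thm:M+QTp-rw} is required.
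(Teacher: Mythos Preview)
Your approach is essentially identical to the paper's: both arguments realize $(\eta_1,\eta_2)$ as the pair of interfaces in the simultaneous welding
\[
\iint_{\bbR_+^2}\Wd\big(\Md_2(W;\ell_1),\QT(W_1,W_2,W_3;\ell_1,\ell_2),\Md_2(W';\ell_2)\big)\,d\ell_1\,d\ell_2,
\]
and read off the two hierarchical descriptions by performing the two applications of Theorem~\ref{thm:M+QTp-rw} in the two possible orders.

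There is, however, a labeling slip in your second paragraph. Welding $\Md_2(W)$ first (inner step) and $\Md_2(W')$ second (outer step) yields the marginal law of the \emph{outer} interface $\eta_2$ with the ``$+W$'' shift, and the conditional law of $\eta_1$ given $\eta_2$ without a shift; this is procedure~(2), not procedure~(1). Conversely, welding $\Md_2(W')$ first yields procedure~(1). Your third paragraph actually gets this right---you correctly explain that the shifted force weight in each procedure comes from the outer welding seeing the already-augmented intermediate triangle---so the second paragraph is simply inconsistent with your own (correct) later analysis. The paper also adds one line you omit: when some vertex of the final triangle is thin, one restricts attention to the thick core and the argument goes through unchanged.
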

\begin{figure}[ht]
\centering
\includegraphics[width=0.35\textwidth]{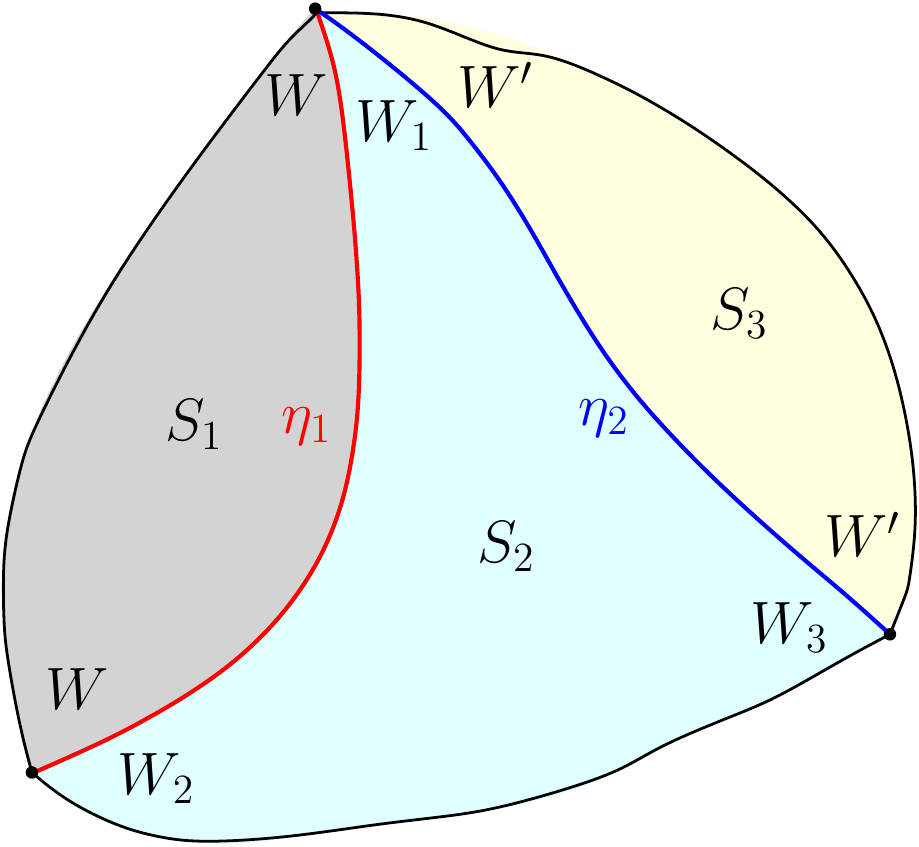}
\caption{Proof of Proposition \ref{prop-sle-commutation}. If we start by welding $S_1$ to the left of $S_2$ first and then $S_3$ to the right, then by Theorem \ref{thm:M+QTp-rw} we know the marginal law of $\eta_2$ and the law of $\eta_1$ given $\eta_2$. If we first weld $S_3$ to the right of $S_2$ and $S_1$ to the left, then we can interpret the marginal law of $\eta_1$ and then the conditional law of $\eta_2$ given $\eta_1$. This justifies Proposition \ref{prop-sle-commutation}.  }\label{fig-sle-commutation}
\end{figure}

\begin{proof}
The proof is again an application of Theorem \ref{thm:M+QTp-rw} and the argument in {Section \ref{sec-interface-law}}. Namely, suppose we are in the setting of Figure \ref{fig-sle-commutation}, where we sample surfaces $(S_1,S_2,S_3)$ from the measure
$$\iint_{\bbR_+^2}\Wd(\Md_2(W;\ell_1),\QT(W_1,W_2,W_3;\ell_1,\ell_2),\Md_2(W';\ell_2))\ d\ell_1\ d\ell_2 $$
and conformally weld them together.
First consider the case where $W+W'+W_1,W+W_2,W'+W_3\ge\frac{\gamma^2}{2}$. We may first weld $S_1$ and $S_2$ together, which implies that given $\eta_2$, the conditional law of $\eta_1$ is   $\widetilde{\SLE}_\kappa(W-2;W_2-2,W_1-W_2;\alpha)$. Then as we weld $S_3$ to the right, we observe that the marginal law of $\eta_2$ is proportional to $\widetilde{\SLE}_\kappa(W_3-2,W_1-W_3+W;W'-2;\alpha')$. This implies the interface law $(\eta_1,\eta_2)$ is a constant multiple of the second law. On the other hand, if we first fix $S_1$ and weld $S_3$ to the right of $S_2$, and then weld $S_1$ to the left, by Theorem \ref{thm:M+QTp-rw}, we know that the conditional law of $\eta_2$ given $\eta_1$ is a constant times  $\widetilde{\SLE}_\kappa(W_3-2,W_1-W_3;W'-2;\alpha')$ and the marginal law of $\eta_1$ is  $\widetilde{\SLE}_\kappa(W-2;W_2-2,W_1-W_2+W';\alpha)$. If any of the vertex in the large triangle is thin, then we may focus on the thick triangle component. This concludes the proof.
\end{proof}

\end{document}